\theoremstyle{plain} 
 \newtheorem{thm}{Theorem}[section]
 \newtheorem{lem}[thm]{Lemma}
 \newtheorem{cor}[thm]{Corollary}
 \newtheorem{prop}[thm]{Proposition}
 \newtheorem{claim}[thm]{Claim}
\theoremstyle{definition}
  \newtheorem{defn}[thm]{Definition}
  \newtheorem{notation}[thm]{Notation}
\theoremstyle{remark}
  \newtheorem{rem}[thm]{Remark}
\newcommand{\comm}{{\rm Comm}}
\newcommand{\stab}{{\rm Stab}}
\newcommand{\aut}{{\rm Aut}}
\newcommand{\ad}{{\rm Ad}}
\newcommand{\isom}{{\rm Isom}}
\newcommand{\bs}{{\rm BS}}
\newcommand{\qn}{{\rm QN}}
\newcommand{\m}{\mathfrak{m}}
\newcommand{\n}{\mathfrak{n}}
\newcommand{\D}{\mathfrak{D}}
\newcommand{\I}{\mathfrak{I}}
\newcommand{\R}{\mathbb{R}}
\newcommand{\Rm}{\mathbb{R}_+^{\times}}
\newcommand{\cal}{\mathcal}
\newcommand{\ci}[2]{\cite[#1]{#2}}
\renewcommand{\c}{\curvearrowright}
\begin{document}

\title[Invariants of orbit equivalence relations]{Invariants of orbit equivalence relations and Baumslag-Solitar groups}
\author{Yoshikata Kida}
\address{Department of Mathematics, Kyoto University, 606-8502 Kyoto, Japan}
\email{kida@math.kyoto-u.ac.jp}
%start: June 10, 2011.
\date{June 3, 2013}
\subjclass[2010]{20E06, 20E08, 37A20}
\keywords{Baumslag-Solitar groups, orbit equivalence, measure equivalence}
%\thanks{Partly supported by the Grant-in-Aid for Young Scientists (A), Japan Society for the Promotion of Science.}

\begin{abstract}
To an ergodic, essentially free and measure-preserving action of a non-amenable Baumslag-Solitar group on a standard probability space, a flow is associated.
The isomorphism class of the flow is shown to be an invariant of such actions of Baumslag-Solitar groups under weak orbit equivalence.
Results on groups which are measure equivalent to Baumslag-Solitar groups are also provided.
\end{abstract}

\maketitle

%%%%%%%%%%%%%%%%%%%%%%%%%%%%%%%%%%%%%%%%%%%%%

\section{Introduction}\label{sec-int}

The study of probability-measure-preserving actions of discrete countable groups through the associated orbit equivalence relations has been making progress since the seminal works \cite{cfw}, \cite{dye1}, \cite{dye2} and \cite{ow} on amenable groups.
We recommend the reader to consult \cite{furman-survey}, \cite{gab}, \cite{popa-survey}, \cite{shalom-survey} and \cite{vaes-survey} for recent development and related topics.
In this paper, we focus on actions of non-amenable Baumslag-Solitar groups, and introduce their invariants under orbit equivalence.
For two non-zero integers $p$, $q$, the {\it Baumslag-Solitar group} $\bs(p, q)$ is defined as the group with the presentation
\[\bs(p, q)=\langle \, a, t\mid ta^pt^{-1}=a^q\,\rangle.\]
After the appearance in \cite{bs} as a simple example of a non-Hopfian finitely presented group for certain $p$ and $q$, this group attracts attention in combinatorial group theory.
The isomorphism problem for these groups is solved by Moldavanski\u{\i} \cite{mold}, who shows that $\bs(p, q)$ and $\bs(r, s)$ are isomorphic if and only if there exists an integer $\varepsilon \in \{ \pm 1\}$ such that either $(p, q)=(\varepsilon r, \varepsilon s)$ or $(p, q)=(\varepsilon s, \varepsilon r)$.
The group $\bs(p, q)$ is amenable if and only if either $|p|=1$ or $|q|=1$.
We therefore assume $2\leq |p|\leq |q|$ throughout the paper.

A discrete group is assumed countable unless otherwise mentioned.
We mean by an {\it f.f.m.p}.\ action of a discrete group $G$ a measure-preserving Borel action of $G$ on a standard finite measure space $(X, \mu)$ such that the stabilizer of a.e.\ $x\in X$ in $G$ is trivial, where ``f.f.m.p." stands for ``essentially free and finite-measure-preserving".
The following equivalence relations among such actions are of our interest.

\begin{defn}
Let $G\c (X, \mu)$ and $H\c (Y, \nu)$ be ergodic f.f.m.p.\ actions of discrete groups.
We say that these actions are {\it orbit equivalent (OE)} if there exists a Borel isomorphism $f$ from a conull Borel subset of $X$ onto a conull Borel subset of $Y$ such that $f_*\mu$ and $\nu$ are equivalent and we have $f(Gx)=Hf(x)$ for a.e.\ $x\in X$.

More generally, we say that the actions $G\c (X, \mu)$ and $H\c (Y, \nu)$ are {\it weakly orbit equivalent (WOE)} if there exists a Borel isomorphism $f$ from a Borel subset $A$ of $X$ with $\mu(A)>0$ onto a Borel subset $B$ of $Y$ with $\nu(B)>0$ such that $f_*(\mu|_A)$ and $\nu|_B$ are equivalent and we have $f(Gx\cap A)=Hf(x)\cap B$ for a.e.\ $x\in A$.
\end{defn}

We put $\Gamma =\bs(p, q)$ with $2\leq |p|\leq |q|$ and define a homomorphism $\m \colon \Gamma \to \Rm$, called the {\it modular homomorphism} of $\Gamma$, by setting $\m(a)=1$ and $\m(t)=|q/p|$.
Let $\Gamma \c (X, \mu)$ be an ergodic f.f.m.p.\ action.
We now introduce the flow associated with this action, which is shown to be an invariant under orbit equivalence.
Let $\pi \colon (X, \mu)\to (Z, \xi)$ be the ergodic decomposition for the action of $\ker \m$ on $(X, \mu)$.
We have the canonical ergodic measure-preserving action of $\m(\Gamma)$ on $(Z, \xi)$.
The {\it flow associated with} the action $\Gamma \c (X, \mu)$ is defined as the action of $\R$ induced from the action of $\log \circ \m(\Gamma)$ on $(Z, \xi)$ through the isomorphism $\log \colon \Rm \to \R$.
We refer to Remark \ref{rem-type} for a detailed description of this flow.

\begin{thm}\label{thm-flow}
We set $\Gamma =\bs(p, q)$ and $\Lambda =\bs(r, s)$ with $2\leq |p|\leq |q|$ and $2\leq |r|\leq |s|$.
Let $\Gamma \c (X, \mu)$ and $\Lambda \c (Y, \nu)$ be ergodic f.f.m.p.\ actions.
If these two actions are WOE, then the flows associated with them are isomorphic.
\end{thm}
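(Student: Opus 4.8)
The plan is to exhibit the associated flow as a Mackey range of a canonical cocycle on the orbit equivalence relation, and then to prove that any weak orbit equivalence transports this cocycle to a cohomologous one.

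First I would reinterpret the flow intrinsically. Write $\mathcal{R}_\Gamma$ for the orbit equivalence relation of $\Gamma\c(X,\mu)$. Essential freeness makes the assignment $(x,\gamma x)\mapsto\log\m(\gamma)$ a well-defined Borel cocycle $\rho_\Gamma\colon\mathcal{R}_\Gamma\to\R$ for a.e.\ pair. Form the skew-product equivalence relation on $(X\times\R,\mu\times\mathrm{Leb})$ built from $\rho_\Gamma$; translation of the second coordinate commutes with this relation and hence descends to its ergodic decomposition, yielding a measure-preserving $\R$-action. This $\R$-action is isomorphic to the flow associated with $\Gamma\c(X,\mu)$: since $\rho_\Gamma$ vanishes on the subrelation generated by $\ker\m$, the first stage of the ergodic decomposition collapses that subrelation and returns $(Z,\xi)$ with its $\m(\Gamma)$-action, and what remains is the orbit equivalence relation on $Z\times\R$ of the $\m(\Gamma)\cong\mathbb{Z}$-action whose generator sends $(z,s)$ to $(\m(t)z,\,s+\log|q/p|)$; its ergodic decomposition together with the $\R$-translation is precisely the suspension $\R\times_{(\log|q/p|)\mathbb{Z}}Z$, that is, the action of $\R$ induced from $\log\circ\m(\Gamma)\c(Z,\xi)$. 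So it suffices to prove that the Mackey range of $\rho_\Gamma$ is a weak orbit equivalence invariant.

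Recall next that the Mackey range of a cocycle over an ergodic equivalence relation is unchanged under (i) replacing the cocycle by a cohomologous one and (ii) restricting the relation to a Borel set of positive measure and restricting the cocycle accordingly. Hence, given a weak orbit equivalence $f$ between $\Gamma\c(X,\mu)$ and $\Lambda\c(Y,\nu)$ implemented on subsets $A\subset X$ and $B\subset Y$, it is enough to show that $f$ carries the restriction of $\rho_\Gamma$ to $\mathcal{R}_\Gamma|_A$ onto a cocycle on $\mathcal{R}_\Lambda|_B$ that is cohomologous to the restriction of $\rho_\Lambda$. In particular the degenerate case $|p|=|q|$, in which $\m$ and the flow are trivial, is subsumed: the Mackey range is a one-point flow exactly when the cocycle is a coboundary, and this property transfers.

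The crux, and what I expect to be the main obstacle, is the cohomological rigidity of the modular cocycle: an isomorphism between stabilizations of $\mathcal{R}_\Gamma$ and $\mathcal{R}_\Lambda$ must match $\rho_\Gamma$ and $\rho_\Lambda$ up to coboundary. For this I would invoke the structure theory of orbit equivalence relations of non-amenable Baumslag--Solitar actions via their Bass--Serre trees (developed earlier in the paper): such a relation carries a canonical measured-groupoid cocycle into the automorphism group of the associated biregular tree, and with respect to it $\rho_\Gamma$ is the restriction of the purely geometric---hence canonical---modular function of that automorphism group, equivalently the Busemann-type cocycle at the canonical end fixed by $\ker\m$, while the subrelation generated by $\ker\m$ is the corresponding horospherical normal subrelation. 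The rigidity then reduces to showing that a weak orbit equivalence respects this tree structure up to the natural equivalence, hence respects the $\ker\m$-subrelation and the induced $\mathbb{Z}$-valued quotient cocycle, so that $\rho_\Gamma$ and $\rho_\Lambda$ correspond modulo a coboundary; functoriality of the induction from $\mathbb{Z}$ to $\R$ then yields the claimed isomorphism of flows. The delicate points should be ruling out a nontrivial reparametrization constant in the matching (rather than matching only up to scaling) and carrying out the measured-groupoid bookkeeping uniformly across the arithmetically different cases, most notably $p\mid q$ versus $p\nmid q$.
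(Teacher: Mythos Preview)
Your overall framework is exactly right and matches the paper: exhibit the associated flow as the Mackey range of $\rho_\Gamma(\gamma,x)=\log\m(\gamma)$, invoke invariance of the Mackey range under cohomology and under restriction to a positive-measure subset (Lemmas~\ref{lem-mackey-res} and~\ref{lem-mackey-cohom}), and reduce to showing that a WOE carries $\rho_\Gamma$ to a cocycle cohomologous to $\rho_\Lambda$. Your identification of the Mackey range with the induced flow is also correct.

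The gap is in the mechanism for the rigidity step. There is no ``canonical end fixed by $\ker\m$'': the element $a$ lies in $\ker\m$ and fixes only the single vertex $\langle a\rangle$ of $T_\Gamma$ and no edge (since $|p|,|q|\geq 2$), hence no point of $\partial T_\Gamma$; so a Busemann-cocycle description of $\rho_\Gamma$ is unavailable. More seriously, the paper does not produce a cocycle into $\aut(T_\Gamma)$ in this generality, nor does it show that a WOE respects the $\ker\m$-subrelation; that kind of tree rigidity appears only in Section~\ref{sec-red} under strong additional ergodicity hypotheses, and is not available for Theorem~\ref{thm-flow}.

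What the paper does instead is more indirect. Theorem~\ref{thm-ell} characterizes elliptic subgroupoids intrinsically (amenable and quasi-normal in a nowhere-amenable ambient), so the isomorphism $f$ carries $(\cal{G}_u)_Z$ into some $(\cal{H}_v)_A$ and back into some $(\cal{G}_{u'})_A$; after restriction, $f$ matches elliptic subgroupoids up to finite index. The modular homomorphism is then recovered from the pair $(\cal{G},\cal{E})$ via two cocycles into $\Rm$ that depend only on the groupoid data: a Radon--Nikodym-type cocycle $\D$ (Section~\ref{subsec-rn}) and a local-index cocycle $\I$ (Section~\ref{subsec-lic}). The identity $\D\cdot\I=\m\circ\rho$ for an elliptic pair (Corollary~\ref{cor-bs-comp}), together with the behavior of $\D$ and $\I$ under restriction and under finite-index change of subgroupoid (Lemmas~\ref{lem-d-finite}, \ref{lem-d-cohom}, \ref{lem-lic-res}), yields equation~(\ref{eq-cohom}) with no scaling ambiguity---so the reparametrization worry you raise does not arise.
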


For a locally compact group $G$, two measure-preserving actions $G\c (Z, \xi)$ and $G\c (W, \omega)$ on measure spaces are called {\it isomorphic} if there exists a measurable isomorphism $f$ from a conull measurable subset of $Z$ onto a conull measurable subset of $W$ such that $f_*\xi$ and $\omega$ are equivalent and for any $g\in G$, we have $f(gz)=gf(z)$ for a.e.\ $z\in Z$.
In Corollary \ref{cor-conj}, we obtain a refinement of Theorem \ref{thm-flow} under an additional assumption.

Theorem \ref{thm-flow} is strongly inspired by the theory of the flow associated with an ergodic transformation of type ${\rm III}$, summarized in \cite{ho}.
This flow is defined as the Mackey range of the Radon-Nikodym cocycle for the transformation.
It is notable that the isomorphism class of the associated flow is a complete invariant of ergodic single transformations of type ${\rm III}$ under orbit equivalence.
This is due to Hamachi-Oka-Osikawa \cite{hoo} and Krieger \cite{krieger}, \cite{krieger-flow}.

Theorem \ref{thm-flow} is indeed a consequence of a rigidity result on the composition of the cocycle associated with the WOE and the homomorphism $\m$, which is formulated in terms of measure equivalence in Theorem \ref{thm-r}.
In Section \ref{sec-var}, we prove a variant of Furman's theorem on construction of a representation of a group which is measure equivalent to a given group.
Combining these results, we obtain the following:

\begin{thm}\label{thm-z}
We set $\Gamma =\bs(p, q)$ with $2\leq |p|<|q|$.
Suppose that an ergodic f.f.m.p.\ action $\Gamma \c (X, \mu)$ is WOE to a weakly mixing f.f.m.p.\ action $\Delta \c (Y, \nu)$ of a discrete group $\Delta$.
Then there exists a homomorphism from $\Delta$ onto $\mathbb{Z}$.
\end{thm}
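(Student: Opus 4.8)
The plan is to reduce the statement to a cocycle-reduction result via measure equivalence. First I would pass from the weak orbit equivalence to a measure equivalence coupling $(\Sigma, m)$ of $\Gamma$ and $\Delta$, and, choosing a $\Gamma$-fundamental domain in $\Sigma$ that realizes the given weakly mixing action $\Delta \c (Y, \nu)$ (after the standard adjustments), form the associated measure equivalence cocycle $\alpha \colon \Delta \times Y \to \Gamma$. The object to analyze is the composition $\m \circ \alpha \colon \Delta \times Y \to \m(\Gamma) \subseteq \Rm$. Since $2 \le |p| < |q|$ we have $|q/p| > 1$, so $\m(\Gamma) = \langle |q/p| \rangle$ is infinite cyclic; via $\log$, after rescaling by $\log|q/p|$, we regard $\m \circ \alpha$ as a cocycle valued in $\mathbb{Z} \cong \m(\Gamma)$.

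Next I would apply Theorem \ref{thm-r}, which records the rigidity of $\m \circ \alpha$ up to cohomology over the coupling, and then feed its conclusion into the variant of Furman's representation construction from Section \ref{sec-var}. Together with the weak mixing of $\Delta \c (Y, \nu)$, this should produce a homomorphism $\rho \colon \Delta \to \Rm$ with image contained in $\m(\Gamma)$ such that $\m \circ \alpha$ is cohomologous to the cocycle $(\delta, y) \mapsto \rho(\delta)$; the weak mixing hypothesis is the input that lets one pass from the cocycle all the way to an honest homomorphism defined on $\Delta$. I expect this middle step to be the main obstacle: Theorem \ref{thm-r} controls $\m \circ \alpha$ only modulo the associated flow, which can itself be a nontrivial $\R$-action, so one must use weak mixing to strip this ambiguity away and land on a homomorphism rather than a weaker, fibered object.

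Finally I would check that $\rho$ is nontrivial; its image is then an infinite cyclic subgroup $n\mathbb{Z}$ of $\mathbb{Z} \cong \m(\Gamma)$ with $n \ge 1$, and composing $\rho$ with the isomorphism $n\mathbb{Z} \cong \mathbb{Z}$ gives the required surjection $\Delta \twoheadrightarrow \mathbb{Z}$. If $\rho$ were trivial, then $\m \circ \alpha$ would be a coboundary; transporting this back along the weak orbit equivalence, the cocycle on the orbit equivalence relation of $\Gamma \c (X, \mu)$ that sends $(\gamma x, x)$ to $\m(\gamma)$ would be a coboundary on the whole relation (using ergodicity to extend a trivializing function from the positive-measure piece where the weak orbit equivalence is defined to all of $X$). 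Writing this $\mathbb{Z}$-valued coboundary as $\partial\varphi$ for a measurable $\varphi \colon X \to \mathbb{Z}$ and evaluating at $t$, where $\m(t) = |q/p|$ corresponds to $1 \in \mathbb{Z}$, gives $\varphi(tx) = \varphi(x) + 1$ for a.e.\ $x$. Then $\varphi_*\mu$ would be a finite nonzero measure on $\mathbb{Z}$ invariant under the shift by $1$, which is impossible. This contradiction shows $\rho \neq 0$ and completes the argument.
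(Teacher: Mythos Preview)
Your plan matches the paper's route: build the coupling $\Sigma$, combine Theorem~\ref{thm-r} with the abelian variant of Furman's theorem (Theorem~\ref{thm-abel}) to extract a homomorphism $\rho\colon\Delta\to\R$, and show nontriviality by a finite-measure contradiction. Two steps need more care than you indicate.

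First, Theorem~\ref{thm-r} is stated only for couplings between two Baumslag--Solitar groups, so it does not apply to the $(\Gamma,\Delta)$-coupling $\Sigma$ directly; your description of it as ``the rigidity of $\m\circ\alpha$ over the coupling'' is not quite what it says. The paper applies it instead to the self-coupling $\Omega=\Sigma\times_\Delta\Sigma$, which is a $(\Gamma,\Gamma)$-coupling, to produce the equivariant map $\Omega\to(\R,\log\circ\m,\log\circ\m)$ that serves as hypothesis~(2) of Theorem~\ref{thm-abel}; weak mixing of $\Delta\c Y$ supplies hypothesis~(1). Second, Theorem~\ref{thm-abel} outputs $\rho\colon\Delta\to\R$, not into $\log\circ\m(\Gamma)$, and the containment $\rho(\Delta)\subset\log\circ\m(\Gamma)$ is not automatic. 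The paper obtains it by a \emph{second} use of weak mixing: the equivariant map $\Phi\colon\Sigma\to\R$ descends to a $\Delta$-equivariant map $\bar\Phi\colon Y\to\R/(\log|q/p|)\mathbb{Z}$, and the difference map $(y_1,y_2)\mapsto\bar\Phi(y_1)-\bar\Phi(y_2)$ on $Y\times Y$ is $\Delta$-invariant, hence essentially constant, forcing $\bar\Phi$ constant and therefore $\rho(\Delta)\subset(\log|q/p|)\mathbb{Z}$. Your nontriviality argument is equivalent to the paper's (if $\rho$ is trivial, $\Phi$ induces a $\Gamma$-equivariant map $X\to\R$ on which $\log\circ\m(\Gamma)$ acts by translation, impossible with a finite invariant measure), just rephrased through coboundaries.
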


Putting $\Gamma =\bs(p, q)$, we mean by an {\it elliptic} subgroup of $\Gamma$ a subgroup contained in the group generated by $a$ or in its conjugate in $\Gamma$.
Under the assumption that any non-trivial elliptic subgroup of $\Gamma$ acts ergodically, the parameters $p$, $q$ are shown to be invariant under WOE, while the associated flow is isomorphic to the action of $\R$ on $\R /(\log |q/p|)\mathbb{Z}$ by addition, and remembers at most the modulus $|q/p|$.

\begin{thm}\label{thm-woe}
We set $\Gamma =\bs(p, q)$ and $\Lambda =\bs(r, s)$ with $2\leq |p|<|q|$ and $2\leq |r|<|s|$.
Suppose that $q$ is not a multiple of $p$ and that $s$ is not a multiple of $r$.
Let $\Gamma \c (X, \mu)$ and $\Lambda \c (Y, \nu)$ be f.f.m.p.\ actions such that any non-trivial elliptic subgroup of $\Gamma$ acts ergodically and so does any non-trivial elliptic subgroup of $\Lambda$.
If the two actions $\Gamma \c (X, \mu)$ and $\Lambda \c (Y, \nu)$ are WOE, then there exists an integer $\varepsilon \in \{ \pm 1\}$ with $(p, q)=(\varepsilon r, \varepsilon s)$, that is, $\Gamma$ and $\Lambda$ are isomorphic.
\end{thm}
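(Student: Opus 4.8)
The strategy is to upgrade the weak orbit equivalence to an isomorphism of groups $\Gamma\cong\Lambda$. Once that is in hand, Moldavanski\u{\i}'s theorem provides $\varepsilon\in\{\pm1\}$ with $(p,q)=(\varepsilon r,\varepsilon s)$ or $(p,q)=(\varepsilon s,\varepsilon r)$, and the second alternative is excluded since it would force $|p|=|s|>|r|=|q|$, contradicting $|p|<|q|$. So everything reduces to producing a group isomorphism $\Gamma\to\Lambda$, which I plan to extract from the cocycle rigidity of Theorem \ref{thm-r} together with a measurable form of Bass--Serre theory for these groups.

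As an easy first step I would record what the flow already gives. The subgroup $\langle a\rangle$ is a non-trivial elliptic subgroup of $\Gamma$ lying in $\ker\m$, so the hypothesis forces $\ker\m \c (X,\mu)$ to be ergodic; hence the ergodic decomposition defining the associated flow is trivial and, by Remark \ref{rem-type}, the flow of $\Gamma\c(X,\mu)$ is the translation action of $\R$ on $\R/(\log|q/p|)\mathbb{Z}$, and likewise for $\Lambda\c(Y,\nu)$ with $|s/r|$ in place of $|q/p|$. Theorem \ref{thm-flow} then yields $|q/p|=|s/r|$. This is a genuine constraint but not the full conclusion---for example $\bs(4,6)$ and $\bs(6,9)$ have equal moduli while being non-isomorphic---so the integers $p$ and $q$ themselves must be recovered from the finer rigidity, the flow construction retaining only the modulus.

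For the main step, realize the weak orbit equivalence as a measure equivalence coupling of $\Gamma$ and $\Lambda$, choose fundamental domains, and form the associated cocycle $\alpha\colon\Gamma\times X_0\to\Lambda$. By Theorem \ref{thm-r}, $\m_\Lambda\circ\alpha$ is cohomologous to the pull-back of $\m_\Gamma$ under the identification $|q/p|=|s/r|$, so the coupling is compatible with the modular homomorphisms and hence with the canonical orientations of the two Bass--Serre trees. Feeding this into the variant of Furman's representation theorem of Section \ref{sec-var}, and using that every non-trivial elliptic subgroup acts ergodically, one argues that $\alpha$ carries elliptic subgroups of $\Gamma$ onto elliptic subgroups of $\Lambda$: an ergodic action of an elliptic $\langle a^m\rangle\cong\mathbb{Z}$ cannot be pushed into a hyperbolic cyclic subgroup, an end stabilizer, or a diagonal subgroup of $\Lambda$ without violating ergodicity or the modular compatibility. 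Matching the two trees this way and reading off the indices of an edge group in its two adjacent vertex groups---which are $|p|,|q|$ on one side and $|r|,|s|$ on the other, the orientation telling which is which---forces $|p|=|r|$ and $|q|=|s|$, while the way generators of adjacent vertex groups are linked along an edge pins down the remaining sign; so $\alpha$ is cohomologous to an isomorphism $\rho\colon\Gamma\to\Lambda$, and the proof closes. The hypothesis that $q$ is not a multiple of $p$ (and $s$ not of $r$) enters exactly here: it is equivalent to $\bs(p,q)$ having no quotient isomorphic to the non-virtually-cyclic amenable group $\bs(1,q/p)$, the degenerate case in which the elliptic part of the coupling can fail to be rigid, and it is also what lets one conclude that $\rho$ is onto rather than of proper finite index. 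I expect this reconstruction step---promoting the modular-compatible cocycle to a group isomorphism through a careful measurable Bass--Serre analysis tracking elliptic versus hyperbolic behavior and the finite-index bookkeeping---to be the main obstacle; the flow computation and the use of Theorems \ref{thm-flow} and \ref{thm-r} should be comparatively routine.
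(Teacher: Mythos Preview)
Your high-level outline (pass to a coupling, show elliptic subgroups are taken to elliptic subgroups, reconstruct a tree isomorphism, then pin down signs) matches the paper's strategy, but the proposal has two genuine gaps.

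First, the sign determination is where almost all the work lies, and your one sentence (``the way generators of adjacent vertex groups are linked along an edge pins down the remaining sign'') does not touch it. After Theorem~\ref{thm-aut-t} one has $|p|=|r|$ and $|q|=|s|$, so up to a global sign the only remaining case is $p>0$, $q>0$, $r=p$, $s=-q$. The paper rules this out by an intricate argument (the proof of Theorem~\ref{thm-mer}): it builds, for each point of the common fundamental domain, an isomorphism of the profinite completions $E_\infty\to F_\infty$ of the vertex groups by tracking how the groupoid isomorphism acts on the tower $E/E_{k,l}\to F/F_{k,l}$ (Claim~\ref{claim-part}); shows this family is essentially constant (Claim~\ref{claim-const}); embeds these isomorphisms in a space $C$ on which $\Gamma\times\Lambda$ acts via $\sigma$ and $\tau\circ I$; and finally derives from $\Lambda[\alpha_0]=[\alpha_0]\Gamma$ an equation $\tau^{1-n}\circ I=\mathrm{id}$ on a finite-index subgroup of $\bar F_{0,0}$, which contradicts torsion-freeness (Lemma~\ref{lem-limit}). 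Nothing about ``how generators are linked along an edge'' produces this; the sign information is encoded in the $\Lambda$-action on $\mathfrak C$ through $I(x)=-x$, and extracting it requires the profinite machinery. The Remark after Theorem~\ref{thm-mer} shows the caution is warranted: $\bs(p,q)$ and $\bs(p,-q)$ are commensurable, so there exist couplings satisfying the weaker conditions of Theorem~\ref{thm-aut-t} for which the sign cannot be recovered.

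Second, several of the tools you invoke are not the ones that do the job. The variant of Furman's theorem in Section~\ref{sec-var} (Theorem~\ref{thm-abel}) targets an \emph{abelian} group and is used for Theorem~\ref{thm-z}, not here; the equivariant map $\Phi\colon\Sigma\to\isom(T_\Lambda,T_\Gamma)$ is built directly from the groupoid analysis (Lemmas~\ref{lem-vertex-stab}--\ref{lem-edge-stab} and the two Claims in the proof of Theorem~\ref{thm-aut-t}), with ellipticity coming from Theorem~\ref{thm-ell} (amenability plus quasi-normality), not from modular compatibility or ruling out hyperbolic targets by ergodicity. Likewise, the cocycle $\alpha$ is never shown to be cohomologous to a group isomorphism; the conclusion $\Gamma\cong\Lambda$ is the combinatorial consequence $(p,q)=(\varepsilon r,\varepsilon s)$, not a cocycle untwisting. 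Finally, your reading of ``$q$ not a multiple of $p$'' is off: in the paper it is exactly what makes distinct edges at a vertex have equal stabilizers only when they coincide (used in Lemma~\ref{lem-vertex-stab} and in the local-injectivity Claim), i.e.\ it guarantees $1<|p_0|<|q_0|$ so that index computations in the tree separate vertices and edges.
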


The assumption on the actions of non-trivial elliptic subgroups of $\Gamma$ and $\Lambda$ can be relaxed.
We refer to Theorem \ref{thm-mer} for a more general assertion.
In Section \ref{sec-oe}, when $\Gamma =\bs(p, q)$ with $2\leq |p|\leq |q|$, we construct two f.f.m.p.\ actions of $\Gamma$ which are WOE, but not conjugate.
The restrictions of the two actions to any non-trivial elliptic subgroup of $\Gamma$ are shown to be ergodic.
It is therefore impossible to obtain conjugacy of the two actions under the assumption in Theorem \ref{thm-woe}. 
As for Bernoulli shifts of Baumslag-Solitar groups, Popa's cocycle superrigidity theorem \cite{popa-gap} shows orbit equivalence rigidity of them (see Remark \ref{rem-ber}).

We now turn our attention to measure equivalence (ME).
This is an equivalence relation between discrete groups, introduced by Gromov \cite{gromov-as-inv} (see Definition \ref{defn-me}).
It is known that two discrete groups $G$ and $H$ are ME if and only if there exists an ergodic f.f.m.p.\ action of $G$ which is WOE to an ergodic f.f.m.p.\ action of $H$.

\begin{thm}\label{thm-bs-hyp}
We set $\Gamma =\bs(p, q)$ with $2\leq |p|<|q|$.
Let $\Delta$ be a discrete group having an infinite amenable normal subgroup $N$.
Suppose that the quotient $\Delta /N$ is a non-elementarily word-hyperbolic group.
Then $\Gamma$ and $\Delta$ are not ME.
\end{thm}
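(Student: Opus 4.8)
The plan is to derive a contradiction from the existence of a measure equivalence coupling between $\Gamma=\bs(p,q)$ and $\Delta$, by combining the modular-flow machinery of Theorem~\ref{thm-flow} (in its ME formulation, Theorem~\ref{thm-r}) with the structure of the word-hyperbolic quotient $\Delta/N$. Suppose $\Gamma$ and $\Delta$ were ME. Then there exist ergodic f.f.m.p.\ actions $\Gamma\c(X,\mu)$ and $\Delta\c(Y,\nu)$ that are WOE. By Theorem~\ref{thm-z} (using that $2\le|p|<|q|$ and after arranging the $\Delta$-side action to be weakly mixing, which one can do by replacing it with a product with a Bernoulli shift, an operation that preserves WOE-type up to the relevant invariants), there is a surjection $\Delta\twoheadrightarrow\mathbb{Z}$; but more importantly, the cocycle $\alpha\colon R_\Delta\to\Gamma$ composed with $\m$ gives, via the Mackey range, a measure-preserving $\R$-flow which is an ME-invariant. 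The first step is therefore to transport this: the flow associated with $\Gamma\c(X,\mu)$ must be realized on the $\Delta$ side, and because the coupling is an ME coupling, the modular cocycle $\m\circ\alpha$ is cohomologous to a homomorphism-like object controlled by $N$ and $\Delta/N$.

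The key structural input is that $N$ is an \emph{infinite amenable normal} subgroup. First I would argue that $N$ acts on $(Y,\nu)$ in a way whose restriction to the groupoid must map, under $\m\circ\alpha$, into the kernel of $\m$: indeed, any amenable subrelation that is "large" inside $R_\Delta$ is forced by the rigidity of the $\bs$-cocycle (the elliptic/parabolic structure of $\Gamma$) to land in the elliptic part, on which $\m$ is trivial. Concretely, $\ker\m=\langle\!\langle a\rangle\!\rangle\rtimes\ker(\m)$ is locally elliptic-by-$\mathbb{Z}$-like, and an amenable normal piece cannot surject onto the hyperbolic direction of the $\bs$-tree. Hence $\m\circ\alpha$ factors through $\Delta/N$ up to cohomology. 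The second step is then to study the induced behavior of $\Delta/N$, a non-elementary word-hyperbolic group: I would invoke (or adapt) the fact that such groups have no nontrivial measure-preserving flow arising this way that is compatible with the rigidity of $\bs(p,q)$ — equivalently, a cocycle of an ergodic action of $\Delta/N$ into $\Gamma$ cannot be compatible with the Bass--Serre tree action of $\Gamma$ in the manner demanded by the coupling, because word-hyperbolic groups act on their own boundary with strong properness/amenability-at-infinity properties incompatible with the $\mathbb{Z}$-layering forced by $\m$. This is where one uses that $\Delta/N$ is \emph{non-elementary}: an elementary hyperbolic (virtually cyclic) quotient would be exactly the exceptional case allowed by the modular $\mathbb{Z}$.

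The main obstacle I expect is the second step: making precise the incompatibility between the tree-like/parabolic geometry of $\bs(p,q)$ (encoded in the cocycle rigidity behind Theorems~\ref{thm-r} and~\ref{thm-woe}) and the negative-curvature geometry of $\Delta/N$. The cleanest route is probably to use the ME-invariance of the associated flow together with a dichotomy: either $\m\circ\alpha$ is a coboundary (forcing the $\Delta$-action, and hence $\Delta$ itself, to be "unimodular" relative to the coupling, which would let one push the $\bs$-structure entirely onto $\Delta$ and contradict that $\Delta/N$ hyperbolic non-elementary has no normal amenable structure compatible with being ME to a $\bs$-group), or it is not a coboundary, in which case the Mackey-range flow is a genuine $\mathbb{Z}$- or $\R$-action that must be implemented by an endomorphism-type structure on $\Delta/N$ — but a non-elementary hyperbolic group, having trivial amenable radical and being ICC, cannot carry such a structure arising from an amenable normal kernel. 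I would handle this by citing the general principle that the class of groups ME to $\bs(p,q)$ is governed by those admitting an action on a tree with the same edge/vertex-group pattern (as in the $\bs$-ME-rigidity philosophy), and a group with infinite amenable normal subgroup and non-elementary hyperbolic quotient admits no such action: its only small (amenable, or edge-like) subgroups are virtually inside $N$, which is normal, so no nontrivial tree action with amenable edge stabilizers can have the quotient act non-elementarily. Assembling these observations yields the contradiction, proving $\Gamma$ and $\Delta$ are not ME.
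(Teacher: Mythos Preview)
Your proposal identifies some of the right ingredients but has genuine gaps, and in one place it is wrong.

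First, the suggestion to ``replace the $\Delta$-side action with a product with a Bernoulli shift'' to arrange weak mixing is simply false: taking a diagonal product does not preserve WOE, so you cannot feed the situation into Theorem~\ref{thm-z} this way. More importantly, Theorem~\ref{thm-z} is not what drives the contradiction here.

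Second, the heart of the argument---the step where you say the amenable normal piece must ``land in the elliptic part''---is gestured at but never carried out. The paper does this via two concrete groupoid results. One (Theorem~\ref{thm-ell}) says: in $\Gamma\ltimes X$, any amenable subgroupoid that is quasi-normal in a nowhere-amenable ambient groupoid is elliptic. The other (Theorem~\ref{thm-ell-hyp}) is the analogous statement on the $\Delta$ side, proved using Adams' amenability of the boundary action of the hyperbolic quotient $\Delta/N$: any amenable quasi-normal subgroupoid of $\Delta\ltimes Y$ must, on a positive-measure piece, sit inside $L\ltimes Y$ for some $L$ with $N<L$ and $[L:N]<\infty$. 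Applying these back and forth to the isomorphism $f\colon(\mathcal G)_Z\to(\mathcal H)_Z$ traps $f((\Gamma_v\ltimes X)_A)$ between $(N\ltimes Y)_A$ (up to finite index) and the image of another elliptic subgroupoid. This is where non-elementary hyperbolicity of $\Delta/N$ is actually used; your proposal never invokes the hyperbolic boundary.

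Third, once this sandwich is in place the contradiction is not an abstract ``incompatibility of geometries'' but a one-line computation with the modular cocycle. Since $N$ is \emph{normal} in $\Delta$, we have $[N:N\cap\delta N\delta^{-1}]=1$ for every $\delta\in\Delta$, so Lemma~\ref{lem-comp} gives $\D\cdot\I\equiv 1$ for the pair $(\mathcal H,N\ltimes Y)$. Transporting through $f$ and comparing with Corollary~\ref{cor-bs-comp}, one obtains (exactly as in the proof of Theorem~\ref{thm-r}) an almost $(\Gamma\times\Delta)$-equivariant Borel map $\Phi\colon\Sigma\to(\R,\log\circ\m,0)$. Passing to $\Sigma/\Delta$ yields a $\Gamma$-equivariant Borel map from a finite-measure space into $\R$, with $\Gamma$ acting through the infinite discrete group $\log\circ\m(\Gamma)$; this is impossible. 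Your dichotomy in the final paragraph is unnecessary: the modular cocycle on the $\Delta$ side is forced to be (cohomologically) trivial outright, and that is the whole point.
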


For any integers $r$, $s$ with $2\leq |r|=|s|$, the group $\bs(r, s)$ contains a finite index subgroup isomorphic to the direct product of $\mathbb{Z}$ with a non-abelian free group of finite rank.
It follows from Theorem \ref{thm-bs-hyp} that the group $\bs(p, q)$ with $2\leq |p|<|q|$ is not ME to $\bs(r, s)$.
The basic question asking whether the groups $\bs(p, q)$ with different parameters $p$, $q$ are ME or not remains unsolved.

This paper is organized as follows.
In Section \ref{sec-bs}, we review basic properties of Baumslag-Solitar groups and the Bass-Serre trees associated with them.
In Section \ref{sec-dmg}, starting with terminology of discrete measured groupoids, we introduce several notions related to them, index, local index, normality, quasi-normality and quotient.
In Section \ref{sec-var}, we review the aforementioned Furman's theorem and prove its variant.
In Section \ref{sec-ell}, given a finite-measure-preserving action of a Baumslag-Solitar group, we present an algebraic and sufficient condition for a subgroupoid of the associated groupoid to be elliptic.
Elliptic subgroupoids are shown to be preserved under an isomorphism between the groupoids associated with actions of Baumslag-Solitar groups.
In Section \ref{sec-mod}, we introduce the modular cocycle of Radon-Nikodym type and the local-index cocycle.
They are defined for the pair of a discrete measured groupoid and its quasi-normal subgroupoid.
If the pair comes from an action of a Baumslag-Solitar group and its restriction to an elliptic subgroup, then those two cocycles are related to the modular homomorphism.
In Section \ref{sec-mackey}, Theorems \ref{thm-flow}, \ref{thm-z} and \ref{thm-bs-hyp} are proved by using those cocycles. 
In Section \ref{sec-red}, Theorem \ref{thm-woe} is proved.
In Section \ref{sec-oe}, we discuss f.f.m.p.\ actions of a Baumslag-Solitar group which are WOE, but not conjugate.

In Appendix \ref{app}, we observe basic properties of an ergodic probability-measure-preserving action of a Baumslag-Solitar group, focusing on ergodicity of the action of an elliptic subgroup.
Those properties are used to relax ergodicity assumptions in theorems.
In Appendix \ref{sec-exotic}, we show that the groupoid associated with a certain action of a Baumslag-Solitar group has an amenable normal subgroupoid of infinite type.
We also show that the quotient groupoid is of type ${\rm III}$, and clarify relationship with the associated flow.

%%%%%%%%%%%%%%%%%%%%%%%%%%%%%%%%%%%%%%%%%%%%%

\section{Baumslag-Solitar groups}\label{sec-bs}

Contents of this section are discussed from a more general viewpoint in \cite[Section 2]{levitt}.
For a group $G$ and an element $g$ of $G$, let $\langle g \rangle$ denote the cyclic subgroup of $G$ generated by $g$.
Let $p$ and $q$ be integers with $2\leq |p|\leq |q|$, and set
\begin{equation*}\tag{$\star$}\label{pre}
\Gamma =\bs(p, q)=\langle\, a, t\mid ta^pt^{-1}=a^q\,\rangle.
\end{equation*}
The group $\Gamma$ is the HNN extension of the infinite cyclic group $\langle a\rangle$ relative to the isomorphism from $\langle a^p\rangle$ onto $\langle a^q\rangle$ sending $a^p$ to $a^q$.
Let $T=T_{\Gamma}$ denote the Bass-Serre tree associated with this HNN extension.
The set of vertices of $T$, denoted by $V(T)$, is defined to be $\Gamma /\langle a\rangle$.
The set of edges of $T$, denoted by $E(T)$, is defined to be $\Gamma/ \langle a^q\rangle$, and for each $\gamma \in \Gamma$, the edge corresponding to the coset $\gamma \langle a^q\rangle$ joins the two vertices corresponding to the cosets $\gamma \langle a\rangle$ and $\gamma t\langle a\rangle$.
We orient this edge so that the origin is the vertex corresponding to $\gamma \langle a\rangle$.
The group $\Gamma$ then acts on $T$ by orientation-preserving simplicial automorphisms.
The action of $\Gamma$ on $V(T)$ and that on $E(T)$ are both transitive.
Let $\aut(T)$ denote the group of orientation-preserving simplicial automorphisms of $T$ equipped with the standard Borel structure induced by the pointwise convergence topology.
Unless otherwise stated, we mean by the {\it Bass-Serre tree associated with} $\Gamma$ the oriented simplicial tree defined above for a fixed presentation of $\Gamma$ of the form (\ref{pre}).
We refer to \cite{serre} for the Bass-Serre theory.

Let $E_+$ denote the set of oriented edges of $T$ with the orientation defined above.
Let $E_-$ denote the set of oriented edges of $T$ consisting of the inverses of edges in $E_+$.
We set $\sigma(e)=1$ for each $e\in E_+$, and set $\sigma(f)=-1$ for each $f\in E_-$.
For a simplex $s$ of $T$, let $\Gamma_s$ denote the stabilizer of $s$ in $\Gamma$.
We pick two distinct vertices $v_0, v\in V(T)$.
Let $e_1,\ldots, e_n$ be the shortest sequence of oriented edges of $T$ such that the origin of $e_1$ is $v_0$; the terminal of $e_n$ is $v$; and for any $i=1,\ldots, n-1$, the terminal of $e_i$ and the origin of $e_{i+1}$ are equal.
We define $M$ as the maximal number in the set $\{ 0\} \cup \{\, \sum_{i=1}^k\sigma(e_i)\mid k=1,\ldots, n\, \}$, and define $m$ as the minimal number in it.
The equality
\[[\Gamma_{v_0}: \Gamma_{v_0}\cap \Gamma_v]=d_0|p_0|^m|q_0|^M\]
then holds, where we denote by $d_0>0$ the greatest common divisor of $p$ and $q$, and set $p_0=p/d_0$ and $q_0=q/d_0$.

For a group $G$ and a subgroup $H$ of $G$, we set
\[\comm_G(H)=\{\, g\in G\mid [H: gHg^{-1}\cap H]<\infty,\ [gHg^{-1}:gHg^{-1}\cap H]<\infty\,\}\]
and call it the {\it (relative) commensurator} of $H$ in $G$, which is a subgroup of $G$.
We say that an element $\gamma$ of $\Gamma$ is {\it elliptic} if it fixes a vertex of $T$.
This condition is equivalent to the equality $\comm_{\Gamma}(\langle \gamma \rangle)=\Gamma$ (see \cite[Lemma 2.1]{levitt}).
It follows that ellipticity of elements of $\Gamma$ is independent of the presentation (\ref{pre}).
We say that a subgroup $E$ of $\Gamma$ is {\it elliptic} if it fixes a vertex of $T$.
This condition is equivalent to that there exists an elliptic element of $\Gamma$ generating $E$.
It follows that ellipticity of subgroups of $\Gamma$ is also independent of the presentation (\ref{pre}).

We define the {\it modular homomorphism} $\m \colon \Gamma \to \Rm$ by sending $a$ to $1$ and $t$ to $|q/p|$.
This is indeed independent of the presentation (\ref{pre}) as shown below.
Any two non-trivial elliptic subgroups $E$, $F$ of $\Gamma$ are commensurable, that is, the intersection $E\cap F$ is of finite index in both $E$ and $F$.
Let $E$ be a non-trivial elliptic subgroup of $\Gamma$, and pick a non-neutral element $x$ of $E$.
For each $\gamma \in \Gamma$, since $\gamma E\gamma^{-1}$ and $E$ are commensurable, there exist non-zero integers $n$, $m$ with $\gamma x^n\gamma^{-1}=x^m$.
We define a map $f\colon \Gamma \to \Rm$ by $f(\gamma)=|m/n|$.
This is a well-defined homomorphism, and is independent of the choice of $E$ and $x$.
Since we have $f(a)=1$ and $f(t)=|q/p|$ by definition, the equality $\m =f$ holds.
The modular homomorphism $\m$ is thus defined independently of the presentation (\ref{pre}).

%%%%%%%%%%%%%%%%%%%%%%%%%%%%%%%%%%%%%%%%%%%%%

\section{Discrete measured groupoids}\label{sec-dmg}

\subsection{Terminology}

We recommend the reader to consult \cite{kechris} and \cite[Chapter XIII, Section 3]{take3} for basic knowledge of standard Borel spaces and discrete measured groupoids, respectively.
Unless otherwise stated, all sets and maps that appear in this paper are assumed to be Borel, and relations among Borel sets and maps are understood to hold up to sets of measure zero.

We refer to a standard Borel space with a finite positive measure as a {\it standard finite measure space}.
When the measure is a probability one, we refer to it as a {\it standard probability space}.
Let $(X, \mu)$ be a standard finite measure space.
Let $\cal{G}$ be a discrete measured groupoid on $(X, \mu)$.
Let $r, s\colon \cal{G}\rightarrow X$ denote the range and source maps, respectively.
For a Borel subset $A\subset X$ of positive measure, we define
\[(\cal{G})_A=\{\, g\in \cal{G}\mid r(g), s(g)\in A\, \}\]
and call it the restriction of $\cal{G}$ to $A$.
We denote by $\cal{G}A$ the saturation
\[\cal{G}A=\{\, r(g)\in X\mid g\in \cal{G},\ s(g)\in A\, \},\]
which is a Borel subset of $X$.
We say that $\cal{G}$ is {\it finite} if for a.e.\ $x\in X$, the set $r^{-1}(x)$ consists of at most finitely many points.
We say that $\cal{G}$ is of {\it infinite type} if for a.e.\ $x\in X$, the set $r^{-1}(x)$ consists of infinitely many points.

Let $\Gamma$ be a discrete group.
We say that a Borel action of $\Gamma$ on $(X, \mu)$ is {\it non-singular} if the action preserves the class of the measure $\mu$.
For a non-singular action $\Gamma \c (X, \mu)$, the associated discrete measured groupoid is denoted by $\Gamma \ltimes (X, \mu)$.
If $\mu$ is not specified, it is also denoted by $\Gamma \ltimes X$.
The range and source maps of $\Gamma \ltimes (X, \mu)$ are defined by $r(\gamma, x)=\gamma x$ and $s(\gamma, x)=x$, respectively, for $\gamma \in \Gamma$ and $x\in X$.
The product operation is defined by $(\gamma_1, \gamma_2x)(\gamma_2, x)=(\gamma_1\gamma_2, x)$ for $\gamma_1, \gamma_2\in \Gamma$ and $x\in X$.

Let $\Gamma \c (X, \mu)$ and $\Lambda \c (Y, \nu)$ be ergodic f.f.m.p.\ actions of discrete groups on standard finite measure spaces, and let $\cal{G}$ and $\cal{H}$ be the associated groupoids, respectively.
The two actions are WOE if and only if there are Borel subsets $A\subset X$ and $B\subset Y$ of positive measure such that $(\cal{G})_A$ and $(\cal{H})_B$ are isomorphic.

We recall the ergodic decomposition for a discrete measured groupoid.

\begin{thm}[\ci{Theorem 6.1}{hahn}]
Let $(X, \mu)$ be a standard finite measure space.
Let $\cal{G}$ be a groupoid on $X$ admitting the structure of a discrete measured groupoid on $(X, \mu)$.
Then there exist a standard finite measure space $(Z, \xi)$ and a Borel map $\pi \colon X\to Z$ such that $\pi_*\mu=\xi$; and for a.e.\ $z\in Z$, $\cal{G}$ admits the structure of an ergodic discrete measured groupoid on $(X, \mu_z)$, where $\mu_z$ is the probability measure on $X$ obtained through the disintegration $\mu =\int_Z\mu_zd\xi(z)$ with respect to $\pi$.

Moreover, if a standard finite measure space $(W, \omega)$ and a Borel map $\theta\colon X\to W$ satisfy these properties in place of $(Z, \xi)$ and $\pi$, respectively, then there exists a Borel isomorphism $f$ from a conull Borel subset of $Z$ onto a conull Borel subset of $W$ such that $\theta(x)=f\circ \pi(x)$ for a.e.\ $x\in X$. 
\end{thm}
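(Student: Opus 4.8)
The plan is to mimic the ergodic decomposition of a single non-singular transformation: identify the ``space of components'' with the standard Borel space dual to the $\sigma$-algebra of $\cal{G}$-invariant sets, realise it by a factor map $\pi$, and obtain the measures $\mu_z$ by disintegrating $\mu$ along $\pi$. First, since $\cal{G}$ is a discrete measured groupoid, by the Lusin--Novikov uniformization theorem it is generated, modulo null sets, by a countable family $(\phi_n)_n$ of partially defined Borel isomorphisms of $(X, \mu)$. Call a Borel set $A\subset X$ \emph{$\cal{G}$-invariant} if $\phi_n(A\cap \mathrm{dom}\,\phi_n)=A\cap \mathrm{range}\,\phi_n$ modulo $\mu$-null sets for every $n$; these sets form a sub-$\sigma$-algebra $\cal{B}$ of the Borel $\sigma$-algebra of $X$, and since the measure algebra of $(X, \mu)$ is separable, $\cal{B}$ is countably generated modulo $\mu$-null sets. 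Fix a countable algebra $\cal{A}$ generating $\cal{B}$ in this sense.

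Next I would build, using $\cal{A}$, a Borel map $\pi \colon X\to Z$ onto a standard Borel space $Z$ such that $\pi^{-1}(\mathrm{Borel}(Z))$ coincides with $\cal{B}$ modulo $\mu$-null sets. Put $\xi =\pi_*\mu$ and take the disintegration $\mu =\int_Z\mu_zd\xi(z)$ along $\pi$, each $\mu_z$ being a probability measure concentrated on $\pi^{-1}(z)$. Because $\pi$ is $\cal{G}$-invariant, each $\phi_n$ respects the partition of $X$ into fibres of $\pi$ up to null sets, so $\cal{G}$ acts fibrewise. To see that $\mu_z$ turns $\cal{G}$ into a discrete measured groupoid for a.e.\ $z$, let $\tilde \mu$ be the measure on $\cal{G}$ obtained by integrating the fibrewise counting measures against $\mu$, and let $D=d\tilde \mu /d(\tilde \mu \circ \iota)$ be its Radon--Nikodym cocycle, $\iota$ denoting inversion. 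Disintegrating $\tilde \mu$ over $\pi \circ s$ and using that $\iota$ preserves this fibration (as $s(g^{-1})=r(g)$ and $\pi$ is $\cal{G}$-invariant), one checks that $D$ restricts, for $\xi$-a.e.\ $z$, to the Radon--Nikodym cocycle of $\cal{G}$ with respect to $\mu_z$; in particular $\mu_z$ is quasi-invariant for a.e.\ $z$.

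It remains to show that $\cal{G}$ is ergodic on $(X, \mu_z)$ for a.e.\ $z$. For each $A\in \cal{A}$ one has $A=\pi^{-1}(A')$ modulo $\mu$-null, so $\mu_z(A)\in \{0, 1\}$ for $\xi$-a.e.\ $z$; intersecting over the countable set $\cal{A}$, there is a $\xi$-conull set of $z$ with $\mu_z(A)\in \{0, 1\}$ for all $A\in \cal{A}$. If, on a set of $z$ of positive $\xi$-measure, there were a Borel set $B_z$ that is $\cal{G}$-invariant with respect to $\mu_z$ and satisfies $0<\mu_z(B_z)<1$, then by a measurable selection argument the assignment $z\mapsto B_z$ could be taken Borel, and $\tilde B=\{\, x\in X\mid x\in B_{\pi(x)}\,\}$ would be $\cal{G}$-invariant with respect to $\mu$, hence $\tilde B=\pi^{-1}(C)$ modulo $\mu$-null for some $C\subset Z$; this forces $\mu_z(B_z)=\mu_z(\tilde B)\in \{0, 1\}$ for a.e.\ $z$, a contradiction. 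Thus $\cal{G}$ is ergodic on $(X, \mu_z)$ for $\xi$-a.e.\ $z$.

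Finally, for uniqueness, if a standard finite measure space $(W, \omega)$ and $\theta \colon X\to W$ also satisfy the conclusion, then $\cal{G}$-invariance of $\theta$ gives $\theta^{-1}(\mathrm{Borel}(W))\subseteq \cal{B}$ modulo $\mu$-null, while ergodicity of a.e.\ $\theta$-fibre forces every $\cal{G}$-invariant set to be $\theta$-measurable modulo null; hence $\theta^{-1}(\mathrm{Borel}(W))=\cal{B}=\pi^{-1}(\mathrm{Borel}(Z))$ modulo $\mu$-null. Two Borel maps out of a standard finite measure space that generate the same $\sigma$-algebra modulo null sets are intertwined by a Borel isomorphism between conull Borel subsets of their images, and this provides the desired $f$ with $\theta =f\circ \pi$ a.e. I expect the crux to be the ergodicity step: passing from ``every $\cal{G}$-invariant subset of $(X, \mu)$ is $\mu_z$-trivial on a.e.\ fibre'' to ``a.e.\ fibre is genuinely ergodic as a measured groupoid'' requires handling the invariant sets uniformly in $z$, and the measurable selection of the family $(B_z)_z$, together with the compatibility of the Radon--Nikodym cocycle with disintegration, is where the real work lies; the remaining steps are routine measure theory on standard Borel spaces.
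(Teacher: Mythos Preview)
The paper does not prove this theorem; it is quoted verbatim from Hahn \cite[Theorem 6.1]{hahn} as a background result on which the rest of Section~\ref{sec-dmg} is built, and no proof appears in the paper. So there is no ``paper's own proof'' to compare your proposal against.

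That said, your outline is the standard route to ergodic decomposition for discrete measured groupoids and is essentially the strategy Hahn (and, in the principal case, Feldman--Moore and others) follow: realise the invariant $\sigma$-algebra by a factor map, disintegrate, verify fibrewise quasi-invariance via the global Radon--Nikodym cocycle, and then argue ergodicity on almost every fibre. Your self-assessment is accurate: the only step that needs genuine care is the passage from ``every global $\cal{G}$-invariant set has $\mu_z$-measure $0$ or $1$ for a.e.\ $z$'' to ``for a.e.\ $z$, every $\mu_z$-invariant set is $\mu_z$-trivial''. The measurable-selection argument you sketch can be made to work, but a cleaner way to close this gap is to note that the Borel space of $\cal{G}$-invariant probability measures on $X$ is standard, that the extreme points are exactly the ergodic ones, and that the disintegration of $\mu$ along $\pi$ is the unique integral representation of $\mu$ over this simplex; Choquet-type uniqueness then forces $\mu_z$ to be ergodic for $\xi$-a.e.\ $z$ without any ad hoc selection of the sets $B_z$. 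The uniqueness paragraph is fine: two factor maps generating the same sub-$\sigma$-algebra of a standard probability space are related by a Borel isomorphism of conull subsets of their targets, which is a standard Rohlin-type fact.
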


We call the map $\pi \colon (X, \mu)\to (Z, \xi)$ the {\it ergodic decomposition} for $\cal{G}$.
For $z\in Z$, we put $X_z=\pi^{-1}(z)$ and denote by $\cal{G}_z$ the ergodic discrete measured groupoid $\cal{G}$ on $(X, \mu_z)$.
Since $\mu_z$ is supported on $X_z$, we often identify $\cal{G}_z$ with a discrete measured groupoid on $(X_z, \mu_z)$.

We introduce an invariant map for an action of a groupoid.
If the unit space of the groupoid consists of a single point, then it is a fixed point of the action of the group.

\begin{defn}\label{defn-inv-map}
Let $\cal{G}$ be a discrete measured groupoid on a standard finite measure space $(X, \mu)$, $\Gamma$ a discrete group, and $S$ a standard Borel space.
Suppose that we have a Borel action of $\Gamma$ on $S$ and a Borel homomorphism $\rho \colon \cal{G}\rightarrow \Gamma$.
A Borel map $\varphi \colon X\rightarrow S$ is said to be {\it $(\cal{G}, \rho)$-invariant} if we have the equality
\[\rho(g)\varphi(s(g))=\varphi(r(g))\]
for a.e. $g\in \cal{G}$.

More generally, if $A$ is a Borel subset of $X$ with positive measure and if a Borel map $\varphi \colon A\rightarrow S$ satisfies the above equality for a.e.\ $g\in (\cal{G})_A$, then we also say that $\varphi$ is {\it $(\cal{G}, \rho)$-invariant}.
\end{defn}

The following extendability of invariant maps is in general use.

\begin{lem}[\ci{Lemma 2.3}{kida-ama}]\label{lem-inv-ext}
Let $\cal{G}$ be a discrete measured groupoid on a standard finite measure space $(X, \mu)$.
Suppose that we have a Borel action of a discrete group $\Gamma$ on a standard Borel space $S$ and a Borel homomorphism $\rho \colon \cal{G}\rightarrow \Gamma$.
If $A$ is a Borel subset of $X$ with positive measure and if $\varphi \colon A\rightarrow S$ is a $(\cal{G}, \rho)$-invariant Borel map, then $\varphi$ extends to a $(\cal{G}, \rho)$-invariant Borel map from $\cal{G}A$ into $S$. 
\end{lem}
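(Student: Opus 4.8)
The plan is to extend $\varphi$ from $A$ to the saturation $\cal{G}A$ by "transporting" values along groupoid elements, and then to check that the extension is well-defined a.e.\ and $(\cal{G},\rho)$-invariant. First I would invoke a standard Borel selection/countable-decomposition result for discrete measured groupoids: there is a countable family $\{g_i\}_{i\in\mathbb{N}}$ of partial Borel isomorphisms (bisections) $g_i\colon \mathrm{dom}(g_i)\to \mathrm{ran}(g_i)$ with $\mathrm{dom}(g_i)\subset A$ whose ranges cover $\cal{G}A$ up to null sets; concretely, $\cal{G}A=\{r(g):g\in\cal{G},\ s(g)\in A\}$ and one writes $\cal{G}\cap s^{-1}(A)$ as a countable union of graphs of bisections by the Lusin--Novikov uniformization theorem applied to the Borel map $r$ on the Borel set $s^{-1}(A)$ (which has $\sigma$-finite, indeed countable-to-one, fibers over $\cal{G}A$ after the decomposition). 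Using disjointification one may assume the ranges $\mathrm{ran}(g_i)$ partition $\cal{G}A$ up to measure zero.

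Next I would define the extension $\tilde\varphi\colon \cal{G}A\to S$ by $\tilde\varphi(y)=\rho(g_i)\,\varphi(s(g_i\text{-preimage of }y))$ whenever $y\in\mathrm{ran}(g_i)$; more precisely, for $y\in\mathrm{ran}(g_i)$ there is a unique $g\in\cal{G}$ in the graph of $g_i$ with $r(g)=y$ and $s(g)\in A$, and we set $\tilde\varphi(y)=\rho(g)\varphi(s(g))$. On $A$ itself (covered, say, by the units in $A$ among the $g_i$), this recovers $\varphi$ since $\rho$ of a unit acts trivially. The substantive point is \emph{well-definedness}: if $g,h\in\cal{G}$ both have range $y$ and sources in $A$, then $g^{-1}h\in(\cal{G})_A$, so by $(\cal{G},\rho)$-invariance of $\varphi$ on $A$ we get $\rho(g^{-1}h)\varphi(s(h))=\varphi(s(g))$, i.e. $\rho(h)\varphi(s(h))=\rho(g)\varphi(s(g))$. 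This holds for a.e.\ such pair $(g,h)$; a Fubini-type argument over the groupoid (the fibered measure on $\cal{G}$ pushed to $\cal{G}A$) upgrades "for a.e.\ pair" to "the value $\tilde\varphi(y)$ is independent of the choice, for a.e.\ $y$", which is exactly what makes $\tilde\varphi$ Borel and well-defined up to null sets regardless of the chosen bisection decomposition.

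Finally I would verify $(\cal{G},\rho)$-invariance of $\tilde\varphi$ on all of $\cal{G}A$: given $g\in(\cal{G})_{\cal{G}A}$, pick $h,k\in\cal{G}$ with $s(h)=s(g)\in A$-side witnesses, $r(h)=s(g)$... more cleanly, for a.e.\ $g$ with $r(g),s(g)\in\cal{G}A$, choose witnesses $h$ with $r(h)=s(g)$, $s(h)\in A$ and $k=gh$ so that $r(k)=r(g)$, $s(k)=s(h)\in A$; then $\tilde\varphi(s(g))=\rho(h)\varphi(s(h))$ and $\tilde\varphi(r(g))=\rho(k)\varphi(s(h))=\rho(g)\rho(h)\varphi(s(h))=\rho(g)\tilde\varphi(s(g))$, using the homomorphism property of $\rho$. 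The main obstacle I anticipate is purely measure-theoretic bookkeeping: ensuring that the countable bisection decomposition exists with the required covering property for $\cal{G}A$, and that the "a.e.\ pair $\Rightarrow$ a.e.\ point" step is justified by an honest Fubini argument on the groupoid rather than hand-waving — this is where one must be careful that the disintegrated fiber measures behave well and that null sets do not accumulate. Everything else is the routine cocycle manipulation sketched above.
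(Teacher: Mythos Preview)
The paper does not prove this lemma; it merely cites it as \cite[Lemma 2.3]{kida-ama}. Your approach is the standard one and is correct: use Lusin--Novikov (valid because a discrete measured groupoid has countable $s$-fibers) to write $s^{-1}(A)$ as a countable union of graphs of Borel bisections, disjointify their ranges to cover $\cal{G}A$, and transport $\varphi$ along them; well-definedness and $(\cal{G},\rho)$-invariance then reduce to the cocycle identity exactly as you wrote.

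The measure-theoretic worry you flag is lighter than you suggest. No genuine Fubini step is needed. The invariance hypothesis says that the equation $\rho(g)\varphi(s(g))=\varphi(r(g))$ fails only on a null set $N\subset(\cal{G})_A$; since each bisection $g_i$ is a non-singular Borel isomorphism between its domain and range, the intersection of $N$ with the graph of any $g_j^{-1}\bullet g_i$ (these compositions lie in $(\cal{G})_A$) pushes forward to a null set of points $y$. Taking a countable union over $i,j$ still gives a null set, off which your formula is independent of the chosen bisection. The same mechanism handles the invariance check for $\tilde\varphi$: the witnesses $h$ and $k=gh$ need not be the designated bisections at $s(g)$ and $r(g)$, but well-definedness (already established off a null set) lets you swap them in.
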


We refer to \cite{ar-book} for amenable groupoids.
A consequence of amenability in terms of the fixed point property is the following:

\begin{prop}[\ci{Theorem 4.2.7}{ar-book}]\label{prop-ame-basic}
Let $\mathcal{G}$ be a discrete measured groupoid on a standard finite measure space $(X, \mu)$, $\Gamma$ a discrete group, and $\rho \colon \mathcal{G}\rightarrow \Gamma$ a Borel homomorphism. 
Suppose that $\Gamma$ acts on a compact Polish space $K$ continuously. 
Let $M(K)$ denote the space of probability measures on $K$, on which $\Gamma$ naturally acts. 
If $\mathcal{G}$ is amenable, then there exists a $(\cal{G}, \rho)$-invariant Borel map from $X$ into $M(K)$.
\end{prop}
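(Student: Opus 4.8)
The plan is to reformulate the statement as an instance of the fixed-point characterisation of amenability for groupoids, and to indicate how that characterisation is proved by the standard ``average, then take a weak$^*$ limit'' argument. Since $K$ is compact Polish, $C(K)$ is a separable Banach space and $M(K)$, with the weak$^*$ topology, is a compact, convex, metrizable subset of $C(K)^*$; the continuous action $\Gamma \c K$ induces an action of $\Gamma$ on $M(K)$ by affine homeomorphisms (pushforward of measures). Form the trivial Borel bundle $X\times M(K)\to X$ of compact convex sets and let $\mathcal{G}$ act on it fibrewise by $g\cdot(s(g),\zeta)=(r(g),\rho(g)\zeta)$, which is well defined because $\rho$ is a homomorphism. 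A $(\mathcal{G},\rho)$-invariant Borel map $\varphi\colon X\to M(K)$ is precisely an invariant Borel section of this bundle, so the assertion is that an amenable $\mathcal{G}$ admits such a section.

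Next I would produce a section by averaging. Amenability of $\mathcal{G}$ furnishes a sequence of non-negative Borel functions $f_n$ on $\mathcal{G}$ that are probability densities along the $r$-fibres, $\sum_{g\in r^{-1}(x)}f_n(g)=1$ for a.e.\ $x$, and that are asymptotically left-invariant in the sense that $\sum_{g\in r^{-1}(r(h))}|f_n(h^{-1}g)-f_n(g)|\to 0$ for a.e.\ $h\in\mathcal{G}$. Fix any Borel map $\varphi_0\colon X\to M(K)$ (for instance a constant one) and set
\[
\varphi_n(x)=\sum_{g\in r^{-1}(x)}f_n(g)\,\rho(g)\,\varphi_0(s(g))\ \in\ M(K),
\]
the barycentre of the probability measure on $M(K)$ that $f_n$ pushes forward from the fibre over $x$. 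Substituting $g\mapsto hg$ in the fibre over $s(h)$ and using the asymptotic invariance of $f_n$ shows that the invariance defect $\rho(h)\varphi_n(s(h))-\varphi_n(r(h))$ tends to $0$ in total variation, hence weak$^*$, for a.e.\ $h\in\mathcal{G}$, while staying bounded by $2$.

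Finally I would pass to a limit. Identifying $L^1(X,\mu;C(K))^*$ with the space of weak$^*$-measurable essentially bounded maps $X\to C(K)^*$, the sequence $(\varphi_n)$ lies in the weak$^*$-compact set of (classes of) maps into $M(K)$, so it has a weak$^*$ cluster point $\varphi$. Testing against functions of the form $x\mapsto a(x)f$ with $a\in L^\infty(X,\mu)$ and $f\in C(K)$, and using dominated convergence together with a change of variables along the countably many Borel bisections covering $\mathcal{G}$ (refined so that $\rho$ is constant on each), the a.e.\ vanishing of the defects of $\varphi_n$ forces the corresponding defect of $\varphi$ to vanish, i.e.\ $\rho(g)\varphi(s(g))=\varphi(r(g))$ for a.e.\ $g$; a Pettis-type measurability argument then shows that $\varphi$ agrees a.e.\ with an honest Borel map $X\to M(K)$. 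The hard part is precisely this last step: one must choose the Banach-space duality so that the evaluation functionals defining the invariance relation are weak$^*$-continuous, control the relevant Radon--Nikodym derivatives of the bisections, and upgrade the resulting null-set identity to a genuine Borel section. This is the content of \ci{Theorem 4.2.7}{ar-book}, which one may instead invoke directly at the level of the abstract fixed-point property for affine Borel actions of amenable groupoids.
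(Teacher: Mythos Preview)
The paper does not prove this proposition at all: it is stated with an explicit attribution to \ci{Theorem 4.2.7}{ar-book} and no proof is given. Your sketch is a reasonable outline of the standard ``approximate invariant densities plus weak$^*$ compactness'' argument that underlies that theorem, and you yourself correctly observe at the end that one may simply invoke the cited result directly; that is exactly what the paper does.
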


%%%%%%%%%%%%%%%%%%%%%%%%%%%%%%%%%%%%%%%%%%%%%

\subsection{Index for groupoids}

The index of a subrelation in a discrete measured equivalence relation is introduced by Feldman, Sutherland and Zimmer \cite{fsz}.
Their definition is directly generalized to that for discrete measured groupoids.
We define index for groupoids and present its basic properties.

Let $(X, \mu)$ be a standard finite measure space and $\cal{G}$ a discrete measured groupoid on $(X, \mu)$.
Let $r, s\colon \cal{G}\to X$ denote the range and source maps of $\cal{G}$, respectively.
Let $\cal{H}$ be a subgroupoid of $\cal{G}$.
For each $x\in X$, we define an equivalence relation on $s^{-1}(x)$ so that two elements $g, h\in s^{-1}(x)$ are equivalent if and only if $gh^{-1}\in \cal{H}$.
For a subset $E$ of $s^{-1}(x)$, let $E/\cal{H}$ denote the set of equivalence classes with respect to this equivalence relation on $E$.
We define the {\it index} of $\cal{H}$ in $\cal{G}$ at $x$, denoted by $[\cal{G}: \cal{H}]_x$, as the cardinality $|s^{-1}(x)/\cal{H}|$.

\begin{lem}\label{lem-index}
In the above notation, we put $I(x)=[\cal{G}:\cal{H}]_x$ for $x\in X$. 
Then the following assertions hold:
\begin{enumerate}
\item The function $I\colon X\to \mathbb{Z}_{>0}\cup \{ \infty \}$ is Borel, and we have $I(s(g))=I(r(g))$ for any $g\in \cal{G}$.
In particular, if $\cal{G}$ is ergodic, then $I$ is essentially constant.
\item For any Borel subset $A$ of $X$ with positive measure and any $x\in A$, we have the inequality $[(\cal{G})_A: (\cal{H})_A]_x\leq I(x)$.
If $\cal{H}$ is ergodic, then the equality holds.
\item If we have a non-singular action of a discrete group $\Gamma$ on $(X, \mu)$ and a subgroup $\Lambda$ of $\Gamma$ such that $\cal{G}=\Gamma \ltimes (X, \mu)$ and $\cal{H}=\Lambda \ltimes (X, \mu)$, then $I(x)=[\Gamma: \Lambda]$ for any $x\in X$.
\end{enumerate}
\end{lem}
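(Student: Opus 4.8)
The plan is to fix a countable generating family of Borel bisections of $\cal{G}$, use it to parametrise each source-fibre by a countable index set, and then reduce all three assertions to elementary manipulations of cosets and equivalence classes. Concretely, since $\cal{G}$ is a standard Borel groupoid with countable source- and range-fibres, the Lusin--Novikov uniformization theorem yields countably many Borel subsets $\phi_n\subset\cal{G}$ with $\cal{G}=\bigcup_n\phi_n$, on each of which both $r$ and $s$ are injective. For $x\in X$ put $D(x)=\{\,n\mid s^{-1}(x)\cap\phi_n\neq\emptyset\,\}$, and for $n\in D(x)$ let $g_n(x)$ be the unique element of $s^{-1}(x)\cap\phi_n$, so that $s^{-1}(x)=\{\,g_n(x)\mid n\in D(x)\,\}$; the set $\{\,(x,n)\mid n\in D(x)\,\}$ and the partial maps $x\mapsto g_n(x)$ are Borel.

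For assertion (i), the map $x\mapsto g_n(x)g_m(x)^{-1}$ is Borel where defined and $\cal{H}\subset\cal{G}$ is Borel, so each set $\{\,x\mid g_n(x)g_m(x)^{-1}\in\cal{H}\,\}$ is Borel; since $I(x)$ is the number of classes of the resulting Borel family of equivalence relations on the countable sets $D(x)$, it is Borel, because $\{\,x\mid I(x)\geq k\,\}$ is the countable union over $k$-tuples of pairwise inequivalent elements of $D(x)$, and $\{I=\infty\}=\bigcap_k\{I\geq k\}$. For the invariance $I(s(g))=I(r(g))$, right multiplication $h\mapsto hg$ is a bijection from $s^{-1}(r(g))$ onto $s^{-1}(s(g))$ with $(h_1g)(h_2g)^{-1}=h_1h_2^{-1}$, so it carries the $\cal{H}$-equivalence to the $\cal{H}$-equivalence in both directions and descends to a bijection of the quotients. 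If $\cal{G}$ is ergodic, the $\cal{G}$-invariant Borel function $I$ is then essentially constant.

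For assertion (ii), the source-fibre of $(\cal{G})_A$ at $x\in A$ is $\{\,g\in s^{-1}(x)\mid r(g)\in A\,\}$, and for two such $g,h$ the product $gh^{-1}$ has source $r(h)\in A$ and range $r(g)\in A$, whence $gh^{-1}\in\cal{H}$ if and only if $gh^{-1}\in(\cal{H})_A$; thus the $(\cal{H})_A$-equivalence on that fibre is the restriction of the $\cal{H}$-equivalence, the natural map from the set of $(\cal{H})_A$-classes in that fibre into $s^{-1}(x)/\cal{H}$ is injective, and the inequality follows. When $\cal{H}$ is ergodic, $\cal{H}A$ is conull, and since each $\phi_n$ is non-singular, for a.e.\ $x\in A$ one has $r(g_n(x))\in\cal{H}A$ for every $n\in D(x)$; for such $x$ and any $g\in s^{-1}(x)$ there is $k\in\cal{H}$ with $s(k)=r(g)$ and $r(k)\in A$, so that $kg$ lies in the source-fibre of $(\cal{G})_A$ and is $\cal{H}$-equivalent to $g$, giving surjectivity and hence equality for a.e.\ $x\in A$.

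Assertion (iii) is a direct computation: with $\cal{G}=\Gamma\ltimes(X,\mu)$ and $\cal{H}=\Lambda\ltimes(X,\mu)$ one has $s^{-1}(x)=\{\,(\gamma,x)\mid\gamma\in\Gamma\,\}$ and $(\gamma,x)(\delta,x)^{-1}=(\gamma\delta^{-1},\delta x)$, which lies in $\Lambda\ltimes(X,\mu)$ exactly when $\gamma\delta^{-1}\in\Lambda$; hence $s^{-1}(x)/\cal{H}$ is in bijection with $\Lambda\backslash\Gamma$ and $I(x)=[\Gamma:\Lambda]$ for every $x$. The steps needing genuine care are the measurability in (i) --- verifying that the number of classes of a Borel family of countable equivalence relations is Borel --- and the equality case of (ii), where the quantifier over all $g\in s^{-1}(x)$ must be reduced, via the countably many bisections $\phi_n$ and their non-singularity, to a countable union of null conditions.
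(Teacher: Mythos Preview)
Your proof is correct and follows essentially the same approach as the paper: both parametrise source-fibres by countable families of Borel sections (you via Lusin--Novikov bisections, the paper via a countable partition of $\cal{G}$ into graphs of sections $\eta_n$), write $\{I\geq k\}$ as a countable union over $k$-tuples of pairwise $\cal{H}$-inequivalent sections, use right multiplication by $g$ (the paper uses $g^{-1}$) for invariance, and in the equality case of (ii) use ergodicity of $\cal{H}$ to push an arbitrary $g\in s^{-1}(x)$ into $(\cal{G})_A$ by left-multiplying with an element of $\cal{H}$. The only cosmetic difference is that the paper packages the last step via a single Borel map $f\colon X\to\cal{H}$ with $r\circ f(y)\in A$, whereas you argue pointwise via the non-singularity of each bisection; both are fine.
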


\begin{proof}
We prove assertion (i), following the proof of \cite[Lemma 1.1 (a)]{fsz}.
Taking Borel sections of the source map $s$, we find a countable set $N$ and a Borel map $\eta_n\colon D_n\to \cal{G}$ indexed by $n\in N$, where $D_n$ is a Borel subset of $X$ with positive measure, such that for any $n\in N$ and any $x\in D_n$, we have $s\circ \eta_n(x)=x$; and for any $g\in \cal{G}$, there exists a unique $n\in N$ with $s(g)\in D_n$ and $\eta_n(s(g))=g$.
Similarly, we find a countable set $M$ and a Borel map $\zeta_m\colon D_m\to \cal{H}$ indexed by $m\in M$, where $D_m$ is a Borel subset of $X$ with positive measure, such that for any $m\in M$ and any $x\in D_m$, we have $s\circ \zeta_m(x)=x$; and for any $g\in \cal{H}$, there exists a unique $m\in M$ with $s(g)\in D_m$ and $\zeta_m(s(g))=g$.
For $n_1,\ldots, n_k\in N$ and $m\in M$, we define a Borel subset $E(n_1,\ldots, n_k; m)$ of $X$ as the set of all points $x$ of $X$ such that for any $i=1,\ldots, k$, we have $x\in D_{n_i}$; and for any distinct $i, j=1,\ldots, k$ with $r\circ \eta_{n_i}(x)\in D_m$, the two elements $\zeta_m(r\circ \eta_{n_i}(x))\eta_{n_i}(x)$ and $\eta_{n_j}(x)$ are distinct.
For any $k\in \mathbb{Z}_{>0}$, the equality
\[\{\, x\in X\mid I(x)\geq k\,\}=\bigcup_{(n_1,\ldots, n_k)\in N^k}\bigcap_{m\in M}E(n_1,\ldots, n_k; m)\]
holds.
The function $I$ is thus Borel.

For any $g\in \cal{G}$, putting $x=s(g)$ and $y=r(g)$, we have the bijection from $s^{-1}(x)$ onto $s^{-1}(y)$ sending each element $h$ of $s^{-1}(x)$ to $hg^{-1}$.
It induces a bijection from $s^{-1}(x)/\cal{H}$ onto $s^{-1}(y)/\cal{H}$.
The equality $I(x)=I(y)$ follows.
Assertion (i) is proved.

Let $A$ be a Borel subset of $X$ with positive measure.
Let $s_A\colon (\cal{G})_A\to A$ denote the source map of $(\cal{G})_A$.
The inequality in assertion (ii) holds because for any $x\in A$, the inclusion of $s_A^{-1}(x)$ into $s^{-1}(x)$ induces an injective map from $s_A^{-1}(x)/(\cal{H})_A$ into $s^{-1}(x)/\cal{H}$.
Assume that $\cal{H}$ is ergodic.
For $x\in X$, let $e_x\in \cal{G}$ denote the unit element at $x$.
Ergodicity of $\cal{H}$ implies that there exists a Borel map $f\colon X\to \cal{H}$ with $f(x)=e_x$ for any $x\in A$ and with $s\circ f(y)=y$ and $r\circ f(y)\in A$ for any $y\in X$.
For any $x\in A$ and any $g\in s^{-1}(x)$, the two elements $g$ and $f(r(g))g$ lie in the same class in $s^{-1}(x)/\cal{H}$, and we have $f(r(g))g\in (\cal{G})_A$.
The map from $s_A^{-1}(x)/(\cal{H})_A$ into $s^{-1}(x)/\cal{H}$ is therefore surjective, and the equality $[(\cal{G})_A: (\cal{H})_A]_x=I(x)$ follows.
Assertion (ii) is proved.

Assertion (iii) follows by definition.
\end{proof}

\begin{lem}\label{lem-index-formula}
Let $\cal{G}$ be a discrete measured groupoid on a standard finite measure space $(X, \mu)$.
Let $r, s\colon \cal{G}\to X$ denote the range and source maps of $\cal{G}$, respectively.
Let $\cal{H}$, $\cal{K}$ and $\cal{L}$ be subgroupoids of $\cal{G}$.
Then the following assertions hold:
\begin{enumerate}
\item For any $x\in X$, we have $[\cal{G}:\cal{H}\cap \cal{K}]_x\leq [\cal{G}:\cal{H}]_x[\cal{G}:\cal{K}]_x$.
\item If $\cal{H}<\cal{K}$, then for any $x\in X$, we have $[\cal{K}\cap \cal{L}: \cal{H}\cap \cal{L}]_x\leq [\cal{K}: \cal{H}]_x$.
\item If $\cal{H}<\cal{K}$, then for any $x\in X$, we have the equality
\[[\cal{G}: \cal{H}]_x=\sum_{g\in E}[\cal{K}:\cal{H}]_{r(g)},\]
where $E$ is a set of representatives of all classes in $s^{-1}(x)/\cal{K}$. 
Moreover, if any real-valued Borel function on $X$ that is $\cal{K}$-invariant is $\cal{G}$-invariant, then for a.e.\ $x\in X$, we have the equality
\[[\cal{G}:\cal{H}]_x=[\cal{G}:\cal{K}]_x[\cal{K}:\cal{H}]_x.\]
\end{enumerate}
\end{lem}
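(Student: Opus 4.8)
The plan is to run the classical coset-counting arguments for the index of subgroups fibrewise, using that for $g,h$ in a source fibre $s^{-1}(x)$ the product $gh^{-1}$ is defined in $\cal{G}$ and that membership of $gh^{-1}$ in a subgroupoid is exactly what detects the equivalence classes defining the index. For subgroupoids $\cal{H}'<\cal{K}'<\cal{G}$ and $x\in X$ I write $s_{\cal{K}'}^{-1}(x)=\{g\in\cal{K}':s(g)=x\}$, so that $[\cal{K}':\cal{H}']_x=|s_{\cal{K}'}^{-1}(x)/\cal{H}'|$.

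For (i) I would fix $x$ and send the $(\cal{H}\cap\cal{K})$-class of $g\in s^{-1}(x)$ to the pair formed by its $\cal{H}$-class and its $\cal{K}$-class in $s^{-1}(x)/\cal{H}\times s^{-1}(x)/\cal{K}$; this is well defined because $\cal{H}\cap\cal{K}$ lies in both $\cal{H}$ and $\cal{K}$, and injective because $gh^{-1}\in\cal{H}$ and $gh^{-1}\in\cal{K}$ together force $gh^{-1}\in\cal{H}\cap\cal{K}$. For (ii), assuming $\cal{H}<\cal{K}$, I would send the $(\cal{H}\cap\cal{L})$-class of $g\in s_{\cal{K}\cap\cal{L}}^{-1}(x)$ to its $\cal{H}$-class in $s_{\cal{K}}^{-1}(x)/\cal{H}$: well-definedness uses $\cal{H}\cap\cal{L}\subset\cal{H}$, and injectivity uses that $g,h\in\cal{K}\cap\cal{L}$ with $gh^{-1}\in\cal{H}$ forces $gh^{-1}\in\cal{L}$, hence $gh^{-1}\in\cal{H}\cap\cal{L}$. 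Comparing cardinalities gives the two inequalities.

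For the first equality in (iii) I would fix $x$ and note that, since $\cal{H}<\cal{K}$, the identity on $s^{-1}(x)$ induces a surjection $s^{-1}(x)/\cal{H}\to s^{-1}(x)/\cal{K}$, so $[\cal{G}:\cal{H}]_x$ is the sum over $g\in E$ of the cardinalities of its fibres. The fibre over the $\cal{K}$-class of $g$ is $\{[h]_{\cal{H}}:h\in s^{-1}(x),\ hg^{-1}\in\cal{K}\}$, and $h\mapsto hg^{-1}$ is a bijection of $\{h\in s^{-1}(x):hg^{-1}\in\cal{K}\}$ onto $s_{\cal{K}}^{-1}(r(g))$ with inverse $k\mapsto kg$; it respects $\cal{H}$-equivalence because $h_1h_2^{-1}=(h_1g^{-1})(h_2g^{-1})^{-1}$, so the fibre has cardinality $[\cal{K}:\cal{H}]_{r(g)}$ and summing over $g\in E$ gives the formula (reading sums of cardinalities as cardinalities of disjoint unions, which makes the argument insensitive to infinite indices). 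For the last equality I would put $J(x)=[\cal{K}:\cal{H}]_x$; by Lemma \ref{lem-index}(i) applied to $\cal{H}<\cal{K}$, $J$ is Borel and $J(s(g))=J(r(g))$ for every $g\in\cal{K}$, hence $J$ is $\cal{K}$-invariant. Composing $J$ with a Borel embedding of $\mathbb{Z}_{>0}\cup\{\infty\}$ into $\R$ and invoking the hypothesis shows the resulting real-valued function is $\cal{G}$-invariant, so $J\circ s=J\circ r$ almost everywhere on $\cal{G}$; since $\cal{G}$ is a discrete measured groupoid, its measure disintegrates along $s$ into counting measures over $(X,\mu)$, so for a.e.\ $x$ one has $J(r(g))=J(x)$ for every $g\in s^{-1}(x)$, in particular for every $g\in E$. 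Substituting this into the first equality of (iii) and using $|E|=[\cal{G}:\cal{K}]_x$ yields $[\cal{G}:\cal{H}]_x=[\cal{G}:\cal{K}]_x[\cal{K}:\cal{H}]_x$ for a.e.\ $x$, again with the convention covering the infinite cases.

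Almost all of this is routine groupoid bookkeeping copied from the group case; I expect the only delicate step to be the passage, in the last paragraph, from ``$J\circ s=J\circ r$ holds a.e.\ on $\cal{G}$'' to a statement valid, for a.e.\ $x$, simultaneously for every element of the countable fibre $s^{-1}(x)$ --- that is where discreteness of $\cal{G}$ and the disintegration of its measure along the source map over counting measures are genuinely needed.
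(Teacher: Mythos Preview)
Your proof is correct and follows essentially the same approach as the paper: for (i) the injection into the product of quotient sets, for (ii) the injection induced by inclusion, and for (iii) the bijection $h\mapsto hg^{-1}$ onto $s_{\cal{K}}^{-1}(r(g))$ followed by invoking $\cal{K}$-invariance of $J(x)=[\cal{K}:\cal{H}]_x$ from Lemma~\ref{lem-index}(i). Your explicit remarks about embedding $\mathbb{Z}_{>0}\cup\{\infty\}$ into $\R$ and about passing from an a.e.\ equality on $\cal{G}$ to one holding for all arrows in almost every source fibre are more careful than the paper, which treats these points implicitly; neither is a genuine gap.
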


\begin{proof}
For $z\in X$, we set $\cal{G}_z=s^{-1}(z)$.
We prove assertion (i).
Fix $x\in X$.
Let
\[p\colon \cal{G}_x/(\cal{H}\cap \cal{K})\to \cal{G}_x/\cal{H},\quad q\colon \cal{G}_x/(\cal{H}\cap \cal{K})\to \cal{G}_x/\cal{K}\]
be the natural maps.
For any $g, h\in \cal{G}_x$, if $p(g)=p(h)$ and $q(g)=q(h)$, then $gh^{-1}\in \cal{H}\cap \cal{K}$.
It follows that the map from the set $\cal{G}_x/(\cal{H}\cap \cal{K})$ into the product $\cal{G}_x/\cal{H}\times \cal{G}_x/\cal{K}$ defined by $p$ and $q$ is injective.
Assertion (i) is proved.

Assertion (ii) holds because for any $x\in X$, the inclusion of $\cal{G}_x\cap \cal{K}\cap \cal{L}$ into $\cal{G}_x\cap \cal{K}$ induces an injective map from $(\cal{G}_x\cap \cal{K}\cap \cal{L})/(\cal{H}\cap \cal{L})$ into $(\cal{G}_x\cap \cal{K})/\cal{H}$.

We prove assertion (iii).
For any $z\in X$, we have the natural map $q_z\colon \cal{G}_z\to \cal{G}_z/\cal{K}$.
Fix $x\in X$.
Let $E$ be a set of representatives of all classes in $\cal{G}_x/\cal{K}$.
Pick $g\in E$ and put $y=r(g)$.
Let $e_y\in \cal{G}$ denote the unit element at $y$.
The map from $q_x^{-1}(q_x(g))$ onto $q_y^{-1}(q_y(e_y))$ sending each element $h$ of $q_x^{-1}(q_x(g))$ to $hg^{-1}$ is bijective.
This map induces a bijection from $q_x^{-1}(q_x(g))/\cal{H}$ onto $q_y^{-1}(q_y(e_y))/\cal{H}$.
We thus have the equality $|q_x^{-1}(q_x(g))/\cal{H}|=|q_y^{-1}(q_y(e_y))/\cal{H}|=[\cal{K}:\cal{H}]_y$.
The equality
\[[\cal{G}: \cal{H}]_x=|\cal{G}_x/\cal{H}|=\sum_{g\in E}|q_x^{-1}(q_x(g))/\cal{H}|=\sum_{g\in E}[\cal{K}:\cal{H}]_{r(g)}\]
is obtained.
The former assertion in assertion (iii) is proved.

The function on $X$ assigning $[\cal{K}: \cal{H}]_z$ to each $z\in X$ is $\cal{K}$-invariant by Lemma \ref{lem-index} (i).
If the ergodic decompositions for $\cal{G}$ and $\cal{K}$ are the same map, then this function is also $\cal{G}$-invariant.
For a.e.\ $x\in X$ and any $g\in E$, the equality $[\cal{K}: \cal{H}]_{r(g)}=[\cal{K}: \cal{H}]_x$ then holds.
The latter assertion in assertion (iii) is proved.
\end{proof}

The following lemma is obtained from the definition of index.

\begin{lem}\label{lem-index-fi}
Let $\cal{G}$ be a discrete measured groupoid on a standard finite measure space $(X, \mu)$.
Let $\Gamma$ be a discrete group, and let $\Lambda$ be a subgroup of $\Gamma$.
If $\rho \colon \cal{G}\to \Gamma$ is a Borel homomorphism, then $\rho^{-1}(\Lambda)$ is a subgroupoid of $\cal{G}$, and for any $x\in X$, we have $[\cal{G}:\rho^{-1}(\Lambda)]_x\leq [\Gamma :\Lambda]$.
\end{lem}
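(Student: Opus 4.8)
The plan is to verify directly that $\mathcal{H}:=\rho^{-1}(\Lambda)$ is a subgroupoid of $\mathcal{G}$, and then, for each $x\in X$, to exhibit an injection from $s^{-1}(x)/\mathcal{H}$ into the coset space $\Lambda\backslash\Gamma$; the index bound is then immediate by comparing cardinalities.

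First I would check that $\mathcal{H}$ is a subgroupoid. Since $\rho$ is a Borel homomorphism, $\mathcal{H}=\rho^{-1}(\Lambda)$ is a Borel subset of $\mathcal{G}$; it contains every unit $e_x$ because $\rho(e_x)$ is the neutral element of $\Gamma$, hence lies in $\Lambda$; it is closed under inversion since $\rho(g^{-1})=\rho(g)^{-1}$; and it is closed under the partial product of $\mathcal{G}$ because $\rho(gh)=\rho(g)\rho(h)\in\Lambda$ whenever $g,h\in\mathcal{H}$ are composable. In particular $\mathcal{H}$ has the same unit space $X$ as $\mathcal{G}$, so the index $[\mathcal{G}:\mathcal{H}]_x$ is defined for every $x$.

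Next, fix $x\in X$ and consider the map $\Phi\colon s^{-1}(x)\to\Lambda\backslash\Gamma$ given by $\Phi(g)=\Lambda\rho(g)$. For $g,h\in s^{-1}(x)$ we have $s(g)=s(h)=x$, so the product $gh^{-1}$ is defined and $\rho(gh^{-1})=\rho(g)\rho(h)^{-1}$; consequently $gh^{-1}\in\mathcal{H}$ if and only if $\rho(g)\rho(h)^{-1}\in\Lambda$, i.e.\ if and only if $\Phi(g)=\Phi(h)$. Thus $\Phi$ is constant on each class of the equivalence relation defining $[\mathcal{G}:\mathcal{H}]_x$ and descends to an injective map $s^{-1}(x)/\mathcal{H}\hookrightarrow\Lambda\backslash\Gamma$. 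Taking cardinalities yields $[\mathcal{G}:\mathcal{H}]_x=|s^{-1}(x)/\mathcal{H}|\le|\Lambda\backslash\Gamma|=[\Gamma:\Lambda]$, as desired.

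There is no genuine obstacle here: the only point requiring care is to keep the left/right coset conventions consistent with the defining relation $g\sim h\iff gh^{-1}\in\mathcal{H}$ of the index at $x$, which is what forces the use of the right coset space $\Lambda\backslash\Gamma$ rather than $\Gamma/\Lambda$. Measurability issues do not arise, since the statement is a pointwise cardinality bound at each $x\in X$.
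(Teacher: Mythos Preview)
Your proof is correct and is exactly the direct verification the paper has in mind; indeed, the paper does not write out a proof at all, merely remarking that the lemma ``is obtained from the definition of index.'' Your unpacking of that definition---checking the subgroupoid axioms and then observing that $g\mapsto\Lambda\rho(g)$ descends to an injection $s^{-1}(x)/\mathcal{H}\hookrightarrow\Lambda\backslash\Gamma$---is precisely the intended argument.
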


Let $\Gamma$ be a discrete group, $\Lambda$ a subgroup of $\Gamma$ of finite index, and $S$ a space on which $\Gamma$ acts.
Choose a family $\{ \gamma_1,\ldots, \gamma_N\}$ of representatives for all right cosets of $\Lambda$ in $\Gamma$ with $N=[\Gamma: \Lambda]$.
For any fixed point $x_0$ in $S$ for the action of $\Lambda$, the subset $\{ \gamma_1^{-1}x_0,\ldots, \gamma_N^{-1}x_0\}$ of $S$ is an orbit for the action of $\Gamma$ and is thus fixed by $\Gamma$.
The following lemma is an analogue for groupoids.

\begin{lem}\label{lem-finite-index}
In the notation in the second paragraph of this subsection, let $\Gamma$ be a discrete group, $\rho \colon \cal{G}\to \Gamma$ a Borel homomorphism, and $S$ a standard Borel space on which $\Gamma$ acts by Borel automorphisms.
We denote by $\cal{F}(S)$ the Borel space of all non-empty finite subsets of $S$, on which $\Gamma$ naturally acts.
Suppose that the function $I(x)=[\cal{G}:\cal{H}]_x$ on $X$ is essentially constant, and its essential value, denoted by $N$, is finite.
Let $\phi_1,\ldots, \phi_N$ be Borel maps from $X$ into $\cal{G}$ such that for a.e.\ $x\in X$, $\{ \phi_1(x),\ldots, \phi_N(x)\}$ is a set of representatives of all classes in $s^{-1}(x)/\cal{H}$.
Then for any $(\cal{H}, \rho)$-invariant Borel map $\psi \colon X\to S$, the Borel map $\Psi \colon X\to \cal{F}(S)$ defined by
\[\Psi(x)=\{\, \rho(\phi_1(x))^{-1}\psi(r\circ \phi_1(x)),\ldots, \rho(\phi_N(x))^{-1}\psi(r\circ \phi_N(x))\,\}\]
for $x\in X$ is $(\cal{G}, \rho)$-invariant.
\end{lem}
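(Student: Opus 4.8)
The plan is to verify the $(\cal{G},\rho)$-invariance of $\Psi$ directly from the definition, namely that $\rho(g)\Psi(s(g))=\Psi(r(g))$ for a.e.\ $g\in\cal{G}$. Fix such a $g$ and write $x=s(g)$, $y=r(g)$. The right-translation map $h\mapsto hg^{-1}$ is a bijection from $s^{-1}(x)$ onto $s^{-1}(y)$ that descends to a bijection $s^{-1}(x)/\cal{H}\to s^{-1}(y)/\cal{H}$, since $hg^{-1}(h'g^{-1})^{-1}=h(h')^{-1}$. Hence, given that $\{\phi_1(x),\dots,\phi_N(x)\}$ represents $s^{-1}(x)/\cal{H}$, the elements $\phi_1(x)g^{-1},\dots,\phi_N(x)g^{-1}$ represent $s^{-1}(y)/\cal{H}$. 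On the other hand $\{\phi_1(y),\dots,\phi_N(y)\}$ is by hypothesis another set of representatives of the same $N$ classes in $s^{-1}(y)/\cal{H}$. So there is a permutation $\tau=\tau_g$ of $\{1,\dots,N\}$ and elements $\eta_i=\eta_i(g)\in\cal{H}$ with $r(\eta_i)$ equal to the appropriate base point, such that
\[
\phi_i(x)g^{-1}=\eta_i\,\phi_{\tau(i)}(y)
\]
for each $i$; that is, $\phi_i(x)=\eta_i\,\phi_{\tau(i)}(y)\,g$.

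Now I would compute the $i$-th entry of $\rho(g)\Psi(x)$. Applying $\rho$ to the last displayed identity gives $\rho(\phi_i(x))=\rho(\eta_i)\,\rho(\phi_{\tau(i)}(y))\,\rho(g)$, whence $\rho(g)\rho(\phi_i(x))^{-1}=\rho(\phi_{\tau(i)}(y))^{-1}\rho(\eta_i)^{-1}$. Therefore
\[
\rho(g)\,\rho(\phi_i(x))^{-1}\psi(r\circ\phi_i(x))
=\rho(\phi_{\tau(i)}(y))^{-1}\rho(\eta_i)^{-1}\psi\bigl(r\circ\phi_i(x)\bigr).
\]
From $\phi_i(x)=\eta_i\phi_{\tau(i)}(y)g$ one has $r\circ\phi_i(x)=r(\eta_i)$ and $s(\eta_i)=r(\phi_{\tau(i)}(y)g)=r\circ\phi_{\tau(i)}(y)$ (using $r(g)=y=s(\phi_{\tau(i)}(y))$). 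Since $\psi$ is $(\cal{H},\rho)$-invariant, $\rho(\eta_i)\psi(s(\eta_i))=\psi(r(\eta_i))$, i.e.\ $\rho(\eta_i)^{-1}\psi(r\circ\phi_i(x))=\psi(r\circ\phi_{\tau(i)}(y))$. Substituting, the $i$-th entry of $\rho(g)\Psi(x)$ equals $\rho(\phi_{\tau(i)}(y))^{-1}\psi(r\circ\phi_{\tau(i)}(y))$, which is the $\tau(i)$-th entry of $\Psi(y)$. As $\tau$ is a permutation, the two finite sets $\rho(g)\Psi(x)$ and $\Psi(y)$ coincide, which is the desired equality.

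For the measurability of $\Psi$: each $\phi_k$ is Borel and $\rho$, $r$, $\psi$ are Borel, so $x\mapsto\rho(\phi_k(x))^{-1}\psi(r\circ\phi_k(x))$ is Borel into $S$, and forming the (non-empty, finite) set of these $N$ values is a Borel operation into $\cal{F}(S)$; hence $\Psi$ is Borel. The one point requiring a little care—and the only mild obstacle—is the measurable choice of the permutation $\tau_g$ and the elements $\eta_i(g)$ as $g$ varies, needed to make the ``for a.e.\ $g$'' argument rigorous rather than pointwise. This is handled in the standard way by partitioning $\cal{G}$ into countably many Borel pieces on each of which $\tau_g$ is constant and $\eta_i(g)$ is given by a fixed Borel section of $\cal{H}$, using that $\cal{H}\subset\cal{G}$ is a Borel subgroupoid and that the index is the constant $N$; on each such piece the computation above goes through verbatim. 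Alternatively, one avoids the permutation entirely by noting that the identity of sets $\{\phi_i(x)g^{-1}\}_i=\{\phi_j(y)\}_j$ modulo $\cal{H}$ already implies, after applying the $(\cal{H},\rho)$-invariant map $\psi$ twisted by $\rho$, that the two resulting subsets of $S$ agree—so one really just needs the well-definedness observation that $\rho(h)^{-1}\psi(r(h))$ depends only on the $\cal{H}$-class of $h$ in $s^{-1}(x)$, which is immediate from the $(\cal{H},\rho)$-invariance of $\psi$.
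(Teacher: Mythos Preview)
Your argument is correct and essentially identical to the paper's: the paper also observes that $\{\phi_i(x)g^{-1}\}_i$ represents $s^{-1}(y)/\cal{H}$, packages the resulting permutation as a Borel cocycle $\alpha\colon\cal{G}\to\mathfrak{S}(N)$, and then performs the same computation using $(\cal{H},\rho)$-invariance of $\psi$. Your closing remark---that $h\mapsto\rho(h)^{-1}\psi(r(h))$ depends only on the $\cal{H}$-class of $h$---is a nice way to sidestep the measurability of the permutation, though the paper's cocycle formulation handles this just as cleanly.
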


\begin{proof}
We denote by $\mathfrak{S}(N)$ the symmetric group on the set $\{ 1,\ldots, N\}$.
For a.e.\ $g\in \cal{G}$ with $x=s(g)$ and $y=r(g)$, the set $\{ \phi_1(x)g^{-1},\ldots, \phi_N(x)g^{-1}\}$ is then a set of representatives of all classes in $s^{-1}(y)/\cal{H}$.
There thus exists a Borel homomorphism $\alpha \colon \cal{G}\to \mathfrak{S}(N)$ such that for a.e.\ $g\in \cal{G}$ with $x=s(g)$ and $y=r(g)$ and for any $i\in \{ 1,\ldots, N\}$, we have $\phi_i(x)g^{-1}\phi_{\alpha(g)(i)}(y)^{-1}\in \cal{H}$.
Putting $j=\alpha(g)(i)$, we have
\begin{align*}
\rho(g)\rho(\phi_i(x))^{-1}\psi(r\circ \phi_i(x))&=\rho(\phi_j(y))^{-1}\rho(\phi_j(y)g\phi_i(x)^{-1})\psi(r\circ \phi_i(x))\\
&=\rho(\phi_j(y))^{-1}\psi(r\circ \phi_j(y)),
\end{align*}
where the last equality holds because $\psi$ is $(\cal{H}, \rho)$-invariant.
The lemma follows.
\end{proof}

In Lemma \ref{lem-finite-index}, we note that for a.e.\ $x\in X$, exactly one of $\phi_1(x),\ldots, \phi_N(x)$, say $\phi_1(x)$, belongs to $\cal{H}$, and we then have $\rho(\phi_1(x))^{-1}\psi(r\circ \phi_1(x))=\psi(x)$ because $\psi$ is $(\cal{H}, \rho)$-invariant.
It follows that $\Psi(x)$ contains $\psi(x)$ for a.e.\ $x\in X$.

\begin{lem}\label{lem-erg-dec}
In the notation in the second paragraph of this subsection, suppose that $\cal{G}$ is measure-preserving.
We also suppose that the function $I(x)=[\cal{G}: \cal{H}]_x$ on $X$ is essentially constant, and its essential value, denoted by $N$, is finite.
Let $\pi \colon (X, \mu)\to (Z, \xi)$ and $\theta \colon (X, \mu)\to (W, \omega)$ be the ergodic decompositions for $\cal{G}$ and $\cal{H}$, respectively.
Let $\sigma \colon (W, \omega)\to (Z, \xi)$ be the canonical Borel map such that $\pi =\sigma \circ \theta$, i.e., the following diagram commutes:
\[\xymatrix{
& \ar[dl]_{\theta} (X, \mu) \ar[dr]^{\pi} & \\
(W, \omega) \ar[rr]^{\sigma} & & (Z, \xi) \\  
}\]
Let $\omega =\int_{Z}\omega_z d\xi(z)$ be the disintegration with respect to $\sigma$.
Then for a.e.\ $z\in Z$, the set $\sigma^{-1}(z)$ consists of at most $N$ points up to $\omega_z$-null sets.
\end{lem}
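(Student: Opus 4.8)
The plan is to reduce the statement, by passing to the ergodic decomposition of $\cal{G}$, to the following assertion: \emph{if $\cal{G}$ is ergodic on a standard finite measure space and $\cal{H}<\cal{G}$ is a subgroupoid with $[\cal{G}:\cal{H}]_x$ essentially constant and equal to $N<\infty$, then $X$ cannot be written as the disjoint union of $N+1$ $\cal{H}$-invariant Borel sets of positive measure} (equivalently, the $\cal{H}$-invariant $\sigma$-algebra is purely atomic with at most $N$ atoms, i.e.\ the ergodic decomposition of $\cal{H}$ is supported on at most $N$ points).

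Granting the assertion, I disintegrate $\mu=\int_Z\mu_zd\xi(z)$ along $\pi$. By the ergodic decomposition theorem, for a.e.\ $z$ the groupoid $\cal{G}$ is ergodic on $(X,\mu_z)$, and $[\cal{G}:\cal{H}]_x=N$ still holds $\mu_z$-a.e.\ because the index at $x$ depends only on $s^{-1}(x)$. A transitivity-of-disintegration computation shows $\theta_*\mu_z=\omega_z$ (using $\sigma\circ\theta=\pi$) and that the disintegration of $\mu_z$ along $\theta$ is carried by the same conull set of ergodic components of $\cal{H}$ as the global one; hence, by the uniqueness clause of the ergodic decomposition theorem, $\theta\colon(X,\mu_z)\to(\sigma^{-1}(z),\omega_z)$ is the ergodic decomposition of $\cal{H}$ on $(X,\mu_z)$ for a.e.\ $z$. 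Applying the assertion to $\cal{G}$ on $(X,\mu_z)$ then gives that $(\sigma^{-1}(z),\omega_z)$ is purely atomic with at most $N$ atoms, which is exactly the conclusion of the lemma.

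To prove the assertion, take Borel maps $\phi_1,\dots,\phi_N\colon X\to\cal{G}$ as in the statement of Lemma~\ref{lem-finite-index} (these exist because $[\cal{G}:\cal{H}]_x=N$ is essentially constant and finite), so that $\{\phi_1(x),\dots,\phi_N(x)\}$ represents $s^{-1}(x)/\cal{H}$ for a.e.\ $x$; using the observation following Lemma~\ref{lem-finite-index} we may relabel and modify them so that $\phi_1(x)=e_x$. Given any $\cal{H}$-invariant Borel function $f\colon X\to\R$, let $F(x)$ be the multiset $\{f(r(\phi_1(x))),\dots,f(r(\phi_N(x)))\}$, a Borel map into the standard Borel space of $N$-element multisets of reals. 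The key point is that $F$ is $\cal{G}$-invariant: for a.e.\ $g\in\cal{G}$ with $x=s(g)$ and $y=r(g)$, the set $\{\phi_i(x)g^{-1}\}_i$ again represents $s^{-1}(y)/\cal{H}$, so there is a permutation $\tau$ with $\phi_i(x)g^{-1}(\phi_{\tau(i)}(y))^{-1}=h_i\in\cal{H}$; since right multiplication by $g^{-1}$ leaves the range unchanged, $h_i$ goes from $r(\phi_{\tau(i)}(y))$ to $r(\phi_i(x))$, whence $f(r(\phi_i(x)))=f(r(\phi_{\tau(i)}(y)))$ by $\cal{H}$-invariance of $f$, and therefore $F(x)=F(y)$. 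Ergodicity of $\cal{G}$ forces $F$ to be a.e.\ equal to a fixed $N$-element multiset $M$, and since $\phi_1(x)=e_x$ gives $f(x)=f(r(\phi_1(x)))\in M$, the function $f$ takes at most $N$ distinct values on a conull set. Finally, if $C_1,\dots,C_{N+1}$ were pairwise disjoint $\cal{H}$-invariant Borel sets of positive measure, then $f=\sum_{i=1}^{N+1}i\,\mathbf{1}_{C_i}$ would be $\cal{H}$-invariant and take the $N+1$ distinct values $1,\dots,N+1$, each on a set of positive measure; this contradiction proves the assertion.

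I expect the only real difficulty to be bookkeeping rather than conceptual: checking that the ``for a.e.\ $g$'' transformation rule for the $\phi_i$ descends to $\cal{G}$ on $(X,\mu_z)$ for a.e.\ $z$ (a Fubini argument over the natural measure on $\cal{G}$), and carefully running the transitivity of disintegration so as to identify $(\sigma^{-1}(z),\omega_z)$ with the ergodic decomposition of $\cal{H}$ on $(X,\mu_z)$. The heart of the matter --- the $\cal{G}$-invariance of the multiset-valued map $F$ --- is immediate from the coset representatives and their transformation rule established in the proof of Lemma~\ref{lem-finite-index}.
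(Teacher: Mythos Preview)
Your argument is correct and takes a somewhat different route from the paper's. The paper does not reduce to ergodic $\cal{G}$; instead it argues by contradiction globally. Assuming a Borel set $A\subset Z$ of positive measure admits a Borel $B\subset\sigma^{-1}(A)$ with $0<\omega_z(B\cap\sigma^{-1}(z))<1/N$ for a.e.\ $z\in A$ (such $B$ is extracted via Rohlin's description of measurable partitions), the paper sets $Y=\theta^{-1}(B)$ and $D=\bigcup_{i=1}^N(r\circ\phi_i)^{-1}(Y)$, shows directly that $D$ is $\cal{G}$-invariant using exactly the permutation rule for the $\phi_i$ that you invoke, proves $D=\pi^{-1}(A)$, and derives the measure contradiction $\mu(D)\le N\mu(Y)<\xi(A)=\mu(D)$. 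The combinatorial core is identical---in your language, with $f=\mathbf{1}_Y$, the set $D$ is precisely $\{x:1\in F(x)\}$, and its $\cal{G}$-invariance is your $\cal{G}$-invariance of $F$. The paper trades your fiberwise reduction and the clean intermediate statement ``every $\cal{H}$-invariant function takes at most $N$ values'' for a single global measure inequality, thereby avoiding the transitivity-of-disintegration bookkeeping you flag at the end; your version has the advantage of isolating the natural fact that finite index bounds the number of $\cal{H}$-ergodic components under an ergodic $\cal{G}$.
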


\begin{proof}
To prove the lemma, it is enough to show that for a.e.\ $z\in Z$, there exists no Borel subset of $\sigma^{-1}(z)$ whose measure with respect to $\omega_z$ is positive and less than $1/N$.
We assume the contrary, and will deduce a contradiction.

Thanks to general description of a measurable decomposition of a measure space in \cite[\S 4, No.1]{rohlin}, there exist a countable set $M$, a Borel partition $Z=\bigsqcup_{m\in M}Z_m$ and a standard finite measure space $(E_m, \eta_m)$ indexed by each $m\in M$ satisfying the following:
For any $m\in M$, there exists a Borel isomorphism $f_m$ from a conull Borel subset of $E_m\times Z_m$ onto a conull Borel subset of $\sigma^{-1}(Z_m)$ such that for a.e.\ $y\in E_m$ and a.e.\ $z\in Z_m$, the equality $\sigma \circ f_m(y, z)=z$ holds; and the measure $(f_m)_*(\eta_m\times \xi_m)$ and the restriction of $\omega$ to $\sigma^{-1}(Z_m)$ are equivalent, where $\xi_m$ is the restriction of $\xi$ to $Z_m$.
This application of the result in \cite{rohlin} owes to the proof of \cite[Proposition 2.21]{fmw}.
Our assumption in the last paragraph implies that there exists a Borel subset $A$ of $Z$ with $\xi(A)>0$ and a Borel subset $B$ of $\sigma^{-1}(A)$ with $0<\omega_z(B\cap \sigma^{-1}(z))<1/N$ for a.e.\ $z\in A$.
We choose Borel maps $\phi_1,\ldots, \phi_N$ from $X$ into $\cal{G}$ such that for a.e.\ $x\in X$, $\{ \phi_1(x),\ldots, \phi_N(x)\}$ is a set of representatives of all classes in $s^{-1}(x)/\cal{H}$.
We put
\[Y=\theta^{-1}(B)\quad \textrm{and}\quad D=\bigcup_{i=1}^N (r\circ \phi_i)^{-1}(Y).\]
We claim that $D$ is $\cal{G}$-invariant.
For a.e.\ $g\in \cal{G}$ with $x=s(g)\in D$ and $y=r(g)$, there exists $i$ with $r\circ \phi_i(x)\in Y$, and there exists $j$ with $\phi_j(y)g\phi_i(x)^{-1}\in \cal{H}$.
Since $Y$ is $\cal{H}$-invariant, we have $r\circ \phi_j(y)\in Y$.
We thus have $y\in D$.
The claim is proved.
The claim and the inclusion $B\subset \sigma^{-1}(A)$ imply the inclusion $D\subset \pi^{-1}(A)$.

We show the converse inclusion.
Let $\mu =\int_Z\mu_z d\xi(z)$ be the disintegration with respect to $\pi$.
We have $\omega =\theta_*\mu =\int_Z\theta_*\mu_z d\xi(z)$.
By uniqueness of disintegration, we have $\omega_z=\theta_*\mu_z$ for a.e.\ $z\in Z$.
The condition $\omega_z(B\cap \sigma^{-1}(z))>0$ for a.e.\ $z\in A$ implies that $\mu_z(Y\cap \pi^{-1}(z))>0$ for a.e.\ $z\in A$.
Since $D$ is a $\cal{G}$-invariant Borel subset of $X$ containing $Y$, we have $\pi^{-1}(A)\subset D$.

We thus obtained the equality $D=\pi^{-1}(A)$.
By the definition of $D$, we have
\[\mu(D)\leq N\mu(Y)=N\omega(B)=N\int_A\omega_z(B\cap \sigma^{-1}(z))\, d\xi(z)<\xi(A)=\mu(D).\]
This is a contradiction.
\end{proof}

In Lemma \ref{lem-erg-dec}, thanks to its conclusion, replacing $W$ by its conull Borel subset, we can assume that for a.e.\ $z\in Z$, any point of $\sigma^{-1}(z)$ has positive measure with respect to $\omega_z$, and $\sigma^{-1}(z)$ consists of at most $N$ points.

We end this subsection with the following lemma on disintegration of a measure, which will be applied in the setting of Lemma \ref{lem-erg-dec}.

\begin{lem}\label{lem-rest}
Let $(X, \mu)$, $(Z, \xi)$ and $(W, \omega)$ be standard finite measure spaces.
Let $\pi \colon X\to Z$, $\theta \colon X\to W$ and $\sigma \colon W\to Z$ be Borel maps satisfying the equalities $\pi_*\mu=\xi$, $\theta_*\mu=\omega$ and $\pi =\sigma \circ \theta$.
Let $\mu =\int_Z\mu_zd\xi(z)$ and $\mu =\int_W\nu_wd\omega(w)$ be the disintegrations with respect to $\pi$ and $\theta$, respectively.
Suppose that for a.e.\ $z\in Z$, the set $\sigma^{-1}(z)$ is countable.
Then for a.e.\ $w\in W$, we have the equality
\[\mu_{\sigma(w)}|_{\theta^{-1}(w)}=\mu_{\sigma(w)}(\theta^{-1}(w))\nu_w.\]
\end{lem}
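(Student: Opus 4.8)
The plan is to build the disintegration of $\omega$ with respect to $\sigma$, use the countability of the fibers of $\sigma$ to see that its fiber measures are purely atomic, and then identify $\mu_z$ with a countable convex combination of the $\nu_w$ via the uniqueness clause of the disintegration theorem.

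First, since $\sigma_*\omega = \sigma_*\theta_*\mu = \pi_*\mu = \xi$, one may disintegrate $\omega = \int_Z \omega_z\, d\xi(z)$ with respect to $\sigma$, where for a.e.\ $z\in Z$ the probability measure $\omega_z$ on $W$ is concentrated on $\sigma^{-1}(z)$. By hypothesis $\sigma^{-1}(z)$ is countable for a.e.\ $z$, so $\omega_z$ is purely atomic and $\omega_z = \sum_{w\in \sigma^{-1}(z)}\omega_z(\{w\})\,\delta_w$ for a.e.\ $z$. I would then identify $\omega_z$ with $\theta_*\mu_z$: since $\mu_z$ is concentrated on $\pi^{-1}(z)$ and $\theta(\pi^{-1}(z))\subset \sigma^{-1}(z)$, the measure $\theta_*\mu_z$ is concentrated on $\sigma^{-1}(z)$, and $\int_Z\theta_*\mu_z\,d\xi(z)=\theta_*\mu=\omega$; uniqueness of disintegration with respect to $\sigma$ gives $\omega_z=\theta_*\mu_z$ for a.e.\ $z$, hence $\omega_z(\{w\}) = \mu_z(\theta^{-1}(w))$ whenever $\sigma(w)=z$.

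The key step is a chain rule for disintegrations: for a.e.\ $z\in Z$ one has $\mu_z = \int_W \nu_w\, d\omega_z(w)$. To prove this, set $\lambda_z := \int_W \nu_w\, d\omega_z(w)$; this defines a measurable family of probability measures on $X$, each $\lambda_z$ is concentrated on $\bigcup_{w\in\sigma^{-1}(z)}\theta^{-1}(w)\subset \pi^{-1}(z)$, and interchanging the order of integration gives $\int_Z \lambda_z\, d\xi(z) = \int_W \nu_w\, d\omega(w) = \mu$; uniqueness of the disintegration with respect to $\pi$ then yields $\lambda_z = \mu_z$ for a.e.\ $z$. Combining this with the atomic description of $\omega_z$, we get, for a.e.\ $z\in Z$,
\[
\mu_z = \sum_{w\in \sigma^{-1}(z)} \mu_z(\theta^{-1}(w))\,\nu_w .
\]

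Finally, fix such a $z$ and a point $w_0\in \sigma^{-1}(z)$, and restrict both sides of this identity to the Borel set $\theta^{-1}(w_0)$. Because $\nu_w$ is concentrated on $\theta^{-1}(w)$ and the fibers $\theta^{-1}(w)$ are pairwise disjoint, every term with $w\neq w_0$ vanishes under the restriction, leaving $\mu_z|_{\theta^{-1}(w_0)} = \mu_z(\theta^{-1}(w_0))\,\nu_{w_0}$, that is, $\mu_{\sigma(w_0)}|_{\theta^{-1}(w_0)} = \mu_{\sigma(w_0)}(\theta^{-1}(w_0))\,\nu_{w_0}$. If $N\subset Z$ is the $\xi$-null set outside of which all the above a.e.\ statements hold, then this identity holds for every $w_0\in W\setminus\sigma^{-1}(N)$, and $\omega(\sigma^{-1}(N)) = \xi(N)=0$, which is the assertion. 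I do not expect a genuine obstacle: the whole proof is an assembly of the uniqueness part of the measure disintegration theorem applied a few times, the only points requiring care being the Fubini-type exchange in the chain rule and the bookkeeping of null sets through $\sigma$.
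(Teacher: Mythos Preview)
Your proof is correct and follows essentially the same route as the paper: both arguments identify the atomic disintegration of $\omega$ over $\sigma$ (your $\omega_z(\{w\})=\mu_z(\theta^{-1}(w))$ is exactly the paper's computation of $\omega(B)=\int_Z\sum_{w\in B\cap W_z}\mu_z(X_w)\,d\xi(z)$) and then invoke uniqueness of disintegration. The only organizational difference is that you phrase the second step as a chain rule $\mu_z=\int_W\nu_w\,d\omega_z(w)$ and then restrict, whereas the paper directly verifies that $w\mapsto \mu_{\sigma(w)}(\theta^{-1}(w))^{-1}\mu_{\sigma(w)}|_{\theta^{-1}(w)}$ satisfies the defining property of the $\theta$-disintegration; these are two ways of packaging the same computation.
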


\begin{proof}
For $z\in Z$, we put $X_z=\pi^{-1}(z)$ and $W_z=\sigma^{-1}(z)$.
Similarly, for $w\in W$, we put $X_w=\theta^{-1}(w)$.
For a.e.\ $z\in Z$, we have the partition $X_z=\bigsqcup_{w\in W_z}X_w$ into countably many Borel subsets.
For any Borel subset $B$ of $W$, we have
\begin{align*}
\omega(B)&=\mu(\theta^{-1}(B))=\int_Z\mu_z(\theta^{-1}(B)\cap X_z)\, d\xi(z)=\int_Z\sum_{w\in B\cap W_z}\mu_z(X_w)\, d\xi(z).
\end{align*}
For any Borel subset $A$ of $X$, we thus have
\begin{align*}
\mu(A)&=\int_Z\mu_z(A\cap X_z)\, d\xi(z)=\int_Z\sum_{w\in W_z}\mu_z(A\cap X_w)\, d\xi(z)\\
&=\int_W\frac{\mu_{\sigma(w)}(A\cap X_w)}{\mu_{\sigma(w)}(X_w)}\, d\omega(w).
\end{align*}
By uniqueness of disintegration, we obtain the desired equality.
\end{proof}

%%%%%%%%%%%%%%%%%%%%%%%%%%%%%%%%%%%%%%%%%%%%%

\subsection{Local index}

Throughout this subsection, we fix a standard finite measure space $(X, \mu)$, a discrete measured groupoid $\cal{G}$ on $(X, \mu)$ and a subgroupoid $\cal{H}$ of $\cal{G}$.
As proved in Lemma \ref{lem-index} (ii), if $A$ is a Borel subset of $X$ with positive measure, the inequality $[(\cal{G})_A:(\cal{H})_A]_x\leq [\cal{G}:\cal{H}]_x$ holds for any $x\in A$ though the equality does not hold in general.
Under a certain assumption, for $x\in X$, we define the local index of $\cal{H}$ in $\cal{G}$ at $x$, denoted by $[[\cal{G}: \cal{H}]]_x$, which necessarily satisfies the equality $[[(\cal{G})_A:(\cal{H})_A]]_x=[[\cal{G}:\cal{H}]]_x$ for any Borel subset $A$ of $X$ with positive measure and a.e.\ $x\in A$.
This local index can be seen as an anlogue of the local index for subfactors introduced by Jones in \cite[\S 2.2]{jones}.

Let $\pi \colon (X, \mu)\to (Z, \xi)$ and $\theta \colon (X, \mu)\to (W, \omega)$ be the ergodic decompositions for $\cal{G}$ and $\cal{H}$, respectively.
Let $\sigma \colon (W, \omega)\to (Z, \xi)$ be the canonical Borel map such that $\pi =\sigma \circ \theta$, i.e., the following diagram commutes:
\[\xymatrix{
& \ar[dl]_{\theta} (X, \mu) \ar[dr]^{\pi} & \\
(W, \omega) \ar[rr]^{\sigma} & & (Z, \xi) \\  
}\]
We suppose that for any $z\in Z$, the set $\sigma^{-1}(z)$ is countable.
Under this assumption, we define the local index of $\cal{H}$ in $\cal{G}$ at $x\in X$.

Choose a countable set $N$ and a Borel partition $W=\bigsqcup_{n\in N}W_n$ such that for any $n\in N$, we have $\omega(W_n)>0$, and the map $\sigma$ is injective on $W_n$.
Set $Y_n=\theta^{-1}(W_n)$ for $n\in N$.
We define a Borel function $J\colon X\to \mathbb{Z}_{>0}\cup \{ \infty \}$ so that for $n\in N$ and $x\in Y_n$, we have $J(x)=[(\cal{G})_{Y_n}: (\cal{H})_{Y_n}]_x$.
The following lemma implies that this function does not depend on the choice of the partition $W=\bigsqcup_{n\in N}W_n$.

\begin{lem}\label{lem-li-wd}
In the above notation, fix $n\in N$.
If $A$ is a Borel subset of $Y_n$ with $\mu(A)>0$, then for a.e.\ $x\in A$, we have $[(\cal{G})_A:(\cal{H})_A]_x=[(\cal{G})_{Y_n}:(\cal{H})_{Y_n}]_x$.
\end{lem}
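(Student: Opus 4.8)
The plan is to reduce the statement to the situation of Lemma \ref{lem-index} (ii), which gives the equality $[(\cal{G})_A:(\cal{H})_A]_x=[\cal{G}:\cal{H}]_x$ whenever $\cal{H}$ is \emph{ergodic}. The point is that although $\cal{H}$ need not be ergodic on $(X,\mu)$, its restriction to $Y_n=\theta^{-1}(W_n)$ is ergodic: indeed $\theta$ is the ergodic decomposition for $\cal{H}$, so $\cal{H}$ acts ergodically on each fiber $\theta^{-1}(w)$, and since $\sigma$ is injective on $W_n$, the set $Y_n$ is a union of $\cal{H}$-ergodic components indexed by $W_n$. More precisely, the ergodic decomposition of the restricted groupoid $(\cal{H})_{Y_n}$ is given (up to the obvious identifications) by $\theta|_{Y_n}\colon Y_n\to W_n$.

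First I would work inside the groupoid $(\cal{G})_{Y_n}$ with its subgroupoid $(\cal{H})_{Y_n}$, and apply Lemma \ref{lem-index} (ii) to the Borel subset $A\subset Y_n$. The hypothesis needed is ergodicity of $(\cal{H})_{Y_n}$, which fails in general; so the actual maneuver is to disintegrate over $W_n$. Let $\mu=\int_{W}\nu_w\,d\omega(w)$ be the disintegration of $\mu$ with respect to $\theta$; for $\omega$-a.e.\ $w\in W_n$ the groupoid $\cal{H}$ is ergodic on $(\theta^{-1}(w),\nu_w)$. For such $w$, Lemma \ref{lem-index} (ii), applied to the ergodic groupoid $(\cal{H})_{\theta^{-1}(w)}$ inside $(\cal{G})_{\theta^{-1}(w)}$ with the further restriction to $A\cap\theta^{-1}(w)$, yields
\[
[(\cal{G})_{A\cap \theta^{-1}(w)}:(\cal{H})_{A\cap \theta^{-1}(w)}]_x=[(\cal{G})_{\theta^{-1}(w)}:(\cal{H})_{\theta^{-1}(w)}]_x
\]
for $\nu_w$-a.e.\ $x\in A\cap\theta^{-1}(w)$ with $\nu_w(A\cap\theta^{-1}(w))>0$. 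The two sides are, respectively, $[(\cal{G})_A:(\cal{H})_A]_x$ and $[(\cal{G})_{Y_n}:(\cal{H})_{Y_n}]_x$: the index at a point $x$ lying over $w$ is computed entirely within the fiber $\theta^{-1}(w)$, since $s^{-1}(x)/(\cal{H})_{\ast}$ only involves elements of $\cal{H}$ with source and range over $w$ on the $\cal{H}$-side; passing between $A$, $Y_n$ and the fiber changes nothing in these quotient sets beyond what Lemma \ref{lem-index} (ii) already accounts for. Integrating over $w\in W_n$ against $\omega$, and using Fubini together with the fact that $\mu(A)>0$ forces $\nu_w(A\cap\theta^{-1}(w))>0$ for a positive-measure set of $w$, gives the desired a.e.\ equality on $A$.

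The main obstacle is the bookkeeping in identifying $[(\cal{G})_A:(\cal{H})_A]_x$ and $[(\cal{G})_{Y_n}:(\cal{H})_{Y_n}]_x$ with the corresponding fiberwise indices, i.e.\ checking that restricting the unit space from $Y_n$ to $A$, and from $Y_n$ to a single $\cal{H}$-ergodic component $\theta^{-1}(w)$, does not disturb the index beyond the content of Lemma \ref{lem-index} (ii). This requires that the relevant $\cal{H}$-classes meeting $s^{-1}(x)$ are the same whether one works over $A$, over $Y_n$, or over $\theta^{-1}(w)$; that is exactly where ergodicity of $\cal{H}$ on each fiber is used, via the surjectivity argument in the proof of Lemma \ref{lem-index} (ii) (existence of a Borel map $f$ sending a.e.\ point back into $A$, resp.\ into $A\cap\theta^{-1}(w)$, through $\cal{H}$). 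Once this identification is in place, the remaining steps — disintegration, Fubini, the positive-measure observation — are routine.
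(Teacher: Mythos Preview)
Your strategy is sound and does lead to a proof, but it is different from (and longer than) the paper's. The paper argues directly: writing $Y=Y_n$, for $x\in A$ the inclusion $s_A^{-1}(x)\hookrightarrow s_Y^{-1}(x)$ induces an injection $\imath\colon s_A^{-1}(x)/\cal{H}\to s_Y^{-1}(x)/\cal{H}$, and one just shows $\imath$ is surjective. The single key observation is the equality of saturations $(\cal{G})_Y A=\cal{H}A$, which holds because $\sigma$ is injective on $W_n$ (so within $Y$ the $\cal{G}$-orbit and the $\cal{H}$-orbit of a point coincide). Given that, any $g\in s_Y^{-1}(x)$ has $r(g)\in\cal{H}A$, so there is $h\in\cal{H}$ with $r(h)=r(g)$ and $s(h)\in A$, and $h^{-1}g\in s_A^{-1}(x)$ represents the same $\cal{H}$-class as $g$. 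No disintegration is needed.

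Your fiberwise route via Lemma \ref{lem-index} (ii) works, but the step you flag as ``bookkeeping'' is exactly where the content lies, and your justification for it is imprecise. To identify $[(\cal{G})_{Y_n}:(\cal{H})_{Y_n}]_x$ with $[(\cal{G})_{\theta^{-1}(w)}:(\cal{H})_{\theta^{-1}(w)}]_x$ you need that $s_{Y_n}^{-1}(x)=s_{\theta^{-1}(w)}^{-1}(x)$ for $x\in\theta^{-1}(w)$, i.e.\ that every $g\in(\cal{G})_{Y_n}$ with $s(g)=x$ has $r(g)\in\theta^{-1}(w)$. This is a statement about $\cal{G}$-elements, not about $\cal{H}$-elements as you write; it holds because $\pi(r(g))=\pi(x)$ (range and source of $g$ lie in the same $\cal{G}$-ergodic component), hence $\sigma(\theta(r(g)))=\sigma(w)$, and injectivity of $\sigma$ on $W_n$ forces $\theta(r(g))=w$. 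This is precisely the same fact that underlies the paper's equality $(\cal{G})_Y A=\cal{H}A$, so your argument and the paper's ultimately hinge on the same point; the paper simply uses it in one line rather than passing through disintegration and Fubini.
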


\begin{proof}
We put $Y=Y_n$.
Let $s_Y\colon (\cal{G})_Y\to Y$ and $s_A\colon (\cal{G})_A\to A$ be the source maps.
For any $x\in A$, the inclusion of $s_A^{-1}(x)$ into $s_Y^{-1}(x)$ induces the injective map $\imath$ from $s_A^{-1}(x)/\cal{H}$ into $s_Y^{-1}(x)/\cal{H}$.
To prove the lemma, it suffices to show that $\imath$ is surjective for a.e.\ $x\in A$.
Since $\sigma$ is injective on $W_n$, the equality $(\cal{G})_YA=\cal{H}A$ holds.
For a.e.\ $x\in A$ and any $g\in s_Y^{-1}(x)$, there thus exists $h\in \cal{H}$ with $s(h)\in A$ and $r(h)=r(g)$.
The product $h^{-1}g$ belongs to $s_A^{-1}(x)$, and we have $g(h^{-1}g)^{-1}=h\in \cal{H}$.
The map $\imath$ is therefore surjective. 
\end{proof}

For $x\in X$, we denote $J(x)$ by $[[\cal{G}:\cal{H}]]_x$, and call this number the {\it local index} of $\cal{H}$ in $\cal{G}$ at $x$.
We present basic properties of local index.

\begin{lem}\label{lem-li-prod}
In the notation in the second paragraph of this subsection, let $\cal{K}$ be a subgroupoid of $\cal{G}$ with $\cal{H}<\cal{K}$.
Then for a.e.\ $x\in X$, the local indices $[[\cal{G}:\cal{K}]]_x$ and $[[\cal{K}:\cal{H}]]_x$ at $x$ are well-defined, and we have the equality
\[[[\cal{G}:\cal{H}]]_x=[[\cal{G}:\cal{K}]]_x[[\cal{K}:\cal{H}]]_x.\]
\end{lem}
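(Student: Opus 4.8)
The plan is to chain together three ergodic decompositions, one for each of $\mathcal{G}$, $\mathcal{K}$ and $\mathcal{H}$, and reduce the claimed multiplicativity of local index to the ordinary index formula of Lemma \ref{lem-index-formula}~(iii). Write $\pi\colon(X,\mu)\to(Z,\xi)$, $\rho\colon(X,\mu)\to(V,\zeta)$ and $\theta\colon(X,\mu)\to(W,\omega)$ for the ergodic decompositions of $\mathcal{G}$, $\mathcal{K}$ and $\mathcal{H}$ respectively. Since $\mathcal{H}<\mathcal{K}<\mathcal{G}$, these factor through canonical Borel maps, giving a commuting triangle $\theta$ through $\rho$ through $\pi$; call the intermediate maps $\tau\colon W\to V$ (with $\rho=\tau\circ\theta$) and $\beta\colon V\to Z$ (with $\pi=\beta\circ\rho$), so $\sigma=\beta\circ\tau$ in the notation of the subsection. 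The hypothesis of the subsection (used to define $[[\mathcal{G}:\mathcal{H}]]$) is that $\sigma^{-1}(z)$ is countable for every $z$; this immediately forces $\tau^{-1}(v)$ and $\beta^{-1}(z)$ to be countable, so $[[\mathcal{G}:\mathcal{K}]]_x$ and $[[\mathcal{K}:\mathcal{H}]]_x$ are also well-defined for a.e.\ $x$. That disposes of the first assertion.

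For the multiplicativity, first I would pass to a good partition. Choose a countable Borel partition $W=\bigsqcup_{n\in N}W_n$ on which $\sigma$ (equivalently both $\tau$ and $\beta\circ\tau$) is injective and with $\omega(W_n)>0$; refining, I may assume $\tau$ is injective on each $W_n$ and $\tau(W_n)$ lies in a single piece of a corresponding partition $V=\bigsqcup V_m$ on which $\beta$ is injective. Put $Y_n=\theta^{-1}(W_n)$ and $U_m=\rho^{-1}(V_m)$; by definition, on $Y_n$ we have $[[\mathcal{G}:\mathcal{H}]]_x=[(\mathcal{G})_{Y_n}:(\mathcal{H})_{Y_n}]_x$, on $U_m$ we have $[[\mathcal{G}:\mathcal{K}]]_x=[(\mathcal{G})_{U_m}:(\mathcal{K})_{U_m}]_x$, and—using Lemma \ref{lem-li-wd} to shrink $U_m$ down to $Y_n\subset U_m$—also $[[\mathcal{K}:\mathcal{H}]]_x=[(\mathcal{K})_{Y_n}:(\mathcal{H})_{Y_n}]_x$. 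So it suffices to prove, for a.e.\ $x\in Y_n$,
\[
[(\mathcal{G})_{Y_n}:(\mathcal{H})_{Y_n}]_x=[(\mathcal{G})_{U_m}:(\mathcal{K})_{U_m}]_x\cdot[(\mathcal{K})_{Y_n}:(\mathcal{H})_{Y_n}]_x,
\]
an honest index computation among the restricted groupoids $(\mathcal{H})_{Y_n}<(\mathcal{K})_{Y_n}$ and $(\mathcal{H})_{Y_n}<(\mathcal{G})_{U_m}$ on the base $U_m$ (after replacing $(\mathcal{G})_{Y_n}$ by $(\mathcal{G})_{U_m}$, which changes the index only if the $\mathcal{G}$-saturation of $Y_n$ fails to be all of $U_m$—but $\mathcal{G}Y_n\supset\mathcal{K}Y_n=U_m$ since $\rho$ is injective on $V_m$, so in fact $(\mathcal{G})_{Y_n}Y_n=U_m$ and the two indices agree by Lemma \ref{lem-index}~(ii)). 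Now apply the first part of Lemma \ref{lem-index-formula}~(iii) with ambient groupoid $(\mathcal{G})_{U_m}$, intermediate $(\mathcal{K})_{U_m}$, subgroupoid $(\mathcal{H})_{Y_n}$: the index $[(\mathcal{G})_{U_m}:(\mathcal{H})_{Y_n}]_x$ is a sum over $(\mathcal{K})_{U_m}$-coset representatives of $[(\mathcal{K})_{U_m}:(\mathcal{H})_{Y_n}]_{r(g)}$; because the ergodic decomposition of $(\mathcal{G})_{U_m}$ is trivial on the piece $U_m$ (that is what injectivity of $\beta$ on $V_m$ buys us), every $(\mathcal{K})_{U_m}$-invariant function on $U_m$ is $(\mathcal{G})_{U_m}$-invariant, so the second part of Lemma \ref{lem-index-formula}~(iii) gives the product form and collapses the sum to a single product.

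The main obstacle I anticipate is bookkeeping with the measure-zero ambiguities in the three ergodic decompositions and making the three partitions $W=\bigsqcup W_n$, $V=\bigsqcup V_m$ mutually compatible, so that each $Y_n$ sits inside exactly one $U_m$ and $\sigma$, $\tau$, $\beta$ are simultaneously injective on the relevant pieces; this is where I would be most careful. A secondary subtlety is that Lemma \ref{lem-index-formula}~(iii) is stated for subgroupoids of a common ambient groupoid on the same base, so I must verify that $(\mathcal{H})_{Y_n}$ really is a subgroupoid of $(\mathcal{K})_{U_m}$ on the base $U_m$ (it is: $(\mathcal{H})_{Y_n}=(\mathcal{H})_{U_m}\cap\{g:r(g),s(g)\in Y_n\}$ but as a groupoid on $U_m$ it has units only on $Y_n$, which is harmless since $[\cdot:\cdot]_x$ is defined pointwise via $s^{-1}(x)$) and that the "converse invariance" hypothesis is genuinely available, which is exactly the content of "$\beta$ injective on $V_m$ $\Rightarrow$ $(\mathcal{G})_{U_m}$ has the same ergodic decomposition as $(\mathcal{K})_{U_m}$ over $U_m$". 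Once these are pinned down the computation is immediate.
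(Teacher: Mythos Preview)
Your overall strategy---factor $\sigma=\beta\circ\tau$ through the ergodic decomposition of $\mathcal{K}$, read off countability of the $\tau$- and $\beta$-fibers, then combine Lemma~\ref{lem-index-formula}~(iii) with Lemma~\ref{lem-li-wd}---is exactly the paper's; its proof is literally a one-line citation of those two lemmas.

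Where you go astray is in the detour through $U_m$. Once you try to apply Lemma~\ref{lem-index-formula}~(iii) on the base $U_m$ with subgroupoid $(\mathcal{H})_{Y_n}$, the index $[(\mathcal{K})_{U_m}:(\mathcal{H})_{Y_n}]_{r(g)}$ at points $r(g)\in U_m\setminus Y_n$ is \emph{not} $[[\mathcal{K}:\mathcal{H}]]_{r(g)}$ (the $(\mathcal{H})_{Y_n}$-relation only identifies elements whose ranges land in $Y_n$), so the sum does not collapse as you want. Your appeal to Lemma~\ref{lem-index}~(ii) to replace $(\mathcal{G})_{Y_n}$ by $(\mathcal{G})_{U_m}$ also fails: equality there needs the \emph{sub}groupoid ergodic, which $(\mathcal{H})_{Y_n}$ is not on $U_m$; and $\mathcal{K}Y_n=U_m$ is unjustified since you only arranged $\tau(W_n)\subset V_m$, not equality. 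The clean route, implicit in the paper, is to never leave $Y_n$: since $\sigma$ is injective on $W_n$, so is $\beta$ on $\tau(W_n)$, hence $\tau(W_n)$ is itself a valid piece for computing $[[\mathcal{G}:\mathcal{K}]]$, and Lemma~\ref{lem-li-wd} (for the pair $(\mathcal{G},\mathcal{K})$, with $A=Y_n\subset\rho^{-1}(\tau(W_n))$---this is where that lemma is actually needed, not for $(\mathcal{K},\mathcal{H})$) gives $[[\mathcal{G}:\mathcal{K}]]_x=[(\mathcal{G})_{Y_n}:(\mathcal{K})_{Y_n}]_x$. Now all three local indices are ordinary indices on $Y_n$, and the hypothesis of the second part of Lemma~\ref{lem-index-formula}~(iii) holds there because injectivity of $\sigma$ on $W_n$ forces $(\mathcal{G})_{Y_n}$, $(\mathcal{K})_{Y_n}$ and $(\mathcal{H})_{Y_n}$ to share the common ergodic decomposition $\theta|_{Y_n}$.
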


\begin{proof}
Let $\Phi \colon (X, \mu)\to (V, \upsilon)$ be the ergodic decomposition for $\cal{K}$.
We have the canonical Borel map from $(W, \omega)$ into $(V, \upsilon)$ and that from $(V, \upsilon)$ into $(Z, \xi)$ because we have $\cal{H}<\cal{K}<\cal{G}$.
Since the composition of these two maps is equal to $\sigma$, the inverse image of any point under these two maps is countable.
The former assertion follows.
The desired equality follows from Lemma \ref{lem-index-formula} (iii) and Lemma \ref{lem-li-wd}.
\end{proof}

\begin{lem}\label{lem-li-res}
In the notation in the second paragraph of this subsection, if $A$ is a Borel subset of $X$ with $\mu(A)>0$, then for a.e.\ $x\in A$, we have the equality
\[[[(\cal{G})_A:(\cal{H})_A]]_x=[[\cal{G}:\cal{H}]]_x.\]
\end{lem}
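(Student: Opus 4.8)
The plan is to reduce the statement to the well-definedness result already established in Lemma \ref{lem-li-wd}, together with the transitivity of restriction. Recall the setup: $\pi \colon (X, \mu)\to (Z, \xi)$ and $\theta \colon (X, \mu)\to (W, \omega)$ are the ergodic decompositions for $\cal{G}$ and $\cal{H}$, and $\sigma \colon W\to Z$ satisfies $\pi=\sigma\circ\theta$ with $\sigma^{-1}(z)$ countable for every $z$. Fix a countable Borel partition $W=\bigsqcup_{n\in N}W_n$ with $\omega(W_n)>0$ and $\sigma$ injective on each $W_n$, and set $Y_n=\theta^{-1}(W_n)$, so by definition $[[\cal{G}:\cal{H}]]_x=[(\cal{G})_{Y_n}:(\cal{H})_{Y_n}]_x$ for $x\in Y_n$. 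First I would observe that the sets $A\cap Y_n$ partition $A$ up to a null set, so it suffices to prove the asserted equality for a.e.\ $x$ in $A\cap Y_n$, for each fixed $n$.

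Now fix $n$ and set $A_n=A\cap Y_n$, assuming $\mu(A_n)>0$ (discarding indices $n$ with $\mu(A_n)=0$). The key point is that the ergodic decomposition for $(\cal{H})_A$ is compatible with that for $\cal{H}$: restricting to $A$ and then cutting by the pieces coming from the $W_n$ gives a partition of $W$ (restricted appropriately) on which the range map of the restricted ergodic decomposition is again injective, and $A_n=A\cap Y_n$ is exactly the preimage of the $n$-th piece. Hence the definition of local index applied to the pair $((\cal{G})_A,(\cal{H})_A)$ yields, for a.e.\ $x\in A_n$,
\[
[[(\cal{G})_A:(\cal{H})_A]]_x=[((\cal{G})_A)_{A_n}:((\cal{H})_A)_{A_n}]_x=[(\cal{G})_{A_n}:(\cal{H})_{A_n}]_x,
\]
using $((\cal{G})_A)_{A_n}=(\cal{G})_{A_n}$ and likewise for $\cal{H}$. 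On the other hand, since $A_n\subset Y_n$ and $\mu(A_n)>0$, Lemma \ref{lem-li-wd} applied with $A_n$ in place of $A$ gives $[(\cal{G})_{A_n}:(\cal{H})_{A_n}]_x=[(\cal{G})_{Y_n}:(\cal{H})_{Y_n}]_x=[[\cal{G}:\cal{H}]]_x$ for a.e.\ $x\in A_n$. Combining the two displays gives the claim on $A_n$, and summing over $n$ finishes the proof.

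The main obstacle is the bookkeeping in the first step: one must check that the ergodic decomposition of $(\cal{H})_A$ is obtained from that of $\cal{H}$ by restriction — concretely, that $\theta|_A$ composed with the obvious identification realizes the ergodic decomposition for $(\cal{H})_A$, and that the associated map down to the ergodic decomposition of $(\cal{G})_A$ still has countable fibers (which is immediate since these fibers inject into the fibers of $\sigma$). Once this compatibility is in hand, the choice of the partition witnessing the local index for the restricted pair can be taken to be $\{W_n\cap\sigma'(\,\cdot\,)\}$ refined suitably so that its preimages in $A$ are precisely the $A_n$, and everything reduces to Lemma \ref{lem-li-wd}. There is no serious analytic content here; the work is entirely in matching up the two ergodic decompositions and invoking the earlier lemmas correctly.
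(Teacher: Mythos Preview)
Your proposal is correct and follows essentially the same approach as the paper: identify the ergodic decompositions of $(\cal{G})_A$ and $(\cal{H})_A$ with Borel subsets of $Z$ and $W$ respectively (so that the map $\sigma_1$ between them is a restriction of $\sigma$ and hence has countable fibers), then reduce to Lemma~\ref{lem-li-wd}. The only cosmetic difference is that the paper chooses a piece $W_2$ on which $\sigma_1$ is injective and then enlarges $Y_1=\theta_1^{-1}(W_2)$ to $Y=\cal{H}Y_1=\theta^{-1}(W_2)$ to compute $[[\cal{G}:\cal{H}]]$, whereas you start from the partition $\{W_n\}$ for the ambient pair and restrict to $A$; both routes amount to the same bookkeeping.
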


\begin{proof}
Let $\pi_1\colon (A, \mu|_A)\to (Z_1, \xi_1)$ and $\theta_1\colon (A, \mu|_A)\to (W_1, \omega_1)$ be the ergodic decompositions for $(\cal{G})_A$ and $(\cal{H})_A$, respectively.
We have the canonical Borel map $\sigma_1\colon (W_1, \omega_1)\to (Z_1, \xi_1)$ such that $\pi_1=\sigma_1\circ \theta_1$, i.e., the following diagram commutes:
\[\xymatrix{
& \ar[dl]_{\theta_1} (A, \mu|_A) \ar[dr]^{\pi_1} & \\
(W_1, \omega_1) \ar[rr]^{\sigma_1} & & (Z_1, \xi_1) \\
}\]
We also have the canonical injective Borel map from $Z_1$ into $Z$, and identify $Z_1$ with the image, which is a Borel subset of $Z$.
In the same manner, we naturally identify $W_1$ with a Borel subset of $W$.
The map $\sigma_1$ is the restriction of $\sigma$ under these identifications.

Let $W_2$ be a Borel subset of $W_1$ such that $\omega_1(W_2)>0$, and $\sigma_1$ is injective on $W_2$.
Put $Y_1=\theta_1^{-1}(W_2)$.
By definition, we have $[[(\cal{G})_A:(\cal{H})_A]]_x=[(\cal{G})_{Y_1}:(\cal{H})_{Y_1}]_x$ for a.e.\ $x\in Y_1$.
We set $Y=\cal{H}Y_1=\theta^{-1}(W_2)$.
Since $\sigma$ is injective on $W_2$, we have the equality $[[\cal{G}:\cal{H}]]_x=[(\cal{G})_Y:(\cal{H})_Y]_x$ for a.e.\ $x\in Y$.
By Lemma \ref{lem-li-wd}, the desired equality holds for a.e.\ $x\in Y_1$.
\end{proof}

\begin{lem}\label{lem-li-group}
Let $\Gamma$ be a discrete group.
Let $\Lambda$ be a finite index, normal subgroup of $\Gamma$.
Suppose that we have a non-singular action of $\Gamma$ on a standard finite measure space $(X, \mu)$.
We set $\cal{G}=\Gamma \ltimes X$ and $\cal{H}=\Lambda \ltimes X$, and define the maps $\pi$, $\theta$ and $\sigma$ as in the second paragraph of this subsection.
Let $\omega =\int_Z\omega_zd\xi(z)$ be the disintegration with respect to $\sigma$.
We assume that for a.e.\ $z\in Z$, any point of $\sigma^{-1}(z)$ has positive measure with respect to $\omega_z$.
Then for a.e.\ $x\in X$, we have the equality
\[[[\cal{G}:\cal{H}]]_x=\frac{[\Gamma :\Lambda]}{|\sigma^{-1}(\pi(x))|}.\]
\end{lem}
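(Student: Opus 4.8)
The plan is to reduce the computation to a transparent statement about the finite quotient $\Gamma/\Lambda$ acting on the fibres of $\sigma$, and then to count cosets directly from the definition of index in a transformation groupoid. Since $\Lambda$ is normal in $\Gamma$ and $\Gamma$ preserves the measure class of $\mu$, the $\Gamma$-action on $X$ descends to a non-singular action of the finite group $\Gamma/\Lambda$ on $(W,\omega)$ making $\theta$ equivariant: for $\gamma\in\Gamma$, the set $\gamma\theta^{-1}(w)$ is again $\Lambda$-invariant (normality) and $\Lambda$-ergodic, hence equals $\theta^{-1}(\gamma\cdot w)$ for a uniquely determined $\gamma\cdot w\in W$, and $\Lambda$ acts trivially on $W$. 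As $\sigma\circ\theta=\pi$ and $\pi$ is $\Gamma$-invariant, $\sigma$ is $\Gamma/\Lambda$-invariant, so each fibre $\sigma^{-1}(z)$ is preserved by $\Gamma/\Lambda$. I would then argue that for a.e.\ $z\in Z$ this action is transitive on $\sigma^{-1}(z)$: writing $\pi^{-1}(z)=\bigsqcup_{w\in\sigma^{-1}(z)}\theta^{-1}(w)$, the pieces $\theta^{-1}(w)$ are $\Lambda$-invariant, $\Gamma$ permutes them, and $\Gamma$-ergodicity of $\mu_z$ on $\pi^{-1}(z)$ forces a single orbit once every piece is $\mu_z$-non-null. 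The hypothesis that every point of $\sigma^{-1}(z)$ has positive $\omega_z$-mass is exactly this non-nullity, since $\omega_z=\theta_*\mu_z$. Consequently $|\sigma^{-1}(z)|$ is finite, $\stab_\Gamma(w)\supseteq\Lambda$ for $w\in\sigma^{-1}(z)$, and $[\stab_\Gamma(w):\Lambda]=[\Gamma:\Lambda]/|\sigma^{-1}(z)|$.

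Next I would compute the local index straight from its definition. Choose a Borel partition $W=\bigsqcup_n W_n$ with $\omega(W_n)>0$ and $\sigma$ injective on each $W_n$, and set $Y_n=\theta^{-1}(W_n)$; by Lemma \ref{lem-li-wd}, $[[\cal{G}:\cal{H}]]_x=[(\cal{G})_{Y_n}:(\cal{H})_{Y_n}]_x$ for $x\in Y_n$. Fix such an $x$ and put $w=\theta(x)$, $z=\pi(x)$. Since $\cal{G}=\Gamma\ltimes X$, $\cal{H}=\Lambda\ltimes X$, and $Y_n$ is $\Lambda$-invariant, the set $s^{-1}(x)/(\cal{H})_{Y_n}$ for the restricted groupoid $(\cal{G})_{Y_n}$ is in bijection with the set of right cosets $\{\,\Lambda\gamma : \gamma x\in Y_n\,\}$ (this set is well-defined because $Y_n$ is $\Lambda$-invariant). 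For a.e.\ $x$ one has $\theta(\gamma x)=\gamma\cdot w$ for all $\gamma\in\Gamma$, and $\gamma\cdot w\in\sigma^{-1}(z)$; as $\sigma$ is injective on $W_n$, the condition $\gamma x\in Y_n$ is equivalent to $\gamma\cdot w=w$, i.e.\ $\gamma\in\stab_\Gamma(w)$. Hence $[(\cal{G})_{Y_n}:(\cal{H})_{Y_n}]_x=[\stab_\Gamma(w):\Lambda]=[\Gamma:\Lambda]/|\sigma^{-1}(z)|$, which is the asserted formula; this is also consistent with $[\cal{G}:\cal{H}]_x=[\Gamma:\Lambda]$, given by Lemma \ref{lem-index} (iii).

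The main obstacle here is the measure-theoretic bookkeeping rather than any new idea. One must verify that for a.e.\ $x$ the identities $\theta(\gamma x)=\gamma\cdot\theta(x)$ hold simultaneously for all $\gamma$ in the countable group $\Gamma$; that $\sigma^{-1}(z)$ genuinely enumerates the positive-measure $\Lambda$-ergodic components of $\pi^{-1}(z)$, which is precisely where the standing hypothesis on $\omega_z$ enters, via $\omega_z=\theta_*\mu_z$ and the disintegration identities in the vein of Lemma \ref{lem-rest}; and that the induced $\Gamma/\Lambda$-action on $W$ is well-defined outside a null set. Once these points are settled, the remainder is a routine unwinding of the definitions of index, local index, and ergodic decomposition.
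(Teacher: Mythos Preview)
Your argument is correct and follows essentially the same route as the paper: both identify the induced $\Gamma$-action on $W$, use orbit--stabilizer to get $[\stab_\Gamma(w):\Lambda]=[\Gamma:\Lambda]/|\sigma^{-1}(z)|$, and then compute the restricted index $[(\cal{G})_{Y_n}:(\cal{H})_{Y_n}]_x$ as $[\stab_\Gamma(\theta(x)):\Lambda]$. The only cosmetic difference is that the paper additionally refines the partition so that $\Gamma_w$ is constant on each $W_n$, which lets it write $(\cal{G})_{Y_n}=\Gamma_{w_0}\ltimes Y_n$ and invoke Lemma~\ref{lem-index}~(iii), whereas you count the cosets $\{\Lambda\gamma:\gamma x\in Y_n\}$ directly; both reach the same endpoint.
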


\begin{proof}
We have the canonical non-singular action of $\Gamma$ on $(W, \omega)$ because $\Lambda$ is normal in $\Gamma$.
For $w\in W$, let $\Gamma_w$ denote the stabilizer of $w$ in $\Gamma$, which contains $\Lambda$.
The set $\sigma^{-1}(\sigma(w))$ is then identified with $\Gamma /\Gamma_w$.
Let $W_1$ be a Borel subset of $W$ such that $\omega(W_1)>0$; the map $\sigma$ is injective on $W_1$; and $\Gamma_w=\Gamma_{w'}$ for any $w, w'\in W_1$.
Set $Y=\theta^{-1}(W_1)$ and fix $w_0\in W_1$.
The set $Y$ is $\Gamma_{w_0}$-invariant, and the equality $(\cal{G})_Y=\Gamma_{w_0}\ltimes Y$ holds.
For a.e.\ $x\in Y$, we then have the equality
\[[[\cal{G}:\cal{H}]]_x=[(\cal{G})_Y:(\cal{H})_Y]_x=[\Gamma_{w_0}\ltimes Y: \Lambda \ltimes Y]_x=[\Gamma_{w_0}:\Lambda].\]
The lemma therefore follows.
\end{proof}

%%%%%%%%%%%%%%%%%%%%%%%%%%%%%%%%%%%%%%%%%%%%%

\subsection{Quasi-normal subgroupoids}

Normal subgroupoids are studied in \cite{kida-exama} and \cite{sauer-thom} as a generalization of normal subrelations introduced by Feldman, Sutherland and Zimmer \cite{fsz}.
In this subsection, we recall normal subgroupoids, and introduce quasi-normal subgroupoids, which are a slight generalization of normal ones.
We refer to \cite{aoi} and \cite{ay} for related works in the framework of von Neumann algebras.

Let $\cal{G}$ be a discrete measured groupoid on a standard finite measure space $(X, \mu)$.
Let $r, s\colon \cal{G}\to X$ denote the range and source maps of $\cal{G}$, respectively.
We define $[[\cal{G}]]$ as the set of all Borel maps $\phi \colon D_{\phi}\to \cal{G}$ from a Borel subset $D_{\phi}$ of $X$ into $\cal{G}$ such that $s\circ \phi(x)=x$ for any $x\in D_{\phi}$; and the map $r\circ \phi \colon D_{\phi}\to X$ is injective.
For any $\phi \in [[\cal{G}]]$, we set $R_{\phi}=r\circ \phi(D_{\phi})$ and define a Borel map $U_{\phi}\colon (\cal{G})_{D_{\phi}}\to (\cal{G})_{R_{\phi}}$ by the formula $U_{\phi}(g)=\phi(r(g))g\phi(s(g))^{-1}$ for $g\in (\cal{G})_{D_{\phi}}$.
The map $U_{\phi}$ is an isomorphism of discrete measured groupoids.

Let $\cal{S}$ be a subgroupoid of $\cal{G}$.
For $\phi \in [[\cal{G}]]$, we set $\cal{S}^{\phi}=U_{\phi}((\cal{S})_{D_{\phi}})$.
We define two subsets ${\rm N}_{\cal{G}}(\cal{S})$, $\qn_{\cal{G}}(\cal{S})$ of $[[\cal{G}]]$ by
\begin{align*}
{\rm N}_{\cal{G}}(\cal{S})&=\{\, \phi \in [[\cal{G}]]\mid \cal{S}^{\phi}=(\cal{S})_{R_{\phi}}\,\},\\
\qn_{\cal{G}}(\cal{S})&=\{\, \phi \in [[\cal{G}]]\mid [(\cal{S})_{R_{\phi}}: (\cal{S})_{R_{\phi}}\cap \cal{S}^{\phi}]_x<\infty,\ [\cal{S}^{\phi}: (\cal{S})_{R_{\phi}}\cap \cal{S}^{\phi}]_x<\infty\\ 
& \hspace{85mm} \textrm{for a.e.\ }x\in R_{\phi}\,\}.
\end{align*}

\begin{defn}\label{defn-qn}
Let $\cal{G}$ be a discrete measured groupoid on a standard finite measure space $(X, \mu)$ with $s\colon \cal{G}\to X$ the source map.
Let $\cal{S}$ be a subgroupoid of $\cal{G}$.
\begin{enumerate}
\item We say that $\cal{S}$ is {\it normal} in $\cal{G}$ if there exists a countable family $\{ \phi_n\}_n$ of elements of ${\rm N}_{\cal{G}}(\cal{S})$ such that for a.e.\ $g\in \cal{G}$, there exists $n$ with $s(g)\in D_{\phi_n}$ and $\phi_n(s(g))g^{-1}\in \cal{S}$.
\item We say that $\cal{S}$ is {\it quasi-normal} in $\cal{G}$ if the same condition as in (i) holds after replacing ${\rm N}_{\cal{G}}(\cal{S})$ with $\qn_{\cal{G}}(\cal{S})$.
\end{enumerate}
\end{defn}

While the above definition of normal subgroupoids is slightly different from those in \cite{kida-exama} and \cite{sauer-thom}, it can be checked that they are equivalent.

We introduce composition and inverse of elements of $[[\cal{G}]]$. 
Pick $\phi, \psi \in [[\cal{G}]]$.
We define $\eta \in [[\cal{G}]]$ as follows.
Set $D_{\eta}=(r\circ \phi)^{-1}(D_{\psi}\cap R_{\phi})$ and define a Borel map $\eta \colon D_{\eta}\to \cal{G}$ by $\eta(x)=\psi(r\circ \phi(x))\phi(x)$ for $x\in D_{\eta}$.
The map $\eta$ then belongs to $[[\cal{G}]]$.
The isomorphism $U_{\eta}$ is equal to the restriction of $U_{\psi}\circ U_{\phi}$ to $(\cal{G})_{D_{\eta}}$.
We denote the map $\eta$ by $\psi \bullet \phi$ and call it the {\it composition} of $\phi$ and $\psi$.

For $\phi \in [[\cal{G}]]$, we define $\zeta \in [[\cal{G}]]$ as follows.
Set $D_{\zeta}=R_{\phi}$ and define a Borel map $\zeta \colon D_{\zeta}\to \cal{G}$ by $\zeta(x)=\phi((r\circ \phi)^{-1}(x))^{-1}$ for $x\in D_{\zeta}$.
The map $\zeta$ then belongs to $[[\cal{G}]]$.
We have the equality $U_{\zeta}=U_{\phi}^{-1}$.
Let us call the map $\zeta$ the {\it inverse} of $\phi$.

\begin{lem}\label{lem-qn}
Let $\cal{S}$ be a subgroupoid of $\cal{G}$.
Then the following assertions hold.
\begin{enumerate}
\item For any $\phi \in \qn_{\cal{G}}(\cal{S})$ and any Borel subset $A$ of $D_{\phi}$, the restriction of $\phi$ to $A$, denoted by $\phi|_A$, belongs to $\qn_{\cal{G}}(\cal{S})$.
\item For any $\phi, \psi \in \qn_{\cal{G}}(\cal{S})$, we have $\psi \bullet \phi \in \qn_{\cal{G}}(\cal{S})$.
\item For any $\phi \in \qn_{\cal{G}}(\cal{S})$, the inverse of $\phi$ also belongs to $\qn_{\cal{G}}(\cal{S})$.
\end{enumerate}
\end{lem}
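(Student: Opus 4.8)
The plan is to verify each of the three assertions directly from the definition of $\qn_{\cal{G}}(\cal{S})$, exploiting the fact that the finiteness conditions defining $\qn_{\cal{G}}(\cal{S})$ are conditions on local indices $[\,\cdot : \cdot\,]_x$ at a.e.\ point, and that these behave well under restriction, composition, and the index-product inequalities established in Lemma \ref{lem-index-formula}. For assertion (i), if $\phi \in \qn_{\cal{G}}(\cal{S})$ and $A \subset D_{\phi}$ is Borel, then $R_{\phi|_A} = r\circ\phi(A) \subset R_{\phi}$, and $U_{\phi|_A}$ is simply the restriction of $U_{\phi}$ to $(\cal{G})_A$; hence $\cal{S}^{\phi|_A} = (\cal{S}^{\phi})_{R_{\phi|_A}}$. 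So both relevant index conditions for $\phi|_A$ follow from those for $\phi$ by Lemma \ref{lem-index} (ii) (passing to a restriction does not increase index) applied to the pair $(\cal{S})_{R_{\phi}}$, $\cal{S}^{\phi}$ restricted to the subset $R_{\phi|_A}$, together with Lemma \ref{lem-index-formula} (ii); the point is just that restricting the ambient groupoid to a smaller Borel set can only decrease the index at each point.

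For assertion (iii), let $\zeta$ be the inverse of $\phi$, so $D_{\zeta} = R_{\phi}$, $R_{\zeta} = D_{\phi}$, and $U_{\zeta} = U_{\phi}^{-1}$. Then $\cal{S}^{\zeta} = U_{\phi}^{-1}((\cal{S})_{R_{\phi}})$, while $(\cal{S})_{R_{\zeta}} = (\cal{S})_{D_{\phi}}$. Applying the isomorphism $U_{\phi}$ to the pair $\bigl(\cal{S}^{\zeta},\ (\cal{S})_{D_{\phi}}\bigr)$ carries it to the pair $\bigl((\cal{S})_{R_{\phi}},\ U_{\phi}((\cal{S})_{D_{\phi}})\bigr) = \bigl((\cal{S})_{R_{\phi}},\ \cal{S}^{\phi}\bigr)$, and since an isomorphism of discrete measured groupoids preserves the index function pointwise (it carries $s^{-1}(x)/\cal{T}$ bijectively onto the corresponding fiber-quotient at the image point), the two finiteness conditions for $\zeta$ are exactly the two finiteness conditions for $\phi$ with the roles of the two subgroupoids in each bracket swapped --- which is precisely the symmetric pair of conditions appearing in the definition of $\qn_{\cal{G}}(\cal{S})$. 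Hence $\zeta \in \qn_{\cal{G}}(\cal{S})$.

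Assertion (ii) is the main obstacle, since composition genuinely mixes the two groupoids. Write $\eta = \psi \bullet \phi$, so $D_{\eta} = (r\circ\phi)^{-1}(D_{\psi}\cap R_{\phi})$, $R_{\eta} = r\circ\eta(D_{\eta}) \subset R_{\psi}$, and $U_{\eta}$ is the restriction of $U_{\psi}\circ U_{\phi}$ to $(\cal{G})_{D_{\eta}}$. The strategy is: first use part (i) to replace $\phi$ by $\phi|_{D_{\eta}}$ and $\psi$ by $\psi|_{r\circ\phi(D_{\eta})}$, so that we may assume $R_{\phi} \subset D_{\psi}$ and hence $R_{\phi} = R_{\psi\bullet\phi}$ is carried by $U_{\phi}$ onto... wait, rather: after this reduction $R_{\phi}\subset D_{\psi}$ and $U_{\phi}$ maps $(\cal{G})_{D_{\eta}}$ onto $(\cal{G})_{R_{\phi}}$ which sits inside $D_{\psi}$'s domain. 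Then $\cal{S}^{\eta} = U_{\psi}\bigl(U_{\phi}((\cal{S})_{D_{\eta}})\bigr) = U_{\psi}\bigl((\cal{S}^{\phi})_{R_{\phi}}\bigr)$. Now compare $(\cal{S})_{R_{\eta}}$ and $\cal{S}^{\eta}$ by inserting the intermediate subgroupoid $U_{\psi}\bigl((\cal{S})_{R_{\phi}} \cap \cal{S}^{\phi}\bigr)$: applying $U_{\psi}$ (an isomorphism, hence index-preserving) to the inclusion $(\cal{S})_{R_{\phi}}\cap\cal{S}^{\phi} < \cal{S}^{\phi}$ shows $[\cal{S}^{\eta} : U_{\psi}((\cal{S})_{R_{\phi}}\cap\cal{S}^{\phi})]_x$ is finite a.e.\ because $[\cal{S}^{\phi}:(\cal{S})_{R_{\phi}}\cap\cal{S}^{\phi}]_x<\infty$ a.e.\ by $\phi\in\qn_{\cal{G}}(\cal{S})$; and $U_{\psi}((\cal{S})_{R_{\phi}}\cap\cal{S}^{\phi}) = (\cal{S}^{\psi})_{R_{\eta}} \cap \cal{S}^{\eta}$ requires a short check, after which $\psi\in\qn_{\cal{G}}(\cal{S})$ controls $[(\cal{S})_{R_{\eta}} : (\cal{S}^{\psi})_{R_{\eta}}\cap(\cal{S})_{R_{\eta}}]_x$ type quantities via Lemma \ref{lem-index-formula} (ii). Chaining these finiteness facts with Lemma \ref{lem-index-formula} (i) and (ii) gives finiteness of both $[(\cal{S})_{R_{\eta}} : (\cal{S})_{R_{\eta}}\cap\cal{S}^{\eta}]_x$ and $[\cal{S}^{\eta} : (\cal{S})_{R_{\eta}}\cap\cal{S}^{\eta}]_x$ for a.e.\ $x$. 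The delicate bookkeeping is making sure every intersection and restriction is taken over the correct Borel subset of $X$ and that the isomorphisms $U_{\phi}$, $U_{\psi}$ intertwine the relevant pairs of subgroupoids exactly; once that is set up, all the finiteness propagation is a routine application of the index inequalities in Lemma \ref{lem-index-formula}.
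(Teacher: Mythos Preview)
Your plan is correct and follows essentially the same route as the paper: assertion (i) via Lemma \ref{lem-index} (ii), assertion (iii) by transporting along $U_{\phi}$ so that the two index conditions for the inverse become exactly the two conditions for $\phi$ with the roles swapped, and assertion (ii) by inserting the intermediate subgroupoid $\cal{S}^{\psi}\cap\cal{S}^{\eta}=U_{\psi}((\cal{S})_{R_{\phi}}\cap\cal{S}^{\phi})$ and propagating finiteness. One small correction to your bookkeeping in (ii): the ``chaining'' step---passing from finiteness of $[(\cal{S})_{R_{\eta}}:(\cal{S})_{R_{\eta}}\cap\cal{S}^{\psi}]_x$ and $[(\cal{S})_{R_{\eta}}\cap\cal{S}^{\psi}:(\cal{S})_{R_{\eta}}\cap\cal{S}^{\psi}\cap\cal{S}^{\eta}]_y$ (the latter obtained from your transported $\phi$-bound via Lemma \ref{lem-index-formula} (ii)) to finiteness of $[(\cal{S})_{R_{\eta}}:(\cal{S})_{R_{\eta}}\cap\cal{S}^{\psi}\cap\cal{S}^{\eta}]_x$---uses the sum formula of Lemma \ref{lem-index-formula} (iii), not (i); parts (i) and (ii) alone do not give transitivity of finite index, because the inner index must be evaluated at the varying points $r(g)$ rather than at $x$. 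The paper handles this the same way (it invokes (iii) explicitly and, for the second finiteness condition on $\eta$, reduces to the first by applying $U_{\xi}$ with $\xi$ the inverse of $\eta$ and using assertion (iii) already proved).
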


\begin{proof}
Let $r, s\colon \cal{G}\to X$ be the range and source maps of $\cal{G}$, respectively. 
Assertion (i) follows from Lemma \ref{lem-index} (ii).
To prove assertion (iii), we pick $\phi \in \qn_{\cal{G}}(\cal{S})$.
Let $\theta$ be the inverse of $\phi$.
For a.e.\ $x\in R_{\theta}$, we have
\[[(\cal{S})_{R_{\theta}}: (\cal{S})_{R_{\theta}}\cap \cal{S}^{\theta}]_x=[(\cal{S})_{D_{\phi}}: (\cal{S})_{D_{\phi}}\cap \cal{S}^{\theta}]_x=[\cal{S}^{\phi}: \cal{S}^{\phi}\cap (\cal{S})_{R_{\phi}}]_{r\circ \phi(x)},\]
where the second equality is obtained by applying $U_{\phi}$.
The right hand side is finite because $\phi$ belongs to $\qn_{\cal{G}}(\cal{S})$.
In a similar way, we can show that $[\cal{S}^{\theta}: (\cal{S})_{R_{\theta}}\cap \cal{S}^{\theta}]_x$ is finite for a.e.\ $x\in R_{\theta}$.
Assertion (iii) is proved.

To prove assertion (ii), we pick $\phi, \psi \in \qn_{\cal{G}}(\cal{S})$ and put $\eta =\psi \bullet \phi$.
Let $\zeta$ be the inverse of $\psi$.
For any $x\in R_{\eta}$, let $E$ be a set of representatives of all classes in $(s^{-1}(x)\cap (\cal{S})_{R_{\eta}})/((\cal{S})_{R_{\eta}}\cap (\cal{S}^{\psi})_{R_{\eta}})$.
We have
\begin{align*}
&[(\cal{S})_{R_{\eta}}: (\cal{S})_{R_{\eta}}\cap \cal{S}^{\eta}]_x\leq [(\cal{S})_{R_{\eta}}: (\cal{S})_{R_{\eta}}\cap (\cal{S}^{\psi})_{R_{\eta}}\cap \cal{S}^{\eta}]_x\\
= \, & \sum_{g\in E}[(\cal{S})_{R_{\eta}}\cap (\cal{S}^{\psi})_{R_{\eta}}: (\cal{S})_{R_{\eta}}\cap (\cal{S}^{\psi})_{R_{\eta}}\cap \cal{S}^{\eta}]_{r(g)},
\end{align*}
where the last equality holds by Lemma \ref{lem-index-formula} (iii).
For a.e.\ $x\in R_{\eta}$, the set $E$ is finite because $\psi$ belongs to $\qn_{\cal{G}}(\cal{S})$.
We put $C=r\circ \zeta(R_{\eta})$.
For any $g\in E$, putting $y=r(g)$, we have
\begin{align*}
&[(\cal{S})_{R_{\eta}}\cap (\cal{S}^{\psi})_{R_{\eta}}: (\cal{S})_{R_{\eta}}\cap (\cal{S}^{\psi})_{R_{\eta}}\cap \cal{S}^{\eta}]_y\\
= \, & [(\cal{S}^{\zeta})_C\cap (\cal{S})_C: (\cal{S}^{\zeta})_C\cap (\cal{S})_C\cap (\cal{S}^{\phi})_C]_{r\circ \zeta(y)}\\
\leq \, & [(\cal{S})_C: (\cal{S})_C\cap (\cal{S}^{\phi})_C]_{r\circ \zeta(y)}\leq [(\cal{S})_{R_{\phi}}: (\cal{S})_{R_{\phi}}\cap \cal{S}^{\phi}]_{r\circ \zeta(y)},
\end{align*}
where the first equality is obtained by applying $U_{\zeta}$.
The first and second inequalities follow from Lemma \ref{lem-index-formula} (ii) and Lemma \ref{lem-index} (ii), respectively.
The right hand side is finite because $\phi$ belongs to $\qn_{\cal{G}}(\cal{S})$.
We therefore have $[(\cal{S})_{R_{\eta}}: (\cal{S})_{R_{\eta}}\cap \cal{S}^{\eta}]_x<\infty$ for a.e.\ $x\in R_{\eta}$.

Let $\xi$ denote the inverse of $\eta$, which is the composition of the inverses of $\psi$ and of $\phi$.
For a.e.\ $x\in R_{\eta}$, applying $U_{\xi}$, we obtain the equality
\[[\cal{S}^{\eta}: (\cal{S})_{R_{\eta}}\cap \cal{S}^{\eta}]_x=[(\cal{S})_{R_{\xi}}: \cal{S}^{\xi}\cap (\cal{S})_{R_{\xi}}]_{r\circ \xi(x)}.\]
The right hand side is finite thanks to assertion (iii) and the argument in the last paragraph.
Assertion (ii) is proved.
\end{proof}

In the notation in Definition \ref{defn-qn}, $\cal{S}$ is normal in $\cal{G}$ if and only if there exists a countable family $\{ \phi_n\}_n$ of elements of ${\rm N}_{\cal{G}}(\cal{S})$ such that $\cal{G}=\bigcup_{n}\phi_n(D_{\phi_n})$ up to null sets.
This is because $[[\cal{S}]]$ is contained in ${\rm N}_{\cal{G}}(\cal{S})$ and the composition of two elements of ${\rm N}_{\cal{G}}(\cal{S})$ also belongs to ${\rm N}_{\cal{G}}(\cal{S})$.
A similar property holds for quasi-normality if ${\rm N}_{\cal{G}}(\cal{S})$ is replaced by $\qn_{\cal{G}}(\cal{S})$.

Let $\Gamma$ be a discrete group and $\Lambda$ a subgroup of $\Gamma$.
The set $\qn_{\Gamma}(\Lambda)$ introduced right before Definition \ref{defn-qn} is naturally identified with the subgroup $\comm_{\Gamma}(\Lambda)$ of $\Gamma$ introduced in Section \ref{sec-bs}.
It follows that $\Lambda$ is quasi-normal in $\Gamma$ in the sense of Definition \ref{defn-qn} if and only if the equality $\comm_{\Gamma}(\Lambda)=\Gamma$ holds.

\begin{lem}\label{lem-qn-group}
Let $\Gamma$ be a discrete group and $\Lambda$ a subgroup of $\Gamma$.
Let $\Gamma \c (X, \mu)$ be a non-singular action.
If $\Lambda$ is quasi-normal in $\Gamma$, then $\Lambda \ltimes (X, \mu)$ is quasi-normal in $\Gamma \ltimes (X, \mu)$.
\end{lem}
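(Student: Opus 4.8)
The plan is to produce the countable family required by Definition \ref{defn-qn} (ii) directly from the elements of $\Gamma$. Write $\cal{G}=\Gamma\ltimes(X,\mu)$ and $\cal{S}=\Lambda\ltimes(X,\mu)$, with $r(\gamma,x)=\gamma x$ and $s(\gamma,x)=x$. For each $\gamma\in\Gamma$ I would define $\phi_\gamma\in[[\cal{G}]]$ by $D_{\phi_\gamma}=X$ and $\phi_\gamma(x)=(\gamma,x)$; this indeed lies in $[[\cal{G}]]$, since the action is non-singular, so $x\mapsto\gamma x$ is injective (mod null), whence $R_{\phi_\gamma}=X$. A short computation with the product operation of $\cal{G}$ gives $U_{\phi_\gamma}(\delta,x)=(\gamma\delta\gamma^{-1},\gamma x)$, so after the substitution $y=\gamma x$ one obtains
\[
\cal{S}^{\phi_\gamma}=(\gamma\Lambda\gamma^{-1})\ltimes(X,\mu),\qquad (\cal{S})_{R_{\phi_\gamma}}\cap\cal{S}^{\phi_\gamma}=(\Lambda\cap\gamma\Lambda\gamma^{-1})\ltimes(X,\mu).
\]

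Next I would check that $\phi_\gamma\in\qn_{\cal{G}}(\cal{S})$ for every $\gamma\in\Gamma$. Applying Lemma \ref{lem-index} (iii) — once with ambient group $\Lambda$ and subgroup $\Lambda\cap\gamma\Lambda\gamma^{-1}$, and once with ambient group $\gamma\Lambda\gamma^{-1}$ and subgroup $\Lambda\cap\gamma\Lambda\gamma^{-1}$ — identifies, at every point, the two relevant groupoid indices with the group indices $[\Lambda:\Lambda\cap\gamma\Lambda\gamma^{-1}]$ and $[\gamma\Lambda\gamma^{-1}:\Lambda\cap\gamma\Lambda\gamma^{-1}]$. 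Both are finite because, as recalled just before the lemma, quasi-normality of $\Lambda$ in $\Gamma$ is equivalent to $\comm_\Gamma(\Lambda)=\Gamma$, so $\gamma\in\comm_\Gamma(\Lambda)$. Hence $\phi_\gamma\in\qn_{\cal{G}}(\cal{S})$.

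Finally I would verify the covering condition of Definition \ref{defn-qn} (ii) for the countable family $\{\phi_\gamma\}_{\gamma\in\Gamma}$: for a.e.\ $g=(\delta,x)\in\cal{G}$, taking $n=\delta$ gives $s(g)=x\in D_{\phi_\delta}$ and $\phi_\delta(s(g))g^{-1}=(\delta,x)(\delta^{-1},\delta x)=(e,\delta x)\in\cal{S}$, which is exactly what is needed. (Equivalently, $\bigcup_{\gamma}\phi_\gamma(D_{\phi_\gamma})=\cal{G}$ up to null sets.) This completes the argument.

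I do not expect any genuine obstacle here; the only points requiring care are the bookkeeping of the groupoid product conventions in computing $U_{\phi_\gamma}$ and $\cal{S}^{\phi_\gamma}$, and invoking Lemma \ref{lem-index} (iii) at the level of the subgroups $\Lambda$ and $\gamma\Lambda\gamma^{-1}$ rather than $\Gamma$ in order to convert the groupoid indices into the corresponding group indices.
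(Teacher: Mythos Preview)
Your proof is correct and follows exactly the same approach as the paper: define $\phi_\gamma(x)=(\gamma,x)$ for each $\gamma\in\Gamma$ and check that $\phi_\gamma\in\qn_{\cal{G}}(\cal{S})$. The paper's proof is simply a two-line version of yours, asserting that this map lies in $\qn_{\cal{G}}(\cal{H})$ without spelling out the computation of $\cal{S}^{\phi_\gamma}$ or the appeal to Lemma~\ref{lem-index}~(iii); you have filled in precisely those details.
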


\begin{proof}
Put $\cal{G}=\Gamma \ltimes (X, \mu)$ and $\cal{H}=\Lambda \ltimes (X, \mu)$.
For any $\gamma \in \Gamma$, the map from $X$ into $\cal{G}$ sending each element $x$ of $X$ to $(\gamma, x)$ belongs to $\qn_{\cal{G}}(\cal{H})$.
The lemma follows.
\end{proof}

\begin{lem}\label{lem-qn-res}
Let $\cal{G}$ be a discrete measured groupoid on a standard finite measure space $(X, \mu)$.
Let $\cal{S}$ be a quasi-normal subgroupoid of $\cal{G}$.
Then for any Borel subset $A$ of $X$ with positive measure, $(\cal{S})_A$ is quasi-normal in $(\cal{G})_A$.
\end{lem}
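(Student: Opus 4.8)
The plan is to use the reformulation of quasi-normality recorded immediately after the proof of Lemma~\ref{lem-qn}: a subgroupoid $\cal{S}$ of a discrete measured groupoid $\cal{G}$ is quasi-normal in $\cal{G}$ if and only if there is a countable family $\{ \phi_n\}_n\subset \qn_{\cal{G}}(\cal{S})$ with $\cal{G}=\bigcup_n\phi_n(D_{\phi_n})$ up to null sets. So I would fix such a family for the pair $(\cal{G},\cal{S})$ and manufacture from it a family witnessing quasi-normality of $(\cal{S})_A$ in $(\cal{G})_A$. For each $n$, set $A_n=D_{\phi_n}\cap A\cap (r\circ \phi_n)^{-1}(A)$ and $\psi_n=\phi_n|_{A_n}$. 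Then $D_{\psi_n}\subset A$ and $R_{\psi_n}=r\circ \psi_n(D_{\psi_n})\subset A$, so $\psi_n\in[[(\cal{G})_A]]$, and $\psi_n\in\qn_{\cal{G}}(\cal{S})$ by Lemma~\ref{lem-qn}~(i).

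Next I would check that $\psi_n$ lies in $\qn_{(\cal{G})_A}((\cal{S})_A)$. Because $D_{\psi_n}$ and $R_{\psi_n}$ are contained in $A$, one has $((\cal{S})_A)_{R_{\psi_n}}=(\cal{S})_{R_{\psi_n}}$ and $((\cal{S})_A)_{D_{\psi_n}}=(\cal{S})_{D_{\psi_n}}$; and since $U_{\psi_n}$ is given by the same formula whether $\psi_n$ is regarded inside $[[\cal{G}]]$ or inside $[[(\cal{G})_A]]$, we get $((\cal{S})_A)^{\psi_n}=\cal{S}^{\psi_n}$ as subgroupoids of $((\cal{G})_A)_{R_{\psi_n}}=(\cal{G})_{R_{\psi_n}}$. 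Hence the two finite-index conditions defining membership in $\qn_{(\cal{G})_A}((\cal{S})_A)$ coincide with the ones defining membership of $\psi_n$ in $\qn_{\cal{G}}(\cal{S})$, which already holds. For the covering property, note that $\phi_n$ is injective (its range map is injective and its source map is the identity) and that $\phi_n(x)$ has source $x$ and range $r\circ\phi_n(x)$, so $\phi_n(x)\in(\cal{G})_A$ precisely when $x\in A_n$; therefore $\psi_n(D_{\psi_n})=\phi_n(D_{\phi_n})\cap(\cal{G})_A$ and $\bigcup_n\psi_n(D_{\psi_n})=\bigl(\bigcup_n\phi_n(D_{\phi_n})\bigr)\cap(\cal{G})_A=(\cal{G})_A$ up to null sets, using that the canonical measure class of $(\cal{G})_A$ is the restriction of that of $\cal{G}$. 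By the reformulation quoted above, $\{\psi_n\}_n$ shows that $(\cal{S})_A$ is quasi-normal in $(\cal{G})_A$.

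The only genuine subtlety I foresee is the choice of $A_n$: one cannot simply restrict the maps from the Definition~\ref{defn-qn} form of a witnessing family, since for a given $g\in(\cal{G})_A$ the intermediate vertex $r\circ\phi_n(s(g))$ may lie outside $A$, so that $\phi_n(s(g))g^{-1}$ need not lie in $(\cal{S})_A$. Passing to a covering family $\{\phi_n\}_n$ with $\cal{G}=\bigcup_n\phi_n(D_{\phi_n})$ --- which has absorbed the left translates by $[[\cal{S}]]$ --- is exactly what removes this difficulty, and everything else is routine bookkeeping with the range and source maps.
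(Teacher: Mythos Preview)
Your argument is correct. It differs from the paper's proof in a clean way worth noting. You start from the covering reformulation $\cal{G}=\bigcup_n\phi_n(D_{\phi_n})$ (stated just after Lemma~\ref{lem-qn}), which has already absorbed left translations by $[[\cal{S}]]$; then the restriction $\psi_n=\phi_n|_{A_n}$ with $A_n=D_{\phi_n}\cap A\cap (r\circ\phi_n)^{-1}(A)$ immediately gives $(\cal{G})_A=\bigcup_n\psi_n(D_{\psi_n})$, and the verification that $\psi_n\in\qn_{(\cal{G})_A}((\cal{S})_A)$ is a tautology because $D_{\psi_n},R_{\psi_n}\subset A$. The paper instead works directly from Definition~\ref{defn-qn}: it keeps the original family $\{\phi_n\}$ satisfying only $\phi_n(s(g))g^{-1}\in\cal{S}$, observes that for $g\in(\cal{G})_A$ the range $r\circ\phi_n(s(g))$ lands in $\cal{S}A$ (not necessarily in $A$), and then explicitly composes with a Borel ``return map'' $\psi\colon\cal{S}A\to\cal{S}$ sending each point back into $A$ along an $\cal{S}$-arrow. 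Your route is shorter precisely because the passage to the covering family does the same correction in advance; the paper's route is more self-contained in that it does not invoke the reformulation. The subtlety you flagged in your last paragraph is exactly the one the paper resolves with $\psi$, so your diagnosis is on point.
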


\begin{proof}
Let $\{ \phi_n\}_{n\in N}$ be a countable family of elements of $\qn_{\cal{G}}(\cal{S})$ such that for a.e.\ $g\in \cal{G}$, there exists $n\in N$ with $s(g)\in D_{\phi_n}$ and $\phi_n(s(g))g^{-1}\in \cal{S}$.
Choose a Borel map $\psi \colon \cal{S}A\to \cal{S}$ such that $\psi(x)=e_x$ for any $x\in A$, where $e_x$ is the unit element at $x$; and $s\circ \psi(x)=x$ and $r\circ \psi(x)\in A$ for any $x\in \cal{S}A\setminus A$.
For each $n\in N$, we set
\[D_n=\{\, x\in A\cap D_{\phi_n}\mid r\circ \phi_n(x)\in \cal{S}A\,\}\]
and define a Borel map $\psi_n\colon D_n\to (\cal{G})_A$ by $\psi_n(x)=\psi(r\circ \phi_n(x))\phi_n(x)$ for $x\in D_n$.
Taking a countable Borel partition of $D_n$ and restricting $\psi_n$ to each piece, we obtain a countable family $\{ \eta_m\}_{m\in M}$ of elements of $\qn_{(\cal{G})_A}((\cal{S})_A)$ such that for any $n\in N$ and a.e.\ $x\in D_n$, there exists $m\in M$ with $x\in D_{\eta_m}$ and $\eta_m(x)=\psi_n(x)$.

For a.e.\ $g\in (\cal{G})_A$, there exists $n\in N$ with $s(g)\in D_{\phi_n}$ and $\phi_n(s(g))g^{-1}\in \cal{S}$.
Putting $x=s(g)$, we have $r\circ \phi_n(x)\in \cal{S}A$.
We can thus find $m\in M$ with $x\in D_{\eta_m}$ and $\psi_n(x)=\eta_m(x)$.
The equality $\eta_m(x)g^{-1}=\psi(r\circ \phi_n(x))\phi_n(x)g^{-1}$ holds, and this element belongs to $(\cal{S})_A$.
It follows that $(\cal{S})_A$ is quasi-normal in $(\cal{G})_A$.
\end{proof}

\begin{lem}\label{lem-qn-finite}
Let $\cal{G}$ be a discrete measured groupoid on a standard finite measure space $(X, \mu)$.
Let $\cal{S}$ and $\cal{T}$ be subgroupoids of $\cal{G}$ such that $\cal{S}<\cal{T}$ and $[\cal{T}:\cal{S}]_x<\infty$ for a.e.\ $x\in X$.
Then we have the equality $\qn_{\cal{G}}(\cal{S})=\qn_{\cal{G}}(\cal{T})$.
In particular, $\cal{S}$ is quasi-normal in $\cal{G}$ if and only if $\cal{T}$ is quasi-normal in $\cal{G}$.
\end{lem}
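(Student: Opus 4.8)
The plan is to reduce the ``in particular'' clause to the equality $\qn_{\cal{G}}(\cal{S})=\qn_{\cal{G}}(\cal{T})$ and then to establish that equality by a bookkeeping of indices among four subgroupoids. For the reduction, recall from the paragraph following Lemma~\ref{lem-qn} that, since $[[\cal{S}]]\subset\qn_{\cal{G}}(\cal{S})$ and $\qn_{\cal{G}}(\cal{S})$ is closed under composition (Lemma~\ref{lem-qn}~(ii)), the subgroupoid $\cal{S}$ is quasi-normal in $\cal{G}$ if and only if there is a countable family $\{\phi_n\}_n\subset\qn_{\cal{G}}(\cal{S})$ with $\cal{G}=\bigcup_n\phi_n(D_{\phi_n})$ up to null sets, and likewise for $\cal{T}$. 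Hence, once $\qn_{\cal{G}}(\cal{S})=\qn_{\cal{G}}(\cal{T})$ is known, one and the same family of this kind witnesses quasi-normality of $\cal{S}$ exactly when it witnesses that of $\cal{T}$, which gives the last assertion.

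For the equality $\qn_{\cal{G}}(\cal{S})=\qn_{\cal{G}}(\cal{T})$, fix $\phi\in[[\cal{G}]]$, write $R=R_{\phi}$, and put $\cal{A}=(\cal{S})_R$, $\cal{B}=(\cal{T})_R$, $\cal{A}'=\cal{S}^{\phi}$ and $\cal{B}'=\cal{T}^{\phi}$; these are subgroupoids of $(\cal{G})_R$ with $\cal{A}<\cal{B}$ and $\cal{A}'<\cal{B}'$. By Lemma~\ref{lem-index}~(ii), the hypothesis $[\cal{T}:\cal{S}]_x<\infty$ gives $[\cal{B}:\cal{A}]_x<\infty$ for a.e.\ $x\in R$, and transporting the same inequality through the isomorphism $U_{\phi}$ gives $[\cal{B}':\cal{A}']_x<\infty$ for a.e.\ $x\in R$. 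By definition, $\phi\in\qn_{\cal{G}}(\cal{S})$ means that $[\cal{A}:\cal{A}\cap\cal{A}']_x$ and $[\cal{A}':\cal{A}\cap\cal{A}']_x$ are finite a.e., while $\phi\in\qn_{\cal{G}}(\cal{T})$ means that $[\cal{B}:\cal{B}\cap\cal{B}']_x$ and $[\cal{B}':\cal{B}\cap\cal{B}']_x$ are finite a.e.; so the task is to pass between these two pairs of finiteness statements.

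Each implication is obtained by inserting an intermediate subgroupoid and invoking the additivity and monotonicity of index from Lemma~\ref{lem-index-formula}~(ii) and~(iii). For $\qn_{\cal{G}}(\cal{S})\Rightarrow\qn_{\cal{G}}(\cal{T})$, one has $\cal{A}\cap\cal{A}'<\cal{B}\cap\cal{B}'<\cal{B}$ and $\cal{A}\cap\cal{A}'<\cal{B}\cap\cal{B}'<\cal{B}'$, and applying Lemma~\ref{lem-index-formula}~(iii) along the chains $\cal{A}\cap\cal{A}'<\cal{A}<\cal{B}$ and $\cal{A}\cap\cal{A}'<\cal{A}'<\cal{B}'$ expresses $[\cal{B}:\cal{A}\cap\cal{A}']_x$ and $[\cal{B}':\cal{A}\cap\cal{A}']_x$ as sums of, respectively, $[\cal{B}:\cal{A}]_x$ and $[\cal{B}':\cal{A}']_x$ many finite terms, hence both finite a.e.; then $[\cal{B}:\cal{B}\cap\cal{B}']_x\le[\cal{B}:\cal{A}\cap\cal{A}']_x$ and $[\cal{B}':\cal{B}\cap\cal{B}']_x\le[\cal{B}':\cal{A}\cap\cal{A}']_x$ finish the job. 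For $\qn_{\cal{G}}(\cal{T})\Rightarrow\qn_{\cal{G}}(\cal{S})$, I would instead factor $\cal{A}\cap\cal{A}'$ through $\cal{A}\cap\cal{B}'$ and through $\cal{A}'\cap\cal{B}$: Lemma~\ref{lem-index-formula}~(iii) along $\cal{A}\cap\cal{A}'<\cal{A}\cap\cal{B}'<\cal{A}$ writes $[\cal{A}:\cal{A}\cap\cal{A}']_x=\sum_g[\cal{A}\cap\cal{B}':\cal{A}\cap\cal{A}']_{r(g)}$ over representatives of $(\cal{A})_x/(\cal{A}\cap\cal{B}')$, where $[\cal{A}:\cal{A}\cap\cal{B}']_x\le[\cal{B}:\cal{B}\cap\cal{B}']_x$ and $[\cal{A}\cap\cal{B}':\cal{A}\cap\cal{A}']_x\le[\cal{B}':\cal{A}']_x$, both by Lemma~\ref{lem-index-formula}~(ii); all right-hand sides being finite a.e., the sum is finite a.e., and the symmetric computation along $\cal{A}\cap\cal{A}'<\cal{A}'\cap\cal{B}<\cal{A}'$ bounds $[\cal{A}':\cal{A}\cap\cal{A}']_x$ using $[\cal{A}':\cal{A}'\cap\cal{B}]_x\le[\cal{B}':\cal{B}\cap\cal{B}']_x$ and $[\cal{A}'\cap\cal{B}:\cal{A}\cap\cal{A}']_x\le[\cal{B}:\cal{A}]_x$.

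The only point requiring care — and, I expect, the main nuisance in writing this up cleanly rather than a conceptual difficulty — is that in the additivity formula one evaluates an index function at the ranges $r(g)$ of elements $g$ of $\cal{B}$ or $\cal{B}'$, so the ``finite for a.e.\ $x$'' statements must not be destroyed by this substitution. This is handled by noting that every index function in sight is Borel and constant along $\cal{B}$- and $\cal{B}'$-orbits by Lemma~\ref{lem-index}~(i), so the exceptional null set is carried to a null set under the relevant saturation, while the identities of Lemma~\ref{lem-index-formula}~(iii) hold at every point.
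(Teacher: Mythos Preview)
Your argument is correct and follows essentially the same index-bookkeeping as the paper: both directions are obtained by inserting an intermediate subgroupoid and combining Lemma~\ref{lem-index-formula}~(ii) and~(iii), with only a cosmetic difference in the reverse inclusion (you factor $[\cal{A}:\cal{A}\cap\cal{A}']_x$ through $\cal{A}\cap\cal{B}'$, while the paper passes first to $[\cal{B}:\cal{A}\cap\cal{A}']_x$ and factors through $\cal{B}\cap\cal{B}'$, then bounds the summand $[\cal{B}\cap\cal{B}':\cal{A}\cap\cal{A}']_y$ via Lemma~\ref{lem-index-formula}~(i)). One small imprecision: the index functions are not in general constant along $\cal{B}$- or $\cal{B}'$-orbits---Lemma~\ref{lem-index}~(i) only gives invariance along the ambient groupoid of the index in question---but your conclusion still holds because the $\cal{G}$-saturation of a null set is null, which is all that is needed to evaluate the sum in Lemma~\ref{lem-index-formula}~(iii) at $r(g)$ for a.e.\ $x$.
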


\begin{proof}
We denote by $r, s\colon \cal{G}\to X$ the range and source maps of $\cal{G}$, respectively.
Pick $\phi \in \qn_{\cal{G}}(\cal{S})$.
For any $x\in R_{\phi}$, let $E$ be a set of representatives of all classes in $(s^{-1}(x)\cap (\cal{T})_{R_{\phi}})/(\cal{S})_{R_{\phi}}$.
We have
\[[(\cal{T})_{R_{\phi}}: (\cal{T})_{R_{\phi}}\cap \cal{T}^{\phi}]_x\leq [(\cal{T})_{R_{\phi}}: (\cal{S})_{R_{\phi}}\cap \cal{S}^{\phi}]_x= \sum_{g\in E}[(\cal{S})_{R_{\phi}}: (\cal{S})_{R_{\phi}}\cap \cal{S}^{\phi}]_{r(g)},\]
where the last equality holds by Lemma \ref{lem-index-formula} (iii).
For a.e.\ $x\in R_{\phi}$, the right hand side is finite because $E$ is finite and $\phi$ belongs to $\qn_{\cal{G}}(\cal{S})$.
As in the last part in the proof of Lemma \ref{lem-qn} (ii), we can conclude that $[\cal{T}^{\phi}: (\cal{T})_{R_{\phi}}\cap \cal{T}^{\phi}]_x$ is also finite for a.e.\ $x\in R_{\phi}$.
It follows that $\phi$ belongs to $\qn_{\cal{G}}(\cal{T})$.

Pick $\psi \in \qn_{\cal{G}}(\cal{T})$.
For any $x\in R_{\psi}$, let $F$ be a set of representatives of all classes in $(s^{-1}(x)\cap (\cal{T})_{R_{\psi}})/((\cal{T})_{R_{\psi}}\cap \cal{T}^{\psi})$.
We have
\begin{align*}
& [(\cal{S})_{R_{\psi}}: (\cal{S})_{R_{\psi}}\cap \cal{S}^{\psi}]_x\leq [(\cal{T})_{R_{\psi}}: (\cal{S})_{R_{\psi}}\cap \cal{S}^{\psi}]_x\\
= \, & \sum_{g\in F}[(\cal{T})_{R_{\psi}}\cap \cal{T}^{\psi}: (\cal{S})_{R_{\psi}}\cap \cal{S}^{\psi}]_{r(g)}.
\end{align*}
For a.e.\ $x\in R_{\psi}$, the set $F$ is finite because $\psi$ belongs to $\qn_{\cal{G}}(\cal{T})$.
For any $g\in F$, putting $y=r(g)$, we have
\begin{align*}
& [(\cal{T})_{R_{\psi}}\cap \cal{T}^{\psi}: (\cal{S})_{R_{\psi}}\cap \cal{S}^{\psi}]_y\\
\leq \, & [(\cal{T})_{R_{\psi}}\cap \cal{T}^{\psi}: (\cal{S})_{R_{\psi}}\cap \cal{T}^{\psi}]_y[(\cal{T})_{R_{\psi}}\cap \cal{T}^{\psi}: \cal{S}^{\psi}\cap (\cal{T})_{R_{\psi}}]_y\\
\leq \, & [(\cal{T})_{R_{\psi}}: (\cal{S})_{R_{\psi}}]_y[\cal{T}^{\psi}: \cal{S}^{\psi}]_y, 
\end{align*}
where the first and second inequalities hold by Lemma \ref{lem-index-formula} (i) and (ii), respectively.
The right hand side is finite for a.e.\ $x\in R_{\psi}$.
We see that $[(\cal{S})_{R_{\psi}}: (\cal{S})_{R_{\psi}}\cap \cal{S}^{\psi}]_x$ is finite for a.e.\ $x\in R_{\psi}$.
Similarly, we can conclude that $[\cal{S}^{\psi}: (\cal{S})_{R_{\psi}}\cap \cal{S}^{\psi}]_x$ is finite for a.e.\ $x\in R_{\psi}$.
It follows that $\psi$ belongs to $\qn_{\cal{G}}(\cal{S})$.
\end{proof}

%%%%%%%%%%%%%%%%%%%%%%%%%%%%%%%%%%%%%%%%%%%%%

\subsection{Quotient}

Let $(X, \mu)$ be a standard finite measure space.
Let $\cal{G}$ be a discrete measured groupoid on $(X, \mu)$.
Given a normal subgroupoid $\cal{S}$ of $\cal{G}$, we can construct a discrete measured groupoid $\cal{Q}$ on a standard finite measure space $(Z, \xi)$ and a Borel homomorphism $\theta \colon \cal{G}\to \cal{Q}$ satisfying the following three conditions:
\begin{enumerate}
\item[(a)] The equality $\ker \theta =\cal{S}$ holds.
\item[(b)] For a.e.\ $h\in \cal{Q}$ and $x\in X$ such that $\theta(x)$ is equal to the source of $h$, there exists $g\in \cal{G}$ with $s(g)=x$ and $\theta(g)=h$, where the map from $X$ into $Z$ induced by $\theta$ is denoted by the same symbol $\theta$.
\item[(c)] If $\cal{Q}'$ is a discrete measured groupoid on a standard finite measure space $(Z', \xi')$ and if $\theta'\colon \cal{G}\to \cal{Q}'$ is a Borel homomorphism with $\cal{S}<\ker \theta'$, then there exists a Borel homomorphism $\tau \colon \cal{Q}\to \cal{Q}'$ with $\tau \circ \theta =\theta'$.
\end{enumerate}
The groupoid $\cal{Q}$ is called the {\it quotient} of $\cal{G}$ by $\cal{S}$ and denoted by $\cal{G}/\cal{S}$.
We refer to the proof of \cite[Theorem 2.2]{fsz} for the construction of $\cal{Q}$ and $\theta$ (see also \cite[Section 3]{sauer-thom}, where the quotient by a strongly normal subgroupoid is discussed).
Although in \cite{fsz}, the quotient is constructed in the case where $\cal{G}$ is principal, it is also valid in the general case.
In the construction, the map from $(X, \mu)$ into $(Z, \xi)$ induced by $\theta$ is defined as the ergodic decomposition for $\cal{S}$.
In particular, if $\cal{S}$ is ergodic, then $\cal{Q}$ is a discrete group.
The following lemma is deduced from the construction of the quotient.

\begin{lem}\label{lem-qu}
Let $\cal{G}$ be a discrete measured groupoid on a standard finite measure space $(X, \mu)$.
Let $\cal{H}$ be a normal subgroupoid of $\cal{G}$.
Then the following assertions hold:
\begin{enumerate}
\item Let $A$ be a Borel subset of $X$ with $X=\cal{H}A$.
Then the inclusion of $(\cal{G})_A$ into $\cal{G}$ induces an isomorphism from $(\cal{G})_A/(\cal{H})_A$ onto $\cal{G}/\cal{H}$.
\item Suppose that we have a non-singular action of a discrete group $\Gamma$ on $(X, \mu)$ and a normal subgroup $\Lambda$ of $\Gamma$ such that $\cal{G}=\Gamma \ltimes (X, \mu)$ and $\cal{H}=\Lambda \ltimes (X, \mu)$.
If the action of $\Lambda$ on $(X, \mu)$ is ergodic, then the projection from $\cal{G}$ onto $\Gamma$ induces an isomorphism from $\cal{G}/\cal{H}$ onto $\Gamma/\Lambda$.
\end{enumerate}
\end{lem}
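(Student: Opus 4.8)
The plan is, in both parts, to exhibit a concrete Borel homomorphism out of $\cal{G}$ and to check that it satisfies conditions (a), (b), (c) characterising the quotient for the relevant normal subgroupoid; the assertions then follow from the fact that those three conditions determine the quotient up to a canonical isomorphism. (Indeed, given two such homomorphisms for the same normal subgroupoid, condition (c) produces intertwining homomorphisms in both directions, and condition (b), which forces both homomorphisms to be essentially surjective, shows that these are mutually inverse.) The only structural point to keep in mind is that, by construction, the unit space of a quotient is the ergodic decomposition space of the normal subgroupoid being divided out.

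For (i), write $\theta \colon \cal{G}\to \cal{G}/\cal{H}$ for the quotient homomorphism over $(Z,\xi)$ and $\pi \colon X\to Z$ for the ergodic decomposition for $\cal{H}$. Since $X=\cal{H}A$, the restriction $\pi|_A$ is essentially onto $Z$ and is easily seen to be the ergodic decomposition for $(\cal{H})_A$, so $(\cal{G})_A/(\cal{H})_A$ is a groupoid over the same base $Z$; moreover $(\cal{H})_A$ is normal in $(\cal{G})_A$ by the argument of Lemma \ref{lem-qn-res} with ${\rm N}_{\cal{G}}$ in place of $\qn_{\cal{G}}$, which is available because $\cal{H}A=X$. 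I would then verify that $\theta|_{(\cal{G})_A}\colon (\cal{G})_A\to \cal{G}/\cal{H}$ satisfies (a), (b), (c) for the normal subgroupoid $(\cal{H})_A$ of $(\cal{G})_A$. Condition (a) is immediate since $\ker\theta \cap (\cal{G})_A=(\cal{H})_A$. For (b), lift a given $h$ (and given $x\in A$ with $\pi(x)$ equal to the source of $h$) to some $g_0\in \cal{G}$ using (b) for $\theta$, then left-multiply $g_0$ by an element $k\in \cal{H}$ carrying $r(g_0)$ into $A$; such $k$ exists since $\cal{H}A=X$, and it does not change the image under $\theta$, so $kg_0\in (\cal{G})_A$ is the required lift. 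For (c), given $\theta'\colon (\cal{G})_A\to \cal{Q}'$ with $(\cal{H})_A<\ker\theta'$, fix a Borel map $\psi\colon X\to \cal{H}$ with $\psi|_A$ equal to the units and $r\circ \psi(X)\subset A$, and extend $\theta'$ to $\tilde\theta \colon \cal{G}\to \cal{Q}'$ by $\tilde\theta(g)=\theta'(\psi(r(g))\,g\,\psi(s(g))^{-1})$; a direct computation (using $s(g_1)=r(g_2)$ and multiplicativity of $\theta'$) shows $\tilde\theta$ is a homomorphism restricting to $\theta'$ on $(\cal{G})_A$ and killing $\cal{H}$, so condition (c) for $\theta$ provides $\tau\colon \cal{G}/\cal{H}\to \cal{Q}'$ with $\tau\circ\theta=\tilde\theta$, hence $\tau\circ\theta|_{(\cal{G})_A}=\theta'$. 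Thus $(\cal{G})_A/(\cal{H})_A$ and $\cal{G}/\cal{H}$ are canonically isomorphic via the inclusion.

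For (ii), ergodicity of $\Lambda \c (X,\mu)$ makes the ergodic decomposition for $\cal{H}=\Lambda\ltimes X$ trivial, so $\cal{G}/\cal{H}$ is a discrete group; let $p\colon \cal{G}\to \Gamma$ be the projection $(\gamma,x)\mapsto \gamma$ and $q\colon \Gamma\to\Gamma/\Lambda$ the quotient map, so that $q\circ p$ kills exactly $\cal{H}$, and I would show $q\circ p$ satisfies (a), (b), (c) for $\cal{H}<\cal{G}$. Condition (a) is clear, and for (b) one writes $h=q(\gamma_0)$ and takes $g=(\gamma_0,x)$. Condition (c) is the crux: given $\theta'\colon \cal{G}\to \cal{Q}'$ with $\cal{H}<\ker\theta'$, the induced map $X\to Z'$ on unit spaces is constant on $\cal{H}$-orbits, hence essentially constant by ergodicity of $\cal{H}$, so $\theta'$ takes values in the isotropy group $Q'$ of $\cal{Q}'$ at that point, which is a discrete group; then for each $\gamma\in\Gamma$ the map $x\mapsto \theta'(\gamma,x)$ is $\Lambda$-invariant — using $\theta'(\lambda,\cdot)=e$ together with $(\gamma,\lambda x)(\lambda,x)=(\gamma\lambda,x)$ and $(\gamma\lambda,x)=(\gamma\lambda\gamma^{-1},\gamma x)(\gamma,x)$, where normality of $\Lambda$ enters — hence essentially constant, with value $c(\gamma)\in Q'$; one checks that $c\colon \Gamma\to Q'$ is a homomorphism trivial on $\Lambda$, so it descends to $\tau\colon \Gamma/\Lambda\to Q'$ with $\tau\circ q\circ p=\theta'$. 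I expect the main obstacle to be exactly this condition (c) for part (ii): reducing $\cal{Q}'$ to a discrete group via ergodicity of $\cal{H}$ and establishing the $\Lambda$-invariance of $x\mapsto \theta'(\gamma,x)$; the remaining verifications in both parts are routine bookkeeping with the defining properties of the quotient.
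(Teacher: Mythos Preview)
Your argument is correct. The paper itself does not supply a proof of this lemma beyond the single sentence ``The following lemma is deduced from the construction of the quotient,'' so there is no detailed argument to compare against; your approach---verifying directly that the candidate homomorphisms $\theta|_{(\cal{G})_A}$ and $q\circ p$ satisfy the characterising conditions (a), (b), (c)---is exactly the kind of verification that sentence invites, and all of your checks (in particular the extension via $\psi$ in (i)(c) and the $\Lambda$-invariance computation in (ii)(c)) go through as written.
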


%%%%%%%%%%%%%%%%%%%%%%%%%%%%%%%%%%%%%%%%%%%%%

\section{A variant of Furman's theorem}\label{sec-var}

Theorem \ref{thm-furman} below is used to construct a representation of a group ME to a given group.
It is proved by Furman \cite{furman-mer} in the framework of higher rank lattices, and plays a significant role to deduce ME and OE rigidity results in \cite{bfs}, \cite{furman-mer}, \cite{furman-oer}, \cite{kida-oer} and \cite{ms}, etc.
We provide a variant of this theorem to get a representation of a group ME to a Baumslag-Solitar group, into $\R$.
We first review measure equivalence and introduce terminology.

\begin{defn}[\ci{0.5.E}{gromov-as-inv}]\label{defn-me}
Two discrete groups $\Gamma$ and $\Lambda$ are said to be {\it measure equivalent (ME)} if we have a standard Borel space $(\Sigma, m)$ with a $\sigma$-finite positive measure and a measure-preserving action of $\Gamma \times \Lambda$ on $(\Sigma, m)$ such that there exist Borel subsets $X, Y\subset \Sigma$ satisfying $m(X)<\infty$, $m(Y)<\infty$ and the equality
\[\Sigma =\bigsqcup_{\gamma \in \Gamma}(\gamma, e)Y=\bigsqcup_{\lambda \in \Lambda}(e, \lambda)X\]
up to $m$-null sets.
The space $(\Sigma, m)$ equipped with the action of $\Gamma \times \Lambda$ is then called a {\it $(\Gamma, \Lambda)$-coupling}.
\end{defn}

ME is an equivalence relation between discrete groups (see \cite[Section 2]{furman-mer}).
It is known that two discrete groups $\Gamma$ and $\Lambda$ are ME if and only if there exists an ergodic f.f.m.p.\ action of $\Gamma$ which is WOE to an ergodic f.f.m.p.\ action of $\Lambda$, as discussed in \cite[Section 3]{furman-oer}.

Let $\Gamma$ and $\Lambda$ be discrete groups, and let $G$ be a standard Borel group.
Given homomorphisms $\pi \colon \Gamma \to G$ and $\rho \colon \Lambda \to G$, we denote by $(G, \pi, \rho)$ the Borel space $G$ equipped with the action of $\Gamma \times \Lambda$ on it defined by
\[(\gamma, \lambda)g=\pi(\gamma)g\rho(\lambda)^{-1},\quad g\in G,\ \gamma \in \Gamma,\ \lambda \in \Lambda.\]
Let $\Sigma$ be a $(\Gamma, \Lambda)$-coupling, and let $\Phi \colon \Sigma \to S$ be a Borel map into a standard Borel space $S$ on which $\Gamma \times \Lambda$ acts.
We say that $\Phi$ is {\it almost $(\Gamma \times \Lambda)$-equivariant} if we have the equality
\[\Phi((\gamma, \lambda)x)=(\gamma, \lambda)\Phi(x),\quad \forall \gamma \in \Gamma,\ \forall \lambda \in \Lambda,\ \textrm{a.e.}\ x\in \Sigma.\] 
For a $(\Gamma, \Lambda)$-coupling $\Sigma$, we denote by $\Sigma \times_{\Lambda}\Sigma$ the quotient space of $\Sigma \times \Sigma$ by the diagonal action of $\Lambda$, which is a $(\Gamma, \Gamma)$-coupling.
We refer to \cite[Theorem 2.5]{bfs} for the proof of the following:

\begin{thm}\label{thm-furman}
Let $\Gamma$ be a discrete group, $G$ a standard Borel group and $\pi \colon \Gamma \to G$ a homomorphism.
Let $\Lambda$ be a discrete group, and let $(\Sigma, m)$ be a $(\Gamma, \Lambda)$-coupling.
We set $\Omega =\Sigma \times_{\Lambda}\Sigma$.
Suppose that
\begin{enumerate}
\item[(a)] the Dirac measure on the neutral element of $G$ is the only probability measure on $G$ invariant under conjugation by any element of $\pi(\Gamma)$; and
\item[(b)] we have an almost $(\Gamma \times \Gamma)$-equivariant Borel map from $\Omega$ into $(G, \pi, \pi)$.
\end{enumerate}
Then there exist a homomorphism $\rho \colon \Lambda \rightarrow G$ and an almost $(\Gamma \times \Lambda)$-equivariant Borel map $\Phi \colon \Sigma \rightarrow (G, \pi, \rho)$.
In addition, if $\ker \pi$ is finite and there is a Borel fundamental domain for the action of $\pi(\Gamma)$ on $G$ by left multiplication, then $\rho$ can be chosen so that $\ker \rho$ is finite.
\end{thm}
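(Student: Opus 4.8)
The plan is to carry out Furman's construction, whose engine is the following reading of hypothesis~(a): \emph{if $W$ is a standard Borel space carrying a finite measure invariant under a Borel $\Gamma$-action, and $f\colon W\to G$ is Borel with $f(\gamma w)=\pi(\gamma)f(w)\pi(\gamma)^{-1}$ for all $\gamma$ and a.e.\ $w$, then $f$ equals the neutral element a.e.} --- the normalized pushforward of the measure under $f$ is a probability measure on $G$ invariant under conjugation by every element of $\pi(\Gamma)$, hence the Dirac mass at the neutral element. Two consequences will be used repeatedly. First, since $\Omega$ is a $(\Gamma,\Gamma)$-coupling (so $\Omega$ modulo the second $\Gamma$ carries a finite measure preserved by the first), \emph{any two} almost $(\Gamma\times\Gamma)$-equivariant Borel maps $\Omega\to(G,\pi,\pi)$ agree a.e.; write $\psi$ for the unique such map, provided by~(b). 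Second, the same principle will kill ``discrepancy'' maps that arise below.

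The next step, which I expect to be the crux, is to descend $\psi$ from the $(\Gamma,\Gamma)$-coupling $\Omega$ to a rigidity statement for the measure equivalence cocycle on the $\Lambda$-side; it is important to phrase everything in terms of this cocycle, which lives on a finite-measure base, rather than by evaluating $\psi$ naively on null slices of $\Sigma\times\Sigma$. Concretely: fix a fundamental domain $F\subset\Sigma$ for the $\Gamma$-action, identify $X:=\Sigma/\Gamma$ with $F$, normalize $m|_F$ to a probability measure, and let $\alpha\colon\Lambda\times X\to\Gamma$ be the cocycle for the (measure-preserving) $\Lambda$-action on $X$ determined by $(e,\lambda)x=(\alpha(\lambda,x),e)(\lambda x)$ with $\lambda x\in X$. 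Put $c:=\pi\circ\alpha\colon\Lambda\times X\to G$. Using the existence of $\psi$ together with the uniqueness/triviality principle above, one shows that $c$ is cohomologous to a cocycle of the form $(\lambda,x)\mapsto\rho(\lambda)$ for a homomorphism $\rho\colon\Lambda\to G$: $\psi$ produces a Borel transfer map $\varphi\colon X\to G$, uniqueness forces the straightened cocycle to be independent of $x$, and the cocycle identity for $c$ then forces the resulting map $\rho$ to be multiplicative. Each time a candidate transfer map or straightened cocycle is not yet canonical, the ambiguity is an equivariant map of the type killed by~(a) on a suitable finite-measure quotient, so it must be trivial.

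From $\varphi$ and $\rho$ I would assemble the desired map by setting $\Phi((\gamma,e)x):=\pi(\gamma)\varphi(x)$ for $x\in F$ and $\gamma\in\Gamma$; the coboundary relation between $c$ and $\rho$, together with the defining property of $\alpha$, is exactly what makes $\Phi((\gamma,\lambda)\sigma)=\pi(\gamma)\Phi(\sigma)\rho(\lambda)^{-1}$ hold for a.e.\ $\sigma$, so that $\Phi\colon\Sigma\to(G,\pi,\rho)$ is almost $(\Gamma\times\Lambda)$-equivariant.

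For the supplementary statement, I would assume $\ker\pi$ finite and fix a Borel fundamental domain $D$ for $\pi(\Gamma)$ acting on $G$ by left multiplication. Then $\Phi^{-1}(D)$ meets each $\Gamma$-orbit of $\Sigma$ in exactly $|\ker\pi|$ points, hence has measure $|\ker\pi|\,m(F)<\infty$; consequently $\Phi_*m$ is a $\sigma$-finite $(\pi(\Gamma)\times\rho(\Lambda))$-invariant measure on $G$ for which $\pi(\Gamma)$ has a finite-measure fundamental domain, so $\Phi$ presents $G$ with this measure essentially as a coupling of $\pi(\Gamma)$ with $\rho(\Lambda)$. Transporting through $\Phi$ the free, finite-measure-preserving $\Lambda$-action on $X$ and invoking~(a) once more on an appropriate quotient then forces $\ker\rho$ to be finite, after replacing $\rho$ by a conjugate if necessary. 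I expect the genuine difficulty to lie in the cocycle-rigidity step of the second paragraph (and in setting up the dictionary between the $\Omega$-picture and the $\Lambda$-side cocycle); the engine from~(a), the assembly of $\Phi$, and the supplementary statement are then comparatively formal.
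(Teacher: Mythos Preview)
The paper does not give its own proof of this theorem: it is stated and then the reader is referred to \cite[Theorem 2.5]{bfs} for the proof. So there is no in-paper argument to compare against.

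That said, your outline is indeed the Furman/Bader--Furman--Sauer strategy, and your identification of the ``engine'' coming from hypothesis~(a) --- that any conjugation-equivariant Borel map from a finite-measure $\Gamma$-space into $G$ must be a.e.\ the neutral element --- is exactly right, as is the consequence that the almost $(\Gamma\times\Gamma)$-equivariant map $\psi\colon\Omega\to(G,\pi,\pi)$ is essentially unique. Your framing via the ME cocycle $\alpha\colon\Lambda\times X\to\Gamma$ and the goal of showing $c=\pi\circ\alpha$ is cohomologous to a homomorphism $\rho$ is also the standard translation. Where your sketch is genuinely thin is the sentence ``$\psi$ produces a Borel transfer map $\varphi\colon X\to G$'': this is the actual content of the argument, and it does not come for free. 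In the Furman/BFS proof one works on $\Sigma\times_\Lambda\Sigma\times_\Lambda\Sigma$ (or equivalently on triples/quadruples of points), forms expressions such as $\Psi([x,z])\Psi([y,z])^{-1}$, and uses the engine from~(a) on carefully chosen finite-measure quotients to show these are essentially independent of the auxiliary variable; only then does one extract $\varphi$ and verify that the straightened cocycle is $x$-independent and multiplicative. Your sketch asserts the outcome of this step without indicating the mechanism. The assembly of $\Phi$ from $\varphi$ and $\rho$, and the supplementary finiteness of $\ker\rho$ (via the finite-measure fundamental domain $\Phi^{-1}(D)$), are, as you say, comparatively formal once that step is done.
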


We say that a measure-preserving action of a discrete group $\Gamma$ on a probability space $(X, \mu)$ is {\it weakly mixing} if the diagonal action of $\Gamma$ on the product $(X\times X, \mu \times \mu)$ is ergodic.
This condition is known to imply that for any ergodic measure-preserving action of $\Gamma$ on a probability space $(Y, \nu)$, the diagonal action of $\Gamma$ on the product $(X\times Y, \mu \times \nu)$ is ergodic (see \cite[Proposition 2.2]{sch}).
We note that for any $(\Gamma, \Lambda)$-coupling $\Sigma$, the action $\Gamma \times \Gamma \c \Sigma \times_{\Lambda}\Sigma$ is ergodic if and only if the action $\Lambda \c \Sigma /\Gamma$ is weakly mixing.

\begin{thm}\label{thm-abel}
Let $\Gamma$ be a discrete group, $G$ an abelian standard Borel group and $\pi \colon \Gamma \to G$ a homomorphism.
Let $\Lambda$ be a discrete group, and let $(\Sigma, m)$ be a $(\Gamma, \Lambda)$-coupling.
We set $\Omega =\Sigma \times_{\Lambda}\Sigma$.
Suppose that
\begin{enumerate}
\item[(1)] the action $\Lambda \c \Sigma /\Gamma$ is weakly mixing; and
\item[(2)] we have an almost $(\Gamma \times \Gamma)$-equivariant Borel map from $\Omega$ into $(G, \pi, \pi)$.
\end{enumerate}
Then there exist a homomorphism $\rho \colon \Lambda \rightarrow G$ and an almost $(\Gamma \times \Lambda)$-equivariant Borel map $\Phi \colon \Sigma \to (G, \pi, \rho)$.
\end{thm}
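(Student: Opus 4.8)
The plan is to push hypothesis (2) down to $\Sigma\times\Sigma$, use commutativity of $G$ to manufacture a comparison function that is $\Gamma$-invariant, and then invoke weak mixing to conclude that this function is essentially independent of one of its variables. That independence will force the equivariant map to split as a product of a function of $x$ and a function of $y$, from which $\rho$ and $\Phi$ are read off. Note that hypothesis (a) of Theorem \ref{thm-furman} is vacuous when $G$ is abelian, so in this proof weak mixing is doing the work that the no-invariant-measure condition did there.

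To begin, write $\Omega=\Sigma\times_{\Lambda}\Sigma$, let $q\colon\Sigma\times\Sigma\to\Omega$ be the quotient map, and set $\Psi=\Psi_0\circ q\colon\Sigma\times\Sigma\to G$, where $\Psi_0$ is the map given by (2). Then, a.e., $\Psi(\lambda x,\lambda y)=\Psi(x,y)$ for $\lambda\in\Lambda$ and $\Psi(\gamma_1x,\gamma_2y)=\pi(\gamma_1)\Psi(x,y)\pi(\gamma_2)^{-1}$ for $\gamma_1,\gamma_2\in\Gamma$. Define $\Xi\colon\Sigma\times\Sigma\times\Sigma\to G$ by $\Xi(x_1,x_2,y)=\Psi(x_1,y)\Psi(x_2,y)^{-1}$. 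Since $G$ is abelian, the two relations for $\Psi$ give at once that $\Xi$ is invariant under $\Gamma$ acting on the $y$-coordinate (for instance $\Xi(x_1,x_2,\gamma y)=\Psi(x_1,y)\pi(\gamma)^{-1}\pi(\gamma)\Psi(x_2,y)^{-1}=\Xi(x_1,x_2,y)$) and under $\Gamma$ acting diagonally on $(x_1,x_2)$. Hence $\Xi$ descends to a Borel map $\bar\Xi$ on $U\times V$, where $V=\Sigma/\Gamma$ and $U=(\Sigma\times\Sigma)/\Gamma$ for the diagonal action (the $\Lambda$-action on $\Sigma\times\Sigma$ descends to $U$ because it commutes with $\Gamma$). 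One more use of $\Psi(\lambda x,\lambda y)=\Psi(x,y)$ shows that $\bar\Xi$ is invariant under the diagonal action of $\Lambda$ on $U\times V$.

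The crux is to show that $\bar\Xi$ does not depend on its $V$-coordinate. Here I would use the hypothesis that $\Lambda\c V=\Sigma/\Gamma$ is weakly mixing: by the standard fact that a weakly mixing measure-preserving action on a probability space stays ergodic when multiplied with any ergodic action — applied after decomposing $\Lambda\c U$ into its ergodic components, which are $\sigma$-finite — every $\Lambda$-invariant Borel function on $U\times V$ is essentially a function of $U$ alone (this is the $\sigma$-finite counterpart of \cite[Proposition 2.2]{sch}). So $\Psi(x_1,y)\Psi(x_2,y)^{-1}$ is a.e.\ independent of $y$; call it $\chi(x_1,x_2)$. The a.e.\ identity $\chi(x_1,x_2)\chi(x_2,x_3)=\chi(x_1,x_3)$ then forces $\chi(x_1,x_2)=\kappa(x_1)\kappa(x_2)^{-1}$ for a Borel map $\kappa\colon\Sigma\to G$ (take $\kappa(x)=\chi(x,x_0)$ for a fixed generic $x_0$), and therefore $\Psi(x,y)=\kappa(x)h(y)$ with $h(y)=\kappa(x_0)^{-1}\Psi(x_0,y)$. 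I expect this weak-mixing step — in particular, verifying the product-ergodicity statement against the possibly infinite-measure ergodic components of $\Lambda\c U$ — to be the main obstacle.

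Finally I would extract $\rho$ and $\Phi$ from the factorization $\Psi(x,y)=\kappa(x)h(y)$. Feeding it into $\Psi(\gamma x,y)=\pi(\gamma)\Psi(x,y)$ and cancelling $h$ gives $\kappa(\gamma x)=\pi(\gamma)\kappa(x)$; feeding it into $\Psi(\lambda x,\lambda y)=\Psi(x,y)$ gives $\kappa(\lambda x)\kappa(x)^{-1}=h(y)h(\lambda y)^{-1}$, and since the two sides depend respectively only on $x$ and only on $y$, each equals a constant $\rho_0(\lambda)\in G$; the relation $\kappa(\lambda_1\lambda_2x)=\rho_0(\lambda_1)\rho_0(\lambda_2)\kappa(x)$ then shows $\rho_0\colon\Lambda\to G$ is a homomorphism. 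Setting $\Phi=\kappa$, one has $\Phi(\gamma x)=\pi(\gamma)\Phi(x)$ and $\Phi(\lambda x)=\rho_0(\lambda)\Phi(x)$, i.e.\ $\Phi$ is almost $(\Gamma\times\Lambda)$-equivariant into $(G,\pi,\rho)$ with $\rho=\rho_0^{-1}$, which is again a homomorphism since $G$ is abelian. This would complete the argument.
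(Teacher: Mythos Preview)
Your overall strategy is sound, and once the central step is secured your factorization $\Psi(x,y)=\kappa(x)h(y)$ and the extraction of $\rho$ and $\Phi$ go through cleanly. The gap is exactly where you flagged it: the assertion that a weakly mixing probability action stays ergodic against an arbitrary $\sigma$-finite ergodic factor is \emph{false}, so you cannot conclude that the $\Lambda$-invariant $\bar\Xi$ on $U\times V$ is a function of $U$ alone. For a concrete failure, let $\Lambda$ act on $U=\Lambda$ with counting measure by left translation (transitive, hence ergodic) and let $\Lambda\c V$ be any weakly mixing probability action; then for every $g\in L^\infty(V)$ the function $(\lambda,v)\mapsto g(\lambda^{-1}v)$ is $\Lambda$-invariant on $\Lambda\times V$, so the product is far from ergodic and the invariant functions are certainly not functions of the first coordinate. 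There is no evident reason the ergodic components of $\Lambda\c(\Sigma\times\Sigma)/\Gamma$ avoid this dissipative phenomenon.

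The paper closes this gap by adding one more variable: the function
\[
H(x_1,x_2,y_1,y_2)=\Xi(x_1,x_2,y_1)\,\Xi(x_1,x_2,y_2)^{-1}
\]
is, because $G$ is abelian, $\Gamma$-invariant \emph{separately} in each of its four coordinates and $\Lambda$-invariant diagonally, so it descends to the \emph{probability} space $(\Sigma/\Gamma)^4$, where weak mixing of $\Lambda\c\Sigma/\Gamma$ does yield ergodicity of the diagonal action. Thus $H$ is an a.e.\ constant $g_0$; fixing a generic $(x_1,x_2)$ and writing $A(y)=\Xi(x_1,x_2,y)$, the relation $A(y_1)A(y_2)^{-1}=g_0$ for a.e.\ $(y_1,y_2)$ forces $A$ to be essentially constant (and $g_0=e$), which is precisely your independence-of-$y$ claim. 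After this repair your argument and the paper's are essentially the same.
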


\begin{proof}
For $(x, y)\in \Sigma \times \Sigma$, we denote by $[x, y]\in \Omega$ the equivalence class of $(x, y)$.
Let $\Psi \colon \Omega \to (G, \pi, \pi)$ be an almost $(\Gamma \times \Gamma)$-equivariant Borel map.

\begin{claim}\label{claim-h}
Define a Borel map $H\colon \Sigma^4\to G$ by
\[H(x, y, z, w)=\Psi([y, z])\Psi([x, z])^{-1}\Psi([x, w])\Psi([y, w])^{-1}\]
for $(x, y, z, w)\in \Sigma^4$.
Then $H$ is essentially constant.
\end{claim}

\begin{proof}
For any $\gamma \in \Gamma$, $\lambda \in \Lambda$ and $(x, y, z, w)\in \Sigma^4$, we have
\begin{align*}
H(x, y, z, w)&=H(\gamma x, y, z, w)=H(x, \gamma y, z, w)=H(x, y, \gamma z, w)=H(x, y, z, \gamma w)\\
&=H(\lambda x, \lambda y, \lambda z, \lambda w)
\end{align*}
because $G$ is abelian.
It follows that $H$ induces a Borel map $\bar{H}\colon (\Sigma /\Gamma)^4\to G$ which is invariant under the diagonal action of $\Lambda$ on $(\Sigma /\Gamma)^4$.
This action of $\Lambda$ is ergodic because the action $\Lambda \c \Sigma /\Gamma$ is weakly mixing.
The map $\bar{H}$ is therefore essentially constant, and so is $H$.
\end{proof}

Let $g_0\in G$ denote the essential value of the map $H$.
For $z\in \Sigma$, we define a Borel map $F_z\colon \Sigma^2\to G$ by
\[F_z(x, y)=\Psi([x, z])\Psi([y, z])^{-1}\]
for $(x, y)\in \Sigma^2$.
By Claim \ref{claim-h} and Fubini's theorem, for a.e.\ $x\in \Sigma$ and any $\lambda \in \Lambda$, we have
\[F_z(x, y)^{-1}F_w(x, y)=g_0=F_z(\lambda^{-1}x, y)^{-1}F_w(\lambda^{-1}x, y)\]
for a.e.\ $(y, z, w)\in \Sigma^3$.
For a.e.\ $x\in \Sigma$ and any $\lambda \in \Lambda$, the equality
\[F_z(\lambda^{-1}x, y)F_z(x, y)^{-1}=\Psi([\lambda^{-1}x, z])\Psi([x, z])^{-1}\]
for any $y\in \Sigma$ implies that the Borel map $\Sigma \ni z\mapsto \Psi([\lambda^{-1}x, z])\Psi([x, z])^{-1}\in G$ is essentially constant.
We define $\rho_x(\lambda)\in G$ to be the essential value of this map.
For a.e.\ $x\in \Sigma$ and any $\lambda_1, \lambda_2\in \Lambda$, choosing some $z\in \Sigma$, we have
\begin{align*}
\rho_x(\lambda_1\lambda_2^{-1})&=\Psi([\lambda_2\lambda_1^{-1}x, z])\Psi([x, z])^{-1}=\Psi([\lambda_1^{-1}x, \lambda_2^{-1}z])\Psi([x, z])^{-1}\\
&=\Psi([\lambda_1^{-1}x, \lambda_2^{-1}z])\Psi([x, \lambda_2^{-1}z])^{-1}\Psi([x, \lambda_2^{-1}z])\Psi([\lambda_2^{-1}x, \lambda_2^{-1}z])^{-1}\\
&=\rho_x(\lambda_1)\rho_x(\lambda_2)^{-1}.
\end{align*}
For a.e.\ $x\in \Sigma$, the map $\rho_x\colon \Lambda \to G$ is therefore a homomorphism.
There exists an element $x_0$ of $\Sigma$ such that $\rho_{x_0}\colon \Lambda \to G$ is a homomorphism and we have the equality $\rho_{x_0}(\lambda)=\Psi([\lambda^{-1}x_0, x])\Psi([x_0, x])^{-1}$ for any $\lambda \in \Lambda$ and a.e.\ $x\in \Sigma$.
We define a Borel map $\Phi \colon \Sigma \to G$ by $\Phi(x)=\Psi([x_0, x])^{-1}$ for $x\in \Sigma$.
The map $\Phi \colon \Sigma \to (G, \pi, \rho)$ is then almost $(\Gamma \times \Lambda)$-equivariant.
\end{proof}

%%%%%%%%%%%%%%%%%%%%%%%%%%%%%%%%%%%%%%%%%%%%%

\section{Elliptic subgroupoids}\label{sec-ell}

We set $\Gamma =\bs(p, q)$ with $2\leq |p|\leq |q|$.
Let $T$ be the Bass-Serre tree associated with $\Gamma$.
Suppose that we have a measure-preserving action of $\Gamma$ on a standard finite measure space $(X, \mu)$.
We set $\cal{G}=\Gamma \ltimes X$ and define a homomorphism $\rho \colon \cal{G}\to \Gamma$ by $\rho(\gamma, x)=\gamma$ for $(\gamma, x)\in \cal{G}$.
Let $A$ be a Borel subset of $X$ with positive measure.
We say that a subgroupoid $\cal{S}$ of $(\cal{G})_A$ is {\it elliptic} if there exists an $(\cal{S}, \rho)$-invariant Borel map from $A$ into $V(T)$.

For any $v\in V(T)$, the subgroupoid $\Gamma_v\ltimes X$ of $\cal{G}$ is of infinite type, amenable and quasi-normal in $\cal{G}$ because $\Gamma_v$ is quasi-normal in $\Gamma$.
Conversely, Theorem \ref{thm-ell} below says that these algebraic properties imply ellipticity.
Let us say that a discrete measured groupoid $\cal{H}$ on a standard finite measure space $(Y, \nu)$ is {\it nowhere amenable} if for any Borel subset $B$ of $Y$ with positive measure, $(\cal{H})_B$ is not amenable.

\begin{thm}\label{thm-ell}
We set $\Gamma =\bs(p, q)$ with $2\leq |p|\leq |q|$.
Suppose that we have a measure-preserving action of $\Gamma$ on a standard finite measure space $(X, \mu)$, and set $\cal{G}=\Gamma \ltimes X$.
Let $A$ be a Borel subset of $X$ with positive measure, and let $\cal{S}$ and $\cal{T}$ be subgroupoids of $(\cal{G})_A$ such that we have $\cal{S}<\cal{T}$; $\cal{S}$ is amenable and is quasi-normal in $\cal{T}$; and $\cal{T}$ is nowhere amenable.
Then $\cal{S}$ is elliptic.
\end{thm}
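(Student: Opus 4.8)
The plan is to exploit amenability of $\cal{S}$ to obtain an $(\cal{S},\rho)$-invariant map into probability measures on a compactification of the Bass--Serre tree $T$, to show by an equivariant geometric analysis that such a map forces $\cal{S}$ to be elliptic unless $\cal{S}$ ``lives at infinity'', and finally to exclude the latter possibility by combining quasi-normality of $\cal{S}$ in $\cal{T}$ with nowhere amenability of $\cal{T}$. Concretely, let $\bar{T}=V(T)\sqcup \partial T$ be the end compactification of $T$. Since $\Gamma$ acts vertex-transitively and each vertex has degree $|p|+|q|$, the tree $T$ is locally finite and $\bar{T}$ is a compact metrizable $\Gamma$-space. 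Amenability of $\cal{S}$ and Proposition \ref{prop-ame-basic}, applied to the groupoid $\cal{S}$ on $(A,\mu|_A)$ with $K=\bar{T}$, yield an $(\cal{S},\rho)$-invariant Borel map $\varphi\colon A\to M(\bar{T})$; the goal is to produce from it an $(\cal{S},\rho)$-invariant Borel map $A\to V(T)$.

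I would analyze $\varphi$ pointwise by equivariant, Borel constructions on measures on $\bar{T}$. Let $w(x)$ be the largest weight of an atom of $\varphi(x)$ and let $F(x)$ be the (finite) set of atoms of that weight. The function $w$ and the discrete data ``$\varphi(x)$ is non-atomic'', ``$F(x)$ meets $V(T)$'' and ``$|F(x)|$'' are $\cal{S}$-invariant, so the corresponding subsets of $A$ are $\cal{S}$-invariant and can be treated separately. If $\varphi(x)$ is non-atomic it is supported on $\partial T$, and the set of vertices from which every direction carries $\varphi(x)$-measure at most $1/2$ spans a nonempty \emph{finite} subtree (finite because, by non-atomicity, the mass of a shadow tends to $0$ along every ray), whose canonical center is a vertex or the midpoint of an edge; as $\Gamma$ preserves the orientation of $T$, in the latter case the origin of that edge is canonical, so we obtain an $(\cal{S},\rho)$-invariant map into $V(T)$. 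If $F(x)$ meets $V(T)$, the canonical center of the finite subtree spanned by $F(x)\cap V(T)$ gives, in the same way, a map into $V(T)$. If $F(x)\subset \partial T$: when $|F(x)|\geq 3$ the finite ``core'' subtree spanned by $F(x)$ has a canonical center, again giving a map into $V(T)$; when $|F(x)|=2$ we obtain an $(\cal{S},\rho)$-invariant map into the space $\cal{L}$ of bi-infinite geodesics of $T$; when $|F(x)|=1$, an $(\cal{S},\rho)$-invariant map into $\partial T$. Gluing the maps into $V(T)$ over all but the last two cases, it remains to rule out the existence of a positive-measure $\cal{S}$-invariant Borel set $A'\subset A$ carrying an $(\cal{S},\rho)$-invariant Borel map $\beta\colon A'\to \partial T$ or $\beta\colon A'\to \cal{L}$.

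Here $\cal{T}$ enters. The stabilizer in $\bs(p,q)$ of an end of $T$, and of a line of $T$, is metabelian, hence amenable; equivalently, the actions $\Gamma\c\partial T$ and $\Gamma\c\cal{L}$ are amenable, being derived from the boundary action of a group acting on a tree. Consequently any subgroupoid of $\cal{G}$ admitting a $\rho$-invariant Borel map into $M(\partial T)$ or into $M(\cal{L})$ is amenable, and it therefore suffices to promote $\beta$ to a $(\cal{T},\rho)$-invariant Borel map, on some positive-measure Borel subset $B\subset A'$, into $M(\partial T)$ (resp. $M(\cal{L})$): then $(\cal{T})_B$ is amenable, contradicting nowhere amenability of $\cal{T}$. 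For the promotion I would use quasi-normality: by Lemma \ref{lem-qn} one may choose a countable family $\{\phi_n\}\subset \qn_{\cal{T}}(\cal{S})$, closed under composition and inverse, with $\cal{T}=\bigcup_n \phi_n(D_{\phi_n})$ up to null sets. For each $n$, both $\beta|_{R_{\phi_n}}$ and its $\phi_n$-transport are invariant under $(\cal{S})_{R_{\phi_n}}\cap \cal{S}^{\phi_n}$, which has finite index in $(\cal{S})_{R_{\phi_n}}$; after passing to a countable Borel partition making these indices bounded, Lemma \ref{lem-finite-index} converts these into $((\cal{S})_{R_{\phi_n}},\rho)$-invariant Borel maps into the finite subsets of $\partial T$ (resp. $\cal{L}$) containing the values of $\beta$ and of its transport. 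Transporting back along $\phi_n$, averaging the normalized counting measures on these finite sets, and combining the contributions over $n$ then yields the desired $(\cal{T},\rho)$-invariant Borel map into $M(\partial T)$ (resp. $M(\cal{L})$). This contradiction shows the last two cases cannot occur, so $\cal{S}$ is elliptic.

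The tree combinatorics of the second paragraph and the amenability facts opening the third are routine. I expect the main obstacle to be the promotion of $\beta$ from $\cal{S}$ to $\cal{T}$: one must control the cardinalities produced by Lemma \ref{lem-finite-index} uniformly along the family $\{\phi_n\}$ so that the combined measure is well defined --- equivalently, so that the orbit of $\beta(x)$ under transport by elements of $\qn_{\cal{T}}(\cal{S})$ stays tame --- and this is exactly where quasi-normality, the finiteness of the commensuration indices, and the convexity of $M(\partial T)$ and $M(\cal{L})$ all get used, together with a careful reduction to pieces of $A'$ on which the relevant indices are bounded.
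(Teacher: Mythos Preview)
Your overall architecture matches the paper's: amenability gives an $(\cal{S},\rho)$-invariant map into measures on (the boundary of) the tree, tree geometry reduces to a map into $\partial T$ or into pairs of ends, and quasi-normality together with nowhere amenability of $\cal{T}$ supplies the contradiction. The case analysis in your second paragraph is a correct variant of the paper's Lemma~\ref{lem-two}; the paper works directly with $M(\partial T)$ rather than $M(\bar T)$, but this is cosmetic.

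The genuine gap is precisely where you flag it: the promotion of $\beta$ from $\cal{S}$ to $\cal{T}$. The averaging you sketch does not produce a $(\cal{T},\rho)$-invariant map. If you set, say, $\Psi(x)=\sum_n c_n\,\delta_{\rho(\phi_n(x))^{-1}\beta(r\circ\phi_n(x))}$ for some weights $c_n$, then acting by $g\in\cal{T}$ replaces each $\phi_n(x)$ by $\phi_n(x)g^{-1}$, which lies in the $\cal{S}$-coset of some $\phi_{m}(y)$; but the induced correspondence $n\mapsto m$ is neither a bijection nor weight-preserving, so $\rho(g)\Psi(x)\neq\Psi(y)$ in general. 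No choice of weights fixes this, because the index $[(\cal{S})_{R_{\phi_n}}:(\cal{S})_{R_{\phi_n}}\cap\cal{S}^{\phi_n}]$ is finite for each $n$ but not uniformly bounded. A related slip: a $(\cal{T},\rho)$-invariant map into $M(\partial T)$ would not by itself force amenability of $(\cal{T})_B$; you need the target to be an amenable $\Gamma$-space such as $\partial T$ or $\partial_2T$, not $M(\partial T)$.

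The paper circumvents the averaging entirely with a maximality argument. Working on a set $B$ where no $(\cal{S},\rho)$-invariant map to $V(T)$ exists on any positive-measure subset, Lemma~\ref{lem-two} shows every $(\cal{S},\rho)$-invariant map $B\to M(\partial T)$ is supported on at most two points, so all such maps land in $\partial_2T$. Among them one picks $\varphi_0$ maximizing the measure of the two-point locus. The key rigidity (Lemma~\ref{lem-fi-inv}) is that any $(\cal{S}_0,\rho)$-invariant map into $\partial_2T$, for $\cal{S}_0$ of finite index in $(\cal{S})_{B_1}$, has values contained in $\varphi_0$: inflating via Lemma~\ref{lem-finite-index} gives an $(\cal{S},\rho)$-invariant finite-subset map, which by Lemma~\ref{lem-two} already lies in $\partial_2T$, and by maximality must sit inside $\varphi_0$. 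Now for $\psi\in\qn_{(\cal{T})_B}((\cal{S})_B)$, the transport $\chi(x)=\rho(\psi(x))^{-1}\varphi_0(r\circ\psi(x))$ is invariant under the finite-index subgroupoid $(\cal{S})_{D_\psi}\cap U_\psi^{-1}((\cal{S})_{R_\psi})$, hence $\chi\subset\varphi_0$; applying the same to the inverse of $\psi$ gives the reverse inclusion, so $\chi=\varphi_0$. Thus $\varphi_0$ is $(\cal{T},\rho)$-invariant into $\partial_2T$, and amenability of $\Gamma\curvearrowright\partial_2T$ yields the contradiction. The point is that maximality replaces the uniform control over $n$ that your averaging would need.
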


Before proving this theorem, we prepare the following:

\begin{notation}\label{not-tree}
For a locally compact Polish space $K$, we denote by $M(K)$ the space of probability measures on $K$ equipped with the weak* topology.
Let $V_f(T)$ denote the set of non-empty finite subsets of $V(T)$.
Let $S(T)$ denote the set of simplices of $T$.
We have the $\aut(T)$-equivariant map $C\colon V_f(T)\to S(T)$ associating to each element of $V_f(T)$ its barycenter (see \cite[Section 4.2]{kida-exama} for a precise definition).
We also have the $\aut(T)$-equivariant Borel map $M\colon M(V(T))\to V_f(T)$ associating to each $\nu \in M(V(T))$ the set of all elements of $V(T)$ attaining the maximal value of the function $\nu$ on $V(T)$.
We set
\[\delta T=\{\, (x, y, z)\in (\partial T)^3\mid x\neq y\neq z\neq x\,\},\]
where $\Gamma$ acts by the formula $\gamma (x, y, z)=(\gamma x, \gamma y, \gamma z)$ for $\gamma \in \Gamma$ and $(x, y, z)\in \delta T$.
We define a $\Gamma$-equivariant Borel map $G\colon \delta T\rightarrow V(T)$ so that for each $(x, y, z)\in \delta T$, $G(x, y, z)$ is the intersection of the three geodesics in $T$ joining two of $x$, $y$ and $z$.
We define $\partial_2 T$ as the quotient of $\partial T\times \partial T$ by the action of the symmetric group of two letters that exchanges the coordinates.
\end{notation}

\begin{proof}[Proof of Theorem \ref{thm-ell}]
This proof is similar to the proof of \cite[Lemmas 4.2, 4.4 and 4.5]{kida-exama} except for using quasi-normality in place of normality.
The argument in \cite{kida-exama} partially depends on the proof of \cite[Lemma 3.2]{adams} (see also \cite[Chapter 2]{hjorth-kechris}).
Let $\rho \colon \cal{G}\to \Gamma$ be the homomorphism defined by $\rho(\gamma, x)=\gamma$ for $(\gamma, x)\in \cal{G}$.
For a Borel subset $B$ of $A$ with positive measure, we define $I_B$ as the set of all $(\cal{S}, \rho)$-invariant Borel maps from $B$ into $V(T)$.
It is enough to deduce a contradiction under the assumption that there exists a Borel subset $B$ of $A$ with positive measure such that for any Borel subset $B_1$ of $B$ with positive measure, $I_{B_1}$ is empty.
Since $\cal{S}$ is amenable, there exists an $(\cal{S}, \rho)$-invariant Borel map from $A$ into $M(\partial T)$.

\begin{lem}\label{lem-two}
Let $B_1$ be a Borel subset of $B$ with positive measure, and let $\varphi \colon B_1\to M(\partial T)$ be an $(\cal{S}, \rho)$-invariant Borel map.
Then for a.e.\ $x\in B_1$, the measure $\varphi(x)$ is supported on at most two points of $\partial T$.
\end{lem}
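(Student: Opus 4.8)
The plan is to argue by contradiction, following the classical scheme (going back to Adams' proof of \cite[Lemma 3.2]{adams}) that an amenable groupoid cannot non-trivially act on the space of triples of distinct boundary points of a tree without reducing the problem to vertices. Suppose on a positive-measure set the measure $\varphi(x)$ gives positive mass to at least three distinct points of $\partial T$. First I would pass to a positive-measure subset $B_1'\subset B_1$ on which $\varphi(x)$ has at least three atoms; decomposing further, one may arrange that the three largest atoms are well-separated in mass, so that the map sending $x$ to the (ordered or unordered) triple of top-three atoms is a Borel, $(\cal{S},\rho)$-invariant map $B_1'\to \delta T$ (or into the appropriate symmetrized quotient, which one then splits into finitely many Borel pieces to get honestly into $\delta T$). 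Composing with the $\Gamma$-equivariant map $G\colon \delta T\to V(T)$ from Notation \ref{not-tree}, which assigns to a triple the median vertex, produces an $(\cal{S},\rho)$-invariant Borel map $B_1'\to V(T)$. This contradicts the standing assumption that $I_{B_1'}$ is empty, completing the case of three or more atoms.

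It remains to exclude the possibility that $\varphi(x)$ has no atoms, or exactly one or two atoms but is not purely supported on $\le 2$ points — i.e., that $\varphi(x)$ has a non-atomic part (possibly together with one or two atoms) on a positive-measure set. Here the plan is to use the non-atomic part: the barycenter-type constructions do not directly apply, so instead I would exploit that a non-atomic probability measure on $\partial T$ pushes forward, under the map $\partial T\times\partial T\to \partial_2 T$ and then to the space of geodesics, to give enough ``spread'' that one can extract a measurable family of triples of distinct points. Concretely, on the set where $\varphi(x)$ is non-atomic one can select (Borel-measurably in $x$) a triple of distinct points in the support — for instance by partitioning $\partial T$ into finitely many clopen pieces and choosing points in three pieces that each carry positive $\varphi(x)$-mass, refining the partition along a countable basis so that the choice is Borel and $(\cal S,\rho)$-invariant up to the equivariant reshuffling of the finitely many pieces. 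Again applying $G$ yields an invariant map into $V(T)$, a contradiction. The mixed cases (one or two atoms plus a non-atomic remainder) are handled the same way, using the non-atomic part to manufacture the third point.

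The main obstacle I expect is the measurable selection bookkeeping: ensuring that the assignment $x\mapsto(\text{triple in }\partial T)$ can be made Borel while remaining $(\cal S,\rho)$-invariant. The point is that $(\cal S,\rho)$-invariance of $\varphi$ only says $\rho(g)\varphi(s(g))=\varphi(r(g))$, so any canonically-defined gadget (top three atoms, or a canonical triple extracted from the non-atomic part via a fixed countable clopen partition of $\partial T$ and the $\Gamma$-action permuting the pieces) will again be $(\cal S,\rho)$-invariant; the care is in chopping $B_1$ into countably many Borel pieces on which the relevant ``type'' of $\varphi(x)$ (number of large atoms, which partition pieces carry mass) is constant, and checking that on each such piece the extracted triple lands in $\delta T$ rather than on a diagonal. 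This is exactly the step where the argument of \cite[Lemmas 4.2, 4.4, 4.5]{kida-exama} is invoked, with \emph{quasi}-normality of $\cal S$ in $\cal T$ playing no role yet — it enters only later, when one uses $\cal{T}$ nowhere amenable to upgrade ``at most two points'' to ``exactly one point'' (i.e.\ to produce a genuine vertex-valued invariant map and contradict nowhere amenability). For the present lemma, only amenability of $\cal S$ and emptiness of all $I_{B_1}$ are used.
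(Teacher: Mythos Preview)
Your approach has a real gap in the non-atomic case (and in the atomic case with ties). The problem is exactly where you locate it: there is no $(\cal{S},\rho)$-invariant way to \emph{select} a specific triple of boundary points from a non-atomic $\varphi(x)$. Your proposed fix via a fixed clopen partition of $\partial T$ does not work, because $\rho(g)\in\Gamma$ moves points of $\partial T$ and hence permutes the partition pieces non-trivially; any choice rule based on that partition (``pick points from the first three pieces carrying positive mass'') will fail to be equivariant. Chopping $B_1$ into pieces where the ``type'' is constant does not help, since the type itself is not $(\cal{S},\rho)$-invariant. The same issue already arises for atoms: if $\varphi(x)$ has four atoms of equal mass, there is no canonical top three, and no amount of decomposing the base makes the masses well-separated.

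The paper's argument sidesteps all of this by never selecting a triple. If $\varphi(x)$ is not supported on at most two points, then the product measure $\varphi(x)^3$ on $(\partial T)^3$ gives positive mass to $\delta T$ --- this holds uniformly, whether $\varphi(x)$ is purely atomic, non-atomic, or mixed. Normalizing the restriction yields an $(\cal{S},\rho)$-invariant Borel map $B_2\to M(\delta T)$. Now push forward by the median map $G\colon\delta T\to V(T)$ to obtain an invariant map into $M(V(T))$; apply the map $M$ of Notation~\ref{not-tree} (send a measure to its set of atoms of maximal weight) to land in $V_f(T)$; apply the barycenter map $C$ to land in $S(T)$; and finally, since $\Gamma$ acts on $T$ without inversions, extract a vertex. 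Every step is a canonical $\aut(T)$-equivariant construction applied to $\varphi(x)$, so $(\cal{S},\rho)$-invariance is automatic and no measurable selection is needed. This is the key idea you are missing: stay at the level of measures until a canonical equivariant map to a discrete target is available.
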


\begin{proof}
If the lemma were not true, then there would exist a Borel subset $B_2$ of $B_1$ with positive measure such that for a.e.\ $x\in B_2$, the restriction of the measure $\varphi(x)^3$ on $(\partial T)^3$ to $\delta T$ is non-zero.
Composing the map assigning to each $x\in B_2$ the normalization of the restriction of $\varphi(x)^3$ to $\delta T$, we obtain an $(\cal{S}, \rho)$-invariant Borel map from $B_2$ into $M(\delta T)$.
Composing the maps $G$, $M$ and $C$, we obtain an $(\cal{S}, \rho)$-invariant Borel map from $B_2$ into $S(T)$.
We also obtain an $(\cal{S}, \rho)$-invariant Borel map from $B_2$ into $V(T)$ because $\Gamma$ acts on $T$ without inversions.
This contradicts our assumption that $I_{B_2}$ is empty.
\end{proof}

Lemma \ref{lem-two} implies that any $(\cal{S}, \rho)$-invariant Borel map $\varphi \colon B\to M(\partial T)$ induces an $(\cal{S}, \rho)$-invariant Borel map from $B$ into $\partial_2T$.
We define $J$ as the set of all $(\cal{S}, \rho)$-invariant Borel maps from $B$ into $\partial_2T$, which is non-empty.
For each $\varphi \in J$, we set
\[S_{\varphi}=\{\, x\in B\mid |{\rm supp}(\varphi(x))|=2\,\},\]
where ${\rm supp}(\nu)$ denotes the support of a measure $\nu$.
There exists an element $\varphi_0$ of $J$ with $\mu(S_{\varphi_0})=\sup_{\varphi \in J}\mu(S_{\varphi})$.

\begin{lem}\label{lem-fi-inv}
In the above notation, let $B_1$ be a Borel subset of $B$ with positive measure.
Let $\cal{S}_0$ be a subgroupoid of $(\cal{S})_{B_1}$ with $[(\cal{S})_{B_1}: \cal{S}_0]_x<\infty$ for a.e.\ $x\in B_1$.
Then for any $(\cal{S}_0, \rho)$-invariant Borel map $\varphi \colon B_1\to \partial_2 T$, we have $\varphi(x)\subset \varphi_0(x)$ for a.e.\ $x\in B_1$.
\end{lem}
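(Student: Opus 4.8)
The plan is to exploit the maximality of $\varphi_0$ in $J$ to force compatibility between $\varphi$ and $\varphi_0$. First I would pass from the $(\cal{S}_0, \rho)$-invariant map $\varphi \colon B_1 \to \partial_2 T$ to an $(\cal{S})_{B_1}$-invariant finite-subset-valued map: since $[(\cal{S})_{B_1} : \cal{S}_0]_x < \infty$ for a.e.\ $x \in B_1$, Lemma \ref{lem-li-wd} (applied to a partition of $B_1$ on which the index is constant) shows the index function is essentially constant on the ergodic components of $(\cal{S})_{B_1}$, so after restricting $B_1$ we may assume it equals a finite number $N$. Then choose Borel maps $\phi_1, \dots, \phi_N \colon B_1 \to \cal{S}$ giving coset representatives of $\cal{S}_0$ in $(\cal{S})_{B_1}$, and apply Lemma \ref{lem-finite-index} with the action of $\Gamma$ on $\partial_2 T$ (so $S = \partial_2 T$, $\cal{F}(S)$ the finite subsets) to obtain an $((\cal{S})_{B_1}, \rho)$-invariant Borel map $\Phi \colon B_1 \to \cal{F}(\partial_2 T)$ with $\varphi(x) \in \Phi(x)$ for a.e.\ $x$. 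After extending via Lemma \ref{lem-inv-ext}, one may assume $\Phi$ is defined on all of $B$ (the $(\cal{S})_B$-saturation of $B_1$ is a positive-measure subset of $B$, and on $B \setminus B_1$ we can set $\Phi$ to be, say, the image of $\varphi_0$).

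Next I would extract a single point from $\Phi$ to contradict maximality if $\varphi(x) \not\subset \varphi_0(x)$ on a positive-measure set. The idea: if $\Phi(x)$ — a finite set of unordered pairs of boundary points, hence a finite subset of $\partial T$ by taking the union of supports — together with $\varphi_0(x)$ fails the inclusion $\varphi(x) \subset \varphi_0(x)$ on a positive-measure Borel set $B_3 \subset B$, then the finite subset $\varphi_0(x) \cup \bigcup \Phi(x)$ of $\partial T$ has more than two points there (on $S_{\varphi_0}$, and more than $1$ off it if $\varphi_0(x)$ is a single point not containing a point of some pair in $\Phi(x)$). One feeds the normalization of the uniform measure on the triples drawn from this finite set — more precisely the product measure on $(\partial T)^3$ restricted to $\delta T$, which is non-zero exactly when the set has $\geq 3$ points — through $G$, $M$, $C$ to produce an $(\cal{S}, \rho)$-invariant map $B_3 \to V(T)$, contradicting that $I_{B_3}$ is empty. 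This handles the points where the union has three or more elements. The remaining case is where on a positive-measure set the union $\varphi_0(x) \cup \bigcup \Phi(x)$ has exactly two points but $\varphi(x) \ne \varphi_0(x)$: then $\varphi_0(x)$ is a single point, and we can enlarge $\varphi_0$ by replacing it on this set with the two-point set (which is $((\cal{S})_B$-invariant after the extension, using that $\Phi$ and $\varphi_0$ are both invariant and the union of invariant maps is invariant), strictly increasing $\mu(S_{\varphi_0})$ — again a contradiction.

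The main obstacle I expect is the bookkeeping around the case distinction in the last step: carefully separating ``$\varphi_0(x)$ has two points'' from ``$\varphi_0(x)$ has one point'', and in the latter verifying that the enlarged map still lands in $\partial_2 T$ (i.e.\ that $\bigcup\Phi(x)$ contributes a genuinely new boundary point making a two-point set, rather than a three-point configuration already excluded by the previous paragraph) and is genuinely $((\cal{S})_B, \rho)$-invariant. One must also be careful that all the set manipulations — taking $\bigcup \Phi(x)$, forming $\varphi_0(x) \cup \bigcup\Phi(x)$, normalizing restricted product measures — are Borel in $x$; this is routine given that $\cal{F}(\partial_2 T)$ and $M(\delta T)$ carry standard Borel structures and the maps $G$, $M$, $C$ of Notation \ref{not-tree} are Borel and $\Gamma$-equivariant. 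Once the inclusion is established on the saturation-extended domain, restricting back to $B_1$ gives the statement.
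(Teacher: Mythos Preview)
Your approach is correct and follows the same strategy as the paper. The paper short-circuits your case analysis by invoking Lemma~\ref{lem-two} as a black box: once the $(\cal{S},\rho)$-invariant finite-subset map $x\mapsto\bigcup\Phi(x)\in\cal{F}(\partial T)$ is built, Lemma~\ref{lem-two} (applied to the associated uniform probability measure) forces it to land in $\partial_2 T$, and then maximality of $\mu(S_{\varphi_0})$ gives the inclusion directly---dissolving the bookkeeping you were worried about (minor point: for constancy of the index on pieces you want Lemma~\ref{lem-index}(i), not Lemma~\ref{lem-li-wd}).
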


\begin{proof}
We naturally identify $\partial_2 T$ with a subset of $\cal{F}(\partial T)$, the space of non-empty finite subsets of $\partial T$.
Let $\varphi \colon B_1\to \partial_2 T$ be an $(\cal{S}_0, \rho)$-invariant Borel map.
By Lemma \ref{lem-finite-index}, there exists an $(\cal{S}, \rho)$-invariant Borel map $\Phi \colon B_1\to \cal{F}(\partial T)$ such that $\varphi(x)\subset \Phi(x)$ for a.e.\ $x\in B_1$.
Since each element of $\cal{F}(\partial T)$ naturally associates a probability measure on $\partial T$, applying Lemma \ref{lem-two}, we have $\Phi(x)\in \partial_2 T$ for a.e.\ $x\in B_1$.
By the maximality of $\mu(S_{\varphi_0})$, we have $\Phi(x)\subset \varphi_0(x)$ for a.e.\ $x\in B_1$.
It thus turns out that $\varphi(x)\subset \varphi_0(x)$ for a.e.\ $x\in B_1$.
The lemma is proved.
\end{proof}

We claim that the map $\varphi_0$ is $(\cal{T}, \rho)$-invariant.
Pick $\psi \in \qn_{(\cal{T})_B}((\cal{S})_B)$.
It suffices to show that for a.e.\ $x\in D_{\psi}$, we have the equality $\rho(\psi(x))\varphi_0(x)=\varphi_0(r\circ \psi(x))$.
We define a Borel map $\chi \colon D_{\psi}\to \partial_2 T$ by $\chi(x)=\rho(\psi(x))^{-1}\varphi_0(r\circ \psi(x))$ for $x\in D_{\psi}$.
For a.e.\ $g\in (\cal{S})_{D_{\psi}}\cap U_{\psi}^{-1}((\cal{S})_{R_{\psi}})$ with $x=s(g)$ and $y=r(g)$, we have
\[\rho(g)\chi(x)=\rho(\psi(y))^{-1}\rho(\psi(y)g\psi(x)^{-1})\varphi_0(r\circ \psi(x))=\rho(\psi(y))^{-1}\varphi_0(r\circ \psi(y))=\chi(y),\]
where the second equality holds because $U_{\psi}(g)=\psi(y)g\psi(x)^{-1}$ belongs to $\cal{S}$.
Since $\psi$ is in $\qn_{(\cal{T})_B}((\cal{S})_B)$, we have $[(\cal{S})_{D_{\psi}}: (\cal{S})_{D_{\psi}}\cap U_{\psi}^{-1}((\cal{S})_{R_{\psi}})]_x<\infty$ for a.e.\ $x\in D_{\psi}$.
By Lemma \ref{lem-fi-inv}, the inclusion $\chi(x)\subset \varphi_0(x)$ holds for a.e.\ $x\in D_{\psi}$.

Let $\zeta \in \qn_{(\cal{T})_B}((\cal{S})_B)$ be the inverse of $\psi$.
We next define a Borel map $\omega \colon D_{\zeta}\to \partial_2 T$ by $\omega(y)=\rho(\zeta(y))^{-1}\varphi_0(r\circ \zeta(y))$ for $y\in D_{\zeta}$.
The argument in the last paragraph implies the inclusion $\omega(y)\subset \varphi_0(y)$ for a.e.\ $y\in D_{\zeta}$.
For a.e.\ $x\in D_{\psi}$, we have
\begin{align*}
\chi(x)&=\rho(\psi(x))^{-1}\varphi_0(r\circ \psi(x))\subset \varphi_0(x)=\rho(\psi(x))^{-1}\omega(r\circ \psi(x))\\
&\subset \rho(\psi(x))^{-1}\varphi_0(r\circ \psi(x))=\chi(x)
\end{align*}
and thus $\chi(x)=\varphi_0(x)$.
The claim is proved.

The action of $\Gamma$ on $\partial_2T$ is amenable in a measure-theoretic sense by \cite[Corollary 3.4]{kida-exama}.
The existence of the $(\cal{T}, \rho)$-invariant Borel map $\varphi_0\colon B\to \partial_2T$ therefore implies that $(\cal{T})_B$ is amenable by \cite[Proposition 2.5]{kida-exama}.
This is a contradiction because $\cal{T}$ is nowhere amenable.
\end{proof}

In the following theorem, we obtain a result similar to Theorem \ref{thm-ell} for a discrete group having an infinite amenable normal subgroup with the quotient hyperbolic.

\begin{thm}\label{thm-ell-hyp}
Let $\Delta$ be a discrete group, and let $N$ be an infinite, amenable and normal subgroup of $\Delta$ such that $\Delta /N$ is non-elementarily hyperbolic.
Suppose that we have a measure-preserving action of $\Delta$ on a standard finite measure space $(X, \mu)$.
We set $\cal{G}=\Delta \ltimes X$.
Let $A$ be a Borel subset of $X$ with positive measure, and let $\cal{S}$ and $\cal{T}$ be subgroupoids of $(\cal{G})_A$ such that we have $\cal{S}<\cal{T}$; $\cal{S}$ is amenable and is quasi-normal in $\cal{T}$; and $\cal{T}$ is nowhere amenable.

Then there exist a countable Borel partition $A=\bigsqcup_n A_n$ and a subgroup $L_n$ of $\Delta$ such that for each $n$, we have $N<L_n$, $[L_n: N]<\infty$ and $(\cal{S})_{A_n}<(L_n\ltimes X)_{A_n}$.
\end{thm}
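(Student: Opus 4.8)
The plan is to follow the proof of Theorem \ref{thm-ell} almost verbatim, with the Bass--Serre tree $T$ replaced by the hyperbolic group $Q=\Delta/N$, the boundary $\partial T$ by the Gromov boundary $\partial Q$, the vertex set $V(T)$ by the countable Borel space $\cal{F}(Q)$ of non-empty finite subsets of $Q$ (with $Q$ acting by left translation), and $\partial_2T$ by $\partial_2Q$, the quotient of $\partial Q\times\partial Q$ by the coordinate flip (which contains the diagonal). Write $\pi\colon\Delta\to Q$ for the quotient map and set $\bar\rho=\pi\circ\rho\colon\cal{G}\to Q$, where $\rho(\delta,x)=\delta$; recall $\partial Q$ is compact metrizable and $Q$ acts on it continuously.

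First I would reduce to the following: $(\ast)$ every positive-measure Borel subset $B\subset A$ contains a positive-measure Borel subset $B_1$ admitting an $(\cal{S},\bar\rho)$-invariant Borel map into $\cal{F}(Q)$. Granting $(\ast)$, an exhaustion argument gives a countable Borel partition $A=\bigsqcup_n B^{(n)}$ together with $(\cal{S},\bar\rho)$-invariant Borel maps $\Phi_n\colon B^{(n)}\to\cal{F}(Q)$. Fix $n$ and refine $B^{(n)}$ further: partition it according to the $Q$-orbit of $\Phi_n(x)$, and, on the part where $\Phi_n(x)$ lies in the orbit of a chosen representative $E$ and $\Phi_n(x)=q(x)E$ for a Borel selection $q(x)\in Q$, partition again according to the coset $q(x)\stab_Q(E)\in Q/\stab_Q(E)$. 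On each resulting piece $A_m$, invariance forces $\bar\rho((\cal{S})_{A_m})$ into $q_0\stab_Q(E)q_0^{-1}$, a finite subgroup of $Q$, so that $(\cal{S})_{A_m}<(L_m\ltimes X)_{A_m}$ with $L_m=\pi^{-1}(q_0\stab_Q(E)q_0^{-1})$; this $L_m$ contains $N$ with $[L_m:N]<\infty$. Relabelling the pieces gives the partition claimed in the theorem. This reduction is elementary but must be carried out measurably.

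It remains to prove $(\ast)$; suppose it fails, witnessed by a positive-measure $B\subset A$ such that no positive-measure $B_1\subset B$ carries an $(\cal{S},\bar\rho)$-invariant Borel map into $\cal{F}(Q)$, and argue for a contradiction as in Theorem \ref{thm-ell}. Since $\cal{S}$ is amenable, Proposition \ref{prop-ame-basic}, applied to $\cal{S}$, $\bar\rho$ and the compact $Q$-space $\partial Q$, yields an $(\cal{S},\bar\rho)$-invariant Borel map $\varphi\colon A\to M(\partial Q)$. The crucial step is the analogue of Lemma \ref{lem-two}: over $B$, $\varphi(x)$ is supported on at most two points of $\partial Q$. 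If not, on a positive-measure $B_2\subset B$ the restriction of $\varphi(x)^{\otimes3}$ to the space $\partial^{(3)}Q$ of triples of pairwise distinct boundary points is non-zero, and after normalizing one gets an $(\cal{S},\bar\rho)$-invariant Borel map $B_2\to M(\partial^{(3)}Q)$. Now $\delta$-hyperbolicity of a Cayley graph of $Q$ provides a $Q$-equivariant Borel ``center of the ideal triangle'' map $\partial^{(3)}Q\to\cal{F}(Q)$ (cf.\ \cite{adams}, \cite{hjorth-kechris}); since $\cal{F}(Q)$ is countable, pushing the measure forward along this map, passing to the finite set of atoms of maximal mass, and taking the union of that set produces an $(\cal{S},\bar\rho)$-invariant Borel map $B_2\to\cal{F}(Q)$, contradicting the choice of $B$. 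Hence $\varphi$ induces an $(\cal{S},\bar\rho)$-invariant Borel map $B\to\partial_2Q$, and from here the argument of Theorem \ref{thm-ell} transfers directly: among all $(\cal{S},\bar\rho)$-invariant Borel maps $B\to\partial_2Q$ pick one, $\varphi_0$, maximizing the measure of the set on which its support has two points; Lemma \ref{lem-finite-index} together with the step just proved shows that an $(\cal{S},\bar\rho)$-invariant map into $\cal{F}(\partial Q)$ arising from a finite-index subgroupoid of $\cal{S}$ in fact takes values in $\partial_2Q$ and is pointwise contained in $\varphi_0$, and then quasi-normality of $\cal{S}$ in $\cal{T}$ shows $\varphi_0$ is $(\cal{T},\bar\rho)$-invariant. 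Finally, $Q\c\partial Q\times\partial Q$ is amenable, being the product of two copies of the amenable action $Q\c\partial Q$, so $Q\c\partial_2Q$ is amenable as well; the existence of the $(\cal{T},\bar\rho)$-invariant Borel map $\varphi_0\colon B\to\partial_2Q$ then forces $(\cal{T})_B$ to be amenable, contradicting nowhere amenability of $\cal{T}$.

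I expect the hard part to be this crucial step — recording a genuinely Borel, $Q$-equivariant center map on ideal triangles of $\partial Q$ and checking that the subsequent passage to maximal atoms and to unions stays inside the countable space $\cal{F}(Q)$ — together with the verification that the bookkeeping in the reduction is measurable; the rest is a transcription of the proof of Theorem \ref{thm-ell}, using additionally the classical amenability of the action of a hyperbolic group on its boundary.
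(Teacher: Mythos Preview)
Your proposal is correct and follows essentially the same route as the paper: reduce to the existence of an $(\cal{S},\bar\rho)$-invariant map into finite subsets of $Q$, and obtain this by transcribing the proof of Theorem \ref{thm-ell} with $\partial Q$ in place of $\partial T$, Adams' center map $MS\colon\delta Q\to Q_f$ (from \cite{adams-hyp}, not \cite{adams}) in place of the tree maps $G$, $M$, $C$, and amenability of $Q\curvearrowright\partial_2Q$ in the final step. The paper is simply terser at both ends---it says ``along the proof of Theorem \ref{thm-ell}'' for the contradiction argument and cites \cite[Theorem 5.1]{adams-hyp} directly for amenability of $Q\curvearrowright\partial_2Q$, and it concludes by taking a single invariant map $\psi\colon A\to Q_f$ and partitioning so that $\psi$ is constant, which is equivalent to your orbit/coset bookkeeping.
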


\begin{proof}
The proof is essentially the same as that of Theorem \ref{thm-ell}.
We put $Q=\Delta /N$ and denote by $\partial Q$ the boundary of $Q$ as a hyperbolic metric space.
Let $Q$ act on itself by left multiplication.
This action of $Q$ extends to a continuous action on the compactification $Q\cup \partial Q$ of $Q$.
As before, for a locally compact Polish space $K$, we denote by $M(K)$ the space of probability measures on $K$ equipped with the weak* topology.
Let $Q_f$ denote the set of non-empty finite subsets of $Q$.
We have the $Q$-equivariant Borel map $M\colon M(Q)\to Q_f$ associating to each $\nu \in M(Q)$ the set of all elements of $Q$ attaining the maximal value of the function $\nu$ on $Q$.
We set
\[\delta Q =\{\, (x, y, z)\in (\partial Q)^3\mid x\neq y\neq z\neq x\,\},\]
where $Q$ acts by the formula $\gamma (x, y, z)=(\gamma x, \gamma y, \gamma z)$ for $\gamma \in Q$ and $(x, y, z)\in \delta Q$.
The set $\delta Q$ is non-empty because $Q$ is non-elementary.
In \cite[Definition 6.6]{adams-hyp}, Adams constructs a $Q$-equivariant Borel map $MS\colon \delta Q \to Q_f$, using hyperbolicity of $Q$.
We define $\partial_2Q$ as the quotient of $\partial Q \times \partial Q$ by the action of the symmetric group of two letters that exchanges the coordinates.

Let $\rho \colon \cal{G}\to Q$ be the composition of the projection from $\cal{G}$ onto $\Delta$ with the quotient map from $\Delta$ onto $Q$.
For a Borel subset $B$ of $A$ with positive measure, we define $I_B$ as the set of all $(\cal{S}, \rho)$-invariant Borel maps from $B$ into $Q_f$.
Assume that there is a Borel subset $B$ of $A$ with positive measure such that for any Borel subset $B_1$ of $B$ with positive measure, $I_{B_1}$ is empty.
Along the proof of Theorem \ref{thm-ell}, we can deduce a contradiction, using the fact that the action of $Q$ on $\partial_2Q$ is amenable in a measure-theoretic sense, proved in \cite[Theorem 5.1]{adams-hyp}.
It follows that $I_A$ is non-empty, that is, there exists an $(\cal{S}, \rho)$-invariant Borel map $\psi \colon A\to Q_f$.
Take a countable Borel partition $A=\bigsqcup_n A_n$ such that for each $n$, the map $\psi$ is constant on $A_n$.
Since the stabilizer of each element of $Q_f$ in $Q$ is finite, for each $n$, there exists a subgroup $L_n$ of $\Delta$ such that $N<L_n$, $[L_n: N]<\infty$ and $(\cal{S})_{A_n}<(L_n\ltimes X)_{A_n}$.
\end{proof}

%%%%%%%%%%%%%%%%%%%%%%%%%%%%%%%%%%%%%%%%%%%%%

\section{The modular cocycles}\label{sec-mod}

Given a finite-measure-preserving, discrete measured groupoid $\cal{G}$ and its quasi-normal subgroupoid $\cal{S}$, we introduce the modular cocycle of Radon-Nikodym type and the local-index cocycle in Sections \ref{subsec-rn} and \ref{subsec-lic}, respectively.
They are defined by measuring a difference between $\cal{S}$ and its conjugate by an element of $\qn_{\cal{G}}(\cal{S})$.
In Section \ref{subsec-comp}, these two cocycles are computed when $\cal{G}$ and $\cal{S}$ are associated with an action of a Baumslag-Solitar group and its restriction to an elliptic subgroup.

\subsection{The modular cocycle of Radon-Nikodym type}\label{subsec-rn}

In this subsection, unless otherwise stated, let $\cal{G}$ be a measure-preserving, discrete measured groupoid on a standard finite measure space $(X, \mu)$.
Let $\cal{S}$ be a quasi-normal subgroupoid of $\cal{G}$.
Fix $\phi \in \qn_{\cal{G}}(\cal{S})$ with $\mu(D_{\phi})>0$.
We first define $\D(\phi, x)\in \Rm$ for $x\in D_{\phi}$.
We will define $\D \colon \cal{G}\to \Rm$, called the modular cocycle of Radon-Nikodym type for $\cal{G}$ and $\cal{S}$, so that $\D(\phi(x))=\D(\phi, x)$ for a.e.\ $x\in D_{\phi}$.

We put $D=D_{\phi}$, $R=R_{\phi}$ and $U=U_{\phi}$.
We also put
\[\cal{S}_-=(\cal{S})_D\cap U^{-1}((\cal{S})_R),\quad \cal{S}_+=(\cal{S})_R\cap U((\cal{S})_D).\]
The restriction of $U$ is an isomorphism from $\cal{S}_-$ onto $\cal{S}_+$.
Unless there is a confusion, for each $x\in D$, we write $U(x)=r\circ \phi(x)\in R$.
Let $\pi \colon (X, \mu)\to (Z, \xi)$ be the ergodic decomposition for $\cal{S}$.
Let $\mu =\int_Z \mu_z d\xi(z)$ be the disintegration with respect to $\pi$.
For $z\in Z$, we put $X_z=\pi^{-1}(z)$.
Let
\[\pi_-\colon (D, \mu|_D)\to (Z_-, \xi_-)\quad \textrm{and}\quad \pi_+\colon (R, \mu|_R)\to (Z_+, \xi_+)\]
be the ergodic decompositions for $\cal{S}_-$ and $\cal{S}_+$, respectively, with $(\pi_-)_*(\mu|_D)=\xi_-$ and $(\pi_+)_*(\mu|_R)=\xi_+$.
For $z\in Z_-$, we put $D_z=\pi_-^{-1}(z)$, and for $w\in Z_+$, we put $R_w=\pi_+^{-1}(w)$.
Let $Y_-$ and $Y_+$ be the Borel subsets of $Z$ with $\cal{S}D=\pi^{-1}(Y_-)$ and $\cal{S}R=\pi^{-1}(Y_+)$.
We define $\pi_D\colon D\to Y_-$ and $\pi_R\colon R\to Y_+$ as the restrictions of $\pi$.
We set $\eta_-=(\pi_D)_*(\mu|_D)$ and $\eta_+=(\pi_R)_*(\mu|_R)$.
Let $\sigma_-\colon Z_-\to Y_-$ and $\sigma_+\colon Z_+\to Y_+$ be the canonical Borel maps such that the following diagrams commute:
\[\xymatrix{
& \ar[dl]_{\pi_-} (D, \mu|_D) \ar[dr]^{\pi_D} & \\
(Z_-, \xi_-) \ar[rr]^{\sigma_-} & & (Y_-, \eta_-) \\
}
\quad
\xymatrix{
& \ar[dl]_{\pi_+} (R, \mu|_R) \ar[dr]^{\pi_R} & \\
(Z_+, \xi_+) \ar[rr]^{\sigma_+} & & (Y_+, \eta_+) \\
}
\]
For $y\in Y_-$, we put $(Z_-)_y=\sigma_-^{-1}(y)$, and for $y\in Y_+$, we put $(Z_+)_y=\sigma_+^{-1}(y)$.

The map $\pi_D$ is the ergodic decomposition for $(\cal{S})_D$.
We have the disintegration
\[\mu|_D=\int_{Y_-}\mu_y(D\cap X_y)^{-1}\mu_y|_D\, d\eta_-(y)\]
with respect to $\pi_D$ because the equality $\eta_-=\chi\xi|_{Y_-}$ holds, where $\chi$ is the Borel function on $Y_-$ defined by $\chi(y)=\mu_y(D\cap X_y)$ for $y\in Y_-$.
Applying Lemma \ref{lem-erg-dec} to $(\cal{S})_D$ and $\cal{S}_-$, we may assume that for a.e.\ $y\in Y_-$, the set $(Z_-)_y$ is finite.
Applying Lemma \ref{lem-rest} to $\pi_D$, $\pi_-$ and $\sigma_-$, we see that for a.e.\ $z\in Z_-$, the restriction $\mu_{\sigma_-(z)}|_{D_z}$ is a constant multiple of the ergodic measure for $(\cal{S}_-)_z$.

In the same manner, we may assume that for a.e.\ $y\in Y_+$, the set $(Z_+)_y$ is finite, and we see that for a.e.\ $z\in Z_+$, the restriction $\mu_{\sigma_+(z)}|_{R_z}$ is a constant multiple of the ergodic measure for $(\cal{S}_+)_z$.
Since $U$ is an isomorphism from $\cal{S}_-$ onto $\cal{S}_+$, for a.e.\ $x\in D$, the two measures on the set $U(D_{\pi_-(x)})=R_{\pi_+(U(x))}$,
\[U_*(\mu_{\pi(x)}|_{D_{\pi_-(x)}})\quad \textrm{and}\quad \mu_{\pi(U(x))}|_{R_{\pi_+(U(x))}},\]
are a constant multiple of each other.
We define a number $\D(\phi, x)=\D(\cal{G}, \cal{S}, \phi, x)\in \Rm$ by the equality 
\[U_*(\mu_{\pi(x)}|_{D_{\pi_-(x)}})=\D(\phi, x)\mu_{\pi(U(x))}|_{R_{\pi_+(U(x))}}.\]

\begin{lem}\label{lem-d-res}
If $A$ is a Borel subset of $D$ with $\mu(A)>0$, then we have the equality $\D(\phi|_A, x)=\D(\phi, x)$ for a.e.\ $x\in A$.
\end{lem}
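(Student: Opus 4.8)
The statement asserts that the modular cocycle $\D(\phi, x)$ is unchanged when we restrict $\phi$ to a smaller Borel set $A \subset D_\phi$ with $\mu(A) > 0$. The plan is to trace through the definition of $\D$ and check that every ingredient behaves correctly under restriction. Write $D = D_\phi$, $R = R_\phi$, $U = U_\phi$, and let $A_R = U(A) = r \circ \phi(A) \subset R$; note $\mu(A) > 0$ forces $\mu(A_R) > 0$ since $U$ is a measure-class-preserving isomorphism. Set $\psi = \phi|_A$, which lies in $\qn_{\cal{G}}(\cal{S})$ by Lemma \ref{lem-qn} (i), with $D_\psi = A$, $R_\psi = A_R$, and $U_\psi = U|_{(\cal{G})_A}$. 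The key observation is that $\cal{S}_- = (\cal{S})_D \cap U^{-1}((\cal{S})_R)$ restricts to $(\cal{S}_-)_A = (\cal{S})_A \cap U_\psi^{-1}((\cal{S})_{A_R})$, and similarly for $\cal{S}_+$, so the groupoids entering the construction for $\psi$ are exactly the restrictions of those entering the construction for $\phi$.

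First I would identify the ergodic-decomposition data for $\psi$ in terms of that for $\phi$. Let $\pi \colon (X,\mu) \to (Z,\xi)$ be the ergodic decomposition for $\cal{S}$, with disintegration $\mu = \int_Z \mu_z \, d\xi(z)$; this is common to both constructions since $\cal{S}$ does not change. For the construction with $\phi$ we have $\pi_- \colon D \to Z_-$, the ergodic decomposition for $\cal{S}_-$. Since $(\cal{S}_-)_A$ is the restriction of $\cal{S}_-$ to $A$, its ergodic decomposition $\pi_-^A \colon A \to Z_-^A$ is obtained, up to the canonical identification, by restricting $\pi_-$ (exactly as in the proof of Lemma \ref{lem-li-res}, where $W_1$ is identified with a Borel subset of $W$); in particular for $x \in A$ the fiber $D^A_{\pi_-^A(x)} = D_{\pi_-(x)} \cap A$ carries a measure that is a constant multiple of the restriction to it of the ergodic measure for $(\cal{S}_-)_{\pi_-(x)}$. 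Crucially, $\pi$ restricted to $D_{\pi_-(x)}$ is, by the argument producing the disintegration formula in the paragraph before Lemma \ref{lem-d-res}, constant (equal to $\pi_D(x) \in Y_-$), so $\mu_{\pi(x)}|_{D_{\pi_-(x)}}$ is already the ergodic measure for $(\cal{S}_-)_{\pi_-(x)}$ up to scale, and likewise $\mu_{\pi(x)}|_{D_{\pi_-(x)} \cap A}$ is the corresponding object for the restricted groupoid. The same analysis on the range side, with $\cal{S}_+$ and $A_R$, gives $\mu_{\pi(U(x))}|_{R_{\pi_+(U(x))} \cap A_R}$ as the analogous object.

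Now I would perform the comparison. By definition applied to $\psi$, the number $\D(\psi, x)$ is determined by
\[
(U_\psi)_*\bigl(\mu_{\pi(x)}|_{D_{\pi_-(x)} \cap A}\bigr) = \D(\psi, x)\, \mu_{\pi(U(x))}|_{R_{\pi_+(U(x))} \cap A_R}.
\]
On the other hand, applying $U_*$ to the identity
$U_*(\mu_{\pi(x)}|_{D_{\pi_-(x)}}) = \D(\phi, x)\, \mu_{\pi(U(x))}|_{R_{\pi_+(U(x))}}$
and then restricting both sides to $A_R = U(A)$ — using that $U$ is a Borel isomorphism carrying $D_{\pi_-(x)} \cap A$ onto $R_{\pi_+(U(x))} \cap A_R$ — yields
$(U_\psi)_*(\mu_{\pi(x)}|_{D_{\pi_-(x)} \cap A}) = \D(\phi, x)\, \mu_{\pi(U(x))}|_{R_{\pi_+(U(x))} \cap A_R}$.
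Since the restricted range-side measure is non-zero for a.e.\ $x \in A$ (its total mass is $\mu_{\pi(U(x))}(R_{\pi_+(U(x))} \cap A_R) > 0$ off a null set, because $\mu(A_R) > 0$ and by the disintegration formula for $\pi_R$), comparing the two displayed identities forces $\D(\psi, x) = \D(\phi, x)$ for a.e.\ $x \in A$.

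I expect the main obstacle to be bookkeeping rather than conceptual: one must be careful that the various identifications of the ergodic-decomposition spaces $Z_-^A \hookrightarrow Z_-$, $Z_+^A \hookrightarrow Z_+$ are compatible with the maps $\pi_D$, $\pi_R$, $\sigma_-$, $\sigma_+$ and that ``constant multiple of the ergodic measure'' is preserved under restriction to a positive-measure Borel subset of a fiber — this last point is where one invokes that an ergodic measure restricted to a Borel set, renormalized, is again the ergodic measure for the restricted groupoid, together with the finiteness of the fibers $(Z_-)_y$, $(Z_+)_y$ established via Lemma \ref{lem-erg-dec} and Lemma \ref{lem-rest}. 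Once these compatibilities are in hand, the equality of $\D$'s is immediate from the defining equations.
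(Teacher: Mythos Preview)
Your proposal is correct and follows essentially the same approach as the paper: identify the ergodic decomposition for $(\cal{S}_-)_A$ as the restriction of $\pi_-$ (so that the relevant fibers are $D_{\pi_-(x)}\cap A$ and $R_{\pi_+(U(x))}\cap A_R$), then restrict the defining equality for $\D(\phi,x)$ to $A_R$ and compare with the defining equality for $\D(\phi|_A,x)$. The paper's proof is terser but structurally identical; your added care about non-vanishing of the restricted range-side measure and the compatibility of the ergodic-decomposition identifications is sound and fills in exactly the bookkeeping the paper leaves implicit.
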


\begin{proof}
Let $W_-$ be the Borel subset of $Z_-$ with $\pi_-^{-1}(W_-)=\cal{S}_- A$.
The map $\rho_-\colon A\to W_-$ defined as the restriction of $\pi_-$ is the ergodic decomposition for $(\cal{S}_-)_A$.
Putting $B=U(A)$, we define $W_+$ and $\rho_+\colon B\to W_+$ similarly.
For a.e.\ $x\in A$, restricting the equality in the definition of $\D(\phi, x)$ to $B$, we obtain the equality
\[U_*(\mu_{\pi(x)}|_{D_{\pi_-(x)}\cap A})=\D(\phi, x)\mu_{\pi(U(x))}|_{R_{\pi_+(U(x))}\cap B}.\]
Putting $A_z=\rho_-^{-1}(z)$ for $z\in W_-$ and $B_w=\rho_+^{-1}(w)$ for $w\in W_+$, we have $D_{\pi_-(x)}\cap A=A_{\rho_-(x)}$ and $R_{\pi_+(U(x))}\cap B=B_{\rho_+(U(x))}$ for a.e.\ $x\in A$.
The lemma follows.
\end{proof}

We define a Borel function $\D =\D(\cal{G}, \cal{S})\colon \cal{G}\to \Rm$ as follows.
Pick a countable family $\Phi$ of elements of $\qn_{\cal{G}}(\cal{S})$ such that $\mu(D_{\phi})>0$ for any $\phi \in \Phi$ and the equality $\cal{G}=\bigsqcup_{\phi \in \Phi}\phi(D_{\phi})$ holds.
For $g\in \cal{G}$, choose $\phi \in \Phi$ with $g\in \phi(D_{\phi})$, and set $\D(g)=\D(\phi, s(g))$.
This definition of $\D$ does not depend on the choice of $\Phi$ by Lemma \ref{lem-d-res}.
We call $\D$ the {\it modular cocycle of Radon-Nikodym type} for $\cal{G}$ and $\cal{S}$.

\begin{lem}
The function $\D \colon \cal{G}\to \Rm$ defined above is a Borel cocycle, that is, it preserves products.
\end{lem}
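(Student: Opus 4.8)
The plan is to reduce the cocycle identity $\D(g_1g_2)=\D(g_1)\D(g_2)$ to a chain rule for the numbers $\D(\phi, x)$ under the composition $\bullet$ of elements of $\qn_{\cal{G}}(\cal{S})$, and then to prove that chain rule by passing to a common finite-index subgroupoid of the various ``$\cal{S}_-$''-groupoids involved.

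Recall $\D$ was defined via a countable family $\Phi\subset\qn_{\cal{G}}(\cal{S})$ with $\cal{G}=\bigsqcup_{\phi\in\Phi}\phi(D_{\phi})$, by $\D(g)=\D(\phi,s(g))$ for the unique $\phi\in\Phi$ with $g\in\phi(D_{\phi})$. Given composable $g_1,g_2\in\cal{G}$, put $x=s(g_2)$ and write $g_2=\phi_2(x)$, $g_1=\phi_1(r\circ\phi_2(x))$, $g_1g_2=\phi_3(x)$ with $\phi_1,\phi_2,\phi_3\in\Phi$. Setting $\eta=\phi_1\bullet\phi_2$, one has $x\in D_{\eta}$ and $\eta(x)=g_1g_2=\phi_3(x)$; moreover $\eta\in\qn_{\cal{G}}(\cal{S})$ by Lemma \ref{lem-qn} (ii). Since $\eta$ and $\phi_3$ agree as Borel maps on the set $\{\,y\in D_{\eta}\cap D_{\phi_3}\mid \eta(y)=\phi_3(y)\,\}$, which has positive measure whenever the composable pairs of the given type do and always contains $x$, Lemma \ref{lem-d-res} gives $\D(g_1g_2)=\D(\phi_3,x)=\D(\eta,x)$ for a.e.\ such pair. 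Thus it suffices to show that for any $\phi,\psi\in\qn_{\cal{G}}(\cal{S})$,
\[\D(\phi\bullet\psi,x)=\D(\phi,r\circ\psi(x))\,\D(\psi,x)\qquad\text{for a.e.\ }x\in D_{\phi\bullet\psi}.\]

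To prove this, write $\eta=\phi\bullet\psi$. By Lemma \ref{lem-d-res} we may replace $\psi$ by its restriction to $D_{\eta}$, so we assume $R_{\psi}\subset D_{\phi}$; then $D_{\eta}=D_{\psi}$ and $U_{\eta}=U_{\phi}\circ U_{\psi}$. Let $\cal{S}_-^{\psi}=(\cal{S})_{D_{\psi}}\cap U_{\psi}^{-1}((\cal{S})_{R_{\psi}})$, let $\cal{S}_-^{\phi}$, $\cal{S}_-^{\eta}$ be defined analogously, and let $\cal{S}_+^{\psi}$, $\cal{S}_+^{\phi}$, $\cal{S}_+^{\eta}$ be the corresponding ``$+$''-groupoids. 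Quasi-normality says exactly that $\cal{S}_\pm^{\psi}$, $\cal{S}_\pm^{\phi}$, $\cal{S}_\pm^{\eta}$ have finite index in $(\cal{S})_{D_{\psi}}$, $(\cal{S})_{R_{\psi}}$, etc., using that $U$ is an isomorphism and Lemma \ref{lem-index-formula} (ii). Set $\cal{T}=\cal{S}_-^{\psi}\cap U_{\psi}^{-1}(\cal{S}_-^{\phi})$. A direct check gives $\cal{T}=\cal{S}_-^{\eta}\cap U_{\psi}^{-1}(\cal{S}_+^{\psi})$, and from the finite-index facts together with Lemma \ref{lem-index-formula} (ii) one deduces that $\cal{T}$ has finite index in both $\cal{S}_-^{\psi}$ and $\cal{S}_-^{\eta}$, that $U_{\psi}(\cal{T})\subset\cal{S}_-^{\phi}$, and that $U_{\eta}(\cal{T})\subset\cal{S}_+^{\phi}\cap\cal{S}_+^{\eta}$. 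The point is now that each of $\D(\psi,\cdot)$, $\D(\phi,\cdot)$, $\D(\eta,\cdot)$ may be computed using $\cal{T}$-ergodic components in place of the ``$\cal{S}_-$''-components: restricting the defining equality to the sub-ergodic-component cut out by $\cal{T}$ (which $U$ carries onto the $U(\cal{T})$-component of the image point) leaves the scalar unchanged, since both reference measures are simply restricted. Writing $y=r\circ\psi(x)$, $w=r\circ\eta(x)$, $C_x$ for the $\cal{T}$-component of $x$, $C_y=U_{\psi}(C_x)$, $C_w=U_{\eta}(C_x)$, this gives the equalities of measures on $C_w$
\[(U_{\eta})_*(\mu_{\pi(x)}|_{C_x})=\D(\psi,x)(U_{\phi})_*(\mu_{\pi(y)}|_{C_y})=\D(\psi,x)\D(\phi,y)\,\mu_{\pi(w)}|_{C_w}\]
together with $(U_{\eta})_*(\mu_{\pi(x)}|_{C_x})=\D(\eta,x)\,\mu_{\pi(w)}|_{C_w}$. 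Since $\mu_{\pi(w)}(C_w)>0$ (because $\mu_{\pi(x)}(C_x)>0$, as $\cal{T}$ has finite index in $\cal{S}_-^{\psi}$, whose components carry positive mass, and ergodicity permutes the $\cal{T}$-subcomponents transitively), comparing the two equalities yields $\D(\eta,x)=\D(\psi,x)\D(\phi,y)$.

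I expect the main obstacle to be the bookkeeping in the third paragraph: verifying, via the index formulas of Lemma \ref{lem-index-formula} and quasi-normality, that $\cal{T}$ really does sit with finite index inside all the relevant groupoids, and that the reductions afforded by Lemma \ref{lem-erg-dec} (finiteness of the fibers of the canonical maps $\sigma_\pm$ and positivity of the masses $\mu_{\pi(\cdot)}(C_\cdot)$) are in force here as in the definition of $\D$. The measure-pushforward manipulations and the reduction in the first two paragraphs are routine once one tracks which set each measure lives on.
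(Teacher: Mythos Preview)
Your proposal is correct and follows essentially the same approach as the paper: both arguments reduce to the chain rule for $\D$ under composition in $\qn_{\cal{G}}(\cal{S})$ and establish it by restricting the three defining equalities to a common refinement of the ergodic decompositions for the groupoids $\cal{S}_-^\psi$, $\cal{S}_-^\phi$, $\cal{S}_-^\eta$, using that the proportionality constant survives restriction. The only organizational difference is that you name the common refinement as the groupoid $\cal{T}=\cal{S}_-^\psi\cap U_\psi^{-1}(\cal{S}_-^\phi)$ and pass to its ergodic components, whereas the paper works directly with ergodic components $D_-$, $D_-'$, $D_-''$ for the three ``$\cal{S}_-$''-groupoids chosen to have positive pairwise intersection; the underlying mechanism is identical.
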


\begin{proof}
Pick $\phi, \psi \in \qn_{\cal{G}}(\cal{S})$ and put $\eta =\psi \bullet \phi$, $D=D_{\eta}$, $R=R_{\eta}$ and $U=U_{\eta}$.
We assume $\mu(D)>0$ and the equality $D_{\psi}=R_{\phi}=U_{\phi}(D)$.
Put $L=U_{\phi}(D)$ and
\[\cal{S}_-=(\cal{S})_D\cap U^{-1}((\cal{S})_R),\quad \cal{S}_-'=(\cal{S})_D\cap U_{\phi}^{-1}((\cal{S})_L),\quad \cal{S}_-''=(\cal{S})_L\cap U_{\psi}^{-1}((\cal{S})_R).\]
For a.e.\ $x\in D$, since $[(\cal{S})_D: \cal{S}_-]_x$ is finite, the set $D\cap X_{\pi(x)}$ is decomposed into finitely many ergodic components for $\cal{S}_-$ by Lemma \ref{lem-erg-dec}.
Similarly, for a.e.\ $x\in D$, since $[(\cal{S})_D: \cal{S}_-']_x$ is finite, the set $D\cap X_{\pi(x)}$ is decomposed into finitely many ergodic components for $\cal{S}_-'$.
Let $Z_1$ be the Borel subset of $Z$ with $\cal{S}D=\pi^{-1}(Z_1)$.
For a.e.\ $z\in Z_1$, pick ergodic components $D_-$ and $D_-'$ for $\cal{S}_-$ and $\cal{S}_-'$, respectively, with $D_-, D_-'\subset X_z$ and $\mu_z(D_-\cap D_-')>0$.
For $\mu_z$-a.e.\ $x\in D_-'$, the equality
\[(U_{\phi})_*(\mu_{\pi(x)}|_{D_-'})=\D(\phi, x)\mu_{\pi(U_{\phi}(x))}|_{U_{\phi}(D_-')}\]
holds by the definition of $\D(\phi, x)$.

For a.e.\ $y\in L$, since $[(\cal{S})_L: \cal{S}_-'']_y$ is finite, the set $L\cap X_{\pi(y)}$ is decomposed into finitely many ergodic components for $\cal{S}_-''$.
Let $D_-''$ be an ergodic component for $\cal{S}_-''$ with $\mu_{z'}(U_{\phi}(D_-\cap D_-')\cap D_-'')>0$, where $z'$ is the point of $Z$ with $U_{\phi}(D_-')\subset X_{z'}$.
Such $z'$ exists because we have $U_{\phi}(\cal{S}_-')\subset (\cal{S})_L$.
For $\mu_{z'}$-a.e.\ $y\in D_-''$, the equality
\[(U_{\psi})_*(\mu_{\pi(y)}|_{D_-''})=\D(\psi, y)\mu_{\pi(U_{\psi}(y))}|_{U_{\psi}(D_-'')}\]
holds by the definition of $\D(\psi, y)$.
Combining the above two equalities, we obtain $\D(\eta, x)=\D(\psi, U_{\phi}(x))\D(\phi, x)$ for $\mu_z$-a.e.\ $x\in U_{\phi}^{-1}(U_{\phi}(D_-\cap D_-')\cap D_-'')$.
\end{proof}

The following two lemmas will be used to compute the cocycle $\D$.

\begin{lem}\label{lem-d-finite}
Let $\cal{S}$ and $\cal{T}$ be subgroupoids of $\cal{G}$ such that $\cal{S}<\cal{T}$; $[\cal{T}:\cal{S}]_x<\infty$ for a.e.\ $x\in X$; and $\cal{T}$ is quasi-normal in $\cal{G}$.
We set $\D_{\cal{S}}=\D(\cal{G}, \cal{S})$ and $\D_{\cal{T}}=\D(\cal{G}, \cal{T})$.
Then there exists a Borel map $\psi \colon X\to \Rm$ with the equality $\D_{\cal{S}}(g)=\psi(r(g))\D_{\cal{T}}(g)\psi(s(g))^{-1}$ for a.e.\ $g\in \cal{G}$.
\end{lem}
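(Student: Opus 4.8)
The plan is to produce $\psi$ explicitly as a comparison of the ergodic measures of $\cal{S}$ and of $\cal{T}$, and then to obtain the cohomology identity by restricting the two defining relations of the modular cocycles to a common set. Since $[\cal{T}:\cal{S}]_x<\infty$ for a.e.\ $x$, Lemma \ref{lem-qn-finite} gives $\qn_{\cal{G}}(\cal{S})=\qn_{\cal{G}}(\cal{T})$, so $\D_{\cal{S}}$ and $\D_{\cal{T}}$ are built from the same partial isomorphisms $\phi\in\qn_{\cal{G}}(\cal{S})$; hence it suffices to find one Borel map $\psi\colon X\to\Rm$ with $\D_{\cal{S}}(\phi,x)=\psi(r\circ\phi(x))\D_{\cal{T}}(\phi,x)\psi(x)^{-1}$ for every such $\phi$ and a.e.\ $x\in D_\phi$, since by Lemma \ref{lem-d-res} this passes to the Borel cocycles on $\cal{G}$.

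Let $\pi\colon(X,\mu)\to(Z,\xi)$ and $\theta\colon(X,\mu)\to(W,\omega)$ be the ergodic decompositions for $\cal{S}$ and $\cal{T}$, with disintegrations $\mu=\int_Z\mu_z\,d\xi(z)$ and $\mu=\int_W\nu_w\,d\omega(w)$, and let $\sigma\colon Z\to W$ be the canonical Borel map with $\theta=\sigma\circ\pi$, which exists because $\cal{S}<\cal{T}$. Partitioning $X$ into the $\cal{T}$-invariant Borel sets on which $[\cal{T}:\cal{S}]$ is constant and finite (Lemma \ref{lem-index}) and applying Lemma \ref{lem-erg-dec} on each piece, we may assume that $\sigma^{-1}(w)$ is finite for a.e.\ $w\in W$ and that every $\cal{S}$-ergodic component contained in a $\cal{T}$-ergodic component carries positive mass with respect to the ambient $\cal{T}$-ergodic measure. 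Applying Lemma \ref{lem-rest} with $(\theta,\pi,\sigma)$ in place of $(\pi,\theta,\sigma)$ then yields, for a.e.\ $x\in X$, the identity $\mu_{\pi(x)}=\psi(x)^{-1}\,\nu_{\theta(x)}|_{\pi^{-1}(\pi(x))}$, where $\psi(x):=\nu_{\theta(x)}(\pi^{-1}(\pi(x)))>0$; the resulting map $\psi\colon X\to\Rm$ is Borel and depends only on $\pi(x)$, i.e.\ it is constant on $\cal{S}$-ergodic components.

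Next comes the local comparison. Fix $\phi\in\qn_{\cal{G}}(\cal{S})=\qn_{\cal{G}}(\cal{T})$ with $\mu(D_\phi)>0$, write $D=D_\phi$, $R=R_\phi$, $U=U_\phi$, $y=U(x)$, and set $\cal{S}_-=(\cal{S})_D\cap U^{-1}((\cal{S})_R)$, $\cal{S}_+=(\cal{S})_R\cap U((\cal{S})_D)$, and similarly $\cal{T}_-$, $\cal{T}_+$, so that $\cal{S}_-<\cal{T}_-$ and $\cal{S}_+<\cal{T}_+$ since $\cal{S}<\cal{T}$. For a.e.\ $x\in D$ let $D_x^{\cal{S}}$, $D_x^{\cal{T}}$ be the ergodic components of $\cal{S}_-$, $\cal{T}_-$ through $x$, and $R_y^{\cal{S}}$, $R_y^{\cal{T}}$ those of $\cal{S}_+$, $\cal{T}_+$ through $y$; by the construction of the modular cocycle, $U(D_x^{\cal{S}})=R_y^{\cal{S}}$, $U(D_x^{\cal{T}})=R_y^{\cal{T}}$, and
\[
U_*\bigl(\mu_{\pi(x)}|_{D_x^{\cal{S}}}\bigr)=\D_{\cal{S}}(\phi,x)\,\mu_{\pi(y)}|_{R_y^{\cal{S}}},\qquad U_*\bigl(\nu_{\theta(x)}|_{D_x^{\cal{T}}}\bigr)=\D_{\cal{T}}(\phi,x)\,\nu_{\theta(y)}|_{R_y^{\cal{T}}}.
\]
Because a subgroupoid has a finer ergodic decomposition, $D_x^{\cal{S}}\subset D_x^{\cal{T}}$, and $D_x^{\cal{S}}\subset\pi^{-1}(\pi(x))$ (using $\cal{S}_-<(\cal{S})_D<\cal{S}$); likewise $R_y^{\cal{S}}\subset R_y^{\cal{T}}$ and $R_y^{\cal{S}}\subset\pi^{-1}(\pi(y))$. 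Restricting the $\cal{T}$-relation to $D_x^{\cal{S}}$ (and using $U(D_x^{\cal{S}})=R_y^{\cal{S}}$) gives $U_*(\nu_{\theta(x)}|_{D_x^{\cal{S}}})=\D_{\cal{T}}(\phi,x)\,\nu_{\theta(y)}|_{R_y^{\cal{S}}}$, while the $\cal{S}$-relation together with $\nu_{\theta(x)}|_{D_x^{\cal{S}}}=\psi(x)\,\mu_{\pi(x)}|_{D_x^{\cal{S}}}$ and $\mu_{\pi(y)}|_{R_y^{\cal{S}}}=\psi(y)^{-1}\nu_{\theta(y)}|_{R_y^{\cal{S}}}$ gives $U_*(\nu_{\theta(x)}|_{D_x^{\cal{S}}})=\psi(x)\psi(y)^{-1}\D_{\cal{S}}(\phi,x)\,\nu_{\theta(y)}|_{R_y^{\cal{S}}}$. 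Since the adjustments in the construction of $\D$ make $\nu_{\theta(y)}|_{R_y^{\cal{S}}}$ a nonzero measure, comparing the two expressions yields $\D_{\cal{S}}(\phi,x)=\psi(y)\psi(x)^{-1}\D_{\cal{T}}(\phi,x)$, which is the asserted identity for $g=\phi(x)$ (note $r(g)=y$, $s(g)=x$, and $\Rm$ is abelian), and this holds for every $\phi$ and a.e.\ $x\in D_\phi$.

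The work is almost entirely bookkeeping rather than any single conceptual step: one must keep straight the nested ergodic decompositions $\cal{S}_-<\cal{S}$, $\cal{T}_-<\cal{T}$ and $\cal{S}_-<\cal{T}_-$ (together with the corresponding statements on the range side), and one must ensure that every component appearing in the displayed measure identities genuinely has positive mass, so that the final cancellation of $\nu_{\theta(y)}|_{R_y^{\cal{S}}}$ is legitimate — which is exactly what the adjustments recorded in Section \ref{subsec-rn} and the remark after Lemma \ref{lem-erg-dec} are there to guarantee. I expect this positivity bookkeeping, and the care needed to invoke Lemma \ref{lem-rest} with the factors in the correct roles, to be the main things to get right.
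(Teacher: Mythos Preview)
Your proof is correct and follows essentially the same approach as the paper's own proof: define $\psi(x)=\nu_{\theta(x)}(\pi^{-1}(\pi(x)))$ via Lemma~\ref{lem-rest} (after using Lemma~\ref{lem-erg-dec} to ensure finiteness of the fibers of $\sigma$), and then compare the two defining measure identities for $\D_{\cal{S}}$ and $\D_{\cal{T}}$ on a common $\cal{S}_-$-component. The paper compresses the final comparison into the phrase ``Combining the two equalities\ldots we obtain the equality in the lemma,'' whereas you spell out the restriction to $D_x^{\cal{S}}$ and the substitution explicitly; your partitioning of $X$ by the value of $[\cal{T}:\cal{S}]_x$ before invoking Lemma~\ref{lem-erg-dec} is also a point the paper leaves implicit.
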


\begin{proof}
Note that by Lemma \ref{lem-qn-finite}, $\cal{S}$ is quasi-normal in $\cal{G}$.
Let $\pi \colon (X, \mu)\to (Z, \xi)$ and $\theta \colon (X, \mu)\to (W, \omega)$ be the ergodic decompositions for $\cal{S}$ and $\cal{T}$, respectively.
Let $\mu =\int_Z \mu_z d\xi(z)$ and $\mu =\int_W\nu_w d\omega(w)$ be the disintegrations with respect to $\pi$ and $\theta$, respectively.
We have the canonical Borel map $\tau \colon (Z, \xi)\to (W, \omega)$ with $\theta =\tau \circ \pi$.
Pick an element $\phi$ of the set $\qn_{\cal{G}}(\cal{T})=\qn_{\cal{G}}(\cal{S})$ with $\mu(D_{\phi})>0$.
We put $D=D_{\phi}$, $R=R_{\phi}$ and $U=U_{\phi}$, and define $\cal{S}_-$, $\cal{S}_+$, $\pi_-$ and $\pi_+$ as in the definition of $\D(\cal{G}, \cal{S})$.
Similarly, we set
\[\cal{T}_-=(\cal{T})_D\cap U^{-1}((\cal{T})_R),\quad \cal{T}_+=(\cal{T})_R\cap U((\cal{T})_D).\]
Let $\theta_-\colon (D, \mu|_D)\to (W_-, \omega_-)$ and $\theta_+\colon (R, \mu|_R)\to (W_+, \omega_+)$ denote the ergodic decompositions for $\cal{T}_-$ and $\cal{T}_+$, respectively.

Since we have $[\cal{T}:\cal{S}]_x<\infty$ for a.e.\ $x\in X$, the set $\tau^{-1}(w)$ is finite for a.e.\ $w\in W$ by Lemma \ref{lem-erg-dec}.
We define a Borel map $\psi \colon X\to \Rm$ by $\psi(x)=\nu_{\theta(x)}(X_{\pi(x)})$ for $x\in X$.
Applying Lemma \ref{lem-rest} to $\theta$, $\pi$ and $\tau$, we have the equality $\nu_{\theta(x)}|_{X_{\pi(x)}}=\psi(x)\mu_{\pi(x)}$ for a.e.\ $x\in X$.
Combining the two equalities
\begin{align*}
U_*(\mu_{\pi(x)}|_{D_{\pi_-(x)}})&=\D_{\cal{S}}(\phi, x)\mu_{\pi(U(x))}|_{R_{\pi_+(U(x))}},\\
U_*(\nu_{\theta(x)}|_{D_{\theta_-(x)}})&=\D_{\cal{T}}(\phi, x)\nu_{\theta(U(x))}|_{R_{\theta_+(U(x))}}
\end{align*}
defining $\D_{\cal{S}}(\phi, x)$ and $\D_{\cal{T}}(\phi, x)$, we obtain the equality in the lemma.
\end{proof}

\begin{lem}\label{lem-d-cohom}
Let $A$ be a Borel subset of $X$ with $\mu(A)>0$.
We put $\D=\D(\cal{G}, \cal{S})$ and $\D_A=\D((\cal{G})_A, (\cal{S})_A)$.
Then there exists a Borel map $\psi \colon A\to \Rm$ with the equality $\D_A(g)=\psi(r(g))\D(g)\psi(s(g))^{-1}$ for a.e.\ $g\in (\cal{G})_A$.
\end{lem}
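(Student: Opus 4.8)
The plan is as follows. By Lemma~\ref{lem-qn-res}, $(\cal{S})_A$ is quasi-normal in $(\cal{G})_A$, so $\D_A=\D((\cal{G})_A,(\cal{S})_A)$ is defined. Note first that any $\phi\in\qn_{(\cal{G})_A}((\cal{S})_A)$ with $\mu(D_\phi)>0$ is also an element of $\qn_{\cal{G}}(\cal{S})$: since $D_\phi$ and $R_\phi$ lie in $A$, the groupoids $(\cal{S})_{D_\phi}$, $(\cal{S})_{R_\phi}$, $\cal{S}^\phi=U_\phi((\cal{S})_{D_\phi})$ and the isomorphism $U_\phi$ are literally the same whether one works inside $\cal{G}$ or inside $(\cal{G})_A$, so the defining conditions for membership in $\qn_{(\cal{G})_A}((\cal{S})_A)$ and in $\qn_{\cal{G}}(\cal{S})$ coincide. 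Hence both $\D(\phi,x)$ and $\D_A(\phi,x)$ are defined for a.e.\ $x\in D_\phi$, and it suffices to produce one Borel map $\psi\colon A\to\Rm$ with $\D_A(\phi(x))=\psi(r\circ\phi(x))\,\D(\phi(x))\,\psi(x)^{-1}$ for a.e.\ $x\in D_\phi$, for every $\phi$ in a countable family $\Phi\subset\qn_{(\cal{G})_A}((\cal{S})_A)$ such that $\mu(D_\phi)>0$ for all $\phi\in\Phi$ and $(\cal{G})_A=\bigsqcup_{\phi\in\Phi}\phi(D_\phi)$.

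The second step is to identify the ergodic decomposition for $(\cal{S})_A$. Let $\pi\colon(X,\mu)\to(Z,\xi)$ be the ergodic decomposition for $\cal{S}$, with disintegration $\mu=\int_Z\mu_z\,d\xi(z)$. I would check that the ergodic decomposition for $(\cal{S})_A$ may be taken to be the restriction $\pi|_A\colon(A,\mu|_A)\to(Z_A,\xi_A)$, where $Z_A=\{z\in Z:\mu_z(A)>0\}$, the measure $\xi_A=\pi_*(\mu|_A)$ has density $z\mapsto\mu_z(A)$ with respect to $\xi$, and the $(\cal{S})_A$-ergodic probability measure at $z\in Z_A$ is $\mu_z^A:=\mu_z|_A/\mu_z(A)$. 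This uses that the restriction of an ergodic discrete measured groupoid to a subset of positive measure is again ergodic, together with uniqueness of disintegration applied to $\mu|_A=\int_Z\mu_z|_A\,d\xi(z)$.

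The third step is a direct substitution. Fix $\phi\in\Phi$ and set $D=D_\phi$, $R=R_\phi$, $U=U_\phi$. Because $D,R\subset A$, the groupoids $\cal{S}_-=(\cal{S})_D\cap U^{-1}((\cal{S})_R)$ and $\cal{S}_+=(\cal{S})_R\cap U((\cal{S})_D)$ entering the definition of $\D(\phi,x)$ coincide with the ones entering the definition of $\D_A(\phi,x)$, hence so do their ergodic decompositions $\pi_-,\pi_+$ and the fibres $D_{\pi_-(x)}$ and $R_{\pi_+(U(x))}$; the intermediate facts used to define $\D(\phi,x)$ (that $\mu_{\pi(x)}|_{D_{\pi_-(x)}}$ is a constant multiple of the $\cal{S}_-$-ergodic measure on $D_{\pi_-(x)}$, and similarly on the $+$ side), obtained via Lemmas~\ref{lem-erg-dec} and~\ref{lem-rest}, hold for $\D_A$ by the same reasoning. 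The only change between the two defining equations is that the $\cal{S}$-ergodic measure $\mu_{\pi(x)}$ is replaced by the $(\cal{S})_A$-ergodic measure $\mu_{\pi(x)}^A$; since $D_{\pi_-(x)}$ and $R_{\pi_+(U(x))}$ are contained in $A$, one has $\mu_{\pi(x)}^A|_{D_{\pi_-(x)}}=\mu_{\pi(x)}|_{D_{\pi_-(x)}}/\mu_{\pi(x)}(A)$ and $\mu_{\pi(U(x))}^A|_{R_{\pi_+(U(x))}}=\mu_{\pi(U(x))}|_{R_{\pi_+(U(x))}}/\mu_{\pi(U(x))}(A)$. Substituting into the defining equation for $\D_A(\phi,x)$ and comparing with that for $\D(\phi,x)$ gives
\[
\D_A(\phi,x)=\frac{\mu_{\pi(U(x))}(A)}{\mu_{\pi(x)}(A)}\,\D(\phi,x)
\]
for a.e.\ $x\in D$. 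Put $\psi(x)=\mu_{\pi(x)}(A)$ for $x\in A$: this is Borel and positive for a.e.\ $x\in A$, hence may be taken $\Rm$-valued after modifying it on a null set. Recalling $U(x)=r\circ\phi(x)$, $\D(\phi(x))=\D(\phi,x)$ and $\D_A(\phi(x))=\D_A(\phi,x)$, the displayed equality becomes $\D_A(\phi(x))=\psi(r\circ\phi(x))\,\D(\phi(x))\,\psi(x)^{-1}$. By Lemma~\ref{lem-d-res} the values of $\D$ and $\D_A$ at $\phi(x)$ do not depend on the representative $\phi$, so running over $\Phi$ and using $(\cal{G})_A=\bigsqcup_{\phi\in\Phi}\phi(D_\phi)$ gives $\D_A(g)=\psi(r(g))\,\D(g)\,\psi(s(g))^{-1}$ for a.e.\ $g\in(\cal{G})_A$.

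I expect the main obstacle to be the second step: verifying carefully that the ergodic decomposition of $(\cal{S})_A$ is the restriction of $\pi$, and in particular getting the base density $\mu_z(A)$ and the normalization $\mu_z^A=\mu_z|_A/\mu_z(A)$ exactly right, since it is precisely this normalization that produces the coboundary $\psi$. Once that bookkeeping is settled, the remaining steps are formal substitutions.
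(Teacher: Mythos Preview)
Your proposal is correct and follows essentially the same approach as the paper's proof: both identify the ergodic decomposition for $(\cal{S})_A$ with the restriction of $\pi$ to $A$, observe that the ergodic probability measures differ by the normalization factor $\mu_{\pi(x)}(A)$, and read off the same coboundary $\psi(x)=\mu_{\pi(x)}(A)$ (the paper writes it as $\mu_{\pi(x)}(A\cap X_{\pi(x)})$, which is the same since $\mu_z$ is supported on $X_z$). The only cosmetic difference is that the paper starts from $\phi\in\qn_{\cal{G}}(\cal{S})$ with $D_\phi,R_\phi\subset A$, whereas you start from $\phi\in\qn_{(\cal{G})_A}((\cal{S})_A)$ and note it lies in $\qn_{\cal{G}}(\cal{S})$; these are equivalent viewpoints.
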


\begin{proof}
We use the same notation as in the beginning of this subsection to define $\D(\phi, x)$ for $x\in D$.
Fix $\phi \in \qn_{\cal{G}}(\cal{S})$ with $\mu(D_{\phi})>0$.
Put $D=D_{\phi}$ and $R=R_{\phi}$.
We assume $D\subset A$ and $R\subset A$.
Put $\nu =\mu|_A$.
Let $\theta \colon (A, \nu)\to (W, \omega)$ be the ergodic decomposition for $(\cal{S})_A$ with $\theta_*\nu =\omega$.
Let $\nu=\int_W \nu_w d\omega(w)$ be the disintegration with respect to $\theta$.
To define the number $\D_A(\phi, x)=\D((\cal{G})_A, (\cal{S})_A, \phi, x)$, we use $\theta$ and $\{ \nu_w\}_w$ in place of $\pi$ and $\{ \mu_z\}_z$, and use the same $\cal{S}_-$, $\cal{S}_+$, $\pi_-$ and $\pi_+$ because we have $D\subset A$ and $R\subset A$.
For a.e.\ $x\in D$, we thus have the equality
\[U_*(\nu_{\theta(x)}|_{D_{\pi_-(x)}})=\D_A(\phi, x)\nu_{\theta(U(x))}|_{R_{\pi_+(U(x))}}.\]
Let $Z_0$ be the Borel subset of $Z$ with $\pi^{-1}(Z_0)=\cal{S}A$.
Let $\alpha \colon W\to Z_0$ be the Borel isomorphism with $\pi =\alpha \circ \theta$ on $A$.
We have the disintegration
\[\nu=\int_W\mu_{\alpha(w)}(A\cap X_{\alpha(w)})^{-1}\mu_{\alpha(w)}|_A\, d\omega(w)\]
with respect to $\theta$ because the equality $\alpha_*\omega =\chi \xi|_{Z_0}$ holds, where $\chi$ is the Borel function on $Z_0$ defined by $\chi(z)=\mu_z(A\cap X_z)$ for $z\in Z_0$.
For a.e.\ $w\in W$, we have $\mu_{\alpha(w)}|_A=\mu_{\alpha(w)}(A\cap X_{\alpha(w)})\nu_w$ by uniqueness of disintegration.
We define a Borel map $\psi \colon A\to \Rm$ by $\psi(x)=\mu_{\pi(x)}(A\cap X_{\pi(x)})$ for $x\in A$.
Combining the two equalities defining $\D(\phi, x)$ and $\D_A(\phi, x)$, we obtain the equality in the lemma.
\end{proof}

%%%%%%%%%%%%%%%%%%%%%%%%%%%%%%%%%%%%%%%%%%%%

\subsection{The local-index cocycle}\label{subsec-lic}

We fix a standard finite measure space $(X, \mu)$ and a measure-preserving, discrete measured groupoid $\cal{G}$ on $(X, \mu)$.
Let $\cal{S}$ be a quasi-normal subgroupoid of $\cal{G}$.
We define a Borel function $\I=\I(\cal{G}, \cal{S})\colon \cal{G}\to \Rm$ as follows.
Pick a countable family $\Phi$ of elements of $\qn_{\cal{G}}(\cal{S})$ such that $\mu(D_{\phi})>0$ for any $\phi \in \Phi$ and the equality $\cal{G}=\bigsqcup_{\phi \in \Phi}\phi(D_{\phi})$ holds.
For $\phi \in \Phi$, we put
\[\cal{S}_-=(\cal{S})_{D_{\phi}}\cap U_{\phi}^{-1}((\cal{S})_{R_{\phi}}),\quad \cal{S}_+=(\cal{S})_{R_{\phi}}\cap U_{\phi}((\cal{S})_{D_{\phi}}).\]
For $g\in \cal{G}$, choosing $\phi \in \Phi$ with $g\in \phi(D_{\phi})$, we define the number $\I(g)\in \Rm$ by
\begin{align*}
\I(g)&=[[(\cal{S})_{R_{\phi}}:\cal{S}_+]]_{r(g)}[[(\cal{S})_{D_{\phi}}:\cal{S}_-]]_{s(g)}^{-1}\\
&=[[(\cal{S})_{R_{\phi}}:\cal{S}_+]]_{r(g)}[[U_{\phi}((\cal{S})_{D_{\phi}}):\cal{S}_+]]_{r(g)}^{-1}.
\end{align*}
By Lemma \ref{lem-li-res}, this definition of $\I$ does not depend on the choice of $\Phi$.
We call $\I$ the {\it local-index cocycle} for $\cal{G}$ and $\cal{S}$.

\begin{lem}
The function $\I \colon \cal{G}\to \Rm$ defined above is a Borel cocycle, that is, it preserves products.
\end{lem}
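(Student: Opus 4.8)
The plan is to reduce the cocycle identity $\I(gh)=\I(g)\I(h)$ to a multiplicativity property of local index for three pairwise commensurable subgroupoids, and then to derive that property from the tower formula for local index in Lemma \ref{lem-li-prod}.

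First I would reduce to a local statement. As in the definition of $\I$, cover $\cal{G}$ by countably many graphs $\phi(D_{\phi})$ with $\phi\in\qn_{\cal{G}}(\cal{S})$ and cover the composable pairs accordingly; since by Lemma \ref{lem-li-res} the value of $\I$ is insensitive to such choices, it suffices to consider $\phi,\psi\in\qn_{\cal{G}}(\cal{S})$ with $\mu(D_{\phi})>0$ and $R_{\phi}=D_{\psi}=:L$, and, setting $\eta=\psi\bullet\phi$ (so $\eta\in\qn_{\cal{G}}(\cal{S})$ by Lemma \ref{lem-qn} (ii), $D_{\eta}=D_{\phi}$, $R_{\eta}=R_{\psi}$, $U_{\eta}=U_{\psi}\circ U_{\phi}$), to prove
\[
\I(\eta(x))=\I(\psi(r\circ\phi(x)))\,\I(\phi(x))\qquad\text{ for a.e. }x\in D_{\phi}.
\]
Put $y=r\circ\phi(x)\in L$ and $z=r\circ\psi(y)=r\circ\eta(x)$, and consider the three subgroupoids
\[
\cal{A}=(\cal{S})_{R_{\psi}},\qquad\cal{B}=U_{\psi}((\cal{S})_{L})=\cal{S}^{\psi},\qquad\cal{C}=U_{\eta}((\cal{S})_{D_{\phi}})=\cal{S}^{\eta}
\]
of $(\cal{G})_{R_{\psi}}$. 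Because $\phi$, $\psi$ and $\eta$ lie in $\qn_{\cal{G}}(\cal{S})$, the intersections $\cal{A}\cap\cal{B}=\cal{S}^{\psi}_{+}$ and $\cal{A}\cap\cal{C}=\cal{S}^{\eta}_{+}$ have finite index at a.e.\ point in each of the two groupoids containing them, and $U_{\psi}$ carries $\cal{S}^{\phi}_{+}=(\cal{S})_{L}\cap U_{\phi}((\cal{S})_{D_{\phi}})$ onto $\cal{B}\cap\cal{C}$, which is therefore of finite index in both $\cal{B}$ and $\cal{C}$.

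Next I would express the three values at the common point $z$. Using the second of the two formulas in the definition of $\I$, together with the fact that $U_{\psi}$ is an isomorphism of discrete measured groupoids and that local index is an invariant of such isomorphisms (immediate from uniqueness of ergodic decompositions and the isomorphism-invariance of the ordinary index $[\cal{G}:\cal{H}]_{x}$), one gets
\[
\I(\psi(y))=\frac{[[\cal{A}:\cal{A}\cap\cal{B}]]_{z}}{[[\cal{B}:\cal{A}\cap\cal{B}]]_{z}},\quad\I(\phi(x))=\frac{[[\cal{B}:\cal{B}\cap\cal{C}]]_{z}}{[[\cal{C}:\cal{B}\cap\cal{C}]]_{z}},\quad\I(\eta(x))=\frac{[[\cal{A}:\cal{A}\cap\cal{C}]]_{z}}{[[\cal{C}:\cal{A}\cap\cal{C}]]_{z}};
\]
for the middle one I would first write $\I(\phi(x))$ at the point $y=r\circ\phi(x)$ and then transport along $U_{\psi}$, which sends $(\cal{S})_{L}$, $U_{\phi}((\cal{S})_{D_{\phi}})$ and $\cal{S}^{\phi}_{+}$ to $\cal{B}$, $\cal{C}$ and $\cal{B}\cap\cal{C}$ respectively. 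The remaining task is the identity
\[
\frac{[[\cal{A}:\cal{A}\cap\cal{C}]]_{z}}{[[\cal{C}:\cal{A}\cap\cal{C}]]_{z}}=\frac{[[\cal{A}:\cal{A}\cap\cal{B}]]_{z}}{[[\cal{B}:\cal{A}\cap\cal{B}]]_{z}}\cdot\frac{[[\cal{B}:\cal{B}\cap\cal{C}]]_{z}}{[[\cal{C}:\cal{B}\cap\cal{C}]]_{z}}\qquad\text{ for a.e. }z,
\]
for the pairwise commensurable subgroupoids $\cal{A}$, $\cal{B}$, $\cal{C}$.

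Finally I would prove this identity by inserting $\cal{D}=\cal{A}\cap\cal{B}\cap\cal{C}$. By Lemma \ref{lem-index-formula} (i) the indices $[\cal{A}:\cal{D}]_{x}$, $[\cal{B}:\cal{D}]_{x}$, $[\cal{C}:\cal{D}]_{x}$ are finite for a.e.\ $x$, so by Lemma \ref{lem-erg-dec} the local indices of $\cal{D}$, and of the pairwise intersections, in $\cal{A}$, $\cal{B}$ and $\cal{C}$ are all well-defined and finite. Applying the tower formula Lemma \ref{lem-li-prod} to $\cal{D}<\cal{A}\cap\cal{B}<\cal{A}$, to $\cal{D}<\cal{A}\cap\cal{C}<\cal{A}$, and to the four analogous towers inside $\cal{B}$ and $\cal{C}$, each of the six local indices appearing above becomes a ratio $[[\cal{P}:\cal{D}]]_{z}\,[[\cal{P}\cap\cal{Q}:\cal{D}]]_{z}^{-1}$ with $\cal{P}\in\{\cal{A},\cal{B},\cal{C}\}$; substituting, both sides of the displayed identity collapse to $[[\cal{A}:\cal{D}]]_{z}\,[[\cal{C}:\cal{D}]]_{z}^{-1}$. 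I expect the only real work to be the bookkeeping — checking at each step that the relevant indices are finite so that Lemmas \ref{lem-li-prod}, \ref{lem-erg-dec} and \ref{lem-index-formula} apply, and keeping track of the isomorphism $U_{\psi}$ used to transport $\I(\phi(x))$ from $y$ to $z$; I do not foresee any deeper obstacle.
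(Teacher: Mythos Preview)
Your proposal is correct and follows essentially the same route as the paper: both arguments transport everything to the common range point $r\circ\eta(x)$, set $\cal{A}=(\cal{S})_{R_\psi}$, $\cal{B}=\cal{S}^{\psi}$, $\cal{C}=\cal{S}^{\eta}$, and reduce the cocycle identity to a cancellation of local indices via the tower formula of Lemma~\ref{lem-li-prod}. The only cosmetic difference is that you name $\cal{D}=\cal{A}\cap\cal{B}\cap\cal{C}$ and rewrite every factor as a ratio over $\cal{D}$, whereas the paper applies Lemma~\ref{lem-li-prod} twice in each displayed line without naming the triple intersection; the algebra and the lemmas invoked are identical.
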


\begin{proof}
For each $\phi \in \qn_{\cal{G}}(\cal{S})$, we set $\cal{S}^{\phi}=U_{\phi}((\cal{S})_{D_{\phi}})$.
Pick two elements $\phi$, $\psi$ of $\qn_{\cal{G}}(\cal{S})$, and put $\eta =\psi \bullet \phi$, $D=D_{\eta}$ and $R=R_{\eta}$.
We assume $\mu(D)>0$ and the equalities $D=D_{\phi}$ and $R=R_{\psi}$.
For a.e.\ $x\in D$, putting $y=r\circ \eta(x)$, we obtain the equality
\[
\I(\psi, r\circ \phi(x))=\frac{[[(\cal{S})_R :(\cal{S})_R\cap \cal{S}^{\psi}]]_y}{[[\cal{S}^{\psi}:(\cal{S})_R\cap \cal{S}^{\psi}]]_y}=\frac{[[(\cal{S})_R:(\cal{S})_R\cap \cal{S}^{\psi}\cap \cal{S}^{\eta}]]_y}{[[\cal{S}^{\psi}:(\cal{S})_R\cap \cal{S}^{\psi}\cap \cal{S}^{\eta}]]_y},
\]
where we apply Lemma \ref{lem-li-prod} to the second equality.
Applying the same lemma, we also obtain the equality
\[
\I(\phi, x)=\frac{[[\cal{S}^{\psi}:\cal{S}^{\psi}\cap \cal{S}^{\eta}]]_y}{[[\cal{S}^{\eta}:\cal{S}^{\psi}\cap \cal{S}^{\eta}]]_y}=\frac{[[\cal{S}^{\psi}:(\cal{S})_R\cap \cal{S}^{\psi}\cap \cal{S}^{\eta}]]_y}{[[\cal{S}^{\eta}:(\cal{S})_R\cap \cal{S}^{\psi}\cap \cal{S}^{\eta}]]_y}
\]
for a.e.\ $x\in D$ with $y=r\circ \eta(x)$.
Combining these two equalities, we have
\begin{align*}
\I(\psi, r\circ \phi(x))\I(\phi, x)&=\frac{[[(\cal{S})_R:(\cal{S})_R\cap \cal{S}^{\psi}\cap \cal{S}^{\eta}]]_y}{[[\cal{S}^{\eta}:(\cal{S})_R\cap \cal{S}^{\psi}\cap \cal{S}^{\eta}]]_y}=\frac{[[(\cal{S})_R:(\cal{S})_R\cap \cal{S}^{\eta}]]_y}{[[\cal{S}^{\eta}:(\cal{S})_R\cap \cal{S}^{\eta}]]_y}\\
&=\I(\eta, x)
\end{align*}
for a.e.\ $x\in D$ with $y=r\circ \eta(x)$.
The function $\I$ is therefore a Borel cocycle.
\end{proof}

\begin{lem}\label{lem-lic-res}
The following assertions hold:
\begin{enumerate}
\item Let $A$ be a Borel subset of $X$ with $\mu(A)>0$.
We put $\I_A=\I((\cal{G})_A, (\cal{S})_A)$.
Then the equality $\I_A(g)=\I(g)$ holds for a.e.\ $g\in (\cal{G})_A$. 
\item Let $\cal{T}$ be a subgroupoid of $\cal{G}$ such that $\cal{S}<\cal{T}$; $[\cal{T}:\cal{S}]_x<\infty$ for a.e.\ $x\in X$; and $\cal{T}$ is quasi-normal in $\cal{G}$.
We set $\I_{\cal{T}}=\I(\cal{G}, \cal{T})$.
Then there exists a Borel map $\psi \colon X\to \Rm$ with the equality $\I_{\cal{T}}(g)=\psi(r(g))\I(g)\psi(s(g))^{-1}$ for a.e.\ $g\in \cal{G}$.
\end{enumerate}
\end{lem}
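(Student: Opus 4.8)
\emph{Plan.} Assertion (i) should come out by matching the defining expressions of $\I$ and $\I_A$ term by term. Given a.e.\ $g\in(\cal{G})_A$, pick $\phi\in\qn_{\cal{G}}(\cal{S})$ with $g\in\phi(D_{\phi})$ and, using Lemma \ref{lem-qn} (i), restrict $\phi$ to the Borel subset $A\cap D_{\phi}\cap(r\circ\phi)^{-1}(A)$ of $D_{\phi}$; this set still contains $s(g)$ because $r(g)\in A$, and after the restriction $D_{\phi},R_{\phi}\subset A$. Viewing $\phi$ as a Borel map into $(\cal{G})_A$, one checks directly that $\phi\in\qn_{(\cal{G})_A}((\cal{S})_A)$: the relevant restricted groupoids are literally the same, $((\cal{S})_A)_{D_{\phi}}=(\cal{S})_{D_{\phi}}$, $((\cal{S})_A)_{R_{\phi}}=(\cal{S})_{R_{\phi}}$, and so is the isomorphism $U_{\phi}$, so the subgroupoids $\cal{S}_-,\cal{S}_+$ entering the definitions of $\I(g)$ and of $\I_A(g)$ coincide. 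Since by Lemma \ref{lem-li-res} each of $\I$ and $\I_A$ is independent of the countable family covering $\cal{G}$, resp.\ $(\cal{G})_A$, the two values agree.

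Assertion (ii) is the substantive part and runs parallel to Lemma \ref{lem-d-finite}. By Lemma \ref{lem-qn-finite}, $\cal{S}$ is quasi-normal in $\cal{G}$ and $\qn_{\cal{G}}(\cal{T})=\qn_{\cal{G}}(\cal{S})$. Since $[\cal{T}:\cal{S}]_x<\infty$ a.e., Lemma \ref{lem-erg-dec} lets us replace $X$ by a conull Borel subset on which the canonical map from the ergodic decomposition of $\cal{S}$ to that of $\cal{T}$ has finite fibers; hence $[[\cal{T}:\cal{S}]]_x$ is well-defined, and it is finite by Lemma \ref{lem-index} (ii). I would then take
\[\psi(x)=[[\cal{T}:\cal{S}]]_x,\]
which is a Borel map $X\to\Rm$, and verify the asserted identity on each $\phi(D_{\phi})$ for $\phi$ ranging over a countable family as in the definition of $\I$.

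Fix such a $\phi\in\qn_{\cal{G}}(\cal{T})=\qn_{\cal{G}}(\cal{S})$ and write $D=D_{\phi}$, $R=R_{\phi}$, $U=U_{\phi}$; let $\cal{S}_\pm$, $\cal{T}_\pm$ be the subgroupoids on $D$ and $R$ entering the definitions of $\I(\cal{G},\cal{S})$ and $\I(\cal{G},\cal{T})$. The structural input is the pair of inclusion chains $\cal{S}_-<\cal{T}_-<(\cal{T})_D$ and $\cal{S}_-<(\cal{S})_D<(\cal{T})_D$ (and likewise on $R$), together with the fact that $U$ restricts to an isomorphism from $\cal{T}_-$ onto $\cal{T}_+$ carrying $\cal{S}_-$ onto $\cal{S}_+$. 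Computing $[[(\cal{T})_D:\cal{S}_-]]_x$ along both chains via Lemma \ref{lem-li-prod}, and using $[[(\cal{T})_D:(\cal{S})_D]]_x=[[\cal{T}:\cal{S}]]_x=\psi(x)$ (Lemma \ref{lem-li-res}) together with invariance of local index under the isomorphism $U$ (the same fact already used in the definition of $\I$), one gets
\[\psi(x)\,[[(\cal{S})_D:\cal{S}_-]]_x=[[(\cal{T})_D:\cal{T}_-]]_x\,[[\cal{T}_+:\cal{S}_+]]_{U(x)}\]
for a.e.\ $x\in D$, and the analogous equality on $R$ at $y=U(x)$. Dividing the two equalities cancels the common factor $[[\cal{T}_+:\cal{S}_+]]_y$ and yields $\I_{\cal{T}}(g)=\psi(r(g))\,\I(g)\,\psi(s(g))^{-1}$ for a.e.\ $g=\phi(x)$, which is the claimed cocycle relation.

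The rest is routine bookkeeping with the multiplicativity and restriction properties of local index; all local indices that appear are finite a.e.\ because $[\cal{T}:\cal{S}]_x<\infty$ and $\phi\in\qn_{\cal{G}}(\cal{S})$. The one point that must not be overlooked is the well-definedness of $\psi$, i.e.\ that the finite-index hypothesis forces the relevant ergodic-decomposition fibers to be countable — this is exactly Lemma \ref{lem-erg-dec} — and no genuine obstacle is expected beyond it.
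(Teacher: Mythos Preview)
Your proposal is correct and follows essentially the same approach as the paper: for (i) you spell out explicitly what the paper compresses into ``follows from Lemma~\ref{lem-li-res}'', and for (ii) you choose the same coboundary $\psi(x)=[[\cal{T}:\cal{S}]]_x$ and prove the identity by factoring local indices along two chains via Lemma~\ref{lem-li-prod}, together with Lemma~\ref{lem-li-res} and invariance under $U_\phi$. The only cosmetic difference is that the paper carries out the whole computation at the single point $y=r(g)$ using the transported groupoids $\cal{S}^{\phi},\cal{T}^{\phi}$ (the second form of $\I$ in its definition), whereas you work symmetrically at $x$ and $y$ and divide; the cancellation of $[[\cal{T}_+:\cal{S}_+]]_y$ is the same in both.
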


\begin{proof}
Assertion (i) follows from Lemma \ref{lem-li-res}.
We prove assertion (ii).
We pick an element $\phi$ of the set $\qn_{\cal{G}}(\cal{T})=\qn_{\cal{G}}(\cal{S})$ with $\mu(D_{\phi})>0$.
Put $D=D_{\phi}$ and $R=R_{\phi}$.
Define a Borel map $\psi \colon X\to \Rm$ by $\psi(x)=[[\cal{T}: \cal{S}]]_x$ for $x\in X$.
For a.e.\ $x\in D$, putting $y=r\circ \phi(x)$, we obtain the equality
\begin{align*}
\I_{\cal{T}}(\phi(x))&=\frac{[[(\cal{T})_R: (\cal{T})_R\cap \cal{T}^{\phi}]]_y}{[[\cal{T}^{\phi}: (\cal{T})_R\cap \cal{T}^{\phi}]]_y}=\frac{[[(\cal{T})_R: (\cal{S})_R\cap \cal{S}^{\phi}]]_y}{[[\cal{T}^{\phi}: (\cal{S})_R\cap \cal{S}^{\phi}]]_y}\\
&=\frac{[[(\cal{T})_R: (\cal{S})_R]]_y}{[[\cal{T}^{\phi}: \cal{S}^{\phi}]]_y}\frac{[[(\cal{S})_R: (\cal{S})_R\cap \cal{S}^{\phi}]]_y}{[[\cal{S}^{\phi}: (\cal{S})_R\cap \cal{S}^{\phi}]]_y}=\psi(y)\I(\phi(x))\psi(x)^{-1},
\end{align*}
where we apply Lemma \ref{lem-li-prod} to the second and third equalities, and apply Lemma \ref{lem-li-res} to the fourth equality.
Assertion (ii) follows.
\end{proof}

%%%%%%%%%%%%%%%%%%%%%%%%%%%%%%%%%%%%%%%%%%%%

\subsection{Computation}\label{subsec-comp}

For a finite-measure-preserving action of a Baumslag-Solitar group $\Gamma$, we show relationship between the modular homomorphism $\m \colon \Gamma \to \Rm$ and the cocycles $\D$ and $\I$ for the groupoid associated with the action of $\Gamma$ and its elliptic subgroupoid.

\begin{lem}\label{lem-comp}
Let $\Gamma$ be a discrete group, and let $N$ be a quasi-normal subgroup of $\Gamma$.
Suppose that we have a measure-preserving action of $\Gamma$ on a standard finite measure space $(X, \mu)$.
We set $\cal{G}=\Gamma \ltimes X$ and $\cal{N}=N\ltimes X$, and set $\D=\D(\cal{G}, \cal{N})$ and $\I=\I(\cal{G}, \cal{N})$.
For any $\gamma \in \Gamma$, if both $N\cap \gamma N\gamma^{-1}$ and $N\cap \gamma^{-1}N\gamma$ are normal subgroups of $N$, then for a.e.\ $x\in X$, we have the equality
\[\D(\gamma, x)\I(\gamma, x)=[N: N\cap \gamma N\gamma^{-1}][N: N\cap \gamma^{-1}N\gamma]^{-1}.\]
\end{lem}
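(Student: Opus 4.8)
The plan is to compute the two cocycles separately at $(\gamma,x)\in\cal G$, expressing each through cardinalities of suitable finite fibres of ergodic-decomposition maps, and then to observe that those cardinalities cancel in the product. Write $N_-=N\cap\gamma^{-1}N\gamma$ and $N_+=N\cap\gamma N\gamma^{-1}$; quasi-normality of $N$ in $\Gamma$ means $\comm_\Gamma(N)=\Gamma$, so $[N:N_-]<\infty$ and $[N:N_+]<\infty$, and by hypothesis $N_-$ and $N_+$ are normal in $N$. Since $\cal G=\bigsqcup_{\gamma\in\Gamma}\{(\gamma,x):x\in X\}$, we may use the family $\{\phi_\gamma\}_{\gamma\in\Gamma}\subset\qn_{\cal G}(\cal N)$ with $\phi_\gamma(x)=(\gamma,x)$ (Lemma~\ref{lem-qn-group}) to define $\D$ and $\I$, so that $\D(\gamma,x)=\D(\phi_\gamma,x)$ and $\I(\gamma,x)=\I(\phi_\gamma(x))$. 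Here $D_{\phi_\gamma}=R_{\phi_\gamma}=X$, the isomorphism $U_{\phi_\gamma}$ sends $(h,x)$ to $(\gamma h\gamma^{-1},\gamma x)$ and acts on $X$ by $x\mapsto\gamma x$, and a direct computation identifies the subgroupoids $\cal S_-,\cal S_+$ of Sections~\ref{subsec-rn}--\ref{subsec-lic} with $N_-\ltimes X$ and $N_+\ltimes X$, with $U_{\phi_\gamma}$ restricting to an isomorphism $\cal S_-\to\cal S_+$ (note $\gamma N_-\gamma^{-1}=N_+$). Let $\pi\colon(X,\mu)\to(Z,\xi)$ be the ergodic decomposition for $\cal N$, and $\pi_\pm\colon(X,\mu)\to(Z_\pm,\xi_\pm)$ those for $\cal S_\pm$, with canonical maps $\sigma_\pm\colon Z_\pm\to Z$ satisfying $\pi=\sigma_\pm\circ\pi_\pm$. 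As $[N:N_\pm]<\infty$, Lemma~\ref{lem-erg-dec} makes the fibres of $\sigma_\pm$ finite, and by the remark following it we may assume every ergodic component of $\cal S_\pm$ has positive measure inside the ambient ergodic component of $\cal N$.

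For the local-index factor, Lemma~\ref{lem-li-group} (applicable since $N_\pm\triangleleft N$ has finite index) gives, for a.e.\ $x$,
\[
[[\cal N:\cal S_-]]_x=\frac{[N:N_-]}{|\sigma_-^{-1}(\pi(x))|},\qquad
[[\cal N:\cal S_+]]_{\gamma x}=\frac{[N:N_+]}{|\sigma_+^{-1}(\pi(\gamma x))|},
\]
hence, by the definition of $\I$,
\[
\I(\gamma,x)=[[\cal N:\cal S_+]]_{\gamma x}\,[[\cal N:\cal S_-]]_x^{-1}=\frac{[N:N_+]}{[N:N_-]}\cdot\frac{|\sigma_-^{-1}(\pi(x))|}{|\sigma_+^{-1}(\pi(\gamma x))|}.
\]

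For the modular factor, recall that the ergodic measure $\mu_{\pi(x)}$ of $\cal N$ on its component $X_{\pi(x)}=\pi^{-1}(\pi(x))$ is $N$-invariant, the action being measure-preserving. The set $X_{\pi(x)}$ is partitioned into the $\cal S_-$-ergodic components it contains, namely the fibres $\pi_-^{-1}(w)$ with $w\in\sigma_-^{-1}(\pi(x))$; since $N_-\triangleleft N$, the finite quotient $N/N_-$ permutes these components, and it does so transitively because $\cal N$ is ergodic and, by the positivity arranged above, each component has positive measure. Therefore all $|\sigma_-^{-1}(\pi(x))|$ of them have equal $\mu_{\pi(x)}$-mass, which combined with Lemma~\ref{lem-rest} applied to $\pi,\pi_-,\sigma_-$ yields $\mu_{\pi(x)}|_{\pi_-^{-1}(\pi_-(x))}=|\sigma_-^{-1}(\pi(x))|^{-1}\,\nu^-_{\pi_-(x)}$, where $\nu^-_w$ is the ergodic probability measure of the $\cal S_-$-component over $w$, and similarly $\mu_{\pi(\gamma x)}|_{\pi_+^{-1}(\pi_+(\gamma x))}=|\sigma_+^{-1}(\pi(\gamma x))|^{-1}\,\nu^+_{\pi_+(\gamma x)}$. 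The isomorphism $U_{\phi_\gamma}\colon\cal S_-\to\cal S_+$ acts on $X$ by $x\mapsto\gamma x$ and, since $\gamma^{-1}N_+\gamma=N_-$, carries $\pi_-^{-1}(\pi_-(x))$ onto $\pi_+^{-1}(\pi_+(\gamma x))$ and the probability measure $\nu^-_{\pi_-(x)}$ onto an $N_+$-invariant ergodic probability measure on it, that is, onto $\nu^+_{\pi_+(\gamma x)}$. Feeding these into
\[
(U_{\phi_\gamma})_*\bigl(\mu_{\pi(x)}|_{\pi_-^{-1}(\pi_-(x))}\bigr)=\D(\gamma,x)\,\mu_{\pi(\gamma x)}|_{\pi_+^{-1}(\pi_+(\gamma x))}
\]
gives $\D(\gamma,x)=|\sigma_+^{-1}(\pi(\gamma x))|\cdot|\sigma_-^{-1}(\pi(x))|^{-1}$. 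Multiplying by the expression for $\I(\gamma,x)$ above, the fibre-cardinalities cancel and we obtain $\D(\gamma,x)\I(\gamma,x)=[N:N_+][N:N_-]^{-1}=[N:N\cap\gamma N\gamma^{-1}][N:N\cap\gamma^{-1}N\gamma]^{-1}$ for a.e.\ $x$.

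The main obstacle is the third paragraph: one has to check carefully that, under the normality assumption, the finite group $N/N_-$ acts on $Z_-$ with the fibres of $\sigma_-$ as its orbits, and that the conjugating isomorphism $U_{\phi_\gamma}$ transports ergodic probability measures exactly rather than merely up to equivalence. This is exactly where normality of $N_\pm$ in $N$, as opposed to mere finite index, is needed -- here and in the appeal to Lemma~\ref{lem-li-group}. The rest is routine disintegration bookkeeping.
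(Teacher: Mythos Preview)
Your proof is correct and follows essentially the same approach as the paper's: you compute $\I$ via Lemma~\ref{lem-li-group} and compute $\D$ by using that $N/N_\pm$ acts transitively on the fibres $\sigma_\pm^{-1}(\pi(\cdot))$ so that each $\cal S_\pm$-component has $\mu_{\pi(\cdot)}$-mass equal to the reciprocal of the fibre cardinality, after which the cardinalities cancel in the product. The paper's argument is the same, only terser---it states the mass formula $\mu_{\pi(x)}(\pi_-^{-1}(\pi_-(x)))=|\sigma_-^{-1}(\pi(x))|^{-1}$ without spelling out, as you do, that $U_{\phi_\gamma}$ carries the ergodic probability measure $\nu^-_{\pi_-(x)}$ onto $\nu^+_{\pi_+(\gamma x)}$ (which is what makes comparing total masses suffice to read off $\D$).
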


\begin{proof}
Fix $\gamma \in \Gamma$ and define $\phi \colon X\to \cal{G}$ by $\phi(x)=(\gamma, x)$ for $x\in X$.
We then have $\phi \in \qn_{\cal{G}}(\cal{N})$.
Let $\pi \colon (X, \mu)\to (Z, \xi)$ be the ergodic decomposition for $\cal{N}$ with $\pi_*\mu=\xi$.
Let $\mu =\int_Z \mu_z d\xi(z)$ be the disintegration with respect to $\pi$.
We set $N_-=N\cap \gamma^{-1}N\gamma$ and $N_+=N\cap \gamma N\gamma^{-1}$.
Similarly, we set $\cal{N}_-=N_-\ltimes X$ and $\cal{N}_+=N_+\ltimes X$.
Let $\pi_-\colon (X, \mu)\to (Z_-, \xi_-)$ and $\pi_+\colon (X, \mu)\to (Z_+, \xi_+)$ be the ergodic decompositions for $\cal{N}_-$ and $\cal{N}_+$, respectively.
We have the canonical Borel maps $\sigma_-\colon (Z_-, \xi_-)\to (Z, \xi)$ and $\sigma_+\colon (Z_+, \xi_+)\to (Z, \xi)$ with $\pi =\sigma_-\circ \pi_-=\sigma_+\circ \pi_+$.
The group $N$ transitively acts on $\sigma_-^{-1}(\pi(x))$ and on $\sigma_+^{-1}(\pi(y))$ for a.e.\ $x\in X$ and a.e.\ $y\in X$ because $N_-$ and $N_+$ are normal subgroups of $N$.
By Lemma \ref{lem-li-group}, the equalities
\[[[\cal{N}:\cal{N}_-]]_x=\frac{[N:N_-]}{|\sigma_-^{-1}(\pi(x))|},\quad [[\cal{N}:\cal{N}_+]]_y=\frac{[N:N_+]}{|\sigma_+^{-1}(\pi(y))|}\]
hold for a.e.\ $x\in X$ and a.e.\ $y\in X$.
On the other hand, we have
\[\mu_{\pi(x)}(\pi_-^{-1}(\pi_-(x)))=|\sigma_-^{-1}(\pi(x))|^{-1},\quad \mu_{\pi(y)}(\pi_+^{-1}(\pi_+(y)))=|\sigma_+^{-1}(\pi(y))|^{-1}\]
for a.e.\ $x\in X$ and a.e.\ $y\in X$.
The equality $\D(\gamma, x)\I(\gamma, x)=[N: N_+][N:N_-]^{-1}$ thus holds for a.e.\ $x\in X$.
\end{proof}

\begin{cor}\label{cor-bs-comp}
We set $\Gamma =\bs(p, q)$ with $2\leq |p|\leq |q|$ and denote by $\m \colon \Gamma \to \Rm$ the modular homomorphism.
Let $E$ be an infinite elliptic subgroup of $\Gamma$.
Suppose that we have a measure-preserving action of $\Gamma$ on a standard finite measure space $(X, \mu)$.
We set $\cal{G}=\Gamma \ltimes X$ and $\cal{E}=E\ltimes X$, and set $\D=\D(\cal{G}, \cal{E})$ and $\I=\I(\cal{G}, \cal{E})$.
Then we have the equality $\D(\gamma, x)\I(\gamma, x)=\m(\gamma)$ for any $\gamma \in \Gamma$ and a.e.\ $x\in X$.
\end{cor}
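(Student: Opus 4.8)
The plan is to deduce the corollary from Lemma \ref{lem-comp} applied to the subgroup $N=E$, and then to identify the resulting index ratio with $\m(\gamma)$ by unwinding the definition of the modular homomorphism given in Section \ref{sec-bs}.

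First I would record the relevant algebra. Since $E$ is elliptic it is generated by an elliptic element, and $\Gamma=\bs(p,q)$ is torsion-free, so this generator has infinite order; write $E=\langle c\rangle$ with $c$ of infinite order. Because $E$ is elliptic we have $\comm_{\Gamma}(E)=\comm_{\Gamma}(\langle c\rangle)=\Gamma$ by the characterization of elliptic elements recalled in Section \ref{sec-bs}, so $E$ is quasi-normal in $\Gamma$; hence $\cal{E}=E\ltimes X$ is quasi-normal in $\cal{G}=\Gamma\ltimes X$ by Lemma \ref{lem-qn-group}, and the cocycles $\D=\D(\cal{G},\cal{E})$ and $\I=\I(\cal{G},\cal{E})$ are defined. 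Moreover $E$ is abelian, so every subgroup of $E$ is normal in $E$; in particular $E\cap\gamma E\gamma^{-1}$ and $E\cap\gamma^{-1}E\gamma$ are normal in $E$ for every $\gamma\in\Gamma$. Thus the hypotheses of Lemma \ref{lem-comp} hold with $N=E$, giving, for each fixed $\gamma\in\Gamma$,
\[
\D(\gamma,x)\,\I(\gamma,x)=[E:E\cap\gamma E\gamma^{-1}]\,[E:E\cap\gamma^{-1}E\gamma]^{-1}\qquad\text{for a.e.\ }x\in X.
\]

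It then remains to evaluate the right-hand side. The groups $E$ and $\gamma E\gamma^{-1}$ are commensurable infinite cyclic groups, so $E\cap\gamma E\gamma^{-1}$ is infinite cyclic and, being a subgroup of $\langle c\rangle$, equals $\langle c^k\rangle$ for a unique integer $k\geq 1$; hence $[E:E\cap\gamma E\gamma^{-1}]=k$. Since $c^k\in\gamma E\gamma^{-1}=\langle\gamma c\gamma^{-1}\rangle$, we may write $c^k=\gamma c^j\gamma^{-1}$ for a non-zero integer $j$; conjugating by $\gamma^{-1}$ gives $\gamma^{-1}c^k\gamma=c^j$, and therefore $E\cap\gamma^{-1}E\gamma=\gamma^{-1}(E\cap\gamma E\gamma^{-1})\gamma=\langle c^j\rangle$, so $[E:E\cap\gamma^{-1}E\gamma]=|j|$. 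Thus the right-hand side equals $k/|j|$. Finally, $\gamma c^j\gamma^{-1}=c^k$ is precisely a relation of the form used to define the modular homomorphism in Section \ref{sec-bs} (taking $E$ as the non-trivial elliptic subgroup, $c$ as the chosen non-neutral element, and $n=j$, $m=k$), so $\m(\gamma)=|k/j|=k/|j|$. Combining the displayed equality with these computations yields $\D(\gamma,x)\I(\gamma,x)=\m(\gamma)$ for a.e.\ $x\in X$.

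The conceptual content is carried entirely by Lemma \ref{lem-comp}; the only new observation needed here is the elementary fact that an elliptic subgroup of $\Gamma$ is infinite cyclic, which simultaneously makes the normality hypothesis of Lemma \ref{lem-comp} automatic and lets one express both indices through a single generator $c$. I expect the only real subtlety to be bookkeeping: keeping track of signs and of which generator is used, and confirming that the relation $\gamma c^j\gamma^{-1}=c^k$ matches the convention fixed in the definition of $\m$. There is no circularity in invoking that definition, since $\m$ was shown in Section \ref{sec-bs} to be independent of the choice of non-trivial elliptic subgroup and of the non-neutral element used to define it.
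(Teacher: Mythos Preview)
Your proof is correct and follows exactly the approach the paper intends: the corollary is stated without proof immediately after Lemma~\ref{lem-comp}, and your argument---applying that lemma with $N=E$ (using that $E$ is infinite cyclic, hence abelian, so the normality hypotheses are automatic) and then identifying the index ratio $[E:E\cap\gamma E\gamma^{-1}]\,[E:E\cap\gamma^{-1}E\gamma]^{-1}$ with $\m(\gamma)$ via the intrinsic description of $\m$ from Section~\ref{sec-bs}---is precisely what is needed. The bookkeeping with $c^k=\gamma c^j\gamma^{-1}$ is handled correctly.
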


%%%%%%%%%%%%%%%%%%%%%%%%%%%%%%%%%%%%%%%%%%%%

\section{The Mackey range of the modular cocycle}\label{sec-mackey}

Fundamental ideas of the Mackey range of a cocycle are discussed in \cite[Section 6]{mackey} and \cite[Section 7]{ramsay}, where it is called the ``range closure'' of the cocycle.
Let $\cal{G}$ be a discrete measured groupoid on a standard finite measure space $(X, \mu)$.
Let $H$ be a locally compact second countable group and $\tau \colon \cal{G}\to H$ a Borel cocycle.
We define the Mackey range of $\tau$ as follows.
Define a discrete measured equivalence relation $\cal{R}$ on $X\times H$ by
\[\cal{R}=\{\, ((r(g), \tau(g)h), (s(g), h))\in (X\times H)^2\mid g\in \cal{G},\ h\in H\,\}.\]
Let $Z$ denote the space of ergodic components for $\cal{R}$.
The action of $H$ on $X\times H$ defined by $h(x, h')=(x, h'h^{-1})$ for $x\in X$ and $h, h'\in H$ induces an action of $H$ on $Z$.
This action $H\c Z$ is called the {\it Mackey range} of $\tau$.
We can check the following two lemmas that are basic facts on Mackey ranges.

\begin{lem}\label{lem-mackey-res}
In the above notation, let $A$ be a Borel subset of $X$ with $\cal{G}A=X$.
We denote by $H\c Z_A$ the Mackey range of the restriction of $\tau$ to $(\cal{G})_A$.
Then the inclusion of $A\times H$ into $X\times H$ induces an isomorphism between the two actions $H\c Z_A$ and $H\c Z$.
Namely, it induces a measure space isomorphism $\eta \colon Z_A\to Z$ such that for any $h\in H$, we have $\eta(hz)=h\eta(z)$ for a.e.\ $z\in Z_A$. 
\end{lem}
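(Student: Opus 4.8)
The plan is to unwind the two Mackey-range constructions, reduce the statement to a general fact about ergodic decompositions of discrete measured equivalence relations, and then prove that fact using an explicit Borel retraction.

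First I would match up the relations. Write $\cal{R}$ for the equivalence relation on $X\times H$ defining the Mackey range of $\tau$, and $\cal{R}_A$ for the relation on $A\times H$ defining the Mackey range of $\tau|_{(\cal{G})_A}$. A pair in $\cal{R}$ both of whose coordinates lie in $A\times H$ is realized by some $g\in\cal{G}$ with $r(g),s(g)\in A$, i.e.\ $g\in(\cal{G})_A$, so $\cal{R}_A=(\cal{R})_{A\times H}$ is exactly the restriction of $\cal{R}$. Moreover, since every $\tau(g)$ is invertible and $\cal{G}A=X$, each $(x,h)$ is $\cal{R}$-equivalent to a point of $A\times H$: pick $g$ with $r(g)=x$ and $s(g)\in A$ and pass to $(s(g),\tau(g)^{-1}h)$. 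Hence the $\cal{R}$-saturation of $A\times H$ is conull. Fixing a left Haar measure $m$ on $H$, the relation $\cal{R}$ is nonsingular for $\mu\times m$ (its partial transformations act on the $X$-coordinate by $\mu$-nonsingular maps and on the $H$-coordinate by left translations, which preserve $m$). So it suffices to prove: for a nonsingular discrete measured equivalence relation $\cal{R}$ on a standard $\sigma$-finite measure space $(Y,\omega)$ and a Borel set $B$ with $\cal{R}B=Y$ up to null sets, the inclusion $B\hookrightarrow Y$ induces a measure-space isomorphism $\eta$ from the space of ergodic components of $(\cal{R})_B$ onto that of $\cal{R}$, and $\eta$ intertwines any group of $\omega$-nonsingular Borel automorphisms that normalize $\cal{R}$ and preserve $B$ — here the $H$-action $(x,h')\mapsto(x,h'h^{-1})$.

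Let $\pi\colon(Y,\omega)\to(Z,\zeta)$ and $\pi_B\colon(B,\omega|_B)\to(Z_B,\zeta_B)$ be the two ergodic decompositions, so $Z_B$ is the $Z_A$ of the statement. Since $(\cal{R})_B\subset\cal{R}$, restriction of $\cal{R}$-invariant Borel functions yields a canonical Borel map $\eta\colon Z_B\to Z$ with $\pi|_B=\eta\circ\pi_B$, and $\eta$ is $H$-equivariant because the $H$-actions on $Z$ and $Z_B$ are induced from those on $Y$ and $B$ and the inclusion is $H$-equivariant. It is essentially surjective because $\pi^{-1}(\pi(B))$ is $\cal{R}$-invariant and contains $\cal{R}B=Y$. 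For injectivity I would use that, as $\cal{R}B=Y$ up to null sets, there is a Borel retraction $\rho\colon Y\to B$ with $\rho|_B=\mathrm{id}$ and $(y,\rho(y))\in\cal{R}$ for a.e.\ $y$, obtained by patching countably many partial isomorphisms of $\cal{R}$ carrying the pieces of a Borel partition of $Y\setminus B$ into $B$. Any $(\cal{R})_B$-invariant Borel function $f$ on $B$ then pulls back to the $\cal{R}$-invariant function $f\circ\rho$ on $Y$, since $(y_1,y_2)\in\cal{R}$ forces $(\rho(y_1),\rho(y_2))\in(\cal{R})_B$; hence $f=(f\circ\rho)|_B$ factors through $\pi|_B=\eta\circ\pi_B$, and applying this to a countable separating family of such $f$ gives $\pi_B(y_1)=\pi_B(y_2)$ whenever $\pi(y_1)=\pi(y_2)$, i.e.\ $\eta$ is essentially injective. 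Finally $\pi_*(\omega|_B)$ and $\pi_*\omega$ are equivalent on $Z$: absolute continuity one way is trivial, and conversely, if $\pi^{-1}(E)$ meets $B$ in an $\omega$-null set then, $\pi^{-1}(E)$ being $\cal{R}$-invariant and the patched partial isomorphisms being $\omega$-nonsingular, $\pi^{-1}(E)$ is itself $\omega$-null. Thus $\eta$ transports the class of $\zeta_B$ to that of $\zeta$ and, being an essentially bijective Borel map between standard Borel spaces, is a measure-space isomorphism; pulling this back along $B=A\times H\subset X\times H$ gives the asserted $\eta\colon Z_A\to Z$.

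I expect the only real obstacle to be measure-theoretic housekeeping: confirming that the ergodic decomposition is available for the $\sigma$-finite relation on $X\times H$, and verifying that $\eta$ identifies the measure \emph{classes} $\zeta_B$ and $\zeta$, not merely the underlying Borel spaces. This is precisely where nonsingularity of $\cal{R}$ for $\mu\times m$ — equivalently, nonsingularity of the retraction $\rho$ — is used. Everything else is routine manipulation of invariant functions and the pseudogroup structure of $\cal{R}$.
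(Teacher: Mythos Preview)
Your argument is correct. The paper does not actually give a proof of this lemma: it introduces Lemmas~\ref{lem-mackey-res} and \ref{lem-mackey-cohom} with the sentence ``We can check the following two lemmas that are basic facts on Mackey ranges'' and leaves the verification to the reader. So there is no proof in the paper to compare your approach against; what you have written is a legitimate and complete way to fill in the omitted details.

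A few minor remarks. Your identification $\cal{R}_A=(\cal{R})_{A\times H}$ and the saturation $\cal{R}(A\times H)=X\times H$ are exactly right, and the reduction to the general statement about restricting an equivalence relation to a full Borel subset is the natural way to organize the argument. The only point that requires a little care, as you note yourself, is that $(X\times H,\mu\times m)$ is merely $\sigma$-finite, so one needs the ergodic decomposition in that generality rather than the finite-measure version quoted in the paper; this is standard (and one could also bypass it by working directly with the algebras of invariant sets). Your use of a Borel retraction $\rho\colon Y\to B$ built from countably many partial isomorphisms is the clean way to show that $(\cal{R})_B$-invariant functions extend to $\cal{R}$-invariant ones, which is what gives essential injectivity of $\eta$ and equivalence of the pushed-forward measure classes.
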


\begin{lem}\label{lem-mackey-cohom}
In the above notation, the following assertions hold:
\begin{enumerate}
\item Let $f$ be a Borel automorphism of $\cal{G}$.
Define a Borel cocycle $\tau_f\colon \cal{G}\to H$ by $\tau_f=\tau\circ f$.
Then the automorphism of $X\times H$ sending $(x, h)$ to $(f^{-1}(x), h)$ for $x\in X$ and $h\in H$ induces an isomorphism between the Mackey ranges of $\tau$ and $\tau_f$.
\item Let $\varphi \colon X\to H$ be a Borel map.
Define a Borel cocycle $\tau_{\varphi}\colon \cal{G}\to H$ by $\tau_{\varphi}(g)=\varphi(r(g))\tau(g)\varphi(s(g))^{-1}$ for $g\in \cal{G}$.
Then the automorphism of $X\times H$ sending $(x, h)$ to $(x, \varphi(x)h)$ for $x\in X$ and $h\in H$ induces an isomorphism between the Mackey ranges of $\tau$ and $\tau_{\varphi}$.
\end{enumerate}
\end{lem}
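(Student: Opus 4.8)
The plan is to treat both assertions uniformly. In each case the stated Borel automorphism $\Theta$ of $X\times H$ is explicit, and it suffices to check three things: (a) $\Theta$ preserves the measure class of $\mu\times m_H$, where $m_H$ denotes a Haar measure on $H$; (b) $\Theta$ carries the equivalence relation $\cal{R}$ associated with $\tau$ onto the equivalence relation $\cal{R}'$ associated with $\tau_f$ (resp.\ $\tau_\varphi$); and (c) $\Theta$ commutes with the $H$-action $h\cdot (x,h')=(x,h'h^{-1})$ on $X\times H$. Granting (a) and (b), the uniqueness part of the ergodic decomposition theorem shows that $\Theta$ induces a measure space isomorphism between the spaces of ergodic components $Z$ and $Z'$; granting (c) in addition, this isomorphism intertwines the induced $H$-actions, which is exactly the assertion. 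One also uses, as is implicit in the very definition of the Mackey range, that $\cal{R}$ is $H$-invariant --- replacing $h$ by $hh_0^{-1}$ in a generating pair of $\cal{R}$ again produces a generating pair --- so that the $H$-action on $X\times H$ descends to $Z$.

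For assertion (i), I would first record that a Borel automorphism $f$ of the discrete measured groupoid $\cal{G}$ restricts to a Borel automorphism $f_0$ of the unit space $X$ with $r(f(g))=f_0(r(g))$ and $s(f(g))=f_0(s(g))$, and that $f_0$ is nonsingular; hence $\Theta(x,h)=(f_0^{-1}(x),h)$ preserves the class of $\mu\times m_H$, which is (a). For (b), write $\cal{R}'=\{\,((r(g),\tau(f(g))h),(s(g),h))\mid g\in\cal{G},\ h\in H\,\}$ and substitute $g'=f(g)$, which again ranges over $\cal{G}$; using $r(g)=f_0^{-1}(r(g'))$ and $s(g)=f_0^{-1}(s(g'))$ this set equals $\{\,((f_0^{-1}(r(g')),\tau(g')h),(f_0^{-1}(s(g')),h))\,\}$, which is precisely $(\Theta\times\Theta)(\cal{R})$. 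Point (c) is immediate because $\Theta$ alters only the first coordinate.

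For assertion (ii), $\Theta(x,h)=(x,\varphi(x)h)$ acts for each fixed $x$ by a left translation on $H$ and so preserves the class of $\mu\times m_H$, giving (a). For (b), apply $\Theta\times\Theta$ to a generating pair $((r(g),\tau(g)h),(s(g),h))$ of $\cal{R}$ to get $((r(g),\varphi(r(g))\tau(g)h),(s(g),\varphi(s(g))h))$; setting $h''=\varphi(s(g))h$ and using $\tau(g)h=\tau(g)\varphi(s(g))^{-1}h''$, this pair is $((r(g),\varphi(r(g))\tau(g)\varphi(s(g))^{-1}h''),(s(g),h''))$, a generating pair of the relation $\cal{R}'$ attached to $\tau_\varphi$. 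Hence $(\Theta\times\Theta)(\cal{R})=\cal{R}'$. For (c), the $H$-action operates by right translation in the second coordinate, which commutes with the left translation carried out by $\Theta$, so $\Theta$ is $H$-equivariant.

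I expect no serious obstacle. The only steps that need a little care are the two substitutions $g'=f(g)$ and $h''=\varphi(s(g))h$ by which one identifies $(\Theta\times\Theta)(\cal{R})$ with $\cal{R}'$, and the invocation of the standing convention that an isomorphism of discrete measured groupoids is nonsingular on unit spaces, which is what allows $\Theta$ to be pushed down to the spaces of ergodic components.
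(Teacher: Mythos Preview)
Your proposal is correct and is exactly the direct verification the paper has in mind: the paper does not give a proof of this lemma but merely states that it, together with the preceding lemma, is a basic fact on Mackey ranges that ``we can check.'' Your three-point check (measure class, equivalence relation, $H$-equivariance) is the natural way to carry this out.
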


In the rest of this section, we set $\Gamma =\bs(p, q)$ and $\Lambda =\bs(r, s)$ with $2\leq |p|\leq |q|$ and $2\leq |r|\leq |s|$, unless otherwise mentioned.
Let $\m \colon \Gamma \to \Rm$ and $\n \colon \Lambda \to \Rm$ denote the modular homomorphisms.

\begin{thm}\label{thm-r}
Let $(\Sigma, m)$ be a $(\Gamma, \Lambda)$-coupling.
Then there exists an almost $(\Gamma \times \Lambda)$-equivariant Borel map $\Phi \colon \Sigma \to (\R, \log \circ \m, \log \circ \n)$.
\end{thm}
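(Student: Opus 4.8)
The plan is to translate the statement into a cohomological statement about a cocycle on the groupoid coming from the measure equivalence, and then to settle it by means of the modular cocycle of Radon--Nikodym type together with the transfer of elliptic subgroupoids under measure equivalence. Write $X=\Sigma /\Lambda$ and $Y=\Sigma /\Gamma$; these are finite-measure spaces carrying measure-preserving actions of $\Gamma$ and $\Lambda$. Set $\cal{G}=\Gamma \ltimes X$ and $\cal{H}=\Lambda \ltimes Y$, let $\rho_{\cal{G}}\colon \cal{G}\to \Gamma$ and $\rho_{\cal{H}}\colon \cal{H}\to \Lambda$ be the projections, and let $\alpha_{\cal{G}}\colon \cal{G}\to \Lambda$ be the measure-equivalence cocycle attached to $\Sigma$, so that $(\rho_{\cal{G}},\alpha_{\cal{G}})$ is the restriction to $X$ of the tautological homomorphism $(\Gamma \times \Lambda)\ltimes \Sigma \to \Gamma \times \Lambda$ under the identification $\cal{G}\cong((\Gamma \times \Lambda)\ltimes \Sigma)_X$. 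Unwinding the definitions of a coupling and of an almost equivariant map, the existence of $\Phi$ is equivalent to the existence of a Borel map $\varphi\colon X\to \R$ with $\varphi(r(g))=c(g)+\varphi(s(g))$ for a.e.\ $g\in \cal{G}$, where $c=(\log \circ \m)\circ \rho_{\cal{G}}-(\log \circ \n)\circ \alpha_{\cal{G}}$; indeed, from such a $\varphi$ one recovers $\Phi$ by setting $\Phi((e,\lambda)x)=-(\log \circ \n)(\lambda)+\varphi(x)$ for $x\in X$. The image of $c$ lies in the countable subgroup of $\R$ generated by $\log |q/p|$ and $\log |s/r|$, so after replacing $\cal{G}$ by a restriction to a positive-measure set we may recover a map on all of $X$ by Lemma \ref{lem-inv-ext}; and by passing to the ergodic decomposition of $\Gamma \times \Lambda \c \Sigma$ and gluing at the end, we may assume the coupling is ergodic. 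Thus the goal reduces to: \emph{the $\R$-valued cocycle $c$ on $\cal{G}$ is a coboundary.}

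To prove this, pick an infinite elliptic subgroup $E\le \Gamma$ and set $\cal{E}=E\ltimes X$; it is quasi-normal in $\cal{G}$ by Lemma \ref{lem-qn-group}, and it is amenable and of infinite type. Form $\D=\D(\cal{G},\cal{E})$ and $\I=\I(\cal{G},\cal{E})$; by Corollary \ref{cor-bs-comp} we have $(\log \circ \m)\circ \rho_{\cal{G}}=\log \D+\log \I$. Do the same on the $\Lambda$-side with an infinite elliptic $F\le \Lambda$ and $\cal{F}=F\ltimes Y$, obtaining $(\log \circ \n)\circ \rho_{\cal{H}}=\log \D'+\log \I'$ with $\D'=\D(\cal{H},\cal{F})$ and $\I'=\I(\cal{H},\cal{F})$. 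The measure equivalence provides Borel subsets $A\subseteq X$ and $B\subseteq Y$ of positive measure (automatically complete sections, by ergodicity) and a groupoid isomorphism $f\colon (\cal{G})_A\to (\cal{H})_B$ coming from the $(\Gamma \times \Lambda)\ltimes \Sigma$ structure, under which $(\log \circ \n)\circ \rho_{\cal{H}}\circ f$ is cohomologous to $(\log \circ \n)\circ \alpha_{\cal{G}}$ on $(\cal{G})_A$. Since $\cal{H}$ (hence $(\cal{H})_B$, by ergodicity) is nowhere amenable and $(\cal{E})_A$ is amenable and quasi-normal in $(\cal{G})_A$ (Lemma \ref{lem-qn-res}), the image $\cal{S}=f((\cal{E})_A)$ is amenable and quasi-normal in $(\cal{H})_B$, so by Theorem \ref{thm-ell} it is elliptic in $\cal{H}$: after a countable Borel partition $B=\bigsqcup_n B_n$ one has $\cal{S}|_{B_n}\le (\Lambda_{v_n}\ltimes Y)|_{B_n}$ for vertices $v_n$ of the Bass--Serre tree of $\Lambda$.

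The core of the proof is the invariance statement that, for $\cal{G}=\Gamma \ltimes X$, the cocycles $\D(\cal{G},\cdot)$ and $\I(\cal{G},\cdot)$, taken modulo coboundaries, do not depend on the choice of an amenable, quasi-normal, infinite-type elliptic subgroupoid. Granting this, $\D((\cal{H})_B,\cal{S})$ is cohomologous to $\D((\cal{H})_B,(\cal{F})_B)$, which by Lemma \ref{lem-d-cohom} is cohomologous to the restriction of $\D'$ to $(\cal{H})_B$; composing with $f$ and using that the modular cocycle is functorial under groupoid isomorphisms, $\D'\circ f$ is cohomologous to $\D(\cal{G},\cal{E})$ on $(\cal{G})_A$, and similarly $\I'\circ f$ is cohomologous to $\I(\cal{G},\cal{E})$ by Lemma \ref{lem-lic-res}. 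Hence on $(\cal{G})_A$,
\[(\log \circ \n)\circ \rho_{\cal{H}}\circ f=\log(\D'\circ f)+\log(\I'\circ f)\ \sim\ \log \D(\cal{G},\cal{E})+\log \I(\cal{G},\cal{E})=(\log \circ \m)\circ \rho_{\cal{G}},\]
and since the left-hand side is cohomologous to $(\log \circ \n)\circ \alpha_{\cal{G}}$, we conclude that $c$ is a coboundary on $(\cal{G})_A$, hence on $\cal{G}$ by Lemma \ref{lem-inv-ext}; gluing over the ergodic decomposition of $\Sigma$ completes the argument.

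The main obstacle is precisely the invariance statement of the previous paragraph, and inside it the point that an amenable, quasi-normal, infinite-type subgroupoid of $\cal{G}$ contained in an elliptic group-form subgroupoid $\Gamma_v\ltimes X$ is in fact commensurable with it; this is what permits Lemma \ref{lem-d-finite} and Lemma \ref{lem-lic-res} to be applied, and it relies on the structure of elliptic subgroupoids extracted in the proof of Theorem \ref{thm-ell} together with the fact that any two non-trivial elliptic subgroups of a Baumslag--Solitar group are commensurable. As indicated in the introduction, the variant of Furman's theorem from Section \ref{sec-var} is not needed here; it enters only in the sharper Theorem \ref{thm-z}.
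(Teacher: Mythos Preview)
Your overall strategy matches the paper's: reduce to showing the cocycle $c=(\log\circ\m)\circ\rho_{\cal{G}}-(\log\circ\n)\circ\alpha_{\cal{G}}$ is a coboundary on a restriction, and do this by comparing $\D\cdot\I$ on the two sides via Corollary~\ref{cor-bs-comp}. The decisive difference is in how the comparison is set up.

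The gap is exactly the ``invariance statement'' you flag. You need that an amenable, quasi-normal, infinite-type subgroupoid $\cal{S}$ contained in $(\Lambda_v\ltimes Y)_B$ is commensurable with $(\Lambda_v\ltimes Y)_B$ in the sense that $[(\Lambda_v\ltimes Y)_B:\cal{S}]_x<\infty$; without this, Lemmas~\ref{lem-d-finite} and~\ref{lem-lic-res}~(ii) do not apply and you cannot pass from $\D((\cal{H})_B,\cal{S})$ to $\D((\cal{H})_B,(\cal{F})_B)$. Neither Theorem~\ref{thm-ell} nor its proof gives this: ellipticity only produces an invariant map into $V(T_\Lambda)$, hence a containment $\cal{S}|_{B_n}\le(\Lambda_{v_n}\ltimes Y)|_{B_n}$, not a finite-index containment. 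The commensurability of any two non-trivial elliptic \emph{subgroups} of $\Lambda$ is a statement about groups and does not transfer to arbitrary subgroupoids; quasi-normality of $\cal{S}$ in $(\cal{H})_B$ controls $[\cal{S}^\phi\cap\cal{S}:\cal{S}]$ for $\phi\in\qn$, not $[(\Lambda_v\ltimes Y)_B:\cal{S}]$.

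The paper avoids this entirely by a two-step ping-pong. Starting from a vertex $u$ of $T_\Gamma$, one first finds $v\in V(T_\Lambda)$ and $Z_1$ with $f((\cal{G}_u)_{Z_1})<(\cal{H}_v)_{Z_1}$, and then applies Theorem~\ref{thm-ell} \emph{again} to $f^{-1}$ and $\cal{H}_v$ to obtain $A\subset Z_1$ and $u'\in V(T_\Gamma)$ with $(\cal{H}_v)_A<f((\cal{G}_{u'})_A)$. Setting $\cal{E}=f^{-1}((\cal{H}_v)_A)$ yields the sandwich
\[
((\Gamma_u\cap\Gamma_{u'})\ltimes X)_A<\cal{E}<(\Gamma_{u'}\ltimes X)_A.
\]
Now $[\Gamma_{u'}:\Gamma_u\cap\Gamma_{u'}]<\infty$ (this is where commensurability of elliptic subgroups is actually used), so Lemmas~\ref{lem-d-finite}, \ref{lem-d-cohom}, \ref{lem-lic-res} apply on the $\Gamma$-side; and on the $\Lambda$-side the equality $f(\cal{E})=(\cal{H}_v)_A$ is exact, so Corollary~\ref{cor-bs-comp} applies directly with no commensurability issue. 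This is the missing idea: rather than proving a general invariance of $\D,\I$ up to coboundary under change of elliptic subgroupoid, choose the subgroupoid so that it is group-form on one side and finite-index-sandwiched between group-form subgroupoids on the other.
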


\begin{proof}
We may assume that the action of $\Gamma \times \Lambda$ on $(\Sigma, m)$ is ergodic by \cite[Corollary 3.6]{fmw}.
Let $X, Y\subset \Sigma$ be fundamental domains for the actions of $\Lambda$ and $\Gamma$ on $\Sigma$, respectively.
Putting $Z=X\cap Y$, we may assume that $Z$ has positive measure.
Let $\alpha \colon \Gamma \times X\to \Lambda$ be the ME cocycle associated to $X$, defined by the condition $(\gamma, \alpha(\gamma, x))x\in X$ for $\gamma \in \Gamma$ and $x\in X$.
Since $X$ is identified with $\Sigma /(\{ e\} \times \Lambda)$, we have an ergodic measure-preserving action $\Gamma \c X$, where $X$ is equipped with the finite measure that is the restriction of $m$.
To distinguish this action from the original action on $\Sigma$, we use a dot for the new action, that is, we denote $(\gamma, \alpha(\gamma, x))x$ by $\gamma \cdot x$.
Similarly, we have an ergodic measure-preserving action $\Lambda \c Y$ and use a dot for this action.
We set $\cal{G}=\Gamma \ltimes X$ and $\cal{H}=\Lambda \ltimes Y$.
The map $f\colon (\cal{G})_Z\to (\cal{H})_Z$ defined by $f(\gamma, x)=(\alpha(\gamma, x), x)$ for $(\gamma, x)\in (\cal{G})_Z$ is then an isomorphism.
Let $T_{\Gamma}$ and $T_{\Lambda}$ denote the Bass-Serre trees associated to $\Gamma$ and $\Lambda$, respectively.
For a vertex $u$ of $T_{\Gamma}$, we set $\cal{G}_u=\Gamma_u\ltimes X$.
Similarly, for a vertex $v$ of $T_{\Lambda}$, we set $\cal{H}_v=\Lambda_v\ltimes Y$.

We fix a vertex $u$ of $T_{\Gamma}$.
By Theorem \ref{thm-ell}, $f((\cal{G}_u)_Z)$ is an elliptic subgroupoid of $(\cal{H})_Z$.
It follows that there exist a Borel subset $Z_1$ of $Z$ with positive measure and a vertex $v$ of $T_{\Lambda}$ with $f((\cal{G}_u)_{Z_1})<(\cal{H}_v)_{Z_1}$.
Repeating this argument for $f^{-1}$ and $\cal{H}_v$, we find a Borel subset $A$ of $Z_1$ with positive measure and a vertex $u'$ of $T_{\Gamma}$ with $(\cal{H}_v)_A<f((\cal{G}_{u'})_A)$.
Putting $\cal{E}=f^{-1}((\cal{H}_v)_A)$, we have
\[((\Gamma_u\cap \Gamma_{u'})\ltimes X)_A<\cal{E}<(\Gamma_{u'}\ltimes X)_A.\]

We put $\D_{\Gamma}=\D((\cal{G})_A, \cal{E})$ and $\I_{\Gamma}=\I((\cal{G})_A, \cal{E})$.
Let $\rho \colon \cal{G}\to \Gamma$ be the projection.
Since $\Gamma_u\cap \Gamma_{u'}$ is a finite index subgroup of $\Gamma_{u'}$, by Lemmas \ref{lem-d-finite}, \ref{lem-d-cohom} and \ref{lem-lic-res} and Corollary \ref{cor-bs-comp}, there exists a Borel map $\varphi_0\colon A\to \Rm$ such that
\[\D_{\Gamma}(g)\I_{\Gamma}(g)=\varphi_0(r_{\cal{G}}(g))\m \circ \rho(g)\varphi_0(s_{\cal{G}}(g))^{-1}\]
for a.e.\ $g\in (\cal{G})_A$, where $r_{\cal{G}}, s_{\cal{G}}\colon \cal{G}\to X$ denote the range and source maps of $\cal{G}$, respectively.
Put $\D_{\Lambda}=\D((\cal{H})_A, (\cal{H}_v)_A)$ and $\I_{\Lambda}=\I((\cal{H})_A, (\cal{H}_v)_A)$.
Let $\sigma \colon \cal{H}\to \Lambda$ be the projection.
A similar property also holds for these maps.
On the other hand, the equality $f(\cal{E})=(\cal{H}_v)_A$ implies that $\D_{\Lambda}\circ f=\D_{\Gamma}$ and $\I_{\Lambda}\circ f=\I_{\Gamma}$.
It follows that there exists a Borel map $\varphi \colon A\to \Rm$ such that
\[\varphi(r_{\cal{G}}(g))\m \circ \rho(g)\varphi(s_{\cal{G}}(g))^{-1}=\n \circ \sigma\circ f(g)\]
for a.e.\ $g\in (\cal{G})_A$.
We thus have the equality
\[\varphi(\gamma \cdot x)\m(\gamma)\varphi(x)^{-1}=\n \circ \alpha(\gamma, x)\tag{$\ast$}\label{eq-cohom}\]
for any $\gamma \in \Gamma$ and a.e.\ $x\in A$ with $\gamma \cdot x\in A$.
We define a Borel map $\Phi \colon \Sigma \to \R$ by
\[\Phi((\gamma, \lambda)x)=\log (\m(\gamma)\varphi(x)^{-1}\n(\lambda)^{-1})\]
for $x\in A$, $\gamma \in \Gamma$ and $\lambda \in \Lambda$.
Using equation (\ref{eq-cohom}), we can show that $\Phi$ is well-defined and is a desired map, along the argument in the proof of \cite[Theorem 4.4]{kida-ama}.
\end{proof}

Theorem \ref{thm-flow} is obtained by combining Lemma \ref{lem-mackey-res}, Lemma \ref{lem-mackey-cohom} and equation (\ref{eq-cohom}).

\begin{rem}\label{rem-type}
Let $\Gamma \c (X, \mu)$ be an ergodic measure-preserving action on a standard finite measure space, and call it $\alpha$.
We denote by $\pi \colon (X, \mu)\to (Z, \xi)$ the ergodic decomposition for the action of $\ker \m$ on $(X, \mu)$.
We have the canonical action of $\m(\Gamma)$ on $(Z, \xi)$ and the action of $\log \circ \m(\Gamma)$ on it through the isomorphism $\log \colon \Rm \to \R$.
The flow associated with $\alpha$, introduced in Section \ref{sec-int}, is defined as the action of $\R$ induced from the action of $\log \circ \m(\Gamma)$ on $(Z, \xi)$.
If $|p|=|q|$, then the flow associated with $\alpha$ is isomorphic to the action of $\mathbb{R}$ on itself by addition because $\m$ is trivial.
If $|p|<|q|$, then we have the following two cases:
\begin{itemize}
\item If there exists a positive integer $n$ such that up to null sets, $Z$ consists of $n$ points each of whose measure is $\xi(Z)/n$, then the flow associated with $\alpha$ is isomorphic to the action of $\R$ on $\R/(-\log |p/q|^n)\mathbb{Z}$ by addition.
\item If $Z$ contains no point whose measure is positive, then the flow associated with $\alpha$ is finite-measure-preserving and any orbit of it is of measure zero.
\end{itemize}
Following terminology for transformations of type ${\rm III}$  (see Section \ref{sec-exotic}), in the first case, we say that the action $\Gamma \c (X, \mu)$ is of {\it type $|p/q|^n$}.
In the second case, we say that the action $\Gamma \c (X, \mu)$ is of {\it type $0$}.
\end{rem}

\begin{cor}
Assume $|p|<|q|$ and $|r|<|s|$.
If $\Gamma$ and $\Lambda$ are ME, then $\ker \m$ and $\ker \n$ are ME.
\end{cor}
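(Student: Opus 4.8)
The plan is to use the almost equivariant map produced by Theorem~\ref{thm-r} to cut a $(\Gamma,\Lambda)$-coupling down to a $(\ker \m,\ker \n)$-coupling. So I would start from a $(\Gamma,\Lambda)$-coupling $(\Sigma,m)$ with $m(\Sigma)>0$, together with finite-measure fundamental domains $X,Y\subset \Sigma$ for the actions of $\Lambda$ and $\Gamma$, and apply Theorem~\ref{thm-r} to get an almost $(\Gamma \times \Lambda)$-equivariant Borel map $\Phi \colon \Sigma \to (\R,\log \circ \m,\log \circ \n)$. Since $2\leq |p|<|q|$ and $2\leq |r|<|s|$, the numbers $a_0=\log |q/p|$ and $b_0=\log |s/r|$ are positive, $\log \circ \m(\Gamma)=a_0\mathbb{Z}$ and $\log \circ \n(\Lambda)=b_0\mathbb{Z}$, and the quotients $\Gamma/\ker \m$, $\Lambda/\ker \n$ are infinite cyclic; I would fix coset representatives $\{ \gamma_n\}_{n\in \mathbb{Z}}$ of $\ker \m$ in $\Gamma$ and $\{ \lambda_n\}_{n\in \mathbb{Z}}$ of $\ker \n$ in $\Lambda$ with $\m(\gamma_n)=|q/p|^n$ and $\n(\lambda_n)=|s/r|^n$ (powers of the respective generators $t$ will do).

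Next I would choose the cutting set. Put $\delta =\min (a_0,b_0)>0$; partitioning $\R$ into the intervals $[n\delta,(n+1)\delta)$ and using $m(\Sigma)>0$, I can pick $n_0$ with $m(\Phi^{-1}([n_0\delta,(n_0+1)\delta)))>0$, set $B=[n_0\delta,(n_0+1)\delta)$ and $\Sigma'=\Phi^{-1}(B)$. The point of making $|B|=\delta$ is the twin disjointness property: since $\delta \leq a_0$, the translates $\{ B+na_0\}_{n\in\mathbb{Z}}$ are pairwise disjoint, and since $\delta \leq b_0$, so are $\{ B+nb_0\}_{n\in\mathbb{Z}}$. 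First I would check that $\Sigma'$ is invariant under $\ker \m \times \ker \n$: for $g\in \ker \m$ and $h\in \ker \n$, equivariance gives $\Phi((g,h)x)=\log \m(g)+\Phi(x)-\log \n(h)=\Phi(x)$ for a.e.\ $x$, and countability of $\ker \m \times \ker \n$ then forces $(g,h)\Sigma'=\Sigma'$ up to null sets. Thus the $(\Gamma \times \Lambda)$-action restricts to a measure-preserving action of $\ker \m \times \ker \n$ on the $\sigma$-finite space $(\Sigma',m|_{\Sigma'})$, with $m(\Sigma')>0$.

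It then remains to exhibit finite-measure fundamental domains. From $\Gamma=\bigsqcup_{n}\ker \m\, \gamma_n$ one gets $\Sigma=\bigsqcup_{n}\ker \m \cdot (\gamma_n,e)Y$, so $\widetilde{Y}=\bigsqcup_{n}(\gamma_n,e)Y$ is a fundamental domain for $\ker \m \curvearrowright \Sigma$, and by $\ker \m$-invariance of $\Sigma'$ the set $Y'=\widetilde{Y}\cap \Sigma'$ is a fundamental domain for $\ker \m \curvearrowright \Sigma'$. Using $(\gamma_n,e)^{-1}\Phi^{-1}(B)=\Phi^{-1}(B-na_0)$ up to null sets and the disjointness of the sets $B-na_0$,
\[
m(Y')=\sum_{n\in\mathbb{Z}}m\big((\gamma_n,e)Y\cap \Phi^{-1}(B)\big)=\sum_{n\in\mathbb{Z}}m\big(Y\cap \Phi^{-1}(B-na_0)\big)\leq m(Y)<\infty.
\]
The symmetric argument with $X$, $\Lambda=\bigsqcup_{n}\ker \n\, \lambda_n$ and the disjointness of $\{ B+nb_0\}_n$ gives a fundamental domain $X'$ for $\ker \n \curvearrowright \Sigma'$ with $m(X')\leq m(X)<\infty$. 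Hence $(\Sigma',m|_{\Sigma'})$ with the action of $\ker \m \times \ker \n$ is a $(\ker \m,\ker \n)$-coupling, so $\ker \m$ and $\ker \n$ are ME.

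The only step I expect to require care is the choice of $B$: one needs an interval disjoint from all its nonzero $\log \circ \m(\Gamma)$-translates and simultaneously from all its nonzero $\log \circ \n(\Lambda)$-translates, yet still of positive $\Phi_*m$-mass. Taking the length $\delta=\min(a_0,b_0)$ settles disjointness uniformly, with no need to distinguish whether $a_0/b_0$ is rational (so that $\log \circ \m(\Gamma)+\log \circ \n(\Lambda)$ is discrete) or irrational (so that it is dense in $\R$), and $m(\Sigma)>0$ supplies a suitable translate. Everything else is routine bookkeeping with fundamental domains.
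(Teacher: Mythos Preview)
Your proof is correct and follows essentially the same approach as the paper: pull back under $\Phi$ an interval that fits inside fundamental domains for both lattice actions $a_0\mathbb{Z}$ and $b_0\mathbb{Z}$ on $\R$. The paper simply assumes $|q/p|\leq |s/r|$, takes $U=[0,\log|q/p|)$, and asserts that $\Phi^{-1}(U)$ is a $(\ker\m,\ker\n)$-coupling of positive measure; you carry out explicitly the verification of positive measure (via partitioning) and of the finite-measure fundamental domains that the paper leaves implicit.
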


\begin{proof}
We may assume $|q/p|\leq |s/r|$.
Let $\Sigma$ be a $(\Gamma, \Lambda)$-coupling.
There exists an almost $(\Gamma \times \Lambda)$-equivariant Borel map $\Phi \colon \Sigma \to (\R, \log \circ \m, \log \circ \n)$ by Theorem \ref{thm-r}.
We set $U=\{\, u\in \mathbb{R}\mid 0\leq u<\log |q/p|\,\}$, which is a fundamental domain for the action of $\log \circ \m(\Gamma)$ on $\R$ and is contained in that for the action of $\log \circ \n(\Lambda)$ on $\R$.
The set $\Phi^{-1}(U)$ then has positive measure and is a $(\ker \m, \ker \n)$-coupling.
\end{proof}

For discrete groups $G$ and $H$, two measure-preserving actions $G \c (Z, \xi)$ and $H \c (W, \omega)$ on measure spaces are called {\it conjugate} if there exist an isomorphism $F\colon G\to H$ and a measurable isomorphism $f$ from a conull measurable subset of $Z$ onto a conull measurable subset of $W$ such that $f_*\xi$ and $\omega$ are equivalent and for any $g\in G$, we have $f(gz)=F(g)f(z)$ for a.e.\ $z\in Z$.

\begin{cor}\label{cor-conj}
Assume $|p/q|=|r/s|$.
Let $\Gamma \c (X, \mu)$ and $\Lambda \c (Y, \nu)$ be ergodic f.f.m.p.\ actions.
Let $\pi \colon (X, \mu)\to (Z, \xi)$ and $\theta \colon (Y, \nu)\to (W, \omega)$ be the ergodic decompositions for the actions $\ker \m \c (X, \mu)$ and $\ker \n \c (Y, \nu)$, respectively.
If the actions $\Gamma \c (X, \mu)$ and $\Lambda \c (Y, \nu)$ are WOE, then the canonical actions $\m(\Gamma)\c (Z, \xi)$ and $\n(\Lambda)\c (W, \omega)$ are conjugate.
\end{cor}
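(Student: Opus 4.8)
We may assume $2\le|p|<|q|$: if $|p|=|q|$ then $|q/p|=|s/r|=1$ forces $|r|=|s|$, both $\m$ and $\n$ are trivial, so $\ker\m=\Gamma$ and $\ker\n=\Lambda$ act ergodically on $(X,\mu)$ and $(Y,\nu)$, the spaces $(Z,\xi)$ and $(W,\omega)$ are one-point spaces, and the conclusion holds trivially. Thus set $c:=|q/p|>1$; since $c=|s/r|$ we also have $|r|<|s|$, and $\m(\Gamma)=\langle c\rangle=\n(\Lambda)$ is a single cyclic subgroup of $\Rm$, discrete because it is generated by $c>1$. Write $\cal{G}=\Gamma\ltimes(X,\mu)$, $\cal{H}=\Lambda\ltimes(Y,\nu)$, and let $\rho\colon\cal{G}\to\Gamma$ and $\sigma\colon\cal{H}\to\Lambda$ be the projections. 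The plan is to realize both canonical actions as Mackey ranges, following the same scheme that yields Theorem \ref{thm-flow}, but using the $\m(\Gamma)$-valued cocycle $\m\circ\rho$ in place of its composition with $\log$.

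The first ingredient I would establish is a description of $\m(\Gamma)\c(Z,\xi)$ as a Mackey range. By definition of the canonical action, $\pi(\gamma x)=\m(\gamma)\pi(x)$ for all $\gamma\in\Gamma$ and a.e.\ $x$, so the Borel map $\Psi\colon X\times\m(\Gamma)\to Z$ given by $\Psi(x,h)=h^{-1}\pi(x)$ is invariant under the equivalence relation $\cal{R}$ on $X\times\m(\Gamma)$ defining the Mackey range of $\m\circ\rho\colon\cal{G}\to\m(\Gamma)$, it carries the residual $\m(\Gamma)$-action on $X\times\m(\Gamma)$ to the canonical action on $Z$, and on each fibre $\Psi^{-1}(z)=\bigsqcup_{h\in\m(\Gamma)}\pi^{-1}(hz)\times\{h\}$ it restricts to an ergodic relation for a.e.\ $z$, since $\pi$ is the ergodic decomposition for $\ker\m$ and $\m(\Gamma)$ is exhausted by the values of $\m$. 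Hence $\Psi$ realizes this Mackey range, identifying it, as an $\m(\Gamma)$-action up to measure equivalence, with $\m(\Gamma)\c(Z,\xi)$; in the same way the Mackey range of $\n\circ\sigma\colon\cal{H}\to\n(\Lambda)$ is $\n(\Lambda)\c(W,\omega)$.

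Next I would extract a cohomology from the weak orbit equivalence. It provides Borel subsets $A_0\subset X$, $B_0\subset Y$ of positive measure and an isomorphism $f\colon(\cal{G})_{A_0}\to(\cal{H})_{B_0}$; there is no need to pass to an ergodic coupling, as $\Gamma\c X$ and $\Lambda\c Y$ are already ergodic. Running the argument in the proof of Theorem \ref{thm-r} verbatim — locating elliptic subgroupoids via Theorem \ref{thm-ell}, then evaluating the modular and local-index cocycles by Corollary \ref{cor-bs-comp} and Lemmas \ref{lem-d-finite}, \ref{lem-d-cohom} and \ref{lem-lic-res} — produces a Borel set $A\subset A_0$ of positive measure and a Borel map $\varphi\colon A\to\Rm$ with
\[\varphi(r(g))\,\m(\rho(g))\,\varphi(s(g))^{-1}=\n(\sigma(f(g)))\qquad\text{for a.e.\ }g\in(\cal{G})_A,\]
which is precisely the displayed identity occurring there before the specialization to the ``dot'' actions, and which needs no relation between $|q/p|$ and $|s/r|$. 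Both sides of this identity take values in $\langle c\rangle=\m(\Gamma)=\n(\Lambda)$, so $\varphi$ composed with $\Rm\to\Rm/\langle c\rangle$ is invariant under the equivalence relation underlying $(\cal{G})_A$; that relation is ergodic (it is the restriction to $A$ of the orbit relation of the ergodic action $\Gamma\c X$), hence this composite is essentially constant, and after multiplying $\varphi$ by a suitable element of $\Rm$ we may assume $\varphi$ takes values in $\langle c\rangle$.

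Finally I would chain Mackey ranges. The cocycle $\m\circ\rho|_{(\cal{G})_A}$ is now cohomologous, via the $\m(\Gamma)$-valued coboundary $\varphi$, to $g\mapsto\n(\sigma(f(g)))$, which under the isomorphism $f$ is the pullback of $\n\circ\sigma|_{(\cal{H})_{B_0}}$; so by Lemma \ref{lem-mackey-cohom}(ii) and the evident analogue of Lemma \ref{lem-mackey-cohom}(i) for an isomorphism between two groupoids (immediate from the definition of the Mackey range), the Mackey ranges of $\m\circ\rho|_{(\cal{G})_A}$ and of $\n\circ\sigma|_{(\cal{H})_{B_0}}$ are isomorphic as $\m(\Gamma)=\n(\Lambda)$-actions. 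Ergodicity gives $\cal{G}A=X$ and $\cal{H}B_0=Y$, so by Lemma \ref{lem-mackey-res} these coincide with the Mackey ranges of $\m\circ\rho$ on $\cal{G}$ and of $\n\circ\sigma$ on $\cal{H}$, which by the second paragraph are $\m(\Gamma)\c(Z,\xi)$ and $\n(\Lambda)\c(W,\omega)$. Composing the resulting equivariant measure-class isomorphisms yields a conjugacy between $\m(\Gamma)\c(Z,\xi)$ and $\n(\Lambda)\c(W,\omega)$, the accompanying group isomorphism being the identity of $\m(\Gamma)=\n(\Lambda)$. The step I expect to require the most care is the third paragraph: Theorem \ref{thm-flow} already delivers an isomorphism of the associated $\R$-flows, and upgrading this to a conjugacy of the underlying $\mathbb{Z}$-actions rests both on the Mackey-range description in the second paragraph and on arranging $\varphi$ to be $\langle c\rangle$-valued rather than merely $\Rm$-valued.
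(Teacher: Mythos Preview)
Your argument is correct, but it takes a genuinely different route from the paper's. The paper simply invokes Theorem~\ref{thm-flow} as a black box: the associated $\R$-flows $\R\curvearrowright(Z_1,\xi_1)$ and $\R\curvearrowright(W_1,\omega_1)$ (the actions induced from $\log\circ\m(\Gamma)\curvearrowright Z$ and $\log\circ\n(\Lambda)\curvearrowright W$) are isomorphic; since $|p/q|=|r/s|$ gives $\log\circ\m(\Gamma)=\log\circ\n(\Lambda)$, one restricts the $\R$-isomorphism to this common subgroup and observes that almost every ergodic component of the restricted action on $Z_1$ is a copy of $\log\circ\m(\Gamma)\curvearrowright Z$, and likewise on the $W$-side. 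You instead reopen the proof of Theorem~\ref{thm-r} to extract the cohomology identity, and then work entirely with the discrete target $\m(\Gamma)=\n(\Lambda)=\langle c\rangle$: you identify $\m(\Gamma)\curvearrowright Z$ directly as the Mackey range of the $\langle c\rangle$-valued cocycle $\m\circ\rho$, and you use ergodicity of $(\cal G)_A$ to force the coboundary $\varphi$ into $\langle c\rangle$. The paper's path is shorter because the induction/de-induction is packaged into Theorem~\ref{thm-flow}; yours is more self-contained and pinpoints exactly where the hypothesis $\m(\Gamma)=\n(\Lambda)$ is used (namely, in making $\varphi$ group-valued so that Lemma~\ref{lem-mackey-cohom}(ii) applies over $\langle c\rangle$ rather than over $\Rm$).

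One small imprecision: in your last paragraph, $f$ sends $(\cal G)_A$ onto $(\cal H)_B$ with $B=f(A)\subset B_0$, not onto $(\cal H)_{B_0}$; but this is harmless, since $\cal H B=Y$ by ergodicity and Lemma~\ref{lem-mackey-res} applies to $B$ just as well.
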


\begin{proof}
We have the actions $\log \circ \m(\Gamma)\c (Z, \xi)$ and $\log \circ \n(\Lambda)\c (W, \omega)$ through the isomorphism $\log \colon \Rm\to \R$.
Let $\R \c (Z_1, \xi_1)$ and $\R \c (W_1, \omega_1)$ denote the actions induced from these two actions, respectively.
By Theorem \ref{thm-flow}, these two actions of $\R$ are isomorphic.
The assumption $|p/q|=|r/s|$ implies the equality $\log \circ \m(\Gamma)=\log \circ \n(\Lambda)$.
The action of $\log \circ \m(\Gamma)$ on almost every ergodic component for its action on $(Z_1, \xi_1)$ is isomorphic to the original action $\log \circ \m(\Gamma)\c (Z, \xi)$.
A similar property holds for the actions of $\log \circ \n(\Lambda)$ on $(W_1, \omega_1)$ and on $(W, \omega)$.
The corollary therefore follows.
\end{proof}

\begin{rem}
We set $\Gamma =\bs(p, q)$ with $2\leq |p|\leq |q|$, and suppose that we have an ergodic measure-preserving action of $\m(\Gamma)$ on a standard finite measure space $(Z, \xi)$.
We construct an ergodic f.f.m.p.\ action $\Gamma \c (X, \mu)$ such that the canonical action of $\m(\Gamma)$ on the space of ergodic components for the action $\ker \m \c (X, \mu)$ is isomorphic to the given action on $(Z, \xi)$.
Let $\Gamma \c (Y, \nu)$ be an f.f.m.p.\ action such that its restriction to $\ker \m$ is ergodic.
We set $(X, \mu)=(Y, \nu)\times (Z, \xi)$.
Define an action of $\Gamma$ on $(X, \mu)$ by $\gamma(y, z)=(\gamma y, \m(\gamma)z)$ for $\gamma \in \Gamma$, $y\in Y$ and $z\in Z$.
This is shown to be a desired action.

By definition, any associated flow is induced from an ergodic measure-preserving action of $\log \circ \m(\Gamma)$ on a standard finite measure space.
Thanks to the construction discussed above, we can find an ergodic f.f.m.p.\ action of $\Gamma$ whose associated flow is isomorphic to such a given action of $\R$.
\end{rem}

\begin{proof}[Proof of Theorem \ref{thm-z}]
We set $\Gamma =\bs(p, q)$ with $2\leq |p|<|q|$, and suppose that an ergodic f.f.m.p.\ action $\Gamma \c (X, \mu)$ is WOE to a weakly mixing f.f.m.p.\ action $\Delta \c (Y, \nu)$ of a discrete group $\Delta$.
Let $\Sigma$ be the $(\Gamma, \Delta)$-coupling associated with the WOE between the actions $\Gamma \c (X, \mu)$ and $\Delta \c (Y, \nu)$.
Combining Theorems \ref{thm-abel} and \ref{thm-r}, we obtain a homomorphism $\delta \colon \Delta \to \R$ and an almost $(\Gamma \times \Delta)$-equivariant Borel map $\Phi \colon \Sigma \to (\R, \log \circ \m, \delta)$.

If $\delta$ were trivial, then $\Phi$ would induce a $\Gamma$-equivariant Borel map from $X=\Sigma /\Delta$ into $\R$.
The group $\log \circ \m(\Gamma)$ is infinite and discrete in $\R$.
This is a contradiction because we have the $\Gamma$-invariant finite measure $\mu$ on $X$.
The homomorphism $\delta$ is therefore non-trivial.

Put $M=(\log |q/p|)\mathbb{Z}$.
The map $\Phi$ induces a $\Delta$-equivariant Borel map $\bar{\Phi}$ from $Y=\Sigma /\Gamma$ into $\R /M$.
Define a Borel map $\Psi \colon Y\times Y\to \R/M$ by $\Psi(x, y)=\bar{\Phi}(x)-\bar{\Phi}(y)$ for $x, y\in Y$.
Let $\Delta$ act on $Y\times Y$ diagonally.
The map $\Psi$ is $\Delta$-invariant and is therefore essentially constant because the action $\Delta \c (Y, \nu)$ is weakly mixing.
It follows that $\bar{\Phi}$ is essentially constant and that $\delta(\Delta)$ is contained in $M$.
\end{proof}

\begin{proof}[Proof of Theorem \ref{thm-bs-hyp}]
We set $\Gamma =\bs(p, q)$ with $2\leq |p|<|q|$.
Let $\Delta$ be a discrete group such that there is an infinite amenable normal subgroup $N$ of $\Delta$ with the quotient $\Delta /N$ non-elementarily word-hyperbolic.
Assuming that $\Gamma$ and $\Delta$ are ME, we deduce a contradiction.
Let $\Sigma$ be a $(\Gamma, \Delta)$-coupling such that the action of $\Gamma \times \Delta$ on $\Sigma$ is ergodic.
As in the proof of Theorem \ref{thm-r}, we choose fundamental domains $X, Y\subset \Sigma$ for the actions of $\Delta$ and $\Gamma$ on $\Sigma$, respectively, such that the intersection $Z=X\cap Y$ has positive measure.
We set $\cal{G}=\Gamma \ltimes X$ and $\cal{H}=\Delta \ltimes Y$.
Let $\alpha \colon \Gamma \times X\to \Delta$ be the ME cocycle associated to $X$.
We then have the isomorphism $f\colon (\cal{G})_Z\to (\cal{H})_Z$ defined by $f(\gamma, x)=(\alpha(\gamma, x), x)$ for $(\gamma, x)\in (\cal{G})_Z$.
Let $T$ denote the Bass-Serre tree associated to $\Gamma$.
For a vertex $v$ of $T$, set $\cal{G}_v=\Gamma_v\ltimes X$.

Since $(\cal{G}_v)_Z$ is amenable and quasi-normal in $(\cal{G})_Z$, by Theorem \ref{thm-ell-hyp}, there exist a Borel subset $Z_1$ of $Z$ with positive measure and a subgroup $L$ of $\Delta$ with $N<L$, $[L: N]<\infty$ and $f((\cal{G}_v)_{Z_1})<(L\ltimes X)_{Z_1}$.
Since $(L\ltimes X)_{Z_1}$ is also amenable and quasi-normal in $(\cal{H})_{Z_1}$, by Theorem \ref{thm-ell}, there exist a Borel subset $A$ of $Z_1$ with positive measure and a vertex $v'$ of $T$ with $f^{-1}((L\ltimes X)_A)<(\cal{G}_{v'})_A$.
Along the proof of Theorem \ref{thm-r}, using Lemma \ref{lem-comp}, we find a $(\Gamma \times \Delta)$-equivariant Borel map $\Phi \colon \Sigma \to (\R, \log \circ \m, \log \circ I)$, where $I\colon \Delta \to \Rm$ is the trivial homomorphism.
The map $\Phi$ induces a $\Gamma$-equivariant Borel map from $\Sigma /\Delta$ into $\R$.
This is a contradiction because $\log \circ \m(\Gamma)$ is infinite and discrete in $\R$.
\end{proof}

%%%%%%%%%%%%%%%%%%%%%%%%%%%%%%%%%%%%%%%%%%%%%

\section{Reduction of a coupling of Baumslag-Solitar groups}\label{sec-red}

For a discrete group $G$ acting on a tree $T$ so that the stabilizer of any simplex of $T$ in $G$ is infinite and cyclic, Gilbert, Howie, Metaftsis and Raptis present a sufficient condition for the automorphism group of $G$ to be canonically represented into the automorphism group of $T$, in \cite[Theorem A]{ghmr}.
In Section \ref{subsec-red}, relying on their argument, for any coupling of two Baumslag-Solitar groups satisfying a certain ergodicity condition, we construct an equivariant Borel map from the coupling into the space of isomorphisms between the associated Bass-Serre trees.
Throughout this section, we fix the following:

\begin{notation}\label{not-bs}
Let $p$, $q$, $r$ and $s$ be integers with $2\leq |p|<|q|$ and $2\leq |r|<|s|$.
Let $d_0>0$ denote the greatest common divisor of $p$ and $q$, and let $c_0>0$ denote the greatest common divisor of $r$ and $s$.
Putting $p_0=p/d_0$, $q_0=q/d_0$, $r_0=r/c_0$ and $s_0=s/c_0$, we assume that $q$ is not a multiple of $p$ and that $s$ is not a multiple of $r$.
We thus have $1<|p_0|<|q_0|$ and $1<|r_0|<|s_0|$.

We set $\Gamma =\bs(p, q)$ and $\Lambda =\bs(r, s)$.
Let $T_{\Gamma}$ and $T_{\Lambda}$ be the Bass-Serre trees associated with $\Gamma$ and $\Lambda$, respectively.
We denote by $\imath \colon \Gamma \to \aut(T_{\Gamma})$ and $\jmath \colon \Lambda \to \aut(T_{\Lambda})$ the homomorphisms associated with the actions on the trees.
Let $a\in \Gamma$ (resp.\ $b\in \Lambda$) be a generator of the stabilizer of a vertex in $T_{\Gamma}$ (resp.\ $T_{\Lambda}$). 
We define $\isom(T_{\Lambda}, T_{\Gamma})$ as the set of orientation-preserving isomorphisms from $T_{\Lambda}$ onto $T_{\Gamma}$, which are equipped with the standard Borel structure induced by the pointwise convergence topology.
The set $\isom(T_{\Gamma}, T_{\Lambda})$ is also defined similarly.
\end{notation}

\subsection{Reduction to isomorphisms between trees}\label{subsec-red}

For non-zero integers $m$, $n$, we say that a measure-preserving action of $\mathbb{Z}$ on a measure space is {\it $(m, n)$-ergodic} if for any non-negative integers $k$, $l$, the action of the subgroup $m^kn^l\mathbb{Z}$ is ergodic.

\begin{thm}\label{thm-aut-t}
In Notation \ref{not-bs}, let $(\Sigma, m)$ be a $(\Gamma, \Lambda)$-coupling.
We suppose that
\begin{enumerate}
\item[(1)] the action of $\langle a^{d_0}\rangle$ on almost every ergodic component for the action $\langle a\rangle \c \Sigma /\Lambda$ is $(p_0, q_0)$-ergodic; and
\item[(2)] the action of $\langle b^{c_0}\rangle$ on almost every ergodic component for the action $\langle b\rangle \c \Sigma /\Gamma$ is $(r_0, s_0)$-ergodic.
\end{enumerate}
Then there exists an essentially unique, almost $(\Gamma \times \Lambda)$-equivariant Borel map $\Phi \colon \Sigma \to (\isom(T_{\Lambda}, T_{\Gamma}), \imath, \jmath)$.
In particular, we have $|p|=|r|$ and $|q|=|s|$.
\end{thm}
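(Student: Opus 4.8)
The plan is to mimic the proof of Theorem \ref{thm-r}, replacing the target $\R$ with the tree of the opposite group and using the ergodicity hypotheses to upgrade from an isomorphism between elliptic subgroupoids to an isomorphism between the ambient Bass-Serre trees. First I would reduce to the ergodic case via \cite[Corollary 3.6]{fmw}, fix fundamental domains $X,Y\subset\Sigma$ for the actions of $\Lambda$ and $\Gamma$ with $Z=X\cap Y$ of positive measure, form the groupoids $\cal{G}=\Gamma\ltimes X$ and $\cal{H}=\Lambda\ltimes Y$, and take the isomorphism $f\colon(\cal{G})_Z\to(\cal{H})_Z$ given by the ME cocycle $\alpha\colon\Gamma\times X\to\Lambda$. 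Fixing a vertex $u$ of $T_\Gamma$, Theorem \ref{thm-ell} applied to the amenable, quasi-normal, nowhere-amenable chain inside $(\cal{H})_Z$ shows $f((\cal{G}_u)_Z)$ is elliptic, so after passing to a positive-measure subset we get a vertex $v$ of $T_\Lambda$ with $f((\cal{G}_u)_{Z_1})<(\cal{H}_v)_{Z_1}$; running the same argument for $f^{-1}$ produces a further subset $A$ and a vertex $u'$ with $(\Gamma_u\cap\Gamma_{u'})\ltimes X)_A<\cal{E}<(\Gamma_{u'}\ltimes X)_A$ where $\cal{E}=f^{-1}((\cal{H}_v)_A)$.

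The key new step is to show the vertex stabilizers match up on the nose, not just up to finite index, and that the edge stabilizers also correspond. Here the $(p_0,q_0)$-ergodicity and $(r_0,s_0)$-ergodicity hypotheses enter exactly as in \cite[Theorem A]{ghmr}: the index computations of Section \ref{sec-bs} (the formula $[\Gamma_{v_0}:\Gamma_{v_0}\cap\Gamma_v]=d_0|p_0|^m|q_0|^M$) combined with the local-index cocycle $\I$ and the modular cocycle $\D$ from Section \ref{sec-mod} force the parameters to satisfy $|p|=|r|$, $|q|=|s|$, and force $\cal{E}$ to equal $((\Gamma_u)\ltimes X)_A$ up to the relevant finite ergodicity. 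Concretely, using Corollary \ref{cor-bs-comp} one has $\D\I=\m$ on one side and $\D\I=\n$ transported through $f$ on the other; comparing the discrete subgroups $\m(\Gamma)=|q/p|^{\mathbb Z}$ and $\n(\Lambda)=|s/r|^{\mathbb Z}$ of $\Rm$ gives $|q/p|=|s/r|$, and then the ergodicity assumptions pin down the common refinement so that the identification of one vertex extends coherently. One then builds the map $\Phi\colon\Sigma\to\isom(T_\Lambda,T_\Gamma)$ by transporting the correspondence $v\mapsto u$ equivariantly: on $A$ the groupoid isomorphism $f$ together with the matching of stabilizers determines a simplicial isomorphism of trees (since a tree with inversions-free cocompact action is reconstructed from the poset of vertex and edge stabilizers), and equivariance over $\Gamma\times\Lambda$ propagates it to all of $\Sigma$ as in the proof of \cite[Theorem 4.4]{kida-ama}.

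For uniqueness, suppose $\Phi_1,\Phi_2$ are two such equivariant maps; then $\Phi_1\Phi_2^{-1}$ gives a $\Gamma$-equivariant (after quotienting by $\Lambda$) Borel map from $X=\Sigma/\Lambda$ into $\aut(T_\Gamma)$, equivalently into the coset space modulo $\imath(\Gamma)$; arguing that the only $\Gamma$-invariant section lands in $\imath(\Gamma)$ itself—because any elliptic or hyperbolic ambiguity would contradict the finite invariant measure and the ergodicity hypotheses, exactly the reasoning used to force non-triviality of $\delta$ in the proof of Theorem \ref{thm-z}—shows $\Phi_1=\Phi_2$ a.e. The last assertion $|p|=|r|$, $|q|=|s|$ is then immediate: an orientation-preserving isomorphism $T_\Lambda\cong T_\Gamma$ conjugating the two actions forces the local structure (the in-valence $|q_0|$ versus $|s_0|$ and out-valence $|p_0|$ versus $|r_0|$, together with the edge-stabilizer indices recording $d_0,c_0$) to agree.

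The main obstacle I anticipate is the passage from a finite-index matching of vertex stabilizers to an exact one: Theorem \ref{thm-ell} alone only yields $\cal{E}$ trapped between $((\Gamma_u\cap\Gamma_{u'})\ltimes X)_A$ and $(\Gamma_{u'}\ltimes X)_A$, and one must rule out any genuine finite-index discrepancy. This is precisely where hypotheses (1) and (2) are indispensable—without $(p_0,q_0)$- and $(r_0,s_0)$-ergodicity the ergodic decompositions of the elliptic subgroupoids can split, allowing the trees to differ by a "rescaling" and breaking the conclusion $|p|=|r|$, $|q|=|s|$. Handling this carefully via the local-index and modular cocycles, and checking that the resulting tree isomorphism is genuinely well-defined (independent of the choices of $u,v,A$), is the technical heart of the argument; the equivariant extension and the uniqueness are comparatively routine given the earlier sections.
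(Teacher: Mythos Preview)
Your proposal has a genuine gap: you are importing the cocycle machinery from the proof of Theorem~\ref{thm-r}, but that machinery produces an equivariant map to $\R$, not to $\isom(T_\Lambda,T_\Gamma)$. The equality $\D\I=\m$ (Corollary~\ref{cor-bs-comp}) transported through $f$ gives only that $\m\circ\rho$ and $\n\circ\sigma\circ f$ are cohomologous as $\Rm$-valued cocycles; this yields at best information about the ratio $|q/p|$ versus $|s/r|$, never the separate equalities $|p|=|r|$ and $|q|=|s|$, and it does not tell you how to build a tree isomorphism. Your sentence ``the ergodicity assumptions pin down the common refinement so that the identification of one vertex extends coherently'' is exactly the missing content.

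The paper's route is different and more direct. The ergodicity hypotheses are used not through $\D$ and $\I$ but through a one-line index argument (Lemma~\ref{lem-vertex-stab}): if $(\cal{G}_{u_1})_W\subset(\cal{G}_{u_2})_W$ then letting $e$ be the first edge on the geodesic from $u_1$ to $u_2$ one gets $(\cal{G}_e)_W=(\cal{G}_{u_1})_W$, contradicting $[(\cal{G}_{u_1})_W:(\cal{G}_e)_W]_x=[\Gamma_{u_1}:\Gamma_e]>1$ by $(p_0,q_0)$-ergodicity and Lemma~\ref{lem-index}. This immediately upgrades the sandwich $(\cal{G}_u)_{Z_1}<\cdots<(\cal{G}_{u'})_A$ to an exact equality $f((\cal{G}_u)_{Z_n})=(\cal{H}_{v_n})_{Z_n}$ on a countable partition (Lemma~\ref{lem-pre}), which defines a Borel family of bijections $\varphi_z\colon V(T_\Gamma)\to V(T_\Lambda)$. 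The GHMR argument then enters not to compare parameters but to show $\varphi_z$ is simplicial: one extends $\varphi_z$ affinely to $\tilde\varphi_z\colon T_\Gamma\to T_\Lambda$, and local injectivity is forced because two edges $e,e'$ at $u$ mapping to paths with the same first edge would give $\Gamma_e=\Gamma_d=\Gamma_{e'}$ (using that $q$ is not a multiple of $p$), hence $e=e'$. Orientation-preservation comes from the edge-stabilizer analogue (Lemma~\ref{lem-edge-stab}). The equalities $|p|=|r|$, $|q|=|s|$ are then a \emph{consequence} of the existence of an orientation-preserving isomorphism $T_\Gamma\cong T_\Lambda$ (valence count), not an input.

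Your uniqueness argument is also off. The paper proves a separate lemma (Lemma~\ref{lem-dirac}): the only probability measure on $\aut(T_\Gamma)$ invariant under $\imath(\Gamma)$-conjugation is the Dirac mass at the identity, by looking at hyperbolic elements $t$ and $ata^{-1}$ and their fixed points in $\partial T_\Gamma$. Uniqueness of $\Phi$ follows immediately from this.
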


\begin{lem}\label{lem-dirac}
In Notation \ref{not-bs}, the Dirac measure on the neutral element of $\aut(T_{\Gamma})$ is the only probability measure on $\aut(T_{\Gamma})$ invariant under conjugation by any element of $\imath(\Gamma)$.
\end{lem}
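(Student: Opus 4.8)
The plan is to show that any Borel probability measure $\nu$ on $G:=\aut(T_{\Gamma})$ invariant under conjugation by every element of $\imath(\Gamma)$ equals $\delta_{e}$. The engine of the proof is that, since $\imath(\Gamma)$ acts on $T_{\Gamma}$ vertex-transitively, edge-transitively, and of general type (this is where $2\le |p|,|q|$ is used: $\imath(t)$ and $\imath(ata^{-1})$ are hyperbolic with axes meeting in a single vertex, so large powers generate a free group), there is no non-zero finite $\imath(\Gamma)$-invariant measure on $V(T_{\Gamma})$, on $E(T_{\Gamma})$, or on $\partial T_{\Gamma}$; the first two because the action is transitive on infinite sets, the last because a hyperbolic element of $\imath(\Gamma)$ pushes any non-atomic measure to a Dirac mass while $\imath(\Gamma)$ has no finite orbit on $\partial T_{\Gamma}$. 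I would first discard hyperbolic automorphisms: the set $\mathcal{H}\subset G$ of hyperbolic elements is conjugation-invariant, and $g\mapsto (g^{+},g^{-})$, sending such a $g$ to its attracting and repelling ends, is a Borel $\imath(\Gamma)$-equivariant map into $\partial T_{\Gamma}\times \partial T_{\Gamma}$; pushing $\nu|_{\mathcal{H}}$ forward and projecting to one coordinate gives a finite $\imath(\Gamma)$-invariant measure on $\partial T_{\Gamma}$ of total mass $\nu(\mathcal{H})$, whence $\nu(\mathcal{H})=0$. So $\nu$ is carried by the set $\mathcal{E}$ of automorphisms fixing a vertex.

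For $g\in\mathcal{E}$ I would then consider the subtree $\mathrm{Fix}(g)$ and its boundary $\partial\mathrm{Fix}(g)\subseteq\partial T_{\Gamma}$, using that $g=e$ exactly when $\mathrm{Fix}(g)=T_{\Gamma}$, equivalently $\partial\mathrm{Fix}(g)=\partial T_{\Gamma}$ (a subtree of $T_{\Gamma}$ with full boundary is everything, since $T_{\Gamma}$ has no leaves). For an oriented edge $\vec{e}$ write $U_{\vec{e}}\subseteq\partial T_{\Gamma}$ for the clopen set of ends in its direction; as $\vec{e}$ ranges over $E_{+}$, resp.\ over $E_{-}$, the sets $U_{\vec{e}}$ form a single $\imath(\Gamma)$-orbit, and together they are a countable clopen basis of $\partial T_{\Gamma}$. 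Conjugation-invariance makes $p(\vec{e}):=\nu(\{g\in\mathcal{E}:\partial\mathrm{Fix}(g)\cap U_{\vec{e}}=\emptyset\})$ depend only on the class of $\vec{e}$ in $E_{+}$ or $E_{-}$; since every $g\in\mathcal{E}$ with $g\neq e$ has $\partial\mathrm{Fix}(g)$ missing some basic clopen set, $\nu(\{g\neq e\})\le\sum_{\vec{e}}p(\vec{e})$, so it suffices to prove $p(\vec{e})=0$ for one outgoing and one incoming edge.

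The key device is to slide $U_{\vec{e}}$ along the axis of $\imath(t)$. Let $(w_{n})_{n\in\mathbb{Z}}$ be that axis, with repelling end $\xi_{-}$ and attracting end $\xi_{+}$, and take $\vec{e}=(w_{0},w_{1})$. Conjugating by $\imath(t)^{-n}$ and using invariance of $\nu$ replaces $U_{\vec{e}}$ by $\imath(t)^{-n}U_{\vec{e}}=U_{(w_{-n},w_{-n+1})}$, an increasing sequence of clopen sets exhausting $\partial T_{\Gamma}\setminus\{\xi_{-}\}$; hence the sets $\{g\in\mathcal{E}:\partial\mathrm{Fix}(g)\cap\imath(t)^{-n}U_{\vec{e}}=\emptyset\}$ decrease, all of measure $p(\vec{e})$, to $\{g\in\mathcal{E}:\partial\mathrm{Fix}(g)\subseteq\{\xi_{-}\}\}$, so $p(\vec{e})=\nu(\{g\in\mathcal{E}:\partial\mathrm{Fix}(g)\subseteq\{\xi_{-}\}\})$, and symmetrically for an incoming edge with $\xi_{+}$ in place of $\xi_{-}$. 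Now this set splits into the part where $\mathrm{Fix}(g)$ is bounded and the part where $|\partial\mathrm{Fix}(g)|=1$. On the former, $g\mapsto\mathrm{center}(\mathrm{Fix}(g))\in V(T_{\Gamma})\cup E(T_{\Gamma})$ is $\imath(\Gamma)$-equivariant, so positive mass there would produce a non-zero finite $\imath(\Gamma)$-invariant measure on $V(T_{\Gamma})$ or $E(T_{\Gamma})$; on the latter, $g\mapsto(\text{the unique end of }\mathrm{Fix}(g))$ is $\imath(\Gamma)$-equivariant, so positive mass would produce a non-zero finite $\imath(\Gamma)$-invariant measure on $\partial T_{\Gamma}$. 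Both are impossible, so $p(\vec{e})=0$, hence $\nu(\{g\neq e\})=0$ and $\nu=\delta_{e}$.

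I expect the crux to be the middle step. An elliptic automorphism in $\aut(T_{\Gamma})$ may fix an infinite subtree (for instance a whole half-tree), so it has no ``center'' in $T_{\Gamma}$ and cannot be attached equivariantly to a vertex or an edge; the trick of pushing a missing basic clopen subset of $\partial T_{\Gamma}$ along the axis of $\imath(t)$ until it is squeezed onto a single end is what circumvents this, and one must check carefully that the translated clopen sets genuinely exhaust $\partial T_{\Gamma}\setminus\{\xi_{\mp}\}$ and that the measure identities survive the monotone limit. The other point requiring (standard) care is that $\imath(\Gamma)\curvearrowright T_{\Gamma}$ is of general type, i.e.\ has no finite orbit on $\partial T_{\Gamma}$ and no invariant probability measure there; this is all that the argument needs beyond vertex- and edge-transitivity, it does not use the finer hypotheses of Notation \ref{not-bs}, and it fails for amenable Baumslag--Solitar groups, consistent with the standing assumption $2\le |p|,|q|$.
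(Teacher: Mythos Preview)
Your proof is correct but takes a considerably more elaborate route than the paper's. The paper's argument is very short: it picks the hyperbolic element $t$ with endpoints $x_{\pm}\in\partial T_{\Gamma}$, defines the evaluation map $F(\varphi)=\varphi(x_{+})$, and observes that conjugation-invariance under $\imath(t)$ makes $F_{*}\mu$ an $\imath(t)$-invariant measure on $\partial T_{\Gamma}$, hence supported on $\{x_{\pm}\}$; the same with $x_{-}$ forces $\mu$ to live on the setwise stabilizer of $\{x_{\pm}\}$, and repeating with the conjugate hyperbolic element $ata^{-1}$ (whose axis meets the axis of $t$ only at the vertex $v$ fixed by $a$) pins $\mu$ down to the stabilizer of $v$; vertex-transitivity then collapses the support to $\{e\}$. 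Your approach instead classifies automorphisms by the geometry of their fixed sets and handles each stratum via the absence of $\imath(\Gamma)$-invariant finite measures on $V(T_{\Gamma})$, $E(T_{\Gamma})$ and $\partial T_{\Gamma}$ separately, with the sliding trick along the axis of $t$ to squeeze the elliptic case down to a single boundary point. What your argument buys is modularity: it isolates exactly which orbit properties of $\imath(\Gamma)$ are being used (vertex- and edge-transitivity plus general type on the boundary) and would transfer to any group acting on a leafless tree with those features. The paper's proof is less transparent in that respect but substantially shorter, since the single observation that $\mu$ is supported on the stabilizer of one vertex (hence, by conjugation, on every vertex stabilizer) replaces your entire Fix-set analysis.
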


\begin{proof}
Let $\mu$ be a probability measure on $\aut(T_{\Gamma})$ invariant under conjugation by any element of $\imath(\Gamma)$.
Let $\Gamma =\langle \, a, t\mid ta^pt^{-1}=a^q\,\rangle$ be the presentation of $\Gamma$.
Note that $t$ acts on $T_{\Gamma}$ as a hyperbolic isometry.
Let $x_{\pm}\in \partial T_{\Gamma}$ denote the two fixed points of $t$.
We define a Borel map $F\colon \aut(T_{\Gamma})\to \partial T_{\Gamma}$ by $F(\varphi)=\varphi(x_+)$ for $\varphi \in \aut(T_{\Gamma})$.
Since $tx_+=x_+$, we have $F_*\mu=(F\circ \ad t)_*\mu=(\imath(t)\circ F)_*\mu$.
The measure $F_*\mu$ is thus invariant under the action of $t$.
Since the action of $\langle t\rangle$ on $\partial T_{\Gamma}\setminus \{ x_{\pm}\}$ is free and admits a Borel fundamental domain, we have $F_*\mu(\{ x_{\pm}\})=1$.
It follows that $\mu$ is supported on the set $\{\, \varphi \in \aut(T_{\Gamma})\mid \varphi(x_+)\in \{ x_{\pm}\}\,\}$.
A verbatim argument implies that $\mu$ is supported on the stabilizer of the set $\{ x_{\pm}\}$ in $\aut(T_{\Gamma})$.

The element $ata^{-1}\in \Gamma$ also acts on $T_{\Gamma}$ as a hyperbolic isometry.
In a similar way, if $y_{\pm}\in \partial T_{\Gamma}$ denote the two fixed points of $ata^{-1}$, then $\mu$ is supported on the stabilizer of the set $\{ y_{\pm}\}$ in $\aut(T_{\Gamma})$.
Let $v$ denote the vertex of $T_{\Gamma}$ such that the stabilizer of $v$ in $\Gamma$ is equal to $\langle a\rangle$.
Any element $\varphi$ of $\aut(T_{\Gamma})$ with $\varphi(\{ x_{\pm}\})=\{ x_{\pm}\}$ and $\varphi(\{ y_{\pm}\})=\{ y_{\pm}\}$ fixes $v$ because $v$ is the intersection of the geodesic in $T_{\Gamma}$ between $x_+$ and $x_-$ and the one between $y_+$ and $y_-$.
It follows that $\mu$ is supported on the stabilizer of $v$ in $\aut(T_{\Gamma})$.
Since $\Gamma$ acts on $V(T_{\Gamma})$ transitively, we conclude that $\mu$ is supported on the neutral element of $\aut(T_{\Gamma})$.
\end{proof}

In the notation in Theorem \ref{thm-aut-t}, let $X, Y\subset \Sigma$ be fundamental domains for the actions of $\Lambda$ and $\Gamma$ on $\Sigma$, respectively.
Let $\alpha \colon \Gamma \times X\to \Lambda$ be the ME cocycle defined by the condition $(\gamma, \alpha(\gamma, x))x\in X$ for $\gamma \in \Gamma$ and $x\in X$.
To distinguish the action $\Gamma \c X$ and the action $\Gamma \c \Sigma$, we use a dot for the former action, that is, we set $\gamma \cdot x=(\gamma, \alpha(\gamma, x))x$ for $\gamma \in \Gamma$ and $x\in X$.
Similarly, we use a dot for the action $\Lambda \c Y$. 
By \cite[Lemma 2.27]{kida-survey}, we may assume that the intersection $Z=X\cap Y$ satisfies the equality $\Gamma \cdot Z=X$ when $Z$ is regarded as a subset of $X$, and satisfies the equality $\Lambda \cdot Z=Y$ when $Z$ is regarded as a subset of $Y$.
We set $\cal{G}=\Gamma \ltimes X$ and $\cal{H}=\Lambda \ltimes Y$.
Let $f\colon (\cal{G})_Z\rightarrow (\cal{H})_Z$ be the isomorphism defined by $f(\gamma, x)=(\alpha(\gamma, x), x)$ for $(\gamma, x)\in (\cal{G})_Z$.
For each simplex $u$ of $T_{\Gamma}$, we put $\cal{G}_u=\Gamma_u\ltimes X$.
Similarly, for each simplex $v$ of $T_{\Lambda}$, we put $\cal{H}_v=\Lambda_v\ltimes Y$.

\begin{lem}\label{lem-vertex-stab}
If we have $u_1, u_2\in V(T_{\Gamma})$ and a Borel subset $W$ of $Z$ with positive measure satisfying the inclusion $(\cal{G}_{u_1})_W\subset (\cal{G}_{u_2})_W$, then the equality $u_1=u_2$ holds.
\end{lem}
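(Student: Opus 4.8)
The plan is to prove that $[\Gamma_{u_1}:\Gamma_{u_1}\cap\Gamma_{u_2}]=1$; by the computation of this index recalled in Section~\ref{sec-bs} (if $u_1\neq u_2$ the geodesic between them has positive length, so the index is $d_0$ times a product of nonnegative powers of $|p_0|$ and $|q_0|$, hence at least $\min(|p|,|q|)\geq 2$), this equality forces $u_1=u_2$. Combining the hypothesis $(\mathcal{G}_{u_1})_W\subset(\mathcal{G}_{u_2})_W$ with the trivial inclusion $(\mathcal{G}_{u_1})_W\subset(\mathcal{G}_{u_1})_W$ and using $\mathcal{G}_{u_i}=\Gamma_{u_i}\ltimes X$, one obtains $(\mathcal{G}_{u_1})_W\subset((\Gamma_{u_1}\cap\Gamma_{u_2})\ltimes X)_W\subset(\mathcal{G}_{u_1})_W$. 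Writing $\Delta=\Gamma_{u_1}$, $E=\Gamma_{u_1}\cap\Gamma_{u_2}$ and $N=[\Delta:E]$, this gives the equality $(\Delta\ltimes X)_W=(E\ltimes X)_W$, and it remains to show $N=1$.

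I would then transport the situation to the standard elliptic subgroup $\langle a\rangle$. Choose $\gamma_1\in\Gamma$ with $\Delta=\gamma_1\langle a\rangle\gamma_1^{-1}$ and let $\psi\in[[\mathcal{G}]]$ be the map $\psi(x)=(\gamma_1^{-1},x)$. Then $U_\psi$ is an automorphism of $\mathcal{G}$ carrying $\Delta\ltimes X$ onto $\langle a\rangle\ltimes X$, carrying $E\ltimes X$ onto $(\gamma_1^{-1}E\gamma_1)\ltimes X$, and carrying $(\mathcal{G})_W$ onto $(\mathcal{G})_{W'}$ where $W'=\gamma_1^{-1}\cdot W$ has positive measure. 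Since conjugation preserves index, $\gamma_1^{-1}E\gamma_1$ is the subgroup of $\langle a\rangle$ of index $N$, that is $\langle a^N\rangle$, and by the index formula of Section~\ref{sec-bs}, $N/d_0$ is a product of nonnegative powers of $|p_0|$ and $|q_0|$. Applying $U_\psi$ to the equality of the previous paragraph yields $(\langle a\rangle\ltimes X)_{W'}=(\langle a^N\rangle\ltimes X)_{W'}$.

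The key step is then to invoke hypothesis~(1) of Theorem~\ref{thm-aut-t}. Recall $X=\Sigma/\Lambda$ carries exactly the action of $\langle a\rangle$ occurring there. Since $\langle a^{d_0}\rangle$ acts $(p_0,q_0)$-ergodically on almost every ergodic component of $\langle a\rangle\ltimes X$, the case of exponents $(0,0)$ shows $\langle a^{d_0}\rangle$ acts ergodically on almost every such component, so the ergodic decompositions of $\langle a\rangle\ltimes X$ and $\langle a^{d_0}\rangle\ltimes X$ coincide; and, $N$ being $d_0$ times a product of nonnegative powers of $|p_0|$ and $|q_0|$, $\langle a^N\rangle$ acts ergodically on almost every ergodic component $C$ of $\langle a\rangle\ltimes X$. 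Pick such a $C$ with $W'\cap C$ of positive measure. By Lemma~\ref{lem-index}(iii), $[\langle a\rangle\ltimes C:\langle a^N\rangle\ltimes C]_x=[\langle a\rangle:\langle a^N\rangle]=N$; by Lemma~\ref{lem-index}(ii), ergodicity of $\langle a^N\rangle\ltimes C$ gives $[(\langle a\rangle\ltimes C)_{W'\cap C}:(\langle a^N\rangle\ltimes C)_{W'\cap C}]_x=N$ for a.e.\ $x\in W'\cap C$. But restricting the equality $(\langle a\rangle\ltimes X)_{W'}=(\langle a^N\rangle\ltimes X)_{W'}$ to $W'\cap C$ (legitimate, as $C$ is $\langle a\rangle$-invariant) shows this index is $1$. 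Hence $N=1$ and $u_1=u_2$.

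The main obstacle is precisely this last point: one must know that $\langle a^N\rangle$ --- a conjugate of the intersection $\Gamma_{u_1}\cap\Gamma_{u_2}$ --- still acts ergodically on the ergodic components of $\langle a\rangle\ltimes X$. This is where the arithmetic of the index formula of Section~\ref{sec-bs} ($N/d_0$ being a product of nonnegative powers of $|p_0|$ and $|q_0|$) has to be matched against the $(p_0,q_0)$-ergodicity hypothesis~(1); dropping (1) makes the statement false. The remaining steps are routine bookkeeping with Lemma~\ref{lem-index} and conjugation in $[[\mathcal{G}]]$.
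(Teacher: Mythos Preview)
Your proof is correct and follows essentially the same strategy as the paper: reduce to an equality of restricted subgroupoids coming from a finite-index inclusion of elliptic subgroups, then use condition~(1) of Theorem~\ref{thm-aut-t} together with Lemma~\ref{lem-index}~(ii),~(iii) on ergodic components to force the index to be $1$. The only notable difference is that the paper works with the first edge $e$ on the geodesic from $u_1$ to $u_2$, so the relevant index is simply $[\Gamma_{u_1}:\Gamma_e]\in\{|p|,|q|\}$, whereas you use the full intersection $\Gamma_{u_1}\cap\Gamma_{u_2}$ and invoke the general index formula from Section~\ref{sec-bs}; the paper's choice makes the appeal to condition~(1) slightly more immediate, while your version spells out the conjugation to $\langle a\rangle$ and the ergodic-component argument more explicitly.
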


\begin{proof}
Assuming that $u_1\neq u_2$, we deduce a contradiction.
Let $e$ be the edge of $T_{\Gamma}$ containing $u_1$ and contained in the geodesic between $u_1$ and $u_2$.
The inclusion in the lemma implies the equality $(\cal{G}_e)_W=(\cal{G}_{u_1})_W$.
On the other hand, by condition (1) in Theorem \ref{thm-aut-t} and Lemma \ref{lem-index} (ii), (iii), we have
$[(\cal{G}_{u_1})_W: (\cal{G}_e)_W]_x=[\Gamma_{u_1}: \Gamma_e]>1$ for a.e.\ $x\in W$.
This is a contradiction.
\end{proof}

\begin{lem}\label{lem-pre}
For any $u\in V(T_{\Gamma})$, there exist a countable Borel partition $Z=\bigsqcup_n Z_n$ and $v_n\in V(T_{\Lambda})$ such that $f((\cal{G}_u)_{Z_n})=(\cal{H}_{v_n})_{Z_n}$ for each $n$.
\end{lem}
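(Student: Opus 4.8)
The plan is to apply Theorem~\ref{thm-ell} twice --- once on the $\Lambda$-side to push $(\cal{G}_u)_Z$ forward through $f$, and once on the $\Gamma$-side to push back --- and then invoke Lemma~\ref{lem-vertex-stab} to turn the two resulting inclusions into an equality.

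First I would record the structural facts needed. The stabilizer $\Gamma_u$ is infinite cyclic, hence amenable, so $(\cal{G}_u)_Z=(\Gamma_u\ltimes X)_Z$ is amenable; and $\Gamma_u$ is generated by an elliptic element, so $\comm_{\Gamma}(\Gamma_u)=\Gamma$, whence $\Gamma_u$ is quasi-normal in $\Gamma$ and, by Lemmas~\ref{lem-qn-group} and~\ref{lem-qn-res}, $(\cal{G}_u)_Z$ is quasi-normal in $(\cal{G})_Z$. Since $\Gamma$ and $\Lambda$ are non-amenable and the actions $\Gamma\c X$ and $\Lambda\c Y$ are essentially free, $(\cal{G})_Z$ and $(\cal{H})_Z$ are nowhere amenable. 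Finally $f$ is the identity on its unit space $Z$, because $\alpha(e,x)=e$ for $x\in Z$; in particular $f((\cal{G}_u)_B)=(f((\cal{G}_u)_Z))_B$ for every Borel $B\subseteq Z$. Now $f((\cal{G}_u)_Z)$ is an amenable subgroupoid of $(\cal{H})_Z$, quasi-normal in $(\cal{H})_Z$ (quasi-normality being preserved by the isomorphism $f$), so Theorem~\ref{thm-ell}, applied with $(\Lambda, T_{\Lambda},\cal{H},\sigma)$ in place of $(\Gamma, T_{\Gamma},\cal{G},\rho)$ (where $\sigma\colon\cal{H}\to\Lambda$ is the projection), shows that $f((\cal{G}_u)_Z)$ is elliptic: there is an $(f((\cal{G}_u)_Z),\sigma)$-invariant Borel map $\psi\colon Z\to V(T_{\Lambda})$. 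Partitioning $Z$ into the countably many Borel sets on which $\psi$ is constant and discarding null pieces, we get $Z=\bigsqcup_n Z_n$ and $v_n\in V(T_{\Lambda})$ such that, by invariance of $\psi$, $\sigma(h)\in\Lambda_{v_n}$ for a.e.\ $h\in f((\cal{G}_u)_{Z_n})$; that is, $f((\cal{G}_u)_{Z_n})\subseteq(\cal{H}_{v_n})_{Z_n}$.

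Next I would run the same argument through $f^{-1}$. For each $n$, $(\cal{H}_{v_n})_{Z_n}$ is amenable and quasi-normal in $(\cal{H})_{Z_n}$ (as above, since $\Lambda_{v_n}$ is cyclic and generated by an elliptic element of $\Lambda$), hence $f^{-1}((\cal{H}_{v_n})_{Z_n})$ is an amenable, quasi-normal subgroupoid of the nowhere amenable groupoid $(\cal{G})_{Z_n}$. By Theorem~\ref{thm-ell} it is elliptic, and refining $Z_n=\bigsqcup_m Z_{n,m}$ so that the associated $(f^{-1}((\cal{H}_{v_n})_{Z_n}),\rho)$-invariant map into $V(T_{\Gamma})$ is constant, say equal to $u_{n,m}$, we obtain $f^{-1}((\cal{H}_{v_n})_{Z_{n,m}})\subseteq(\cal{G}_{u_{n,m}})_{Z_{n,m}}$. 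On $Z_{n,m}$ we now have
\[(\cal{G}_u)_{Z_{n,m}}\subseteq f^{-1}((\cal{H}_{v_n})_{Z_{n,m}})\subseteq(\cal{G}_{u_{n,m}})_{Z_{n,m}},\]
so in particular $(\cal{G}_u)_{Z_{n,m}}\subseteq(\cal{G}_{u_{n,m}})_{Z_{n,m}}$; by Lemma~\ref{lem-vertex-stab} (using hypothesis~(1) of Theorem~\ref{thm-aut-t}) this forces $u_{n,m}=u$ whenever $Z_{n,m}$ has positive measure. Hence the displayed inclusions are equalities, i.e.\ $f((\cal{G}_u)_{Z_{n,m}})=(\cal{H}_{v_n})_{Z_{n,m}}$, and reindexing the partition $\{Z_{n,m}\}$ over the pairs with positive measure gives the required conclusion.

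I do not expect a genuinely hard step here: the proof is a mechanical double application of Theorem~\ref{thm-ell} together with Lemma~\ref{lem-vertex-stab}. The points that need care are the verification of the hypotheses of Theorem~\ref{thm-ell} on both sides --- amenability and quasi-normality of the cyclic subgroupoids $(\cal{G}_u)_Z$ and $(\cal{H}_{v_n})_{Z_n}$, and nowhere amenability of $(\cal{G})_Z$ and $(\cal{H})_Z$ --- and keeping track of the fact that $f$ restricts to the identity on $Z$, so that restricting subgroupoids to the partition pieces commutes with transporting them through $f$.
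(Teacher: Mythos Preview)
Your proof is correct and follows the same two-step strategy as the paper: apply Theorem~\ref{thm-ell} to $f((\cal{G}_u)_Z)$ inside $(\cal{H})_Z$, then to $f^{-1}((\cal{H}_{v_n})_{Z_n})$ inside $(\cal{G})_{Z_n}$, and invoke Lemma~\ref{lem-vertex-stab} to turn the resulting chain of inclusions into an equality. One minor point: the coupling actions $\Gamma\c X$ and $\Lambda\c Y$ are not assumed essentially free, but nowhere amenability of $(\cal{G})_Z$ and $(\cal{H})_Z$ still holds, since a finite-measure-preserving action of a non-amenable group always yields a nowhere amenable transformation groupoid.
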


\begin{proof}
By Lemmas \ref{lem-qn-group} and \ref{lem-qn-res}, $(\cal{G}_u)_Z$ is quasi-normal in $(\cal{G})_Z$, and it is also of infinite type and amenable.
It follows that $f((\cal{G}_u)_Z)$ is of infinite type, amenable and quasi-normal in $(\cal{H})_Z$.
Let $\sigma \colon \cal{H}\to \Lambda$ be the projection.
By Theorem \ref{thm-ell}, $f((\cal{G}_u)_Z)$ is elliptic, that is, there exists an $(f((\cal{G}_u)_Z), \sigma)$-invariant Borel map $\varphi \colon Z\to V(T_{\Lambda})$.
Take a countable Borel partition $Z=\bigsqcup_n Z_n$ such that for each $n$, the map $\varphi$ is essentially constant on $Z_n$, and denote by $v_n\in V(T_{\Lambda})$ the essential value of $\varphi$ on $Z_n$.
We obtain the inclusion $f((\cal{G}_u)_{Z_n})\subset (\cal{H}_{v_n})_{Z_n}$ for any $n$.
Applying the same argument to $f^{-1}$ in place of $f$ and taking a finer Borel partition of $Z$, for each $n$, we find $u_n\in V(T_{\Gamma})$ with the inclusion $f^{-1}((\cal{H}_{v_n})_{Z_n})\subset (\cal{G}_{u_n})_{Z_n}$.
By Lemma \ref{lem-vertex-stab}, for any $n$, the equality $u_n=u$ holds, and the equality in the lemma also holds.
\end{proof}

\begin{lem}\label{lem-edge-stab}
If we have $e_1, e_2\in E(T_{\Lambda})$ and a Borel subset $W$ of $Z$ with positive measure such that $(\cal{H}_{e_1})_W=(\cal{H}_{e_2})_W$, then the equality $\Lambda_{e_1}=\Lambda_{e_2}$ holds.
\end{lem}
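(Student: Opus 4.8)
The plan is to prove that $\Lambda_{e_1}$ and $\Lambda_{e_2}$ both coincide with their intersection $\Lambda_0:=\Lambda_{e_1}\cap\Lambda_{e_2}$. Since $\Lambda_{e_1}$ and $\Lambda_{e_2}$ are stabilizers of edges of $T_{\Lambda}$, they are non-trivial elliptic subgroups of $\Lambda$, hence commensurable, so $\Lambda_0$ has finite index in each; put $n_1=[\Lambda_{e_1}:\Lambda_0]$ and $n_2=[\Lambda_{e_2}:\Lambda_0]$. Since $\cal{H}_{e_1}\cap\cal{H}_{e_2}=\Lambda_0\ltimes Y$, the hypothesis $(\cal{H}_{e_1})_W=(\cal{H}_{e_2})_W$ gives $(\cal{H}_{e_1})_W=(\cal{H}_{e_1})_W\cap(\cal{H}_{e_2})_W=(\Lambda_0\ltimes Y)_W$. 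It therefore suffices to show $n_1=n_2=1$, and by symmetry of the hypothesis in $e_1$ and $e_2$ it is enough to treat $n_1$.

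First I would establish a combinatorial fact about $T_{\Lambda}$: the index $n_1$ is a product of powers of $|r_0|$ and $|s_0|$. This is seen by walking along the geodesic of $T_{\Lambda}$ joining $e_1$ to $e_2$: at each intermediate vertex $v$ the incident edges carry exactly two stabilizer groups, of indices $|r|$ and $|s|$ in $\Lambda_v\cong\mathbb{Z}$, and these two groups meet in index $|r_0|$ or $|s_0|$ inside the smaller one; since $\gcd(r_0,s_0)=1$, the accumulated index $[\Lambda_{e_1}:\Lambda_0]$ stays of the form $|r_0|^k|s_0|^l$ with $k,l\ge 0$ (this may also be read off from the index formula recalled in Section \ref{sec-bs}). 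Writing $w$ for an endpoint of $e_1$, so that $\Lambda_w$ is a conjugate of $\langle b\rangle$ containing $\Lambda_{e_1}\supset\Lambda_0$, both $\Lambda_{e_1}$ and $\Lambda_0$ are then — up to the conjugacy identifying $\Lambda_w$ with $\langle b\rangle$ — of the form $\langle b^{c_0r_0^ks_0^l}\rangle$ with $k,l\ge 0$. Hence, by condition (2) in Theorem \ref{thm-aut-t}, together with the routine fact that conjugation carries the ergodic decomposition of $\langle b\rangle\c\Sigma/\Gamma$ onto that of $\Lambda_w\c Y$ (where $Y=\Sigma/\Gamma$), both $\Lambda_{e_1}$ and $\Lambda_0$ act ergodically on almost every ergodic component of $\Lambda_w\c Y$.

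Next, choose an ergodic component $Y_c$ of $\Lambda_w\c Y$ on which both $\Lambda_{e_1}$ and $\Lambda_0$ act ergodically and with $\mu(W\cap Y_c)>0$, and set $W_c=W\cap Y_c$. Inside $Y_c$ we have $\Lambda_0\ltimes Y_c<\Lambda_{e_1}\ltimes Y_c$ of index $n_1$ at every point by Lemma \ref{lem-index} (iii); since $\Lambda_0\ltimes Y_c$ is ergodic, Lemma \ref{lem-index} (ii) gives $[(\Lambda_{e_1}\ltimes Y_c)_{W_c}:(\Lambda_0\ltimes Y_c)_{W_c}]_x=n_1$ for a.e.\ $x\in W_c$. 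On the other hand $Y_c$ is $\Lambda_{e_1}$-invariant and $W_c\subset Y_c\subset Y$, so restricting the equality $(\cal{H}_{e_1})_W=(\Lambda_0\ltimes Y)_W$ obtained above to $W_c$ shows $(\Lambda_{e_1}\ltimes Y_c)_{W_c}=(\Lambda_0\ltimes Y_c)_{W_c}$; hence the index just computed equals $1$, forcing $n_1=1$. The symmetric argument gives $n_2=1$, and therefore $\Lambda_{e_1}=\Lambda_0=\Lambda_{e_2}$.

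I expect the main obstacle to be the combinatorial step that $n_1$ (and $n_2$) is a product of powers of $|r_0|$ and $|s_0|$: condition (2) in Theorem \ref{thm-aut-t} only asserts ergodicity of the subgroups $\langle b^{c_0r_0^ks_0^l}\rangle$, so without this divisibility control one cannot deduce that $\Lambda_0$ acts ergodically on the relevant components, and the index comparison in the last step collapses. The remaining ingredients — commensurability of non-trivial elliptic subgroups, behaviour of index under restriction (Lemma \ref{lem-index}), and the transfer of condition (2) under conjugation — are standard.
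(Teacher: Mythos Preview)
Your proof is correct and follows essentially the same approach as the paper: reduce to $(\cal{H}_{e_1})_W=(\cal{H}_{e_1}\cap\cal{H}_{e_2})_W$, then use condition (2) of Theorem \ref{thm-aut-t} together with Lemma \ref{lem-index} (ii), (iii) to force $[\Lambda_{e_1}:\Lambda_{e_1}\cap\Lambda_{e_2}]=1$. The paper compresses all of this into a single sentence, whereas you unpack the two implicit steps---that $[\Lambda_{e_1}:\Lambda_{e_1}\cap\Lambda_{e_2}]$ has the form $|r_0|^k|s_0|^l$ so that condition (2) actually applies to $\Lambda_0$, and that the index comparison must be done on an ergodic component of $\Lambda_w\c Y$ where Lemma \ref{lem-index} (ii) gives equality---but the argument is the same.
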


\begin{proof}
Since we have $(\cal{H}_{e_1})_W=(\cal{H}_{e_1}\cap \cal{H}_{e_2})_W=(\cal{H}_{e_2})_W$, condition (2) in Theorem \ref{thm-aut-t} and Lemma \ref{lem-index} (ii), (iii) imply that we have $\Lambda_{e_1}=\Lambda_{e_1}\cap \Lambda_{e_2}=\Lambda_{e_2}$.
\end{proof}

\begin{proof}[Proof of Theorem \ref{thm-aut-t}]
For a.e.\ $z\in Z$, we define a map $\varphi_z\colon V(T_{\Gamma})\to V(T_{\Lambda})$ by $\varphi_z(u)=v_n$ if $z\in Z_n$, where we use the notation in Lemma \ref{lem-pre}.
By symmetry, a conclusion similar to Lemma \ref{lem-vertex-stab} holds for subgroupoids of $\cal{H}$.
For a.e.\ $z\in Z$, the map $\varphi_z$ is thus well-defined and is independent of the choice of a countable Borel partition of $Z$.
Applying the same process to $f^{-1}$ in place of $f$, for a.e.\ $z\in Z$, we obtain a map $\psi_z\colon V(T_{\Lambda})\to V(T_{\Gamma})$.
By Lemma \ref{lem-vertex-stab}, for a.e.\ $z\in Z$, the composition $\psi_z\circ \varphi_z$ is the identity on $V(T_{\Gamma})$, and $\varphi_z\circ \psi_z$ is the identity on $V(T_{\Lambda})$.
In particular, $\varphi_z$ and $\psi_z$ are bijections.

\begin{claim}
For a.e.\ $z\in Z$, $\varphi_z$ defines a simplicial isomorphism from $T_{\Gamma}$ onto $T_{\Lambda}$.
\end{claim}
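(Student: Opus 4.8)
The plan is to reduce the Claim to one local statement: \emph{if $u_1,u_2\in V(T_{\Gamma})$ are adjacent, then $\varphi_z(u_1)$ and $\varphi_z(u_2)$ are adjacent in $T_{\Lambda}$ for a.e.\ $z\in Z$}. Granting this, applying it with the roles of $\cal{G}$ and $\cal{H}$ (and of $f$ and $f^{-1}$) interchanged shows that $\psi_z$ likewise preserves adjacency; since $\varphi_z$ and $\psi_z$ are mutually inverse bijections of the vertex sets, $\varphi_z$ then preserves and reflects adjacency, i.e.\ it is an isomorphism of the underlying simple graphs, hence a simplicial isomorphism of the trees.

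To prove the local statement, fix adjacent $u_1,u_2$ and let $e$ be the edge joining them, so that $\Gamma_e=\Gamma_{u_1}\cap\Gamma_{u_2}$ and $(\cal{G}_e)_Z=(\cal{G}_{u_1})_Z\cap(\cal{G}_{u_2})_Z$. Applying Lemma~\ref{lem-pre} to $u_1$ and to $u_2$ and refining the resulting partitions of $Z$ to a common one, it suffices (the refinement being countable) to prove the claim on a single positive-measure piece $W$ of it; on such a $W$ we have $f((\cal{G}_{u_i})_W)=(\cal{H}_{v_i})_W$ with $v_i=\varphi_z(u_i)$ for a.e.\ $z\in W$, hence, $f$ being an isomorphism and therefore commuting with intersections,
\[f\bigl((\cal{G}_e)_W\bigr)=(\cal{H}_{v_1})_W\cap(\cal{H}_{v_2})_W=\bigl((\Lambda_{v_1}\cap\Lambda_{v_2})\ltimes Y\bigr)_W ,\]
while $v_1\ne v_2$ by injectivity of $\varphi_z$. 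From this I would extract two constraints. First, the ergodicity hypotheses~(1)--(2) make the index equalities of Lemma~\ref{lem-index} exact, so $[\Lambda_{v_i}:\Lambda_{v_1}\cap\Lambda_{v_2}]=[(\cal{G}_{u_i})_W:(\cal{G}_e)_W]_x=[\Gamma_{u_i}:\Gamma_e]\in\{|p|,|q|\}$ for a.e.\ $x\in W$ and $i=1,2$. Second, for every vertex $\hat v$ of $T_{\Lambda}$ with $\Lambda_{v_1}\cap\Lambda_{v_2}\subset\Lambda_{\hat v}$, the same reasoning run through $f^{-1}$ (via Theorem~\ref{thm-ell}, Lemma~\ref{lem-pre} and an index argument as in Lemma~\ref{lem-vertex-stab}) places $\psi_z(\hat v)$ in the fixed-vertex set of $\Gamma_e$ in $T_{\Gamma}$; thus $\psi_z$ maps the fixed-vertex set of $\Lambda_{v_1}\cap\Lambda_{v_2}$ injectively into that of $\Gamma_e$, and $\varphi_z$ does the reverse, so the two (finite) fixed-vertex sets have equal cardinality, which a direct computation from the HNN presentation of $\Gamma$ shows to be $|p|+1$, using that $q$ is not a multiple of $p$.

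The remaining step, which I expect to be the main obstacle, is purely combinatorial inside the Bass--Serre trees: one must show that these two constraints --- $\Lambda_{v_1}\cap\Lambda_{v_2}$ has index in $\{|p|,|q|\}$ in each of $\Lambda_{v_1},\Lambda_{v_2}$, and its fixed-vertex set in $T_{\Lambda}$ has exactly $|p|+1$ vertices --- can be met only when $v_1$ and $v_2$ are adjacent (which in addition forces $\{|p|,|q|\}=\{|r|,|s|\}$, hence $|p|=|r|$ and $|q|=|s|$). This is exactly where the hypotheses ``$q$ not a multiple of $p$'' and ``$s$ not a multiple of $r$'' are genuinely used: together with the index formula of Section~\ref{sec-bs} they force that, along any geodesic of length $\ge 2$ in $T_{\Lambda}$, the index of its pointwise stabilizer in an endpoint vertex group is a strictly ``composite'' value lying outside $\{|p|,|q|\}$, once this is checked against the possible shapes of fixed subtrees of elliptic subgroups of $\Lambda$. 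Everything else --- the repeated restriction to positive-measure Borel subsets, the $\cal{H}$-side analogues of Lemmas~\ref{lem-vertex-stab} and~\ref{lem-pre}, and the passage from adjacency-preservation to a simplicial isomorphism --- is routine.
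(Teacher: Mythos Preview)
Your reduction to ``adjacency preservation'' is a reasonable strategy, and the index transfer $[\Lambda_{v_i}:\Lambda_{v_1}\cap\Lambda_{v_2}]=[\Gamma_{u_i}:\Gamma_e]$ is correct under hypotheses (1)--(2). The argument breaks, however, at the second constraint: the fixed-vertex set of $\Gamma_e$ in $T_\Gamma$ is \emph{not} finite, let alone of size $|p|+1$. Take $\Gamma_e=\langle a^q\rangle$ with $u_1=\langle a\rangle$. All $|q|$ out-edges at $u_1$ have stabilizer $\langle a^q\rangle$, so their terminals $u_2$ are fixed; at each such $u_2$ all $|p|$ in-edges again have stabilizer $\langle a^q\rangle$, so the $|p|-1$ new origins $u_1'$ are fixed; and at each $u_1'$ the out-edge to $u_2$ shows that the index-$|q|$ subgroup of $\Gamma_{u_1'}$ is exactly $\langle a^q\rangle$, so all $|q|$ out-edges of $u_1'$ are fixed as well. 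This propagates indefinitely, and the fixed subtree is infinite. So your cardinality-matching argument collapses, and the index constraint alone does not force adjacency either (e.g.\ with $(|p|,|q|)=(4,9)$ and $(|r|,|s|)=(2,3)$ a $++$ geodesic of length $2$ in $T_\Lambda$ gives the index pair $(c_0r_0^2,c_0s_0^2)=(4,9)$).

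The paper avoids this combinatorial morass entirely. It extends $\varphi_z$ and $\psi_z$ affinely to continuous maps $\tilde\varphi_z:T_\Gamma\to T_\Lambda$ and $\tilde\psi_z:T_\Lambda\to T_\Gamma$, and proves that $\tilde\varphi_z$ is locally injective (hence injective, hence a simplicial isomorphism) by analyzing $\tilde\psi_z\circ\tilde\varphi_z$. The point is that $\psi_z\circ\varphi_z$ is the identity on $V(T_\Gamma)$, so $\tilde\psi_z\circ\tilde\varphi_z$ is $\Gamma$-equivariant on all of $T_\Gamma$. If two edges $e,e'$ at $u$ had images whose initial edges coincide in some edge $d$, equivariance gives $\Gamma_e,\Gamma_{e'}\le\Gamma_d$; since $q$ is not a multiple of $p$, distinct edge stabilizers at $u$ cannot be nested, forcing $\Gamma_e=\Gamma_d=\Gamma_{e'}$ and then $e=e'$ by a short orbit argument. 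This local-injectivity route, borrowed from Gilbert--Howie--Metaftsis--Raptis, is what you are missing.
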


\begin{proof}
This proof heavily depends on the proof of \cite[Theorem A]{ghmr}.
We extend the map $\varphi_z\colon V(T_{\Gamma})\to V(T_{\Lambda})$ to a continuous map $\tilde{\varphi}_z\colon T_{\Gamma}\to T_{\Lambda}$ in an affine way.
Namely, for each edge $e$ of $T_{\Gamma}$ whose origin and terminal are $u_1$ and $u_2$, respectively, the restriction of $\tilde{\varphi}_z$ to $e$ is the affine homeomorphism from $e$ to the geodesic from $\varphi_z(u_1)$ to $\varphi_z(u_2)$.
Similarly, we extend $\psi_z$ to a continuous map $\tilde{\psi}_z\colon T_{\Lambda}\to T_{\Gamma}$.

We show that for a.e.\ $z\in Z$, $\tilde{\varphi}_z$ is locally injective, that is, for any $u\in V(T_{\Gamma})$ and any two distinct edges $e_1$, $e_2$ of $T_{\Gamma}$ whose boundaries contain $u$, the first edges of the paths $\tilde{\varphi}_z(e_1)$, $\tilde{\varphi}_z(e_2)$ are distinct.
This implies injectivity of $\tilde{\varphi}_z$, and thus implies the claim.

We pick $u\in V(T_{\Gamma})$ and suppose that we have a Borel subset $W$ of $Z$ with positive measure and two edges $e$, $e'$ of $T_{\Gamma}$ whose boundaries contain $u$, such that for a.e.\ $z\in W$, the first edges of the two paths $\tilde{\psi}_z\circ \tilde{\varphi}_z(e)$, $\tilde{\psi}_z\circ \tilde{\varphi}_z(e')$ in $T_{\Gamma}$ starting at $u$ are equal.
Denote the first edge by $d$.
It suffices to show the equality $e=e'$.
For a.e.\ $z\in Z$, any $\gamma \in \Gamma$ and any $v\in V(T_{\Gamma})$, we have $\gamma \psi_z\circ \varphi_z(v)=\gamma v=\psi_z\circ \varphi_z(\gamma v)$.
The equality $\gamma \tilde{\psi}_z\circ \tilde{\varphi}_z(s)=\tilde{\psi}_z\circ \tilde{\varphi}_z(\gamma s)$ thus holds for any $\gamma \in \Gamma$ and any $s\in T_{\Gamma}$.
We have the inclusions $\Gamma_e< \Gamma_d$ and $\Gamma_{e'}<\Gamma_d$, and thus $\Gamma_e=\Gamma_d=\Gamma_{e'}$ thanks to the assumption that $q$ is not a multiple of $p$.
There exist $k, k'\in \mathbb{Z}$ with $d=a_u^ke=a_u^{k'}e'$, where $a_u$ is a generator of $\Gamma_u$.
The equality $a_u^{k-k'}\tilde{\psi}_z\circ \tilde{\varphi}_z(e)=\tilde{\psi}_z\circ \tilde{\varphi}_z(e')$ implies the equality $a_u^{k-k'}d=d$.
We have $a_u^{k-k'}\in \Gamma_d=\Gamma_e$, and thus $e'=a_u^{k-k'}e=e$.
\end{proof}

\begin{claim}
For a.e.\ $z\in Z$, the simplicial isomorphism $\varphi_z\colon T_{\Gamma}\to T_{\Lambda}$ is orientation-preserving.
\end{claim}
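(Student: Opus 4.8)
The plan is to recover the orientation from an index computation and transport it through the isomorphism $f$. Recall that in the oriented Bass--Serre tree $T_\Gamma$ every oriented edge $e$ with origin $u$ and terminal $w$ has $(\Gamma_u,\Gamma_e)$ conjugate in $\Gamma$ to $(\langle a\rangle,\langle a^q\rangle)$ and $(\Gamma_w,\Gamma_e)$ conjugate to $(\langle a\rangle,\langle a^p\rangle)$, so that $[\Gamma_u:\Gamma_e]=|q|$ and $[\Gamma_w:\Gamma_e]=|p|$, with $|p|<|q|$ by Notation \ref{not-bs}; the analogous statement holds in $T_\Lambda$ with $|r|<|s|$. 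Hence the orientation of an edge is detected by which of its two endpoints has the strictly larger index over the edge stabilizer, and it suffices to show that $\varphi_z$ respects this.

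First I would fix $z$ outside a suitable null set and an oriented edge $e$ of $T_\Gamma$ with origin $u$ and terminal $w$. Applying Lemma \ref{lem-pre} to $u$ and to $w$ and passing to a common refinement, I would choose a Borel set $W\subset Z$ of positive measure with $z\in W$ and vertices $v=\varphi_z(u)$, $v'=\varphi_z(w)$ of $T_\Lambda$ such that $\varphi_{z'}(u)=v$, $\varphi_{z'}(w)=v'$ for a.e.\ $z'\in W$ and $f((\cal{G}_u)_W)=(\cal{H}_v)_W$, $f((\cal{G}_w)_W)=(\cal{H}_{v'})_W$. Since $\Gamma_e=\Gamma_u\cap\Gamma_w$, and since $\varphi_z$ is a simplicial isomorphism so that $v,v'$ are precisely the endpoints of the edge $\varphi_z(e)$ and $\Lambda_{\varphi_z(e)}=\Lambda_v\cap\Lambda_{v'}$ (a non-inverting tree automorphism fixes an edge iff it fixes both endpoints), intersecting the two displayed equalities gives $f((\cal{G}_e)_W)=(\cal{H}_{\varphi_z(e)})_W$. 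Because $f$ is a groupoid isomorphism that is the identity on the unit space $W$, it preserves the index of a subgroupoid at every point, so for a.e.\ $x\in W$
\[
[(\cal{H}_v)_W:(\cal{H}_{\varphi_z(e)})_W]_x=[(\cal{G}_u)_W:(\cal{G}_e)_W]_x,\qquad [(\cal{H}_{v'})_W:(\cal{H}_{\varphi_z(e)})_W]_x=[(\cal{G}_w)_W:(\cal{G}_e)_W]_x.
\]

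Next I would evaluate these indices exactly as in the proofs of Lemmas \ref{lem-vertex-stab} and \ref{lem-edge-stab}: conditions (1) and (2) supply the ergodicity needed to have equality in Lemma \ref{lem-index}~(ii), whence with Lemma \ref{lem-index}~(iii) one gets $[(\cal{G}_u)_W:(\cal{G}_e)_W]_x=[\Gamma_u:\Gamma_e]=|q|$ and $[(\cal{G}_w)_W:(\cal{G}_e)_W]_x=[\Gamma_w:\Gamma_e]=|p|$ for a.e.\ $x\in W$. Therefore $[\Lambda_v:\Lambda_{\varphi_z(e)}]=|q|$ and $[\Lambda_{v'}:\Lambda_{\varphi_z(e)}]=|p|$; since $\varphi_z(e)$ is an edge of $T_\Lambda$ these two numbers constitute the set $\{|r|,|s|\}$, so from $|p|<|q|$ and $|r|<|s|$ we obtain $|p|=|r|$, $|q|=|s|$, and $v=\varphi_z(u)$ is the endpoint of $\varphi_z(e)$ carrying the larger index $|s|=|q|$, i.e.\ its origin. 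As $e$ was an arbitrary oriented edge (and $T_\Gamma$ has countably many), $\varphi_z$ maps every positively oriented edge of $T_\Gamma$ to a positively oriented edge of $T_\Lambda$, i.e.\ it is orientation-preserving; the same computation yields the ``in particular'' assertion $|p|=|r|$, $|q|=|s|$.

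I expect the only genuine obstacle to be bookkeeping: one must ensure that the index equalities (not merely the inequality of Lemma \ref{lem-index}~(ii)) are available on the pieces $W$, which is exactly the role conditions (1) and (2) play in Lemmas \ref{lem-vertex-stab} and \ref{lem-edge-stab}, so I would just reuse that argument rather than redo it. One must also keep in mind that the partition produced by Lemma \ref{lem-pre} depends on the chosen vertex, which is why it is refined so that a single $W$ serves both endpoints of $e$, and why a single conull set of $z$ handles all edges simultaneously.
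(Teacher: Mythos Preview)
Your argument is correct and in fact takes a different route from the paper.

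The paper argues at the level of a vertex rather than a single edge. Fixing $u\in V(T_\Gamma)$, it takes the $|q|$ outgoing edges $e_1,\dots,e_{|q|}$ at $u$, uses $\Gamma_{e_1}=\cdots=\Gamma_{e_{|q|}}$ to get $(\cal{G}_{e_1})_{Z_1}=\cdots=(\cal{G}_{e_{|q|}})_{Z_1}$, pushes this through $f$, and then invokes Lemma~\ref{lem-edge-stab} (which is where condition~(2) enters) to obtain $\Lambda_{f_1}=\cdots=\Lambda_{f_{|q|}}$; the structure of $T_\Lambda$ (outgoing and incoming edges at a vertex have distinct stabilizers since $|r|\ne|s|$) then forces $f_1,\dots,f_{|q|}$ to share an origin. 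Your approach instead transports the two indices $[\Gamma_u:\Gamma_e]=|q|$ and $[\Gamma_w:\Gamma_e]=|p|$ for a \emph{single} edge $e$ through the isomorphism $f$, reading off the orientation from which endpoint carries the larger index.

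Both methods rest on the same ingredient---the index equalities in Lemma~\ref{lem-index}~(ii),~(iii) made exact by conditions (1) and (2), exactly as in Lemmas~\ref{lem-vertex-stab} and~\ref{lem-edge-stab}---so your decision to cite those lemmas rather than redo the computation is appropriate. What your route buys is that it bypasses Lemma~\ref{lem-edge-stab} and delivers $|p|=|r|$, $|q|=|s|$ in the same stroke; what the paper's route buys is that it stays closer to the ``equal stabilizers'' bookkeeping already in play and makes visible how the whole packet of outgoing edges moves as a block.
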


\begin{proof}
Pick $u\in V(T_{\Gamma})$.
Let $e_1,\ldots, e_{|q|}\in E(T_{\Gamma})$ denote the edges whose boundaries contain $u$ as their origins.
Let $u_1,\ldots, u_{|q|}\in V(T_{\Gamma})$ be the vertices of $e_1,\ldots, e_{|q|}$ other than $u$, respectively.
Let $Z_1$ be a Borel subset of $Z$ with positive measure where the maps assigning $\varphi_z(u)$ and $\varphi_z(u_i)$ to $z$ are constant for any $i$.
We denote by $v$ and $v_i$ the values of the maps on $Z_1$, respectively, and denote by $f_i$ the edge in $E(T_{\Lambda})$ connecting $v$ with $v_i$.
The equality $\Gamma_{e_1}=\cdots =\Gamma_{e_{|q|}}$ implies the equality $(\cal{G}_{e_1})_{Z_1}=\cdots =(\cal{G}_{e_{|q|}})_{Z_1}$.
It follows that there exists a Borel subset $Z_2$ of $Z_1$ with positive measure such that $(\cal{H}_{f_1})_{Z_2}=\cdots =(\cal{H}_{f_{|q|}})_{Z_2}$.
By Lemma \ref{lem-edge-stab}, we have $\Lambda_{f_1}=\cdots =\Lambda_{f_{|q|}}$.
The edges $f_1,\ldots, f_{|q|}$ therefore have the same origin.
\end{proof}

The rest of the proof of Theorem \ref{thm-aut-t} is essentially the same as that of \cite[Theorem 4.4]{kida-ama}.
We define a Borel map $\varphi \colon Z\to \isom(T_{\Gamma}, T_{\Lambda})$ by $\varphi(z)=\varphi_z$ for $z\in Z$.
Following the proof of \cite[Lemma 4.7]{kida-ama}, we can show the equality
\[\jmath \circ \alpha(\gamma, x)\varphi(x)=\varphi(\gamma \cdot x)\imath(\gamma)\]
for any $\gamma \in \Gamma$ and a.e.\ $x\in Z$ with $\gamma \cdot x\in Z$.
The coupling $\Sigma$ is identified with $X\times \Lambda$ as a measure space.
The group $\Gamma \times \Lambda$ acts on $X\times \Lambda$ by
\[(\gamma, \lambda)(x, \lambda')=(\gamma \cdot x, \alpha(\gamma, x)\lambda'\lambda^{-1})\]
for $\gamma \in \Gamma$, $\lambda, \lambda'\in \Lambda$ and $x\in X$.
We define a Borel map $\Phi \colon \Sigma \to \isom(T_{\Lambda}, T_{\Gamma})$ by putting $\Phi((\gamma, \lambda)(x, e_{\Lambda}))=\imath(\gamma)\varphi(x)^{-1}\jmath(\lambda)^{-1}$ for $\gamma \in \Gamma$, $\lambda \in \Lambda$ and $x\in Z$, where $e_{\Lambda}$ denotes the neutral element of $\Lambda$.
This map $\Phi$ is shown to be well-defined and be a desired one.
We refer to the proof of \cite[Theorem 4.4]{kida-ama} for a precise argument.
Uniqueness of $\Phi$ follows from Lemma \ref{lem-dirac}.
\end{proof}

Through the translation between ME and WOE, we obtain the following:

\begin{cor}\label{cor-woe}
In Notation \ref{not-bs}, let $\Gamma \c (X, \mu)$ and $\Lambda \c (Y, \nu)$ be ergodic f.f.m.p.\ actions such that
\begin{itemize}
\item the action of $\langle a^{d_0}\rangle$ on almost every ergodic component for the action $\langle a\rangle \c (X, \mu)$ is $(p_0, q_0)$-ergodic; and
\item the action of $\langle b^{c_0}\rangle$ on almost every ergodic component for the action $\langle b\rangle \c (Y, \nu)$ is $(r_0, s_0)$-ergodic.
\end{itemize}
If the actions $\Gamma \c (X, \mu)$ and $\Lambda \c (Y, \nu)$ are WOE, then $|p|=|r|$ and $|q|=|s|$.
\end{cor}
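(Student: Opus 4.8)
The plan is to deduce the corollary from Theorem \ref{thm-aut-t} via the standard dictionary between weak orbit equivalence of f.f.m.p.\ actions and measure equivalence of the acting groups. First I would take a $(\Gamma, \Lambda)$-coupling $(\Sigma, m)$ associated with the given WOE between $\Gamma \c (X, \mu)$ and $\Lambda \c (Y, \nu)$, chosen as in the translation recalled in Section \ref{sec-int} (and used in the proofs of Theorems \ref{thm-r} and \ref{thm-z}), so that the action $\Gamma \c \Sigma /\Lambda$ is isomorphic to $\Gamma \c (X, \mu)$ and the action $\Lambda \c \Sigma /\Gamma$ is isomorphic to $\Lambda \c (Y, \nu)$.

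I would then transport the two ergodicity hypotheses along these isomorphisms. Because $a$ acts on $\Sigma /\Lambda$ through the $\Gamma$-action, the action $\langle a\rangle \c \Sigma /\Lambda$ is isomorphic to $\langle a\rangle \c (X, \mu)$; in particular the two actions have the same ergodic decomposition, and on almost every ergodic component $\langle a^{d_0}\rangle$ acts $(p_0, q_0)$-ergodically by the first hypothesis of the corollary. This is exactly condition~(1) of Theorem \ref{thm-aut-t}. Likewise, applying the same reasoning to $\langle b\rangle \c \Sigma /\Gamma \cong \langle b\rangle \c (Y, \nu)$ together with the second hypothesis yields condition~(2).

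With both conditions verified, Theorem \ref{thm-aut-t} applies to $(\Sigma, m)$, and its concluding assertion gives $|p| = |r|$ and $|q| = |s|$, which is the claim. I do not expect a genuine obstacle here: the only point that needs attention is that conditions (1) and (2) of Theorem \ref{thm-aut-t} are phrased in terms of the quotient actions $\Gamma \c \Sigma /\Lambda$ and $\Lambda \c \Sigma /\Gamma$, so one should be careful to choose the coupling in the WOE-to-ME translation so that these quotients reproduce the given actions $\Gamma \c (X, \mu)$ and $\Lambda \c (Y, \nu)$ exactly (rather than merely up to WOE); this compatibility is part of the standard dictionary, after which the corollary is an immediate consequence of the theorem.
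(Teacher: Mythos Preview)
Your proposal is correct and matches the paper's own argument: the paper states the corollary immediately after Theorem~\ref{thm-aut-t} with only the remark ``Through the translation between ME and WOE, we obtain the following,'' which is precisely the dictionary you invoke. Your care in noting that the coupling should be chosen so that $\Gamma \c \Sigma/\Lambda$ and $\Lambda \c \Sigma/\Gamma$ recover the original actions (and not merely WOE representatives) is well placed and is indeed part of the standard translation.
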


 %%%%%%%%%%%%%%%%%%%%%%%%%%%%%%%%%%%%%%%%%%%%%

 \subsection{A profinite completion of integers}
 
This subsection is preliminary to the next subsection.
We fix two integers $p$, $q$ with $2\leq p<q$.
Define $d_0$ as the greatest common divisor of $p$ and $q$, and set $p_0=p/d_0$ and $q_0=q/d_0$.
We assume that $q$ is not a multiple of $p$.
We thus have $1<p_0<q_0$.

Let $E$ denote the infinite cyclic group $\mathbb{Z}$.
For two non-negative integers $k$, $l$, we denote by $E_{k, l}$ the subgroup of $E$ with $[E: E_{k, l}]=d_0p_0^kq_0^l$.
We define $E_{\infty}$ as the projective limit
\[E_{\infty}=\varprojlim E/E_{k, l},\]
which is a compact unital ring.
The group $E$ is naturally identified with a dense subgroup of $E_{\infty}$.
For two non-negative integers $k$, $l$, we denote by $\bar{E}_{k, l}$ the closure of $E_{k, l}$ in $E_{\infty}$, which is equal to the kernel of the canonical homomorphism from $E_{\infty}$ onto $E/E_{k, l}$.

\begin{lem}\label{lem-limit}
In the above notation, the following assertions hold:
\begin{enumerate}
\item The additive group $\bar{E}_{0, 0}$ is torsion-free.
\item For any two non-negative integers $k$, $l$, the map $\sigma_{k, l}\colon \bar{E}_{0, 0}\to \bar{E}_{k, l}$ defined by $\sigma_{k, l}(x)=p_0^kq_0^lx$ for $x\in \bar{E}_{0, 0}$ is an isomorphism of additive groups.
\item For any continuous automorphism $\alpha$ of the additive group $E_{\infty}$, there exists a unit $r$ of $E_{\infty}$ with $\alpha(x)=rx$ for any $x\in E_{\infty}$.
\item Let $r$ be a unit of $E_{\infty}$.
If there exist non-negative integers $k$, $l$ with $rx=x$ for any $x\in \bar{E}_{k, l}$, then $d_0(r-1)=0$.
\item Conversely, if $r$ is a unit of $E_{\infty}$ with $d_0(r-1)=0$, then we have $rx=x$ for any $x\in \bar{E}_{0, 0}$.
\end{enumerate}
\end{lem}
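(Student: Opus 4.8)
The plan is to base everything on two facts: that $E=\mathbb{Z}$ is dense in $E_{\infty}$ (recalled in the excerpt), and that multiplication by $p_0$ and by $q_0$ are \emph{injective} endomorphisms of the additive group $E_{\infty}$. First I would record some identifications. Since $E=\mathbb{Z}$, we have $E_{k,l}=d_0p_0^kq_0^lE$. Because multiplication by a fixed nonzero integer is a continuous self-map of the compact group $E_{\infty}$, its image is closed; combined with density of $E$ this gives $\bar E_{0,0}=d_0E_{\infty}$ and $\bar E_{k,l}=d_0p_0^kq_0^lE_{\infty}=p_0^kq_0^l\,\bar E_{0,0}$. (Indeed $p_0^kq_0^l\bar E_{0,0}$ is a closed subgroup which contains $E_{k,l}=p_0^kq_0^ld_0E$ and is contained in $\overline{E_{k,l}}=\bar E_{k,l}$ by continuity, so it equals $\bar E_{k,l}$; the case of $\bar E_{0,0}$ is the same with $d_0$.)

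For the injectivity fact I would represent an element of $E_{\infty}$ as a compatible family $(x_{k,l})$ with $x_{k,l}\in E/E_{k,l}\cong\mathbb{Z}/d_0p_0^kq_0^l\mathbb{Z}$. If $p_0x=0$, then $p_0x_{k+1,l}\equiv0\pmod{d_0p_0^{k+1}q_0^l}$ forces $d_0p_0^kq_0^l\mid x_{k+1,l}$, so the transition map $E/E_{k+1,l}\to E/E_{k,l}$ kills $x_{k+1,l}$ and $x_{k,l}=0$; hence $x=0$. The same argument works with $q_0$ in place of $p_0$, and therefore multiplication by any $p_0^kq_0^l$ is injective on $E_{\infty}$.

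Granting these, the five assertions are short. For (iii), set $r=\alpha(1)$; by additivity $\alpha$ agrees with $x\mapsto rx$ on $E$, hence on all of $E_{\infty}$ by density and continuity, and applying the same reasoning to $\alpha^{-1}$ produces $r'=\alpha^{-1}(1)$ with $rr'=1$, so $r$ is a unit. For (v), every $x\in\bar E_{0,0}=d_0E_{\infty}$ is of the form $d_0y$, so $(r-1)x=(r-1)d_0y=0$. For (iv), the hypothesis $rx=x$ on $\bar E_{k,l}$ applied to the element $d_0p_0^kq_0^l\in E_{k,l}\subset\bar E_{k,l}$ gives $(r-1)d_0p_0^kq_0^l=0$ in $E_{\infty}$, and cancelling the factor $p_0^kq_0^l$ via the injectivity fact yields $d_0(r-1)=0$. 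For (ii), $\sigma_{k,l}$ is multiplication by $p_0^kq_0^l$ restricted to $\bar E_{0,0}=d_0E_{\infty}$; its image is $d_0p_0^kq_0^lE_{\infty}=\bar E_{k,l}$ by the first paragraph, and it is injective by the injectivity fact, so being a continuous bijective homomorphism between compact Hausdorff groups it is an isomorphism of topological groups.

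The one point that needs a little care is (i). Torsion-freeness of $\bar E_{0,0}=d_0E_{\infty}$ amounts to injectivity of multiplication by every prime $\ell$ on $\bar E_{0,0}$. If $\ell\mid p_0q_0$, say $\ell\mid p_0$, and $\ell x=0$, then $p_0x=(p_0/\ell)(\ell x)=0$, whence $x=0$ by the injectivity fact. If $\ell\nmid p_0q_0$ and $\ell x=0$ with $x\in\bar E_{0,0}$, I would argue component-wise: each $x_{k,l}$ lifts to an integer multiple of $d_0$, say $x_{k,l}=d_0w$, and $\ell d_0w\equiv0\pmod{d_0p_0^kq_0^l}$ gives $\ell w\equiv0\pmod{p_0^kq_0^l}$; since $\gcd(\ell,p_0q_0)=1$ this forces $p_0^kq_0^l\mid w$, i.e.\ $x_{k,l}=0$. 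Multiplication by an arbitrary positive integer is then injective on $\bar E_{0,0}$ by factoring it into primes. So the real content is the two-line injectivity lemma for multiplication by $p_0$ and $q_0$; the prime-by-prime bookkeeping in (i) is the only genuinely fiddly step, and there is no deep obstacle beyond it.
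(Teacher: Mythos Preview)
Your proof is correct and close in spirit to the paper's, but organized around a slightly different key lemma. You front-load the identifications $\bar E_{0,0}=d_0E_\infty$ and $\bar E_{k,l}=p_0^kq_0^l\bar E_{0,0}$ together with the injectivity of multiplication by $p_0$ and $q_0$ on $E_\infty$, and then read off (ii), (iv), (v) almost immediately. The paper instead proves (i) first by a sequential approximation argument (take $n_i\in E_{0,0}$ with $n_i\to x$, use $mn_i\to 0$ to force $n_i\to 0$), and then derives (ii) and (iv) from (i). Your route makes (ii), (iv), (v) cleaner---in particular, for (v) you avoid the paper's accumulation-point step by using $\bar E_{0,0}=d_0E_\infty$ directly---at the cost of needing the extra prime-by-prime case split in (i) for primes $\ell\nmid p_0q_0$, which the paper's sequential argument handles uniformly. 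Both are short and elementary; the difference is purely one of packaging.
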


\begin{proof}
To prove assertion (i), we pick an element $x$ of $\bar{E}_{0, 0}$ and a positive integer $m$ with $mx=0$.
Choose a sequence $\{ n_i\}_i$ in $E_{0, 0}$ converging to $x$.
The sequence $\{ mn_i\}_i$ then converges to $0$.
Let $k$ be a positive integer.
For any sufficiently large $i$, the integer $mn_i$ is divided by $d_0(p_0q_0)^k$.
It follows that for any sufficiently large $i$, the integer $n_i$ is also divided by $d_0(p_0q_0)^k$.
The sequence $\{ n_i\}_i$ therefore converges to $0$, and we obtain $x=0$.
Assertion (i) is proved.

Assertion (ii) follows from assertion (i).
In assertion (iii), putting $r=\alpha(1)$, we have the equality $\alpha(n)=nr$ for any $n\in \mathbb{Z}$.
The equality $\alpha(x)=rx$ for any $x\in E_{\infty}$ follows from the continuity of $\alpha$.

Let $r$ be a unit of $E_{\infty}$ satisfying the assumption in assertion (iv).
We then have $d_0p_0^kq_0^l(r-1)=0$ because $d_0p_0^kq_0^l$ belongs to $\bar{E}_{k, l}$.
On the other hand, $d_0(r-1)$ belongs to $\bar{E}_{0, 0}$.
We have $d_0(r-1)=0$ by assertion (i).
Assertion (iv) is proved.

In assertion (v), pick an element $x$ of $\bar{E}_{0, 0}$, which is approximated by a sequence $\{ d_0n_i\}_i$ of elements of $E_{0, 0}$ with $n_i\in E$ for any $i$.
Let $y\in E_{\infty}$ be an accumulation point of the sequence $\{ n_i\}_i$.
We then have $d_0y=x$ and $(r-1)x=(r-1)d_0y=0$.
Assertion (v) is proved.
\end{proof}

%%%%%%%%%%%%%%%%%%%%%%%%%%%%%%%%%%%%%%%%%%%%%

\subsection{Further reduction}

Under an assumption stronger than that in Theorem \ref{thm-aut-t}, we obtain the following stronger conclusion.

\begin{thm}\label{thm-mer}
In Notation \ref{not-bs}, let $(\Sigma, m)$ be a $(\Gamma, \Lambda)$-coupling.
We assume the following two conditions:
\begin{enumerate}
\item[(I)] The action of $\langle a^{d_0}\rangle$ on $\Sigma/\Lambda$ is ergodic.
\item[(II)] The action of $\langle b^{c_0}\rangle$ on $\Sigma/\Gamma$ is ergodic.
\end{enumerate}
Then there exists an integer $\varepsilon \in \{ \pm 1\}$ with $(p, q)=(\varepsilon r, \varepsilon s)$, that is, $\Gamma$ and $\Lambda$ are isomorphic.
\end{thm}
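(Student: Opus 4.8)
The plan is to bootstrap from Theorem~\ref{thm-aut-t}. Conditions~(I) and~(II) are stronger than conditions~(1) and~(2) of that theorem (cf.\ Appendix~\ref{app}), so it applies and produces an essentially unique, almost $(\Gamma \times \Lambda)$-equivariant Borel map $\Phi \colon \Sigma \to (\isom(T_{\Lambda}, T_{\Gamma}), \imath, \jmath)$, together with the equalities $|p|=|r|$ and $|q|=|s|$. Hence only the signs remain to be pinned down: I must produce $\varepsilon \in \{ \pm 1\}$ with $(r, s)=(\varepsilon p, \varepsilon q)$, after which $\Gamma$ and $\Lambda$ are isomorphic, since for $\varepsilon =-1$ the relation $ta^{p}t^{-1}=a^{q}$ is equivalent to $ta^{-p}t^{-1}=a^{-q}$.

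The idea for the signs is that the oriented tree $T_{\Gamma}$, equipped with the $\Gamma$-action, remembers $(p, q)$ exactly up to the simultaneous change $(p, q)\mapsto(-p,-q)$, and one must check that $\Phi$ transports this finer datum. Since every vertex group $\Gamma_v$ is abelian, the conjugates $a_v=\gamma a\gamma^{-1}$ (for $v=\gamma\langle a\rangle$) form a $\Gamma$-equivariant system of generators of the vertex groups, depending only on the single choice of $a$ against $a^{-1}$; for an oriented edge $e$ from $u$ to $w$, the element $a_e:=a_u^{q}$ generating $\Gamma_e$ then also equals $a_w^{p}$, so the signed pair $(q,p)$ of exponents at the origin and terminal is an invariant of $(T_{\Gamma},\Gamma\curvearrowright T_{\Gamma})$, well defined up to the above flip. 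Dually, and this is the purpose of Section~\ref{sec-red}, the closure of $\imath(\Gamma_v)$ in $\aut(T_{\Gamma})$ is canonically the profinite group $E_{\infty}$ with $a_v\leftrightarrow 1$, the closure of an adjacent edge group being $\bar E_{0,1}$ for an outgoing edge and $\bar E_{1,0}$ for an incoming one, and on these $a_e$ corresponds to $q$, respectively to $p$; the same holds for $T_{\Lambda}$ with $r,s$ in place of $p,q$, and the completions agree because $|p|=|r|$, $|q|=|s|$. Matching these systems up to a global sign is precisely the assertion $(r,s)=(\varepsilon p,\varepsilon q)$.

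To transport the datum I would proceed as in the proof of Theorem~\ref{thm-aut-t}: pass to fundamental domains $X=\Sigma/\Lambda$, $Y=\Sigma/\Gamma$, the ME cocycle $\alpha \colon \Gamma \times X\to \Lambda$, and the groupoid isomorphism $f$, so that $\Phi$ is recorded by a Borel map $\varphi \colon Z\to \isom(T_{\Gamma}, T_{\Lambda})$ satisfying $\jmath\circ\alpha(\gamma,x)\,\varphi(x)=\varphi(\gamma\cdot x)\,\imath(\gamma)$. Running the argument of Lemma~\ref{lem-pre} not only for vertices but for edges and for the stabilizers of longer paths---each an elliptic subgroup, running through the subgroups $\langle a_u^{d_0p_0^kq_0^l}\rangle$ of $\Gamma_u$---shows that $f$ carries the descending chain $\cal{G}_u\supset\cal{G}_e\supset\cdots$ to the corresponding chain in $\cal{H}$ through $\varphi_z(u)$. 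Passing to profinite completions identifies $\overline{\imath(\Gamma_u)}$ with $\overline{\jmath(\Lambda_{\varphi_z(u)})}$, and conditions~(I),~(II)---through which $\langle a^{d_0}\rangle$ acts ergodically on $X$ and $\langle b^{c_0}\rangle$ on $Y$, hence so do all of their conjugates---force the resulting comparison to be essentially constant, namely a continuous automorphism of $E_\infty$, hence multiplication by a unit $\kappa$ (Lemma~\ref{lem-limit}(iii)). Requiring $\kappa$ to be compatible simultaneously with the outgoing and the incoming edge-stabilizer generators on both trees, and using Lemma~\ref{lem-limit}(iv),(v) to absorb the $d_0$-torsion, forces $\kappa$ to be a global sign, so $(r,s)=(\varepsilon p,\varepsilon q)$ for some $\varepsilon\in\{\pm1\}$; uniqueness of $\Phi$ (Lemma~\ref{lem-dirac}) leaves no freedom in the comparison.

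The main obstacle is the third paragraph, specifically the step showing that the induced map on profinite completions rescales the cyclic elliptic subgroups by at most a global sign. This is exactly where the arithmetic of $E_\infty$ from Section~\ref{sec-red} (Lemma~\ref{lem-limit}) and, crucially, the strong ergodicity hypotheses~(I),~(II) enter: they are what upgrade the ``up to finite index'' information of Theorem~\ref{thm-aut-t}---which sees only $|p|$ and $|q|$---into an exact matching of generators. Under the weaker hypotheses~(1),~(2) this rigidity fails, as the WOE but non-conjugate actions of Section~\ref{sec-oe} show, so the full strength of~(I),~(II) is genuinely used here.
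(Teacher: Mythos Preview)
Your outline tracks the paper closely through the construction of the constant map $\psi_0\colon Z\to\mathfrak{I}$ (Claims~\ref{claim-part} and~\ref{claim-const}): bootstrapping from Theorem~\ref{thm-aut-t}, matching the filtrations $\cal{E}_{k,l}\leftrightarrow\cal{F}_{k,l}$ via Lemma~\ref{lem-pre}, and using condition~(I) to force constancy. Up to that point the argument is correct and essentially identical to the paper's.

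The gap is in your final step. The assertion that the induced isomorphism is ``multiplication by a unit $\kappa$'' which is then forced to be $\pm 1$ is not right. After identifying $E_\infty\cong F_\infty$, the map $\psi_0$ is indeed multiplication by a unit $\kappa$, but $\kappa$ can be \emph{any} unit of $E_\infty$: the filtration-preserving property $\psi_0(\bar E_{k,l})=\bar F_{k,l}$ is automatic for multiplication by any unit and imposes no constraint. Concretely, for $(p,q)=(2,3)$ one has $E_\infty\cong\mathbb{Z}_2\times\mathbb{Z}_3$, and multiplication by $5$ is a perfectly good filtration-preserving automorphism; nothing about edge stabilizers rules it out. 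Lemma~\ref{lem-limit}(iv),(v) do not show $\kappa=\pm1$; in the paper they serve only to identify the quotient $\mathfrak{I}/U_0$ inside the space $\mathfrak{C}$.

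What your sketch is missing is the role of the stable letters. The sign of $s$ versus $q$ is invisible at the level of edge stabilizers (these are just subgroups of index $|p|$ or $|q|$) and becomes visible only through how $t\in\Gamma$ and $u\in\Lambda$ \emph{conjugate} between them. The paper handles this by reducing to the contradictory case $p>0$, $q>0$, $r=p$, $s=-q$, building a space $\mathfrak{C}$ of germs of isomorphisms on which $\Gamma$ and $\Lambda$ act---with $t$ acting by the shift $\sigma$ and $u$ by $\tau\circ I$, the inversion $I$ coming precisely from the sign in $ub^pu^{-1}=b^{-q}$---and proving the equivariance $\alpha(\gamma,x)\psi(x)=\psi(\gamma\cdot x)\gamma$ (Claim~\ref{claim-psi-eq}). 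Ergodicity then gives $\Lambda[\alpha_0]=[\alpha_0]\Gamma$, forcing $[\tau\circ I\circ\alpha_0]=[\alpha_0\circ\sigma^n]$ for some $n$, hence $\tau^{1-n}\circ I$ fixes a nonzero element of $\bar F_{0,0}$; this contradicts torsion-freeness (Lemma~\ref{lem-limit}(i)). That dynamical comparison of the $t$- and $u$-actions on $\mathfrak{C}$ is the heart of the proof, and it is absent from your proposal.
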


Theorem \ref{thm-woe} follows from this theorem.
Before the proof of Theorem \ref{thm-mer}, let us make a few comments.
By Lemma \ref{lem-erg-comp}, conditions (I) and (II) imply conditions (1) and (2) in Theorem \ref{thm-aut-t}, respectively.
It follows that there exists an almost $(\Gamma \times \Lambda)$-equivariant Borel map $\Phi \colon \Sigma \to (\isom(T_{\Lambda}, T_{\Gamma}), \imath, \jmath)$ and that the equalities $|p|=|r|$ and $|q|=|s|$ hold.
In particular, the equality $c_0=d_0$ holds.

\begin{proof}[Proof of Theorem \ref{thm-mer}]
Because of the above argument, to prove the theorem, it is enough to deduce a contradiction under the assumption that $p>0$, $q>0$, $r=p$ and $s=-q$.
We then have $r_0=p_0$ and $s_0=-q_0$.
We fix the notation as follows.
Fix $u_0\in V(T_{\Gamma})$ and $v_0\in V(T_{\Lambda})$.
Let $E$ denote the stabilizer of $u_0$ in $\Gamma$, and let $F$ denote the stabilizer of $v_0$ in $\Lambda$.
For two non-negative integers $k$, $l$, we denote by $E_{k, l}$ the subgroup of $E$ with $[E: E_{k, l}]=d_0p_0^kq_0^l$.
Similarly, we denote by $F_{k, l}$ the subgroup of $F$ with $[F: F_{k, l}]=d_0p_0^kq_0^l$.
Define the projective limits
\[E_{\infty}=\varprojlim E_{k, l},\quad F_{\infty}=\varprojlim F_{k, l}.\]
We define $\mathfrak{I}$ as the space of continuous isomorphisms from $E_{\infty}$ onto $F_{\infty}$ as additive groups, and equip it with the standard Borel structure associated with the compact-open topology.

By Theorem \ref{thm-aut-t}, there exists an almost $(\Gamma \times \Lambda)$-equivariant Borel map $\Phi \colon \Sigma \to (\isom(T_{\Lambda}, T_{\Gamma}), \imath, \jmath)$.
Let $\Sigma_0$ be the inverse image under $\Phi$ of the Borel subset $\{\, \varphi \in \isom(T_{\Lambda}, T_{\Gamma})\mid \varphi(v_0)=u_0\,\}$.
The subset $\Sigma_0$ has positive measure because we have the equality $\Gamma \Sigma_0=\Sigma$ up to null sets.
As in \cite[Lemma 5.2]{kida-ama}, it is shown that $\Sigma_0$ is an $(E, F)$-coupling.
Let $X, Y\subset \Sigma_0$ be fundamental domains for the actions $F\c \Sigma_0$ and $E\c \Sigma_0$, respectively, such that $Z=X\cap Y$ has positive measure.
They are also fundamental domains for the actions $\Lambda \c \Sigma$ and $\Gamma \c \Sigma$, respectively.
We have the natural actions $\Gamma \c X$ and $\Lambda \c Y$, and set $\cal{G}=\Gamma \ltimes X$ and $\cal{H}=\Lambda \ltimes Y$.
Let $\alpha \colon \Gamma \times X\to \Lambda$ be the ME cocycle associated with $X$.
The Borel map $f\colon (\cal{G})_Z\to (\cal{H})_Z$ defined by $f(\gamma, x)=(\alpha(\gamma, x), x)$ for $(\gamma, x)\in (\cal{G})_Z$ is then an isomorphism.
We set $\cal{E}=E\ltimes X$ and $\cal{F}=F\ltimes Y$.
The equality $f((\cal{E})_Z)=(\cal{F})_Z$ holds by the construction of $X$ and $Y$.
For two non-negative integers $k$, $l$, we set
\[\cal{E}_{k, l}=E_{k, l}\ltimes X,\quad \cal{F}_{k, l}=F_{k, l}\ltimes Y.\]

\begin{claim}\label{claim-part}
For any two non-negative integers $k$, $l$ with $(k, l)\neq (0, 0)$, there exists a countable Borel partition $Z=\bigsqcup_nZ_n^{k, l}$ with the equality $f((\cal{E}_{k, l})_{Z_n^{k, l}})=(\cal{F}_{k, l})_{Z_n^{k, l}}$ for any $n$.
\end{claim}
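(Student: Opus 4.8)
The plan is to identify $\cal{E}_{k,l}$ (resp.\ $\cal{F}_{k,l}$) with the intersection of $\cal{E}$ (resp.\ $\cal{F}$) with the groupoid attached to a suitable vertex stabilizer, and then to transport this across the isomorphism $f$ by means of Lemma \ref{lem-pre}. First I would use the index computation recalled in Section \ref{sec-bs}: since $(k,l)\neq (0,0)$, there is a vertex $v$ of $T_{\Gamma}$ with $[\Gamma_{u_0}:\Gamma_{u_0}\cap\Gamma_v]=d_0p_0^kq_0^l$; one may take $v$ to be the far endpoint of a geodesic issuing from $u_0$ that first descends $k$ consecutive edges and then ascends $k+l$ consecutive edges (this path is non-backtracking because $|p|,|q|\geq 2$, and its partial sums of $\sigma$ have minimum $-k$ and maximum $l$; this is where $(k,l)\neq(0,0)$ is used, so that the required index is actually attained). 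Since $\Gamma_{u_0}=E$ is infinite cyclic, $\Gamma_{u_0}\cap\Gamma_v$ is its unique subgroup of index $d_0p_0^kq_0^l$, namely $E_{k,l}$, so $\cal{E}_{k,l}=\cal{E}\cap\cal{G}_v$, where $\cal{G}_v=\Gamma_v\ltimes X$; the analogous facts hold in $\Lambda$.

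Next I would apply Lemma \ref{lem-pre} to $u=v$, obtaining a countable Borel partition $Z=\bigsqcup_n Z_n^{k,l}$ and vertices $w_n\in V(T_{\Lambda})$ with $f((\cal{G}_v)_{Z_n^{k,l}})=(\cal{H}_{w_n})_{Z_n^{k,l}}$ for each $n$. Because $f$ preserves source maps it also preserves range maps and restricts to an isomorphism $(\cal{G})_{Z_n^{k,l}}\to(\cal{H})_{Z_n^{k,l}}$; combining this with $f((\cal{E})_Z)=(\cal{F})_Z$ and the fact that an isomorphism of groupoids commutes with intersections of subgroupoids, I get
\[f\bigl((\cal{E}_{k,l})_{Z_n^{k,l}}\bigr)=f\bigl((\cal{E})_{Z_n^{k,l}}\bigr)\cap f\bigl((\cal{G}_v)_{Z_n^{k,l}}\bigr)=(\cal{F})_{Z_n^{k,l}}\cap(\cal{H}_{w_n})_{Z_n^{k,l}}=\bigl((F\cap\Lambda_{w_n})\ltimes Y\bigr)_{Z_n^{k,l}}.\]
By the index computation of Section \ref{sec-bs} applied to $\Lambda$ (using $c_0=d_0$, $|r_0|=p_0$, $|s_0|=q_0$, established before this claim), $F\cap\Lambda_{w_n}=\Lambda_{v_0}\cap\Lambda_{w_n}$ equals the subgroup $F_{a_n,b_n}$ of $F$ for some integers $a_n,b_n\geq 0$, so the displayed expression is $(\cal{F}_{a_n,b_n})_{Z_n^{k,l}}$.

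Finally I would compare indices on each piece $Z_n^{k,l}$. By conditions (I) and (II) together with Lemma \ref{lem-erg-comp}, the actions $E_{k,l}\c X$ and $F_{a_n,b_n}\c Y$ are ergodic, so Lemma \ref{lem-index}(ii),(iii) gives $[(\cal{E})_{Z_n^{k,l}}:(\cal{E}_{k,l})_{Z_n^{k,l}}]_x=[E:E_{k,l}]=d_0p_0^kq_0^l$ and $[(\cal{F})_{Z_n^{k,l}}:(\cal{F}_{a_n,b_n})_{Z_n^{k,l}}]_x=[F:F_{a_n,b_n}]=d_0p_0^{a_n}q_0^{b_n}$ for a.e.\ $x$. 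Since $f$ restricts to an isomorphism carrying $(\cal{E})_{Z_n^{k,l}}$ onto $(\cal{F})_{Z_n^{k,l}}$ and $(\cal{E}_{k,l})_{Z_n^{k,l}}$ onto $(\cal{F}_{a_n,b_n})_{Z_n^{k,l}}$, these two indices agree; hence $d_0p_0^kq_0^l=d_0p_0^{a_n}q_0^{b_n}$, and as $F\cong\mathbb{Z}$ has a unique subgroup of each index, $F_{a_n,b_n}=F_{k,l}$. Therefore $f((\cal{E}_{k,l})_{Z_n^{k,l}})=(\cal{F}_{k,l})_{Z_n^{k,l}}$ for every $n$, as required.

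The step I expect to be the main obstacle is this last matching: finite-index subgroupoids are not unique, so $f((\cal{E}_{k,l})_{Z_n})$ need not equal $(\cal{F}_{k,l})_{Z_n}$ merely because their indices in $(\cal{F})_{Z_n}$ coincide. The device that circumvents this is to first localize the image inside a vertex-stabilizer groupoid through Lemma \ref{lem-pre} — which pushes the problem down to the cyclic groups $\Gamma_{u_0}$ and $\Lambda_{v_0}$, where subgroups of a given index are unique — and then to pin down the vertex $w_n$ (equivalently the pair $(a_n,b_n)$) by an index count; it is precisely here that the ergodicity hypotheses (I) and (II), via Lemma \ref{lem-erg-comp}, are indispensable.
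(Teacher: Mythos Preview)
Your argument is correct and follows essentially the same route as the paper: choose a vertex $u$ with $E\cap\Gamma_u=E_{k,l}$, apply Lemma~\ref{lem-pre} to get a partition and vertices $w_n$, intersect with $(\cal{F})_Z$ to land in $(\cal{F}\cap\cal{H}_{w_n})_{Z_n}$, and then use ergodicity of $\cal{E}_{k,l}$ and $\cal{F}_{k',l'}$ (via conditions (I), (II) and Lemma~\ref{lem-erg-comp}) together with Lemma~\ref{lem-index}(ii),(iii) to match indices. Your final step, invoking uniqueness of the index-$d_0p_0^kq_0^l$ subgroup of $F\cong\mathbb{Z}$, is equivalent to the paper's deduction that $(k',l')=(k,l)$ from $p_0^kq_0^l=p_0^{k'}q_0^{l'}$ and coprimality of $p_0,q_0$; both reach the same conclusion.
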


\begin{proof}
Let $k$ and $l$ be non-negative integers with $(k, l)\neq (0, 0)$.
There exists a vertex $u\in V(T_{\Gamma})$ with $E\cap \Gamma_u=E_{k, l}$, where $\Gamma_u$ is the stabilizer of $u$ in $\Gamma$.
By Lemma \ref{lem-pre}, there exist a countable Borel partition $Z=\bigsqcup_n Z_n$ and $v_n\in V(T_{\Lambda})$ such that $f((\cal{G}_u)_{Z_n})=(\cal{H}_{v_n})_{Z_n}$ for any $n$, where we use the same notation as in the lemma.
It follows that $f$ sends $(\cal{E}_{k, l})_{Z_n}$ onto $(\cal{F}\cap \cal{H}_{v_n})_{Z_n}$ for any $n$.
Condition (I) in Theorem \ref{thm-mer} and Lemma \ref{lem-erg-comp} imply that $\cal{E}_{k, l}$ is ergodic.
By Lemma \ref{lem-index} (ii) and (iii), the index of $(\cal{E}_{k, l})_{Z_n}$ in $(\cal{E})_{Z_n}$ at a.e.\ $x\in Z_n$ is equal to $d_0p_0^kq_0^l$.
It follows that the index of $(\cal{F}\cap \cal{H}_{v_n})_{Z_n}$ in $(\cal{F})_{Z_n}$ at a.e.\ $x\in Z_n$ is also equal to the same number.
There exist non-negative integers $k'$, $l'$ with $(k', l')\neq (0, 0)$ and $\cal{F}\cap \cal{H}_{v_n}=\cal{F}_{k', l'}$.
Condition (II) in Theorem \ref{thm-mer} and Lemma \ref{lem-erg-comp} imply that $\cal{F}_{k', l'}$ is ergodic.
By Lemma \ref{lem-index} (ii) and (iii), we have $k'=k$ and $l'=l$.
The claim is proved.
\end{proof}

\noindent {\bf Construction of $\psi_0\colon Z\to \mathfrak{I}$.}
Let $k$ and $l$ be non-negative integers with $(k, l)\neq (0, 0)$.
We have a countable Borel partition $Z=\bigsqcup_n Z_n^{k, l}$ satisfying the conclusion in Claim \ref{claim-part}.
For each $n$, we have the isomorphism
\[E/E_{k, l}\to \cal{E}/\cal{E}_{k, l}\to (\cal{E})_{Z_n^{k, l}}/(\cal{E}_{k, l})_{Z_n^{k, l}}\stackrel{f}{\to}(\cal{F})_{Z_n^{k, l}}/(\cal{F}_{k, l})_{Z_n^{k, l}}\to \cal{F}/\cal{F}_{k, l}\to F/F_{k, l},\]
where we use Lemma \ref{lem-qu} in the isomorphisms other than the third one.
We define a Borel map $\psi_{k, l}\colon Z\times E/E_{k, l}\to F/F_{k, l}$ so that for any $x\in Z_n^{k, l}$, $\psi_{k, l}(x, \cdot)$ is equal to the above isomorphism.
This map is defined independently of the choice of the countable Borel partition of $Z$.

For any integers $k'$ and $l'$ with $k'\geq k$ and $l'\geq l$ and for a.e.\ $x\in Z$, we have the following commutative diagram, where the vertical arrows denote the canonical homomorphisms:
\[\xymatrix{
E/E_{k', l'} \ar[rr]^{\psi_{k', l'}(x, \cdot)} \ar[d] & & F/F_{k', l'} \ar[d]\\
E/E_{k, l} \ar[rr]^{\psi_{k, l}(x, \cdot)} & & F/F_{k, l}
}\]
We therefore obtain a Borel map $\psi_0\colon Z\to \mathfrak{I}$ such that for a.e.\ $x\in Z$ and for any non-negative integers $k$, $l$ with $(k, l)\neq (0, 0)$, the isomorphism $\psi_0(x)$ from $E_{\infty}$ onto $F_{\infty}$ induces the isomorphism $\psi_{k, l}(x, \cdot)$ from $E/E_{k, l}$ onto $F/F_{k, l}$.

\begin{claim}\label{claim-const}
The map $\psi_0\colon Z\to \mathfrak{I}$ is essentially constant.
\end{claim}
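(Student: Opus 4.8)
The plan is to fix a pair of non-negative integers $(k,l)\neq(0,0)$ and prove that the Borel map $Z\ni x\mapsto\psi_{k,l}(x,\cdot)$, valued in the finite set of group isomorphisms from $E/E_{k,l}$ onto $F/F_{k,l}$, is essentially constant; since $E_\infty=\varprojlim E/E_{k,l}$ and $F_\infty=\varprojlim F/F_{k,l}$ and since $\psi_0(x)$ induces $\psi_{k,l}(x,\cdot)$ for every such $(k,l)$, this will give that $\psi_0$ is essentially constant. To deduce constancy I would use ergodicity: by condition (I) in Theorem \ref{thm-mer} and Lemma \ref{lem-erg-comp} the subgroupoid $\cal{E}=E\ltimes X$ acts ergodically on $X$, and since the restriction of an ergodic discrete measured groupoid to a positive-measure subset is again ergodic, $(\cal{E})_Z$ is ergodic on $Z$. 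It therefore suffices to check that $x\mapsto\psi_{k,l}(x,\cdot)$ is $(\cal{E})_Z$-invariant, i.e.\ that $\psi_{k,l}(r(g),\cdot)=\psi_{k,l}(s(g),\cdot)$ for a.e.\ $g\in(\cal{E})_Z$.

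First I would set up the comparison on a single piece. Let $\rho\colon\cal{G}\to\Gamma$ and $\sigma\colon\cal{H}\to\Lambda$ be the projections, and on $(\cal{E})_Z$ consider the two Borel cocycles $\bar\rho(g)=\rho(g)\bmod E_{k,l}\in E/E_{k,l}$ and $\bar\sigma(g)=\sigma(f(g))\bmod F_{k,l}\in F/F_{k,l}$. Unwinding the definition of $\psi_{k,l}$ and using the canonical identifications $(\cal{E})_{Z_n^{k,l}}/(\cal{E}_{k,l})_{Z_n^{k,l}}\cong\cal{E}/\cal{E}_{k,l}\cong E/E_{k,l}$ and their $\cal{F}$-counterparts supplied by Lemma \ref{lem-qu} (together with $f((\cal{E}_{k,l})_{Z_n^{k,l}})=(\cal{F}_{k,l})_{Z_n^{k,l}}$ from Claim \ref{claim-part}), one obtains, for each $n$ and a.e.\ $x\in Z_n^{k,l}$, the identity $\bar\sigma(h)=\psi_{k,l}(x,\cdot)(\bar\rho(h))$ for a.e.\ $h\in(\cal{E})_{Z_n^{k,l}}$. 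In particular it holds for the isotropy loops $h$ at $x$; and since $(\cal{E}_{k,l})_{Z_n^{k,l}}$ is ergodic (again by condition (I) and Lemma \ref{lem-erg-comp}) and has finite index $d_0p_0^kq_0^l$ in $(\cal{E})_{Z_n^{k,l}}$ with quotient group $E/E_{k,l}$, the values $\bar\rho(h)$ of these loops exhaust $E/E_{k,l}$ for a.e.\ $x$.

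The main step is then to cross pieces. Given $g\in(\cal{E})_Z$ with $x=s(g)$ in a piece $Z_n^{k,l}$ and $y=r(g)$ in a piece $Z_m^{k,l}$, and a loop $h$ at $x$ lying in $(\cal{E})_{Z_n^{k,l}}$, the conjugate $ghg^{-1}$ is a loop at $y$; the single point requiring care is that a loop at $y$ automatically has both source and range equal to $y$, hence lies in $(\cal{E})_{Z_m^{k,l}}$, so the partition causes no trouble and $\bar\sigma(ghg^{-1})=\psi_{k,l}(y,\cdot)(\bar\rho(ghg^{-1}))$. Because $E/E_{k,l}$ and $F/F_{k,l}$ are abelian, $\bar\rho(ghg^{-1})=\bar\rho(h)$ and $\bar\sigma(ghg^{-1})=\bar\sigma(h)$, whence $\psi_{k,l}(y,\cdot)(\bar\rho(h))=\bar\sigma(h)=\psi_{k,l}(x,\cdot)(\bar\rho(h))$ for every such $h$; letting $\bar\rho(h)$ range over $E/E_{k,l}$ gives $\psi_{k,l}(y,\cdot)=\psi_{k,l}(x,\cdot)$. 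This establishes the $(\cal{E})_Z$-invariance, and with the ergodicity of $(\cal{E})_Z$ the map $x\mapsto\psi_{k,l}(x,\cdot)$ is essentially constant for each $(k,l)$, so $\psi_0$ is essentially constant. The only mildly technical ingredient is the bookkeeping behind the identity $\bar\sigma(h)=\psi_{k,l}(x,\cdot)(\bar\rho(h))$ on a piece, and the standard fact that the isotropy loops of an ergodic groupoid at a.e.\ point surject onto any finite quotient; the conceptual input is simply that $E$ and $F$ are abelian, so conjugation is invisible on the finite quotients.
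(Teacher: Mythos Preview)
Your argument has a genuine gap at the step involving isotropy loops. You claim that because $(\cal{E}_{k,l})_{Z_n^{k,l}}$ is ergodic of finite index with quotient $E/E_{k,l}$, the isotropy loops $h$ at a.e.\ $x$ have $\bar\rho(h)$ exhausting $E/E_{k,l}$, and in your summary you call it a ``standard fact that the isotropy loops of an ergodic groupoid at a.e.\ point surject onto any finite quotient.'' This is false. An isotropy loop at $x$ in $\cal{E}=E\ltimes X$ is a pair $(\gamma,x)$ with $\gamma\in E$ and $\gamma\cdot x=x$, so what you are asserting is that the stabiliser of $x$ in $E$ surjects onto $E/E_{k,l}$. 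Nothing in the hypotheses guarantees this: the coupling $(\Sigma,m)$ may well yield an essentially free action $\Gamma\c X=\Sigma/\Lambda$ (for instance whenever $\Sigma$ arises from a WOE between f.f.m.p.\ actions), and then the only isotropy loop at a.e.\ $x$ is the identity, so your crossing-pieces step collapses. That the quotient groupoid $(\cal{E})_{Z_n^{k,l}}/(\cal{E}_{k,l})_{Z_n^{k,l}}$ equals the group $E/E_{k,l}$ is a statement about the global quotient homomorphism, not about the isotropy bundle at a point; any principal ergodic equivalence relation with a nontrivial finite quotient is a counterexample to your ``standard fact.''

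The paper's proof uses exactly the two ingredients you identified---abelianness of $E$ and $F$, and ergodicity of $\cal{E}$---but applies them to the conjugation automorphism of a restricted groupoid rather than to pointwise isotropy. One fixes $\gamma\in E$ and a Borel set $A\subset Z$ of positive measure with $\gamma\cdot A\subset Z$ and $\alpha(\gamma,\cdot)\equiv\lambda\in F$ on $A$. The automorphism $U_\gamma(h)=(\gamma,r(h))\,h\,(\gamma,s(h))^{-1}$ sends $(\cal{E})_A$ onto $(\cal{E})_{\gamma\cdot A}$ and $(\cal{E}_{k,l})_A$ onto $(\cal{E}_{k,l})_{\gamma\cdot A}$; since $E$ is abelian, the induced map on the quotient group $E/E_{k,l}$ (via Lemma~\ref{lem-qu}) is the identity, and likewise $V_\lambda$ induces the identity on $F/F_{k,l}$. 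Combined with $f\circ U_\gamma=V_\lambda\circ f$ on $(\cal{G})_A$, this forces $\psi_0(\gamma\cdot x)=\psi_0(x)$ for a.e.\ $x\in A$, and ergodicity of $\cal{E}$ finishes. The conceptual input is the same as yours---conjugation is invisible on the abelian quotients---but it is applied to the quotient homomorphism $(\cal{E})_A\to E/E_{k,l}$, which is surjective by construction, rather than to isotropy at a point, which need not be.
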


\begin{proof}
We fix $\gamma \in E$ and a Borel subset $A$ of $Z$ with positive measure such that the inclusion $\gamma \cdot A\subset Z$ holds, and $\alpha(\gamma, \cdot)$ is constant on $A$ with the value $\lambda \in F$.
Define an automorphism $U_{\gamma}$ of $\cal{G}$ by $U_{\gamma}(g)=(\gamma, r(g))g(\gamma, s(g))^{-1}$ for $g\in \cal{G}$.
Similarly, define an automorphism $V_{\lambda}$ of $\cal{H}$ by $V_{\lambda}(h)=(\lambda, r(h))h(\lambda, s(h))^{-1}$ for $h\in \cal{H}$.
The automorphism $U_{\gamma}$ preserves $\cal{E}$ and $\cal{E}_{k, l}$ for any non-negative integers $k$, $l$ with $(k, l)\neq (0, 0)$.
It follows that $U_{\gamma}$ induces an isomorphism
\[E/E_{k, l}\to (\cal{E})_A/(\cal{E}_{k, l})_A\to (\cal{E})_{\gamma \cdot A}/(\cal{E}_{k, l})_{\gamma \cdot A}\to E/E_{k, l},\]
which is the identity because $E$ is commutative.
Similarly, the automorphism $V_{\lambda}$ induces an isomorphism
\[F/F_{k, l}\to (\cal{F})_A/(\cal{F}_{k, l})_A\to (\cal{F})_{\lambda \cdot A}/(\cal{F}_{k, l})_{\lambda \cdot A}\to F/F_{k, l},\]
which is the identity.
Note that we have $\gamma \cdot A=\lambda \cdot A$ because $f$ induces the identity on $Z$.
The equality $f\circ U_{\gamma}=V_{\lambda}\circ f$ holds on $(\cal{G})_A$ by the definition of $f$.
We thus have $\psi_0(\gamma \cdot x)=\psi_0(x)$ for a.e.\ $x\in A$.
This equality holds for a.e.\ $x\in Z$ by the choice of $A$.
The claim follows because $\cal{E}$ is ergodic.
\end{proof}

\noindent {\bf Construction of $C$.}
The groups $\Gamma$ and $\Lambda$ do not act on $\mathfrak{I}$ naturally.
Let us now introduce a standard Borel space $C$ on which $\Gamma \times \Lambda$ acts and which contains a quotient of $\mathfrak{I}$.
We define $\mathfrak{C}_0$ as the set of continuous isomorphisms from a closed finite index subgroup of $E_{\infty}$ onto a closed finite index subgroup of $F_{\infty}$ as additive groups.
We say that two elements of $\mathfrak{C}_0$ are equivalent if there exists a finite index subgroup of $E_{\infty}$ on which they are equal.
Define $\mathfrak{C}$ as the set of equivalence classes in $\mathfrak{C}_0$.
For $\varphi \in \mathfrak{C}_0$, let $[\varphi]\in \mathfrak{C}$ denote the equivalence class of $\varphi$.

Lemma \ref{lem-limit} (ii) implies that we have the isomorphisms $\sigma_{1, 0}\colon \bar{E}_{0, 0}\to \bar{E}_{1, 0}$ and $\sigma_{0, 1}\colon \bar{E}_{0, 0}\to \bar{E}_{0, 1}$ defined as the multiplications by $p_0$ and $q_0$, respectively.
We define $\sigma \colon \bar{E}_{1, 0}\to \bar{E}_{0, 1}$ as the composition $\sigma_{0, 1}\circ (\sigma_{1, 0})^{-1}$.
In the presentation $\Gamma =\langle\, a, t\mid ta^pt^{-1}=a^q\,\rangle$, let $\Gamma$ act on $\mathfrak{C}$ from right as follows.
Let $a$ act on $\mathfrak{C}$ by the identity and $t$ act on $\mathfrak{C}$ by $[\varphi]t=[\varphi \circ \sigma]$ for $\varphi \in \mathfrak{C}_0$, where the composition $\varphi \circ \sigma$ is defined on a closed finite index subgroup of $\bar{E}_{1, 0}$.

Similarly, we have the isomorphisms $\tau_{1, 0}\colon \bar{F}_{0, 0}\to \bar{F}_{1, 0}$ and $\tau_{0, 1}\colon \bar{F}_{0, 0}\to \bar{F}_{0, 1}$ defined as the multiplications by $p_0$ and $q_0$, respectively.
We define $\tau \colon \bar{F}_{1, 0}\to \bar{F}_{0, 1}$ as the composition $\tau_{0, 1}\circ (\tau_{1, 0})^{-1}$.
We also have the automorphism $I$ of $F_{\infty}$ defined by $I(x)=-x$ for $x\in F_{\infty}$.
In the presentation $\Lambda =\langle\, b, u\mid ub^pu^{-1}=b^{-q}\,\rangle$, let $\Lambda$ act on $\mathfrak{C}$ from left as follows.
Let $b$ act on $\mathfrak{C}$ by the identity and $u$ act on $\mathfrak{C}$ by $u[\varphi]=[\tau \circ I \circ \varphi]$ for $\varphi \in \mathfrak{C}_0$.
This action of $\Lambda$ on $\mathfrak{C}$ commutes the action of $\Gamma$.

Let $U$ denote the group of units of $E_{\infty}$, which is a closed subset of $E_{\infty}$.
For $r\in U$, let $m_r$ denote the automorphism of the additive group $E_{\infty}$ defined by $m_r(x)=rx$ for $x\in E_{\infty}$.
The group $U$ acts on $\mathfrak{I}$ by the formula $r\varphi =\varphi \circ m_{r^{-1}}$ for $r\in U$ and $\varphi \in \mathfrak{I}$.
We define $U_0$ as the subgroup of $U$ consisting of all $r\in U$ with $d_0(r-1)=0$, which is closed in $U$.
The natural map from $\mathfrak{I}$ into $\mathfrak{C}$ induces an injective map from the quotient $\mathfrak{I}/U_0$ into $\mathfrak{C}$ by Lemma \ref{lem-limit} (iii)--(v).
We define $C_0$ as the image of this map and equip it with the standard Borel structure induced by the map.

For $n\in \mathbb{Z}$, we set $C_0\sigma^n=\{\, [\varphi \circ \sigma^n]\in \mathfrak{C}\mid [\varphi] \in C_0\,\}$.
Similarly, we define the subset $\tau^nC_0$ of $\mathfrak{C}$.
For any $\varphi \in \mathfrak{C}_0$ and any $n\in \mathbb{Z}$, the equality $[\varphi \circ \sigma^n]=[\tau^n\circ \varphi]$ holds.
It follows that for any $n\in \mathbb{Z}$, the equality $C_0\sigma^n=\tau^nC_0$ holds.
Since $[I\circ \varphi]$ belongs to $C_0$ for any $[\varphi]\in C_0$, we have
\[C_0\Gamma =\bigcup_{n\in \mathbb{Z}}C_0\sigma^n=\bigcup_{n\in \mathbb{Z}}\tau^nC_0=\Lambda C_0.\]
Let $C$ denote this subset of $\mathfrak{C}$, on which $\Gamma \times \Lambda$ acts.

We claim that the sets $C_0\sigma^n$ through $n\in \mathbb{Z}$ are mutually disjoint.
Let $\xi$ and $\eta$ denote the Haar measures on $E_{\infty}$ and $F_{\infty}$, respectively, with the total measure 1.
For any $[\varphi]\in C_0$, we have $\varphi_*\xi=\eta$ on a closed finite index subgroup of $F_{\infty}$.
On the other hand, we have $(\sigma^n)_*\xi=(q_0/p_0)^n\xi$ on a closed finite index subgroup of $E_{\infty}$.
The claim follows.
We equip $C$ with the standard Borel structure so that the actions of $\Gamma$ and $\Lambda$ on it are Borel.

\medskip

We define a Borel map $\psi \colon Z\to C_0$ as the composition of the essentially constant map $\psi_0\colon Z\to \mathfrak{I}$ with the natural map from $\mathfrak{I}$ into $\mathfrak{C}$.
Recall that we have the cocycle $\alpha \colon \Gamma \times X\to \Lambda$ associated with $X$.
We use a dot for the action of $\Gamma$ on $X$, that is, we set $\gamma \cdot x=(\gamma, \alpha(\gamma, x))x$ for $\gamma \in \Gamma$ and $x\in X$.

\begin{claim}\label{claim-psi-eq}
For any $\gamma \in \Gamma$ and a.e.\ $x\in Z$ with $\gamma \cdot x\in Z$, we have the equality $\alpha(\gamma, x)\psi(x)=\psi(\gamma \cdot x)\gamma$.
\end{claim}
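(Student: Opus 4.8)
The plan is to prove that $\psi$ intertwines the ME cocycle with the $\Gamma$- and $\Lambda$-actions on $\mathfrak{C}$, by reducing to the generators $a$ and $t$ of $\Gamma$ and, in the case of $t$, transporting the data defining $\psi$ along the conjugation isomorphism attached to $t$.

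First I would record a reduction to generators. If the asserted equality holds for $\gamma_1$ and for $\gamma_2$ (for a.e.\ admissible basepoints), then the cocycle identity $\alpha(\gamma_1\gamma_2, x)=\alpha(\gamma_1, \gamma_2\cdot x)\alpha(\gamma_2, x)$ yields it for $\gamma_1\gamma_2$, and the equality for $\gamma$ at the point $\gamma^{-1}\cdot x$ yields it for $\gamma^{-1}$; since $\Gamma$ is generated by $a$ and $t$ and $Z$ meets a.e.\ $\Gamma$-orbit, it suffices to treat $\gamma=a$ and $\gamma=t$. The case $\gamma=a$ is immediate: whenever $a\cdot x\in Z$ we have $(a, x)\in(\mathcal{E})_Z$, so $\alpha(a, x)\in F$ because $f((\mathcal{E})_Z)=(\mathcal{F})_Z$; both $a$ (acting on the right) and every element of $F=\langle b\rangle$ (acting on the left) act on $\mathfrak{C}$ by the identity, and $\psi$ is essentially constant by Claim \ref{claim-const}, so $\alpha(a, x)\psi(x)=\psi(x)=\psi(a\cdot x)a$.

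The case $\gamma=t$ is the substance. Fix a positive-measure Borel set $A\subset\{x\in Z\mid t\cdot x\in Z\}$ on which $\alpha(t, \cdot)$ is constant, say $\alpha(t, \cdot)\equiv\lambda\in\Lambda$; it suffices to show $\lambda\psi(x)=\psi(t\cdot x)t$ for a.e.\ $x\in A$. As in the proof of Claim \ref{claim-const} I would use the conjugation isomorphisms $U_t\colon(\mathcal{G})_A\to(\mathcal{G})_{t\cdot A}$ and $V_\lambda\colon(\mathcal{H})_A\to(\mathcal{H})_{t\cdot A}$, with $U_t(g)=(t, r(g))g(t, s(g))^{-1}$ and $V_\lambda(h)=(\lambda, r(h))h(\lambda, s(h))^{-1}$; since $f$ induces the identity on $Z$ we have $t\cdot A=\lambda\cdot A$, and the cocycle identity together with $\alpha(t, \cdot)\equiv\lambda$ on $A$ gives $f\circ U_t=V_\lambda\circ f$ on $(\mathcal{G})_A$ (the computation reducing to $\alpha(t\delta t^{-1}, t\cdot y)=\lambda\alpha(\delta, y)\lambda^{-1}$). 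Then I would track the finite-level subgroupoids. The relation $ta^pt^{-1}=a^q$ gives $tE_{k, l}t^{-1}=E_{k-1, l+1}$ for every $k\ge 1$, so $U_t$ carries $(\mathcal{E}_{k+1, l-1})_A$ onto $(\mathcal{E}_{k, l})_{t\cdot A}$ whenever $l\ge 1$ and induces on the quotients, through conjugation by $t$, the ``multiplication by $q_0/p_0$'' map, which is exactly $\sigma$. Combining $f\circ U_t=V_\lambda\circ f$ with the identities $f((\mathcal{E}_{k, l})_{Z_n})=(\mathcal{F}_{k, l})_{Z_n}$ from Claim \ref{claim-part} applied at the basepoints $x$ and $t\cdot x$, the parallel analysis shows $V_\lambda$ carries $(\mathcal{F}_{k+1, l-1})_A$ onto $(\mathcal{F}_{k, l})_{t\cdot A}$ and induces on the quotients the map coming from conjugation by $\lambda$; here the sign in $ub^pu^{-1}=b^{-q}$ makes conjugation by $u$ send the standard generator of $F_{k, l}$ to the inverse of the standard generator of $F_{k-1, l+1}$, so the induced map on the completions is precisely the action of $\lambda$ on $\mathfrak{C}$ (which factors through $\Lambda\to\mathbb{Z}$ with $u$ acting by $\tau\circ I$, the $b$-factors washing out as $F$ is abelian). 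Restricting to the pairs $(k, l)$ with $k, l\ge 1$, which form a cofinal family and hence reconstruct $\psi_0(\cdot)\in\mathfrak{I}$, and assembling these commuting identifications yields $\psi(t\cdot x)t=\alpha(t, x)\psi(x)$ in $\mathfrak{C}$.

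The hard part is the last step: pinning down the maps induced by $U_t$ and $V_\lambda$ on the finite quotients $E/E_{k, l}$ and $F/F_{k, l}$ and matching them with $\sigma$, $\tau$ and $I$; in particular verifying that conjugation by $u$ flips the generator (so that $I$ really appears on the $\Lambda$-side but not on the $\Gamma$-side), handling the finitely many small pairs $(k, l)$ for which $tE_{k, l}t^{-1}$ is not of the form $E_{k', l'}$, and checking that the cofinal subfamily $k, l\ge 1$ determines the elements of $\mathfrak{I}$ and hence of $C_0$.
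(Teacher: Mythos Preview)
Your approach is essentially the paper's, with one organizational difference: you reduce to the generators $a$ and $t$, whereas the paper treats an arbitrary $\gamma\in\Gamma$ in one stroke. The core computation you carry out for $\gamma=t$ --- tracking how $U_t$ and $V_\lambda$ transport the finite-level subgroupoids $(\mathcal{E}_{k,l})$ and $(\mathcal{F}_{k,l})$ and identifying the induced maps on the completions with $\sigma$ and a power of $\tau\circ I$ --- is exactly what the paper does for general $\gamma$: one picks $n,K,L$ with $\gamma E_{k,l}\gamma^{-1}=E_{k-n,l+n}$ for $k\ge K$, $l\ge L$, shows that $U_\gamma$ induces the restriction of $\sigma^n$ to $\bar{E}_{K,L}$, and that $V_\lambda$ induces a restriction of $(\tau\circ I)^m$ for some $m$; then $f\circ U_\gamma=V_\lambda\circ f$ gives $\psi(\gamma\cdot x)\gamma=\lambda\psi(x)$. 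Since the general-$\gamma$ computation is no harder than the $\gamma=t$ case, the reduction to generators buys nothing. Your identification of why $I$ appears on the $\Lambda$-side (the sign in $ub^pu^{-1}=b^{-q}$) is correct and in fact more explicit than the paper's ``similarly''.

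Your reduction step has a small gap you do not address: deducing the claim for $\gamma_1\gamma_2$ from the claims for $\gamma_1$ and $\gamma_2$ needs the intermediate point $\gamma_2\cdot x$ to lie in $Z$, which need not happen for a.e.\ $x\in Z$ with $(\gamma_1\gamma_2)\cdot x\in Z$. This is fixable --- for instance, extend $\psi$ to the constant $[\alpha_0]$ on all of $X$ and prove the equality for a.e.\ $x\in X$; for $\gamma=a$ this uses that $X\subset\Sigma_0$ is simultaneously a fundamental domain for $\Lambda\curvearrowright\Sigma$ and for $F\curvearrowright\Sigma_0$, which forces $\alpha(a,x)\in F$ for a.e.\ $x\in X$, not just $x\in Z$ --- but it is cleaner simply to run your $t$-argument for arbitrary $\gamma$, as the paper does, and avoid the issue altogether.
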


\begin{proof}
We fix $\gamma \in \Gamma$ and a Borel subset $A$ of $Z$ with positive measure such that the inclusion $\gamma \cdot A\subset Z$ holds, and $\alpha(\gamma, \cdot)$ is constant on $A$ with the value $\lambda \in \Lambda$.
Define the automorphism $U_{\gamma}$ of $\cal{G}$ and the automorphism $V_{\lambda}$ of $\cal{H}$ by the same formulas as in the proof of Claim \ref{claim-const}.
The equality $f\circ U_{\gamma}=V_{\lambda}\circ f$ holds on $(\cal{G})_A$ by the definition of $f$.

Choose integers $n$, $K$ and $L$ such that $K>|n|$, $L>|n|$ and $\gamma E_{k, l}\gamma^{-1}=E_{k-n, l+n}$ for any $k, l\in \mathbb{Z}$ with $k\geq K$ and $l\geq L$.
For any such $k, l\in \mathbb{Z}$, the automorphism $U_{\gamma}$ induces an isomorphism
\begin{align*}
E_{K, L}/E_{k, l}\to &(\cal{E}_{K, L})_A/(\cal{E}_{k, l})_A\\
&\to (\cal{E}_{K-n, L+n})_{\gamma \cdot A}/(\cal{E}_{k-n, l+n})_{\gamma \cdot A}\to E_{K-n, L+n}/E_{k-n, l+n}.
\end{align*}
It moreover induces an isomorphism from $\bar{E}_{K, L}$ onto $\bar{E}_{K-n, L+n}$, which is equal to the restriction of $\sigma^n$.
For a.e.\ $x\in A$, the isomorphism $f\circ U_{\gamma}$ from $(\cal{G})_A$ onto $(\cal{H})_{\lambda \cdot A}$ therefore induces the element $[\psi_0(\gamma \cdot x)\circ \sigma^n]$ of $C$, which is equal to $\psi(\gamma \cdot x)\gamma$.

Similarly, the automorphism $V_{\lambda}$ induces an element of $\mathfrak{C}_0$, which is a restriction of $(\tau\circ I)^m$ for some $m\in \mathbb{Z}$.
For a.e.\ $x\in A$, the isomorphism $V_{\lambda}\circ f$ induces the element $[(\tau \circ I)^m\circ \psi_0(x)]$ of $C$, which is equal to $\lambda \psi_0(x)$.
\end{proof}

We define a Borel map $\Psi \colon \Sigma \to C$ by $\Psi((\gamma, \lambda)x)=\lambda \psi(x)\gamma^{-1}$ for $\gamma \in \Gamma$, $\lambda \in \Lambda$ and $x\in Z$.
As noted in the end of the proof of Theorem \ref{thm-aut-t}, the equality in Claim \ref{claim-psi-eq} is used to show that $\Psi$ is well-defined and is almost $(\Gamma \times \Lambda)$-equivariant.
By condition (I) in Theorem \ref{thm-mer}, the action $E\c \Sigma /\Lambda$ is ergodic.
Since the action of $E$ on $\Lambda \backslash C$ is trivial, the map from $\Sigma /\Lambda$ into $\Lambda \backslash C$ induced by $\Psi$ is essentially constant.
It follows that the image of $\Psi$ is essentially contained in $\Lambda [\alpha_0]$, where $\alpha_0$ is an element of $\mathfrak{C}_0$ such that $[\alpha_0]$ is the essential value of the map $\psi \colon Z\to C_0$.
A similar argument shows that the image of $\Psi$ is essentially contained in $[\alpha_0]\Gamma$.
We thus have the equality $\Lambda [\alpha_0]=[\alpha_0]\Gamma$.
It implies that there exists $n\in \mathbb{Z}$ with $[\tau \circ I\circ \alpha_0]=[\alpha_0\circ \sigma^n]$.
Since we have $[\alpha_0\circ \sigma^n]=[\tau^n\circ \alpha_0]$, there exists a closed finite index subgroup of $F_{\infty}$ on which $\tau^{1-n}\circ I$ is the identity.
One can find a non-zero element $x$ of $\bar{F}_{0, 0}$ such that $\tau^{1-n}\circ I(x)$ is defined and equal to $x$.
If $1-n\geq 0$, then there exists an element $y$ of $\bar{F}_{0, 0}$ with $x=p_0^{1-n}y=-q_0^{1-n}y$.
We thus have $(p_0^{1-n}+q_0^{1-n})y=0$ and obtain a contradiction by Lemma \ref{lem-limit} (i).
If $1-n<0$, then there exists an element $z$ of $\bar{F}_{0, 0}$ with $x=q_0^{n-1}z=-p_0^{n-1}z$.
We also obtain a contradiction.
\end{proof}

\begin{rem}
In Theorem \ref{thm-mer}, the conclusion is not necessarily true if we assume conditions (1) and (2) in Theorem \ref{thm-aut-t} in place of conditions (I) and (II).
A simple counterexample comes from commensurability between $\bs(p, q)$ and $\bs(p, -q)$ with $2\leq |p|\leq |q|$.
If $p$ and $q$ are integers with $2\leq |p|\leq |q|$, then for each $\varepsilon \in \{ \pm 1\}$, the subgroup $\Gamma_{\varepsilon}$ of $\bs(p, \varepsilon q)=\langle\, a, t\mid ta^pt^{-1}=a^{\varepsilon q}\, \rangle$ generated by $a$, $tat^{-1}$ and $t^2$ is of index 2, and $\Gamma_1$ and $\Gamma_{-1}$ are isomorphic.
Let $F\colon \Gamma_{-1}\to \Gamma_1$ be this isomorphism.

Let $\Gamma_1\times \Gamma_{-1}$ act on $\Gamma_1$ by $(\gamma, \delta)g=\gamma gF(\delta)^{-1}$ for $\gamma, g\in \Gamma_1$ and $\delta \in \Gamma_{-1}$.
Put $\Gamma =\bs(p, q)$ and $\Lambda =\bs(p, -q)$.
We define $\Sigma$ as the $(\Gamma, \Lambda)$-coupling defined by the action of $\Gamma \times \Lambda$ induced from the action of $\Gamma_1\times \Gamma_{-1}$ on $\Gamma_1$.
This coupling fulfills conditions (1) and (2) in Theorem \ref{thm-aut-t}, while it does not satisfy the conclusion of Theorem \ref{thm-mer} because $\Gamma$ and $\Lambda$ are not isomorphic.
\end{rem}

%%%%%%%%%%%%%%%%%%%%%%%%%%%%%%%%%%%%%%%%%%%%%

\subsection{A coupling with an unknown group}

The argument of this subsection heavily relies on Monod-Shalom \cite{ms}.
We say that a measure-preserving action of a discrete group $G$ on a measure space $(X, \mu)$ is {\it mildly mixing} if for any measurable subset $A$ of $X$ and any sequence $\{ g_i\}_i$ of $G$ tending to infinity with $\{ \mu(g_iA\triangle A)\}_i$ tending to $0$, we have either $\mu(A)=0$ or $\mu(X\setminus A)=0$.

\begin{thm}\label{thm-mm}
We set $\Gamma =\bs(p, q)$ with $2\leq |p|<|q|$ and denote by $d_0>0$ the greatest common divisor of $p$ and $q$.
Assume that $q$ is not a multiple of $p$.
Let $\Delta$ be a discrete group.
Let $(\Sigma, m)$ be a $(\Gamma, \Delta)$-coupling such that the action $\langle a^{d_0}\rangle \c \Sigma /\Delta$ is ergodic; and the action $\Delta \c \Sigma /\Gamma$ is mildly mixing.
We define $T$ as the Bass-Serre tree associated with $\Gamma$.

Then there exists an orientation-preserving simplicial action of $\Delta$ on $T$ such that for any $v\in V(T)$, the stabilizer of $v$ in $\Delta$ is infinite and amenable; and the actions of $\Delta$ on $V(T)$ and on $E(T)$ are both transitive.
\end{thm}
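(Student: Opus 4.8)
The plan is to bootstrap from the case of two copies of $\Gamma$ by means of a self-coupling, and then to appeal to Furman's theorem. First observe that ergodicity of $\langle a^{d_0}\rangle\c\Sigma/\Delta$ forces $\Gamma\times\Delta\c(\Sigma,m)$ to be ergodic, and that, by Lemma \ref{lem-erg-comp}, $\langle a^{d_0p_0^kq_0^l}\rangle\c\Sigma/\Delta$ is ergodic for all $k,l\geq 0$, equivalently that $\Delta\c\Sigma/\langle a^{d_0p_0^kq_0^l}\rangle$ is ergodic. Put $\Omega=\Sigma\times_\Delta\Sigma$, which is a $(\Gamma,\Gamma)$-coupling. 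Its quotient by either copy of $\Gamma$ is $\Sigma\times_\Delta(\Sigma/\Gamma)$, and the action on it of $\langle a^{d_0p_0^kq_0^l}\rangle$ sitting inside one copy of $\Gamma$ is ergodic precisely when $\Delta$ acts ergodically on the product $(\Sigma/\langle a^{d_0p_0^kq_0^l}\rangle)\times(\Sigma/\Gamma)$; this holds because $\Delta\c\Sigma/\Gamma$ is mildly mixing, so its product with the ergodic action $\Delta\c\Sigma/\langle a^{d_0p_0^kq_0^l}\rangle$ is ergodic. Hence $\Omega$ satisfies conditions (1) and (2) of Theorem \ref{thm-aut-t} (taken with $r=p$ and $s=q$; recall that $q$ is assumed not to be a multiple of $p$), so Theorem \ref{thm-aut-t} provides an almost $(\Gamma\times\Gamma)$-equivariant Borel map $\Omega\to\isom(T,T)$. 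Identifying $\isom(T,T)$ with $\aut(T)$, this is an almost $(\Gamma\times\Gamma)$-equivariant Borel map $\Omega\to(\aut(T),\imath,\imath)$, where $\imath\colon\Gamma\to\aut(T)$ is the canonical homomorphism.

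I would then invoke Theorem \ref{thm-furman} with $\Lambda=\Delta$, $G=\aut(T)$ and $\pi=\imath$: hypothesis (a) is exactly Lemma \ref{lem-dirac}, and hypothesis (b) is the map just constructed. The conclusion is a homomorphism $\rho\colon\Delta\to\aut(T)$ together with an almost $(\Gamma\times\Delta)$-equivariant Borel map $\Phi\colon\Sigma\to(\aut(T),\imath,\rho)$. Composing $\rho$ with the action of $\aut(T)$ on $T$ produces an orientation-preserving simplicial action of $\Delta$ on $T$, and it remains to verify the stabilizer and transitivity properties.

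For a vertex $v$ (resp.\ an edge $e$) of $T$, write $\Gamma_v$ (resp.\ $\Gamma_e$) for its stabilizer in $\Gamma$ and $\Delta_v=\rho^{-1}(\aut(T)_v)$ (resp.\ $\Delta_e$) for its stabilizer in $\Delta$. By almost equivariance of $\Phi$, the set $\Phi^{-1}(\aut(T)_v)$ is $(\Gamma_v\times\Delta_v)$-invariant; it has positive measure because $\Gamma$ is transitive on $V(T)$, so its $\Gamma$-saturation is conull. As in \cite[Lemma 5.2]{kida-ama}, $\Phi^{-1}(\aut(T)_v)$ is then a $(\Gamma_v,\Delta_v)$-coupling; since $\Gamma_v$ is infinite cyclic, $\Delta_v$ is measure equivalent to $\mathbb{Z}$, hence infinite and amenable, and the same applies to every $\Delta_e$. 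Moreover $\rho(\Delta)$ fixes no vertex, no end, and no unordered pair of ends of $T$: if it fixed such an object $o$, then $x\mapsto\Phi(x)o$ would be $\Delta$-invariant (because $\rho(\delta)o=o$) and $\Gamma$-equivariant, so it would descend to a $\Gamma$-equivariant Borel map from the ergodic finite-measure-preserving $\Gamma$-space $\Sigma/\Delta$ into $V(T)$, into $\partial T$, or into $\partial_2 T$, and pushing forward the invariant measure would give a finite $\Gamma$-invariant measure on one of these spaces; this is impossible, since $\Gamma$ is transitive on the infinite set $V(T)$ and contains the loxodromic stable letter $t$, whose only invariant finite measures on $\partial T$ and on $\partial_2 T$ are carried by the pair of fixed ends of $t$, which is not $\Gamma$-invariant. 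Therefore the $\rho(\Delta)$-action on $T$ is of general type, with a unique minimal invariant subtree $T_0$ on which $\Delta$ acts minimally.

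The main remaining step, which I expect to be the principal obstacle, is to upgrade minimality to transitivity and to rule out $T_0\subsetneq T$. Here the plan is to follow the coupling-index bookkeeping of Monod--Shalom \cite{ms}: for $v\in V(T_0)$ and an incident edge $e$ one has $\Phi^{-1}(\aut(T)_e)\subset\Phi^{-1}(\aut(T)_v)$, and comparing the coupling indices across this inclusion forces the $\Delta_v$-orbits of the edges at $v$ to account, in each orientation, for the full local valence $[\Gamma_v:\Gamma_e]\in\{|p|,|q|\}$ of $T$; thus the link of $v$ in $T_0$ already coincides with its link in $T$, whence $T_0=T$ and the action is transitive on $E(T)$. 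Transitivity on $V(T)$ then follows automatically, since every vertex of $T$ is the origin of some edge, so the origins of the edges in a single $\Delta$-orbit exhaust $V(T)$. The verifications in the first two paragraphs, together with the identification of $\Phi^{-1}(\aut(T)_v)$ as a genuine coupling for the stabilizers, are comparatively routine once Lemma \ref{lem-erg-comp}, Theorems \ref{thm-aut-t} and \ref{thm-furman}, Lemma \ref{lem-dirac} and the methods of \cite{ms} are available; the index-matching at the links is where the real difficulty lies.
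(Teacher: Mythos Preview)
Your strategy through the construction of $\rho$ and $\Phi$ is exactly the paper's: form the self-coupling $\Omega=\Sigma\times_\Delta\Sigma$, use mild mixing of $\Delta\c\Sigma/\Gamma$ to verify the ergodicity hypotheses of Theorem~\ref{thm-aut-t} (the paper packages this step as an appeal to \cite[Lemma~6.5]{ms}, but your direct product-ergodicity argument is the same content), then feed the resulting equivariant map $\Omega\to\aut(T)$ into Theorem~\ref{thm-furman}, with Lemma~\ref{lem-dirac} supplying hypothesis~(a). Your identification of $\Phi^{-1}(\stab(v))$ as a $(\Gamma_v,\Delta_v)$-coupling via \cite[Lemma~5.2]{kida-ama}, and the conclusion that $\Delta_v$ is infinite amenable, also match the paper.

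Where you diverge is the transitivity step, and here you have made life harder than necessary. The detour through general type, the minimal subtree $T_0$, and link-by-link index matching is not needed, and you are right to flag it as the part you have not actually carried out. The paper's route is much shorter: it invokes \cite[Lemma~5.3]{kida-ama}, whose only input beyond the equivariant map $\Phi$ is that for every simplex $s$ of $T$ the action $\Gamma_s\c\Sigma/\Delta$ is ergodic. But this ergodicity is immediate from the standing hypothesis: each vertex stabilizer $\Gamma_v$ is a conjugate of $\langle a\rangle\supset\langle a^{d_0}\rangle$, each edge stabilizer $\Gamma_e$ is a conjugate of $\langle a^q\rangle=\langle a^{d_0q_0}\rangle$, and Lemma~\ref{lem-erg-comp} upgrades ergodicity of $\langle a^{d_0}\rangle\c\Sigma/\Delta$ to ergodicity of every $\langle a^{d_0p_0^kq_0^l}\rangle$. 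So the ``principal obstacle'' you anticipate dissolves once you notice that the very ergodicity you already used to run Theorem~\ref{thm-aut-t} on $\Omega$ is, on the $\Sigma/\Delta$ side, exactly what forces transitivity of $\Delta$ on $V(T)$ and $E(T)$. Your argument that $\rho(\Delta)$ fixes no vertex, end, or pair of ends is correct but becomes redundant under this approach.
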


The conclusion of this theorem implies that $\Delta$ is an HNN extension of an infinite amenable group $\Delta_0$ relative to an isomorphism between a subgroup of $\Delta_0$ of index $|p|$ and a subgroup of $\Delta_0$ of index $|q|$.

\begin{proof}[Proof of Theorem \ref{thm-mm}]
We define $\Omega$ as the quotient space of $\Sigma \times \Sigma$ by the diagonal action of $\Delta$, which is a $(\Gamma, \Gamma)$-coupling.
By \cite[Lemma 6.5]{ms}, both the actions $\langle a^{d_0}\rangle \c \Omega /(\{ e\} \times \Gamma)$ and $\langle a^{d_0}\rangle \c \Omega /(\Gamma \times \{ e\})$ are ergodic.
In the application of the cited lemma, we use the assumption that the action $\Delta \c \Sigma/\Gamma$ is mildly mixing.
Thanks to Lemma \ref{lem-erg-comp}, Theorem \ref{thm-aut-t} can be applied to $\Omega$.
Combining Theorem \ref{thm-furman} and Lemma \ref{lem-dirac}, we obtain a homomorphism $\rho \colon \Delta \to \aut(T)$ and an almost $(\Gamma \times \Delta)$-equivariant Borel map $\Phi \colon \Sigma \to (\aut(T), \imath, \rho)$.
For each simplex $s$ of $T$, let $\stab(s)$ denote the stabilizer of $s$ in $\aut(T)$, and put
\[\Sigma_s=\Phi^{-1}(\stab(s)),\quad \Gamma_s=\imath^{-1}(\imath(\Gamma)\cap \stab(s)),\quad \Delta_s=\rho^{-1}(\rho(\Delta)\cap \stab(s)).\]
As in \cite[Lemma 5.2]{kida-ama}, we can show that $\Sigma_s$ is a $(\Gamma_s, \Delta_s)$-coupling.
As in \cite[Lemma 5.3]{kida-ama}, using ergodicity of the action $\Gamma_s \c \Sigma/\Delta$ for each simplex $s$ of $T$, we can show that the actions of $\Delta$ on $V(T)$ and on $E(T)$ are both transitive.
\end{proof}

%%%%%%%%%%%%%%%%%%%%%%%%%%%%%%%%%%%%%%%%%%%%%

\section{Non-trivial orbit equivalence}\label{sec-oe}

This section is devoted to the proof of the following: 

\begin{thm}\label{thm-woe-non-conj}
Let $p$ and $q$ be integers with $2\leq |p|\leq |q|$, and set $\Gamma =\bs(p, q)$.
Then there exist two f.f.m.p.\ actions $\Gamma \c (X, \mu)$ and $\Gamma \c (Y, \nu)$ such that they are WOE, but are not conjugate; and the actions of any non-trivial elliptic subgroup of $\Gamma$ on $(X, \mu)$ and on $(Y, \nu)$ are both ergodic.
\end{thm}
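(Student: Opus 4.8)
The plan is to build the two actions as coinduced (or, more concretely, as products of a base action with a skew action over a well-chosen quotient) so that they have the same orbit equivalence relation on the nose, while a numerical or dynamical invariant distinguishes them up to conjugacy. The key tension is Theorem~\ref{thm-woe-non-conj}'s demand that \emph{every} non-trivial elliptic subgroup act ergodically: this rules out taking, say, a Bernoulli shift twisted in an obvious way, and it is precisely what makes the two actions resist conjugacy-from-WOE arguments. So I would first produce one f.f.m.p.\ action $\Gamma \c (X,\mu)$ with the ellipticity-ergodicity property (for instance a suitable profinite-type action on $\varprojlim \Gamma/\Gamma_n$ for a descending chain of finite-index subgroups, or the Bernoulli-type construction of Appendix~\ref{app}, whose properties are exactly catalogued there), and verify ergodicity of $E \c X$ for each non-trivial elliptic $E$ using the results of the appendix.

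Next, to get a second action WOE to the first but not conjugate to it, I would exploit the flexibility in the associated flow / type. When $|p|=|q|$ the modular homomorphism $\m$ is trivial, so all such actions are ``type $1$'' and the invariant of Theorem~\ref{thm-flow} is useless; when $|p|<|q|$ the associated flow is the action of $\R$ on $\R/(\log|q/p|^n)\Z$ (Remark~\ref{rem-type}) and WOE preserves $n$, so again WOE actions with different $n$ are ruled out as a source of examples. The construction therefore has to keep the same associated flow, and the non-conjugacy must be detected by something strictly finer than the invariants of this paper — e.g.\ by an $L^2$-Betti-number/cost type computation, by spectral (weak mixing versus not) data on the $\ker\m$-ergodic components, or simply by exhibiting an explicit orbit equivalence $f\colon X\to Y$ together with a proof that no conjugacy exists because the actions have non-isomorphic associated \emph{point spectrum} or because one is (weakly) mixing on some component and the other is not. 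Concretely I expect the recipe to be: take $(Y,\nu) = (X,\mu)$ as measure spaces but compose the $\Gamma$-action with an automorphism of the equivalence relation $\mathcal R_{\Gamma\c X}$ coming from the full group $[\mathcal R]$ that is \emph{not} induced by an automorphism of $\Gamma$; the flow, being an orbit-equivalence invariant, is unchanged, ergodicity of elliptic subgroups is preserved because the equivalence relation is literally the same, and non-conjugacy follows because any conjugacy would have to intertwine the $\Gamma$-actions via a group automorphism, contradicting the choice.

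The steps, in order, would be: (1) fix a descending chain of finite-index subgroups of $\Gamma$ (or directly invoke the appendix) and define $\Gamma\c(X,\mu)$, checking f.f.m.p.; (2) prove that every non-trivial elliptic $E<\Gamma$ acts ergodically on $(X,\mu)$ — here one uses that elliptic subgroups are commensurable and that $\comm_\Gamma(E)=\Gamma$, reducing ergodicity of $E$ to a density/mixing statement about the chain, exactly the kind of computation carried out in Appendix~\ref{app}; (3) choose an element $g$ of the full group $[\mathcal R_{\Gamma\c X}]$ whose action on $X$ conjugates $\Gamma\c X$ to a new action $\Gamma\c(Y,\nu)$ with $Y=X$, $\nu=\mu$, and $\mathcal R_{\Gamma\c Y}=\mathcal R_{\Gamma\c X}$ — automatic WOE (indeed OE), and ellipticity-ergodicity is inherited verbatim; (4) show the two actions are not conjugate: a conjugacy would give an automorphism $F\in\aut(\Gamma)$ and an $F$-twisted isomorphism of the actions, which one rules out either by Moldavanski\u{\i}'s description of $\aut(\Gamma)$ combined with a rigidity computation, or — more cheaply — by first-cohomology or spectral data distinguishing the two. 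The main obstacle is step (4): one must find a concrete, checkable invariant separating the two actions that survives orbit equivalence of the relation but is killed by the twist, and simultaneously verify that the twist does not accidentally destroy ergodicity of some elliptic subgroup (step (2) for $Y$); if the twist is chosen inside the full group this last point is free, which is why I would route the whole argument through $[\mathcal R]$ rather than through an abstract coinduction.
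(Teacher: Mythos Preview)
Your step (3) contains a fatal circularity. If $g$ is any measure-preserving automorphism of $X$ (in the full group or not) and you define the new action by $\gamma\cdot_{\rm new}x=g(\gamma\cdot g^{-1}x)$, then $g$ itself is a conjugacy between the two actions, so step (4) is impossible. Conversely, any genuinely non-conjugate action with the same orbits arises from a cocycle $\alpha\colon\Gamma\times X\to\Gamma$ that is \emph{not} cohomologous to an automorphism; but then your claim that ``ergodicity of elliptic subgroups is inherited verbatim because the equivalence relation is literally the same'' fails, since ergodicity of $E\curvearrowright X$ depends on the $E$-orbit partition, not on the $\Gamma$-orbit partition. So either the twist is trivial for conjugacy or the elliptic-ergodicity check is not free; your proposal gives no mechanism for achieving both simultaneously. (Note also that Bernoulli base actions are ruled out by the OE-superrigidity noted in Remark~\ref{rem-ber}.)

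The paper's construction is quite different and concrete. It builds a locally compact group $G$ generated by $L=\mathbb{R}$, the closure $K$ of $\langle\imath(a)\rangle$ in $\aut(T)$, and $\imath(t)$, and embeds $\Gamma$ into $G$ as a cocompact lattice in two ways, $\pi_1$ and $\pi_\theta$, for an irrational parameter $\theta$. The resulting $(\Gamma,\Gamma)$-coupling $G$ gives WOE actions $\Gamma\curvearrowright G/\pi_1(\Gamma)$ and $\Gamma\curvearrowright\pi_\theta(\Gamma)\backslash G$; since these are not free, one passes to the product coupling $\Sigma=Z\times G$ with a mixing action $\Gamma\curvearrowright Z$. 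Elliptic ergodicity on both sides is then proved by hand (Lemmas~\ref{lem-dense} and~\ref{lem-z-erg}), combining density of $\pi_\theta(E)$ in the compact abelian group $(L\times K)/\pi_1(E)$ with mixing on $Z$. Non-conjugacy (Lemma~\ref{lem-z-conj}) reduces, via Proposition~\ref{prop-conj}, to showing that no continuous automorphism of $(L\times K)/\pi_1(E)$ intertwines $\pi_\theta(E)$ with $\pi_{\theta^{-1}}(E)$; this is a number-theoretic computation (Lemmas~\ref{lem-rat} and~\ref{lem-conj-e}) hinging on $(\theta^{-1}-1)/(\theta-1)=-\theta^{-1}$ being irrational.
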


The two actions of $\Gamma$ in this theorem are not virtually conjugate either in the sense of \cite[Definition 1.3]{kida-oer} because $\Gamma$ is torsion-free by \cite[Theorem IV.2.4]{ls} and because the actions of any finite index subgroup of $\Gamma$ on $(X, \mu)$ and on $(Y, \nu)$ are ergodic.
It follows that under the assumption in Theorem \ref{thm-mer}, we cannot conclude that the two actions $\Gamma \c \Sigma /\Lambda$ and $\Lambda \c \Sigma /\Gamma$ in it are virtually conjugate.

Until Theorem \ref{thm-sigma}, we set $\Gamma =\bs(p, q)$ with $2\leq |p|\leq |q|$.
Let $d_0>0$ denote the greatest common divisor of $p$ and $q$, and set $p_0=p/d_0$ and $q_0=q/d_0$.
We fix a presentation $\Gamma =\langle \, a, t\mid ta^pt^{-1}=a^q\,\rangle$.
Let $T$ be the Bass-Serre tree associated with $\Gamma$.
Let $\imath \colon \Gamma \to \aut(T)$ be the homomorphism associated with the action of $\Gamma$ on $T$.

%%%%%%%%%%%%%%%%%%%%%%%%%%%%%%%%%%%%%%%%%%%%%

\subsection{A standard coupling}

We define a continuous homomorphism $\tau \colon \aut(T)\to \mathbb{Z}$ as follows.
Let $E_+$ and $E_-$ denote the sets of oriented edges of $T$ introduced in Section \ref{sec-bs}.
We set $\sigma(e)=1$ for each $e\in E_+$, and set $\sigma(f)=-1$ for each $f\in E_-$.
Fix a vertex $v_0$ of $T$.
For $\varphi \in \aut(T)$, let $e_1,\ldots, e_n$ be the shortest sequence of oriented edges of $T$ such that the origin of $e_1$ is $v_0$; the terminal of $e_n$ is $\varphi(v_0)$; and for any $i=1,\ldots, n-1$, the terminal of $e_i$ and the origin of $e_{i+1}$ are equal.
We then define $\tau(\varphi)$ as the integer $\sum_{i=1}^n\sigma(e_i)$.
The map $\tau$ is independent of the choice of $v_0$ and is indeed a continuous homomorphism.
Note that $\tau(\imath(a))=0$ and $\tau(\imath(t))=1$.

We set $L=\R$.
Let $\aut(T)$ act on $L$ by $\varphi x=(q/p)^{\tau(\varphi)}x$ for $\varphi \in \aut(T)$ and $x\in L$.
We then have the semi-direct product $L\rtimes \aut(T)$.
Let $K$ denote the closure of the cyclic group $\langle \imath(a)\rangle$ in $\aut(T)$, which is compact and abelian.
We define $G$ as the subgroup of $L\rtimes \aut(T)$ generated by $L$, $K$ and $\imath(t)$.
The subgroup $G$ is closed in $L\rtimes \aut(T)$.

For each $\theta \in L\setminus \{ 0\}$, we define a homomorphism $\pi_{\theta}\colon \Gamma \to G$ by $\pi_{\theta}(a)=(\theta, \imath(a))$ and $\pi_{\theta}(t)=(0, \imath(t))$.
This is injective, and the image $\pi_{\theta}(\Gamma)$ is a cocompact lattice in $G$ such that $[0, |\theta|)\times K$ is its fundamental domain. 
Let $\alpha_{\theta}$ be the automorphism of $L\rtimes \aut(T)$ that is the identity on $\aut(T)$ and is the multiplication by $\theta$ on $L$.
The equality $\pi_{\theta}=\alpha_{\theta}\circ \pi_1$ on $\Gamma$ then holds.

It follows that $(G, \pi_{\theta}, \pi_1)$ is a $(\Gamma, \Gamma)$-coupling.
In the rest of this subsection, we examine the action of $\Gamma$ on $G/\pi_1(\Gamma)$ through $\pi_{\theta}$ and that on $\pi_{\theta}(\Gamma)\backslash G$ through $\pi_1$.
The argument in the previous paragraph implies that the latter action is isomorphic to the action of $\Gamma$ on $G/\pi_1(\Gamma)$ through $\pi_{\theta^{-1}}$.
Since $L$ and $K$ commute in $G$, we identify the subgroup of $G$ generated by them with the direct product $L\times K$.
We set $E=\langle a\rangle$.
The space $G/\pi_1(\Gamma)$ is then identified with $(L\times K)/\pi_1(E)$.
We denote the latter space by $M$, which is a compact abelian group.

\begin{lem}\label{lem-dense}
Let $\theta$ be an irrational number.
Then the action of any non-trivial subgroup of $E$ on $M$ through $\pi_{\theta}$ is ergodic.
\end{lem}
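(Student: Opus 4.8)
The plan is to show that for every integer $n\neq 0$, left translation by $\pi_{\theta}(a^n)$ on $M$ is ergodic with respect to Haar measure; since every non-trivial subgroup of $E$ is of the form $\langle a^n\rangle$ with $n\neq 0$, this proves the lemma. Recall that $M=(L\times K)/\pi_1(E)$ is a compact abelian group, that $L=\R$ and $\imath(a)$ act trivially on each other (because $\tau(\imath(a))=0$) so that $L\times K$ is a direct product, and that $\pi_1(E)$ is the closed subgroup generated by $\pi_1(a)=(1,\imath(a))$. The element $\pi_{\theta}(a^n)$ equals $(n\theta,\imath(a)^n)$ and acts on $M$ by the translation $[h]\mapsto[(n\theta,\imath(a)^n)h]$. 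A measure-preserving translation on a compact abelian group is ergodic if and only if the translating element generates a dense subgroup, equivalently if and only if no non-trivial continuous character of the group kills it; so it suffices to check that $\chi(n\theta,\imath(a)^n)\neq 1$ for every non-trivial $\chi\in\widehat{M}$.

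The next step is to describe $\widehat{M}$. By Pontryagin duality, $\widehat{M}$ is the annihilator of $\pi_1(E)$ inside $\widehat{L\times K}=\widehat{L}\times\widehat{K}$; identifying $\widehat{L}$ with $\R$ via $s\mapsto(x\mapsto e^{2\pi i sx})$, any character of $L\times K$ has the form $\chi_{s,\psi}(x,k)=e^{2\pi i sx}\psi(k)$ with $s\in\R$ and $\psi\in\widehat{K}$, and it lies in $\widehat{M}$ exactly when $\chi_{s,\psi}(1,\imath(a))=e^{2\pi i s}\psi(\imath(a))=1$. The key point is that $K$ is profinite: since $a$ is elliptic, $\imath(a)$ lies in a vertex stabilizer of $\aut(T)$, which is profinite, so its closure $K$ is profinite and hence $\widehat{K}$ is a torsion group. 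Consequently $\psi(\imath(a))$ is a root of unity, and then $e^{2\pi i s}=\psi(\imath(a))^{-1}$ is also a root of unity, forcing $s\in\mathbb{Q}$. Moreover, if $s=0$ then $\psi(\imath(a))=1$, and since $\imath(a)$ generates a dense subgroup of $K$ this gives $\psi=1$, i.e.\ $\chi_{s,\psi}$ is trivial. Thus every non-trivial $\chi=\chi_{s,\psi}\in\widehat{M}$ has $s\in\mathbb{Q}\setminus\{0\}$.

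Finally, using $\psi(\imath(a))=e^{-2\pi i s}$ we compute
\[
\chi_{s,\psi}(n\theta,\imath(a)^n)=e^{2\pi i sn\theta}\,\psi(\imath(a))^{n}=e^{2\pi i sn\theta}\,e^{-2\pi i sn}=e^{2\pi i sn(\theta-1)} .
\]
Since $\theta$ is irrational, $\theta-1$ is irrational, so $sn(\theta-1)$ is irrational whenever $s\in\mathbb{Q}\setminus\{0\}$ and $n\neq 0$; in particular $sn(\theta-1)\notin\mathbb{Z}$, hence $\chi_{s,\psi}(n\theta,\imath(a)^n)\neq 1$. This verifies the character criterion and completes the proof. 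The only substantive part is the middle step, namely pinning down $\widehat{M}$ and, in particular, extracting the rationality of the Archimedean parameter $s$ from the profiniteness of $K$ together with the shape of the diagonal lattice $\pi_1(E)$; once that is done the irrationality of $\theta$ finishes the argument immediately, and it is exactly there that the hypothesis that $\theta$ be irrational is used.
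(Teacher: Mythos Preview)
Your proof is correct. The paper's proof and yours are dual to each other: the paper reduces ergodicity to the density of the subgroup of $L\times K$ generated by $\pi_\theta(a^n)$ and $\pi_1(E)$, rewrites this subgroup as $\langle (n(\theta-1),e),(1,\imath(a))\rangle$, and then checks that $([1],\imath(a))$ is dense in $\mathbb{R}/n(\theta-1)\mathbb{Z}\times K$; you instead apply the character criterion directly on $M$ and compute $\chi_{s,\psi}(\pi_\theta(a^n))=e^{2\pi i sn(\theta-1)}$. Both routes hinge on the same two facts---profiniteness of $K$ (which forces $s\in\mathbb{Q}$ in your language, or makes the final density check go through in the paper's) and irrationality of $\theta$---but your argument makes the role of profiniteness completely explicit, whereas the paper absorbs it into the unproved last sentence ``the latter condition follows from the assumption that $\theta$ is irrational.'' In that sense your write-up is the more self-contained of the two.
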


\begin{proof}
Let $n$ be a non-zero integer.
Let $\Lambda$ denote the subgroup of $L\times K$ generated by $\pi_{\theta}(\langle a^n\rangle )$ and $\pi_1(E)$.
It suffices to show that $\Lambda$ is dense in $L\times K$.
The group $\Lambda$ is generated by $(n(\theta -1), e)$ and $(1, \imath(a))$.
Density of $\Lambda$ in $L\times K$ is thus equivalent to density of the subgroup generated by $([1], \imath(a))$ in the group $\mathbb{R}/n(\theta -1)\mathbb{Z}\times K$, where for $r\in \mathbb{R}$, $[r]$ denotes the equivalence class of $r$ in $\mathbb{R}/n(\theta -1)\mathbb{Z}$.
The latter condition follows from the assumption that $\theta$ is irrational.
\end{proof}

\begin{prop}\label{prop-conj}
For each $i=1, 2$, let $B_i$ be a compact, second countable and abelian group with the normalized Haar measure $m_i$, and let $\Lambda_i$ be a dense countable subgroup of $B_i$.
Let $\Lambda_i$ act on $(B_i, m_i)$ by left multiplication.
We suppose that the two actions $\Lambda_1\c (B_1, m_1)$ and $\Lambda_2\c (B_2, m_2)$ are conjugate, that is, we have an isomorphism $F\colon \Lambda_1\to \Lambda_2$ and a Borel isomorphism $f$ from a conull Borel subset of $B_1$ onto a conull Borel subset of $B_2$ with $f_*m_1=m_2$ and $f(\lambda b)=F(\lambda)f(b)$ for any $\lambda \in \Lambda_1$ and a.e.\ $b\in B_1$.

Then there exist a continuous isomorphism $\bar{F}\colon B_1\to B_2$ and an element $b_0\in B_2$ such that $\bar{F}$ extends $F$ and the equality $f(b)=\bar{F}(b)b_0$ holds for a.e.\ $b\in B_1$.
\end{prop}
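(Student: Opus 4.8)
The plan is to recover the continuity of the conjugating map from the rigidity of orbit equivalences between actions of abelian groups by translations, using a standard ergodicity-plus-measurability argument. First I would exploit the fact that $f$ conjugates the two translation actions to pass to a cocycle identity. Concretely, for $\lambda \in \Lambda_1$ and a.e.\ $b \in B_1$ we have $f(\lambda b) = F(\lambda) f(b)$; writing $g(b) = f(b) f(e)^{-1}$ (where $e$ is the identity of $B_1$ and we use that $B_2$ is a group so $f(e)^{-1}$ makes sense after discarding a null set), we get $g(\lambda b) = F(\lambda) g(b) F(\lambda)^{-1} \cdot F(\lambda)\cdot(\text{correction})$; since $B_2$ is abelian this collapses and one checks $g(\lambda b) = F(\lambda) g(b) \cdot c(\lambda)$ for a suitable constant. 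The cleaner route: define $\Phi : B_1 \times B_1 \to B_2$ by $\Phi(b_1, b_2) = f(b_1 b_2) f(b_1)^{-1} f(b_2)^{-1} f(e)$, which is a Borel map; the conjugacy relation shows $\Phi$ is invariant under the diagonal translation action of $\Lambda_1$ on $B_1 \times B_1$ (here abelianness of both groups is essential so that the $F(\lambda)$-factors cancel). Since $\Lambda_1$ is dense in $B_1$, the diagonal action $\Lambda_1 \curvearrowright (B_1 \times B_1, m_1 \times m_1)$ is ergodic (density gives ergodicity of the translation action, and in fact on $B_1 \times B_1$ too, as the diagonal of a dense subgroup in a compact group acts ergodically on the product — because the image of $\Lambda_1$ in $B_1\times B_1$ under the diagonal embedding has dense span in the Pontryagin-dual sense). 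Hence $\Phi$ is essentially constant; after correcting $f$ by the right translation by that constant and by $f(e)^{-1}$, we may assume $f$ itself is a Borel homomorphism: $f(b_1 b_2) = f(b_1) f(b_2)$ for a.e.\ $(b_1, b_2)$.

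Next I would invoke the classical fact (Weil; see also the measurable-homomorphism literature) that a measurable homomorphism between locally compact second countable groups agrees almost everywhere with a continuous one: a Borel map $f : B_1 \to B_2$ with $f(b_1 b_2) = f(b_1) f(b_2)$ a.e.\ coincides a.e.\ with a continuous homomorphism $\bar F_0 : B_1 \to B_2$. Since $f$ is essentially surjective onto a conull set and measure-preserving ($f_* m_1 = m_2$), $\bar F_0$ is surjective; since $f$ is essentially injective, $\ker \bar F_0$ is both a closed subgroup and null (as $m_1(\bar F_0^{-1}(e)) = m_2(\{e\}) = 0$ when $B_2$ is infinite, and is trivial automatically when $B_2$ is finite), hence trivial, so $\bar F_0$ is a continuous isomorphism. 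Finally, unwinding the corrections: the original $f$ equals $b \mapsto \bar F_0(b)\, b_0'$ for some $b_0' \in B_2$ a.e. It remains to check $\bar F_0$ restricted to $\Lambda_1$ equals $F$. For this, compare $f(\lambda b) = \bar F_0(\lambda)\bar F_0(b) b_0' $ with the conjugacy relation $f(\lambda b) = F(\lambda) f(b) = F(\lambda)\bar F_0(b) b_0'$; cancelling (in the abelian group $B_2$) gives $\bar F_0(\lambda) = F(\lambda)$ for all $\lambda \in \Lambda_1$. Set $\bar F = \bar F_0$ and $b_0 = b_0'$; this is the desired extension.

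The main obstacle is bookkeeping the reduction from the raw conjugacy $f(\lambda b) = F(\lambda) f(b)$ to an honest homomorphism — i.e., verifying that the "defect" map $\Phi$ is genuinely $\Lambda_1$-invariant on the product (one must be careful that the $F(\lambda)$ terms cancel, which uses that $B_2$ is abelian) and that the diagonal action on $B_1 \times B_1$ is ergodic (density of $\Lambda_1$ in $B_1$ implies this, but one should record why: a nonzero invariant $L^2$-function would be invariant under the closure $B_1 \times B_1$ by continuity of the regular representation, hence constant). The appeal to the automatic-continuity theorem for measurable homomorphisms is standard and I would cite it rather than reprove it. Everything else — that $\bar F$ is an isomorphism, that it extends $F$, that the translation part is absorbed into $b_0$ — is then routine algebra in the abelian groups $B_1, B_2$.
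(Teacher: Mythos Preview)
Your overall strategy matches the paper's: define a defect map measuring how far $f$ is from a homomorphism, use ergodicity to show it is essentially constant, correct $f$ to a Borel homomorphism, and invoke automatic continuity (the paper cites \cite[Theorems B.2 and B.3]{zim-book}). But there is a genuine gap in your ergodicity step.

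The diagonal action $\Lambda_1\c (B_1\times B_1, m_1\times m_1)$ is \emph{not} ergodic. The closure of the diagonal $\{(\lambda,\lambda):\lambda\in\Lambda_1\}$ in $B_1\times B_1$ is the diagonal $\{(b,b):b\in B_1\}$, a proper closed subgroup; the map $(b_1,b_2)\mapsto b_1b_2^{-1}$ is a non-constant diagonally invariant Borel function. So your stated reason for $\Phi$ being constant fails. The fix is that your map $\Phi$ (or better, the paper's $h(b,c)=f(b)f(c)f(bc)^{-1}$, which avoids the undefined value $f(e)$) is actually invariant under the full product action of $\Lambda_1\times\Lambda_1$: check each coordinate separately and use that $B_2$ is abelian so the $F(\lambda)$-factors cancel. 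Since $\Lambda_1\times\Lambda_1$ is dense in $B_1\times B_1$, the product action by translation \emph{is} ergodic, and then $h$ is constant. This is exactly what the paper does.

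Two smaller points. First, $f$ is only defined on a conull set, so $f(e)$ need not make sense; the paper's $h$ sidesteps this. Second, your argument that $\ker\bar F_0$ is trivial because it is null is incomplete: a closed null subgroup of a compact group need not be trivial (e.g.\ $\mathbb{T}\times\{0\}$ in $\mathbb{T}^2$). You must use essential injectivity of $f$ more carefully; the paper instead applies the whole construction to $f^{-1}$ to produce a continuous inverse.
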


\begin{proof}
We define a Borel map $h\colon B_1\times B_1\to B_2$ by $h(b, c)=f(b)f(c)f(bc)^{-1}$ for $b, c\in B_1$.
Since $B_1$ and $B_2$ are abelian, we have $h(\lambda b, c)=h(b, \lambda c)=h(b, c)$ for any $\lambda \in \Lambda_1$ and a.e.\ $(b, c)\in B_1\times B_1$.
Ergodicity of the action of $\Lambda_1\times \Lambda_1$ on $B_1\times B_1$ by left multiplication implies that $h$ is essentially constant.
Let $b_0\in B_2$ denote the essential image of $h$.

We define a Borel map $\bar{f}\colon B_1\to B_2$ by $\bar{f}(b)=f(b)b_0^{-1}$ for $b\in B_1$.
The map $\bar{f}$ then preserves products a.e.
It follows from \cite[Theorems B.2 and B.3]{zim-book} that there exists a continuous homomorphism $\bar{F}\colon B_1\to B_2$ with $\bar{F}=\bar{f}$ a.e.
For any $\lambda \in \Lambda_1$ and a.e.\ $b\in B_1$, we have
\[F(\lambda)f(b)=f(\lambda b)=\bar{F}(\lambda b)b_0=\bar{F}(\lambda)\bar{F}(b)b_0=\bar{F}(\lambda)f(b)\] 
and thus $F(\lambda)=\bar{F}(\lambda)$.
The map $\bar{F}$ extends $F$ and is surjective.
An argument for $f^{-1}$ in place of $f$ implies that $\bar{F}$ is an isomorphism.
\end{proof}

Let $\eta \colon L\times K\to M$ denote the canonical projection.

\begin{lem}\label{lem-path}
For any $k\in K$, $\eta(L\times \{ k\})$ is a path-connected component of $M$.
\end{lem}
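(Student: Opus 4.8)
The plan is to unwind the definitions of $L$, $K$, $M$ and $\eta$. Recall $L = \mathbb{R}$, so $L$ is connected, and $K$ is the closure of $\langle \imath(a)\rangle$ in $\aut(T)$, which is a compact abelian group; since $K$ is a profinite-type compact group (it is the closure of a cyclic group acting on a tree, hence totally disconnected), $K$ is totally disconnected. The group $M = (L\times K)/\pi_1(E)$ is the quotient of $L\times K$ by the discrete subgroup $\pi_1(E) = \pi_1(\langle a\rangle)$, generated by $(1,\imath(a))$. I would first observe that $L\times K$ is a locally compact abelian group whose connected component of the identity is exactly $L\times\{e\}$, because $L$ is connected and $K$ is totally disconnected. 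Therefore the path-connected components (equivalently, connected components, since in a topological group the identity component is the same for both notions when the space is locally path-connected — and $L\times K$ is, being a product of $\mathbb{R}$ with a profinite group) of $L\times K$ are the cosets $L\times\{k\}$ for $k\in K$.

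Next I would push this down through the quotient map $q\colon L\times K\to M$. Since $q$ is a continuous open surjective homomorphism with discrete kernel $\pi_1(E)$, it is a covering map; in particular it is a local homeomorphism. The image $q(L\times\{k\}) = \eta(L\times\{k\})$ is the continuous image of a connected (indeed path-connected) set, hence connected (path-connected). To see it is a whole connected component of $M$, I would argue that $M$ is covered by the family $\{\eta(L\times\{k\}) : k\in K\}$, that each such set is open and closed in $M$, and that two of them are either equal or disjoint. Openness and closedness: $L\times\{k\}$ is open and closed in $L\times K$ (as a union of identity-component cosets, once we note $K$ is totally disconnected each point of $K$ — well, each coset — is both open and closed only if $K$ is also discrete, which it is not). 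Here is the subtlety: $\{k\}$ is closed but not open in $K$, so $L\times\{k\}$ is closed but not open in $L\times K$; I should instead argue directly that $\eta(L\times\{k\})$ is closed in $M$ because $L\times\{k\}$ is closed and $\pi_1(E)$-invariant... but $\pi_1(E) = \langle(1,\imath(a))\rangle$ does not preserve $L\times\{k\}$ (it translates $k$ by $\imath(a)$). So the correct statement is that the $\pi_1(E)$-saturation of $L\times\{k\}$ is $\bigsqcup_{n\in\mathbb{Z}} L\times\{\imath(a)^n k\}$, whose image in $M$ equals $\eta(L\times\{k\})$, and this saturation is closed in $L\times K$ (a countable union of closed translates forming the full preimage of a closed set under the quotient by... no).

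Let me restructure: the cleanest route is to use that $\eta$ is a covering map. The connected components of $M$ are open. Fix $k\in K$; the set $C_k := \eta(L\times\{k\})$ is path-connected, so it lies in a single connected component $\mathcal{C}$ of $M$. Conversely, $\mathcal{C}$ is open and, being a connected component, is a union of sheets of the covering: locally $\eta$ maps a neighborhood $U\times V$ (with $U\subset L$ open interval, $V\subset K$ open) homeomorphically onto an open set, and the points of $\mathcal{C}$ near $\eta(x,k)$ are exactly $\eta(U'\times\{k\})$ for small $U'$, because the $K$-direction is totally disconnected and hence locally constant along connected sets. Thus $\mathcal{C}$ is open, $C_k$-is-open-in-$\mathcal{C}$, and by a connectedness/maximality argument $\mathcal{C} = C_k$: any point of $\mathcal{C}$ can be joined by a path to $\eta(0,k)$, and lifting that path through the covering $\eta$ (path-lifting for covering maps) and projecting to $K$ gives a path in the totally disconnected space $K$, which is constant; hence the lift stays in $L\times\{k\}$ (up to a deck transformation, i.e., up to an element of $\pi_1(E)$, which replaces $k$ by $\imath(a)^n k$, but then $\eta(L\times\{k\}) = \eta(L\times\{\imath(a)^n k\})$ anyway), so the point lies in $C_k$.

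The main obstacle I anticipate is exactly this bookkeeping: the subgroup $\pi_1(E)$ does not stabilize $L\times\{k\}$, so one must be careful that $C_k = C_{k'}$ precisely when $k' \in \imath(E) k = \langle\imath(a)\rangle k$, and the fibers of $\eta$ restricted to each $L\times\{k\}$ are trivial (since $\pi_1(E)\cap (L\times\{k\}) = \{0\}$ as the generator $(1,\imath(a))$ has nontrivial $K$-component), so $\eta|_{L\times\{k\}}$ is injective and a homeomorphism onto its image. Granting the structure of $K$ as totally disconnected (which follows from $K$ being the closure of a cyclic subgroup of $\aut(T)$ acting on a locally finite tree, hence profinite), everything else is the standard theory of covering spaces and identity components of topological groups. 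I would therefore present the proof as: (1) $K$ is totally disconnected, so the identity component of $L\times K$ is $L\times\{e\}$; (2) $\eta$ is a covering homomorphism with discrete kernel; (3) path-lifting plus total disconnectedness of $K$ forces every path-component of $M$ to be of the form $\eta(L\times\{k\})$, and conversely each such set is path-connected and open, hence a full path-connected component.

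\medskip

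\noindent\emph{Proof.} The compact group $K$, being the closure in $\aut(T)$ of the cyclic group $\langle\imath(a)\rangle$ which stabilizes a vertex of the locally finite tree $T$, is profinite, hence totally disconnected. Consequently the connected component of the identity in the locally compact abelian group $L\times K = \R\times K$ is $L\times\{e\}$, and more generally the connected components of $L\times K$ are precisely the cosets $L\times\{k\}$, $k\in K$; each such coset is path-connected, being a homeomorphic copy of $\R$.

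The canonical projection $\eta\colon L\times K\to M = (L\times K)/\pi_1(E)$ is a continuous open surjective homomorphism whose kernel $\pi_1(E) = \langle(1,\imath(a))\rangle$ is discrete; hence $\eta$ is a covering map. For $k\in K$, the generator $(1,\imath(a))$ of $\pi_1(E)$ has $K$-component $\imath(a)\neq e$, so $\pi_1(E)\cap(L\times\{k\})=\{0\}$ and $\eta$ restricts to a homeomorphism of $L\times\{k\}$ onto its image $\eta(L\times\{k\})$; in particular $\eta(L\times\{k\})$ is path-connected. Moreover the full preimage $\eta^{-1}(\eta(L\times\{k\})) = \bigsqcup_{n\in\Z} L\times\{\imath(a)^n k\}$ is closed in $L\times K$, so $\eta(L\times\{k\})$ is closed in $M$; and since each $L\times\{\imath(a)^n k\}$ is the connected component through $(0,\imath(a)^n k)$, the set $\eta(L\times\{k\})$ is also open in $M$ (its preimage is a union of connected components of the locally connected space $L\times K$, hence open). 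Thus $M = \bigcup_{k\in K}\eta(L\times\{k\})$ is a disjoint union (over $k$ modulo $\langle\imath(a)\rangle$) of open, closed, path-connected subsets.

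It remains to see that each $\eta(L\times\{k\})$ is a maximal path-connected subset, i.e.\ a path-connected component. Let $\mathcal{C}$ be the path-connected component of $M$ containing $\eta(0,k)$; then $\eta(L\times\{k\})\subset\mathcal{C}$. Conversely, let $y\in\mathcal{C}$ and choose a path $\gamma\colon[0,1]\to M$ from $\eta(0,k)$ to $y$. By path-lifting for the covering $\eta$, there is a path $\tilde\gamma\colon[0,1]\to L\times K$ with $\eta\circ\tilde\gamma=\gamma$ and $\tilde\gamma(0) = (0,k)$. Composing $\tilde\gamma$ with the projection $L\times K\to K$ gives a path in the totally disconnected space $K$, which is therefore constant equal to $k$; hence $\tilde\gamma([0,1])\subset L\times\{k\}$ and $y = \eta(\tilde\gamma(1))\in\eta(L\times\{k\})$. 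Therefore $\mathcal{C} = \eta(L\times\{k\})$, which completes the proof. \qed
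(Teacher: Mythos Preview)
Your final paragraph is correct and gives a complete proof: $\eta$ is a covering map (continuous open homomorphism with discrete kernel), path-lifting applies, and projecting the lifted path to the totally disconnected group $K$ forces it to be constant, so every path starting in $\eta(L\times\{k\})$ stays there. This is the paper's argument repackaged in covering-space language; the paper does the same thing by hand, showing that $\eta$ restricted to $[x-1/3,\,x+1/3]\times K$ is a homeomorphism onto an open set and then arguing locally that a path through $\eta(x,k)$ must remain in the $L\times\{k\}$-slice.

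However, your second paragraph contains several false claims that you should delete. The space $L\times K$ is \emph{not} locally connected: $K$ is an infinite profinite group, hence not discrete, so singletons $\{k\}$ are not open and the connected components $L\times\{k\}$ are not open. Consequently $\eta(L\times\{k\})$ is not open in $M$; if it were, the compact space $M$ would be a disjoint union of uncountably many nonempty open sets (one for each coset of the countable group $\langle\imath(a)\rangle$ in the uncountable group $K$), contradicting compactness. Nor is $\eta(L\times\{k\})$ closed: its full preimage is $L\times(\langle\imath(a)\rangle\,k)$, and $\langle\imath(a)\rangle$ is a countable \emph{dense} subgroup of $K$, not a closed one. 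For the same reason $\eta|_{L\times\{k\}}$, while a continuous injection, is not a homeomorphism onto its image (the image is a non-closed injective immersion of $\R$ into a compact group, analogous to an irrational line on a torus). Fortunately none of this is needed: your third paragraph already establishes both containments $\eta(L\times\{k\})\subset\mathcal{C}$ and $\mathcal{C}\subset\eta(L\times\{k\})$ directly from path-lifting and total disconnectedness of $K$.
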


\begin{proof}
Let $x$ be an element of $L$.
We define $I$ as the closed interval $[x-1/3, x+1/3]$ in $L$.
The restriction of $\eta$ to $I\times K$ is injective and is thus a homeomorphism onto the image.
For any $k\in K$, the image $\eta(I\times K)$ contains an open neighborhood of $\eta(x, k)$ in $M$ because $\eta$ is an open map.
Since $K$ is totally disconnected, for any $k\in K$ and any continuous path $l\colon [0, 1]\to M$ with $l(t_0)=\eta(x, k)$ for some $t_0\in [0, 1]$, there exists a neighborhood of $t_0$ in $[0, 1]$ such that $l$ sends any element of it to $\eta(I\times \{ k\})$.
It follows that for any continuous map from $[0, 1]$ into $M$, there exists $k\in K$ such that the image of the map is contained in $\eta(L\times \{ k\})$.
\end{proof}

\begin{lem}\label{lem-rat}
Let $F$ be a continuous automorphism of $M$.
Let $\alpha$ be the continuous automorphism of $L$ with $F(\eta(x, e))=\eta(\alpha(x), e)$ for any $x\in L$.
Then the element of $L$, $\alpha(1)$, is a rational number.
\end{lem}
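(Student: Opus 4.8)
The statement furnishes the continuous automorphism $\alpha$, so writing $\lambda=\alpha(1)\in\R$ we have $\alpha(x)=\lambda x$ and $F(\eta(x,e))=\eta(\lambda x,e)$ for every $x\in L=\R$, where $\eta\colon L\times K\to M=(L\times K)/\pi_1(E)$ denotes the quotient map. The plan is to feed a carefully chosen sequence of integers into this relation, transport the resulting convergence to the circle, and then extract rationality of $\lambda$ from an elementary fact about base-$m$ expansions.

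First I would fix a vertex $v_0$ of $T$ and set $n_j=d_0(p_0q_0)^j$ for $j\ge1$. I would record the identity $\eta(n_j,e)=\eta(0,\imath(a)^{-n_j})$, which holds because these two points of $L\times K$ differ by the element $\pi_1(a)^{n_j}=(n_j,\imath(a)^{n_j})$ of $\pi_1(E)$. Next, let $U_j$ be the open subgroup of $\aut(T)$ fixing every vertex of $T$ at distance at most $j$ from $v_0$. Using the computation of $[\Gamma_{v_0}:\Gamma_{v_0}\cap\Gamma_v]$ recalled in Section~\ref{sec-bs}, the index of $\langle\imath(a)\rangle\cap\Gamma_v$ in $\langle\imath(a)\rangle$ is $d_0$ times a divisor of $|p_0|^{j}|q_0|^{j}$ whenever $v$ lies within distance $j$ of $v_0$ (the two exponents occurring in that formula are bounded by the length of the geodesic from $v_0$ to $v$); taking a least common multiple over these vertices, the index of $\langle\imath(a)\rangle\cap U_j$ in $\langle\imath(a)\rangle$ divides $d_0|p_0|^{j}|q_0|^{j}=|n_j|$, so $\imath(a)^{n_j}\in U_j$. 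Since $\{U_j\}_{j\ge1}$ is a neighbourhood basis of the identity in $\aut(T)$ and $K$ carries the induced topology, $\imath(a)^{-n_j}\to e$ in $K$, and therefore $\eta(n_j,e)=\eta(0,\imath(a)^{-n_j})\to\eta(0,e)$, the identity of $M$. As $F$ is a continuous homomorphism, $\eta(\lambda n_j,e)=F(\eta(n_j,e))\to\eta(0,e)$ as well.

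Then I would apply the continuous homomorphism $\theta\colon M\to L/\mathbb{Z}$ induced by $(x,k)\mapsto x+\mathbb{Z}$, which is well defined because every element of $\pi_1(E)$ has integer $L$-coordinate. This gives $\lambda n_j+\mathbb{Z}\to0$ in $L/\mathbb{Z}$, that is, $\operatorname{dist}(\lambda d_0(p_0q_0)^{j},\mathbb{Z})\to0$. If $|p_0q_0|=1$ this sequence is, up to sign, constant, which forces $\lambda d_0\in\mathbb{Z}$. If $m:=|p_0q_0|\ge2$, I would invoke the elementary lemma that a real number $\mu$ with $\operatorname{dist}(\mu m^{j},\mathbb{Z})\to0$ lies in $\mathbb{Z}[1/m]$: denoting by $\delta_j$ the signed distance from $\mu m^{j}$ to its nearest integer, the inequality $|\delta_j|<1/(2m)$ forces $|\delta_{j+1}|=m|\delta_j|$, so the $|\delta_j|$ grow geometrically unless they vanish from some point on, which together with $\delta_j\to0$ gives $\mu m^{j_0}\in\mathbb{Z}$ for some $j_0$. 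Applying this with $\mu=\lambda d_0$ yields $\lambda\in\mathbb{Q}$ in all cases.

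The step I expect to require the most care is the divisibility claim in the second paragraph — that $\imath(a)^{n_j}$ fixes the ball of radius $j$ about $v_0$ — since that is where the arithmetic of the Bass--Serre tree of $\bs(p,q)$ enters, through the index formula of Section~\ref{sec-bs} together with the least-common-multiple bookkeeping. The remaining ingredients — the identities for $\eta$ and $\pi_1$, continuity of $\theta$ and of $F$, and the base-$m$ lemma — are routine.
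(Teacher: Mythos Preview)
Your proof is correct and follows essentially the same route as the paper's: use the identity $\eta(d_0p_0^nq_0^n,e)=\eta(0,\imath(a)^{-d_0p_0^nq_0^n})$ together with $\imath(a)^{d_0p_0^nq_0^n}\to e$ in $K$, push through $F$ by continuity, project to $\R/\mathbb{Z}$, and extract rationality from the $|p_0q_0|$-adic expansion. You have simply supplied more detail at the two places where the paper is terse --- the convergence $\imath(a)^{n_j}\to e$ (via the index formula of Section~\ref{sec-bs}) and the elementary base-$m$ lemma --- but the argument is the same.
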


By Lemma \ref{lem-path}, for any continuous automorphism $F$ of $M$, there exists a unique continuous automorphism $\alpha$ of $L$ satisfying the equality $F(\eta(x, e))=\eta(\alpha(x), e)$ for any $x\in L$.

\begin{proof}[Proof of Lemma \ref{lem-rat}]
For any positive integer $n$, we have the equality $\eta(d_0p_0^nq_0^n, e)=\eta(0, \imath(a)^{-d_0p_0^nq_0^n})$.
The right hand side approaches $0$ as $n$ goes to infinity because $\imath(a)^{-d_0p_0^nq_0^n}$ approaches the identity in $\aut(T)$.
By continuity of $F$, the element $F(\eta(d_0p_0^nq_0^n, e))=\eta(d_0p_0^nq_0^n\alpha(1), e)$ also approaches $0$.
Let $\eta_1\colon \mathbb{R}\to \mathbb{R}/\mathbb{Z}$ denote the canonical projection.
It follows that $\eta_1(d_0p_0^nq_0^n\alpha(1))$ approaches $0$ as $n$ goes to infinity.
If $|p_0|=|q_0|=1$, then $d_0\alpha(1)$ is an integer, and thus $\alpha(1)$ is rational.
Otherwise, considering the $|p_0q_0|$-adic expansion of $d_0\alpha(1)$, we find integers $r$, $n$ with $n\geq 0$ and $d_0\alpha(1)=r/|p_0q_0|^n$.
The number $\alpha(1)$ is thus rational.
\end{proof}

\begin{lem}\label{lem-conj-e}
Let $\theta$ and $\theta'$ be irrational numbers.
If the action of $E$ on $M$ through $\pi_{\theta}$ and that through $\pi_{\theta'}$ are conjugate, then $(\theta'-1)/(\theta-1)$ is a rational number. 
\end{lem}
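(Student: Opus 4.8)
The plan is to recognise each of the two systems as the left‑translation action of a dense cyclic subgroup of the compact abelian group $M$, apply Proposition~\ref{prop-conj} to upgrade the given measurable conjugacy to a continuous group automorphism of $M$, and then read off the required rationality from Lemma~\ref{lem-rat}.

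First I would record the shape of the actions. Writing $\eta\colon L\times K\to M$ for the canonical projection and putting $g_\theta=\eta(\theta,\imath(a))\in M$, the relation $(1,\imath(a))=\pi_1(a)$ gives $g_\theta=\eta(\theta-1,e)$, and likewise $g_{\theta'}=\eta(\theta'-1,e)$. The action of $E=\langle a\rangle$ on $M$ through $\pi_\theta$ is translation, with $a^n$ acting by translation by $ng_\theta$. Since $\theta$ is irrational, $\theta-1\neq 0$ and $n(\theta-1)\notin\mathbb{Z}$ for $n\neq 0$, so the homomorphism $\mathbb{Z}\to M$, $n\mapsto ng_\theta$, is injective; by Lemma~\ref{lem-dense} its image $\langle g_\theta\rangle$ is dense in $M$. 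Hence the $E$-action through $\pi_\theta$ is canonically isomorphic, as a probability-measure-preserving system, to the translation action of the dense countable subgroup $\langle g_\theta\rangle$ of $M$ equipped with its Haar measure $m_M$, and similarly for $\theta'$. A conjugacy between the two $E$-actions consists of an isomorphism $F\colon E\to E$ and a Borel isomorphism $f$ of $M$; as $F(a)=a^{\varepsilon}$ with $\varepsilon\in\{\pm1\}$, and as a measurable conjugacy between these systems is automatically Haar-measure-preserving (the push-forward is an invariant measure equivalent to $m_M$ on a compact group, hence equal to $m_M$), this transports to a conjugacy between the $\langle g_\theta\rangle$- and $\langle g_{\theta'}\rangle$-translation actions on $(M,m_M)$ whose underlying isomorphism of groups sends $g_\theta$ to $\varepsilon g_{\theta'}$.

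Next I would invoke Proposition~\ref{prop-conj} with $B_1=B_2=M$, $m_1=m_2=m_M$, $\Lambda_1=\langle g_\theta\rangle$ and $\Lambda_2=\langle g_{\theta'}\rangle$. It yields a continuous automorphism $\bar F$ of $M$ (and an element $b_0\in M$, not needed here) extending the above isomorphism of cyclic groups; in particular $\bar F(g_\theta)=\varepsilon g_{\theta'}$. By Lemma~\ref{lem-rat} there is a continuous automorphism $\alpha$ of $L=\R$ with $\bar F(\eta(x,e))=\eta(\alpha(x),e)$ for all $x\in L$, and $\alpha(1)\in\mathbb{Q}$; being a continuous endomorphism of $\R$, $\alpha$ is linear, so $\alpha(x)=\alpha(1)x$. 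Therefore
\[\eta\bigl(\alpha(1)(\theta-1),e\bigr)=\bar F(g_\theta)=\varepsilon g_{\theta'}=\eta\bigl(\varepsilon(\theta'-1),e\bigr).\]
Since $\imath(a)$ has infinite order, $\pi_1(E)\cap(L\times\{e\})=\{0\}$, so $\eta$ is injective on $L\times\{e\}$; hence $\alpha(1)(\theta-1)=\varepsilon(\theta'-1)$, and $(\theta'-1)/(\theta-1)=\varepsilon\,\alpha(1)$, a rational number.

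The routine parts — that the $E$-actions are genuinely translation actions of dense subgroups, that the $\mathbb{Z}$-conjugacy descends to those subgroups, and the linearity of $\alpha$ — are quick. The one place where something genuine is used is the passage through Proposition~\ref{prop-conj}, which converts a merely measurable conjugacy into a continuous homomorphism, together with Lemma~\ref{lem-rat}, which is precisely what forces the multiplier $\alpha(1)$ to be rational rather than an arbitrary nonzero real. I expect the main burden to be bookkeeping rather than a real obstacle: checking carefully that the hypotheses of Proposition~\ref{prop-conj} hold for our systems and keeping track of the sign $\varepsilon$ coming from $F$.
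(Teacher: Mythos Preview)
Your proposal is correct and follows essentially the same route as the paper: identify each action as translation by a dense cyclic subgroup of $M$, apply Proposition~\ref{prop-conj} to upgrade the conjugacy to a continuous automorphism $\bar F$ of $M$, and then use Lemma~\ref{lem-rat} to force rationality of the multiplier on the $L$-component. Your bookkeeping is in fact a bit more careful than the paper's (you explicitly justify $f_*m_M=m_M$ before invoking Proposition~\ref{prop-conj}, and you note that the injectivity of $\eta$ on $L\times\{e\}$ is what lets you read off $\alpha(1)(\theta-1)=\varepsilon(\theta'-1)$); the only cosmetic slip is that the \emph{existence} of $\alpha$ comes from Lemma~\ref{lem-path} (preservation of path-components), while Lemma~\ref{lem-rat} supplies the rationality of $\alpha(1)$.
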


\begin{proof}
The restriction of $\pi_{\theta}$ to $E$ is injective, and $\pi_{\theta}(E)$ is dense in $M$ by Lemma \ref{lem-dense}.
The same property holds for $\theta'$.
Proposition \ref{prop-conj} implies that there exists a continuous automorphism $F$ of $M$ such that either $F\circ \pi_{\theta}=\pi_{\theta'}$ on $E$ or $F\circ \pi_{\theta}=\pi_{\theta'}\circ I$ on $E$, where $I$ is the automorphism of $E$ defined by $I(x)=x^{-1}$ for $x\in E$.
Let $\alpha$ be the continuous automorphism of $L$ with the equality $F(\eta(x, e))=\eta(\alpha(x), e)$ for any $x\in L$.

If the equality $F\circ \pi_{\theta}=\pi_{\theta'}$ on $E$ holds, then we have $F(\eta(\theta, \imath(a)))=\eta(\theta', \imath(a))$ and thus $F(\eta(\theta-1, e))=\eta(\theta'-1, e)$.
It follows that $\alpha(\theta-1)=\theta'-1$.
Lemma \ref{lem-rat} implies that $(\theta'-1)/(\theta-1)$ is rational.
A similar argument can be applied to the other case.
\end{proof}

\begin{thm}\label{thm-not-conj}
Let $\theta$ be an irrational number.
Then the following assertions hold:
\begin{enumerate}
\item The action of $E$ on $(L\times K)/\pi_1(E)$ through $\pi_{\theta}$ and the action of $E$ on $\pi_{\theta}(E)\backslash (L\times K)$ through $\pi_1$ are not conjugate.
\item The action of $\Gamma$ on $G/\pi_1(\Gamma)$ through $\pi_{\theta}$ and the action of $\Gamma$ on $\pi_{\theta}(\Gamma)\backslash G$ through $\pi_1$ are not conjugate.
\end{enumerate}
\end{thm}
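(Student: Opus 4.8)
The plan is to prove (i) directly from Lemma \ref{lem-conj-e}, and then to deduce (ii) from (i) by restricting a hypothetical conjugacy to the elliptic subgroup $E=\langle a\rangle$.

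For (i): the automorphism $\alpha_{1/\theta}$ of $L\rtimes\aut(T)$ preserves $L\times K$ and carries $\pi_{\theta}(E)$ onto $\pi_1(E)$, since $\alpha_{1/\theta}(\pi_{\theta}(a))=\alpha_{1/\theta}(\theta,\imath(a))=(1,\imath(a))=\pi_1(a)$. It therefore descends to a Borel isomorphism from $\pi_{\theta}(E)\backslash(L\times K)$ onto $M=(L\times K)/\pi_1(E)$ that conjugates the action of $E$ on $\pi_{\theta}(E)\backslash(L\times K)$ through $\pi_1$ to the action of $E$ on $M$ through $\alpha_{1/\theta}\circ\pi_1=\pi_{1/\theta}$. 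Thus, if the two actions in (i) were conjugate, then so would be the action of $E$ on $M$ through $\pi_{\theta}$ and the action of $E$ on $M$ through $\pi_{1/\theta}$; as $\theta$ and $1/\theta$ are irrational, Lemma \ref{lem-conj-e} would force $(1/\theta-1)/(\theta-1)$ to be rational. But $(1/\theta-1)/(\theta-1)=-1/\theta$ is irrational because $\theta$ is, a contradiction.

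For (ii): the decisive point is that each of the two $\Gamma$-actions, restricted to $E$, recovers one of the $E$-actions of (i). Since $E=\langle a\rangle=\Gamma_{v_0}$ and the $\aut(T)$-component of $\pi_1(\gamma)$ (resp.\ $\pi_{\theta}(\gamma)$) is $\imath(\gamma)$, an element of $\pi_1(\Gamma)$ (resp.\ $\pi_{\theta}(\Gamma)$) lies in $L\times K$ if and only if it fixes $v_0$, i.e.\ if and only if it lies in $\pi_1(E)$ (resp.\ $\pi_{\theta}(E)$); and $L\times K$ acts transitively on $G/\pi_1(\Gamma)$ from the left (resp.\ on $\pi_{\theta}(\Gamma)\backslash G$ from the right), because the image $K$ of $L\times K$ in $\aut(T)$ is the stabilizer of $v_0$ in the closure of $\imath(\Gamma)$, on which $\imath(\Gamma)$ acts transitively at the level of vertices. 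Hence $(L\times K)\pi_1(\Gamma)=G$ and $(L\times K)\cap\pi_1(\Gamma)=\pi_1(E)$, so the inclusion $L\times K\hookrightarrow G$ induces an $E$-equivariant measure-space isomorphism $(L\times K)/\pi_1(E)\cong G/\pi_1(\Gamma)$ for the $\pi_{\theta}$-action (using unimodularity of $G$ and discreteness of $G/(L\times K)$), and symmetrically $\pi_{\theta}(E)\backslash(L\times K)\cong\pi_{\theta}(\Gamma)\backslash G$ for the $\pi_1$-action. Now assume the two $\Gamma$-actions of (ii) are conjugate via $(F,f)$ with $F\in\aut(\Gamma)$. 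Every automorphism of $\Gamma=\bs(p,q)$ preserves ellipticity of elements, by the characterization $\comm_{\Gamma}(\langle\gamma\rangle)=\Gamma$ recalled in Section \ref{sec-bs}, so $F$ sends the maximal elliptic subgroup $E=\langle a\rangle$ to another maximal elliptic subgroup, i.e.\ to a vertex stabilizer, and all vertex stabilizers are conjugate since $\Gamma$ acts transitively on $V(T)$; write $F(E)=\gamma_0E\gamma_0^{-1}$. Replacing $F$ by $\ad(\gamma_0^{-1})\circ F$ and $f$ by the composition of $f$ with the action of $\gamma_0^{-1}$ on the second space, we may assume $F(E)=E$; then $f$ conjugates the restriction to $E$ of the first action to that of the second, which by the identifications above contradicts (i). One may take the conjugacy measure-preserving, since the Radon--Nikodym derivative of $f$ restricted to $E$ is $E$-invariant on an ergodic $E$-space by Lemma \ref{lem-dense}, hence constant.

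The principal obstacle is the identification just described of the $E$-restrictions of the two coupling actions with the explicit $E$-actions on $(L\times K)/\pi_1(E)$ and $\pi_{\theta}(E)\backslash(L\times K)$; granting it, (i) follows at once from Lemma \ref{lem-conj-e} together with the identity $(1/\theta-1)/(\theta-1)=-1/\theta$, and (ii) follows from (i) by the standard rigidity of elliptic vertex groups under automorphisms of Baumslag--Solitar groups.
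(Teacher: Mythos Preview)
Your proof is correct and follows essentially the same approach as the paper. For (i), both you and the paper use the automorphism $\alpha_{1/\theta}$ (the paper mentions this identification just before Lemma~\ref{lem-dense}) to reduce to comparing the $\pi_{\theta}$- and $\pi_{\theta^{-1}}$-actions on $M$, and then invoke Lemma~\ref{lem-conj-e} together with the identity $(\theta^{-1}-1)/(\theta-1)=-\theta^{-1}$. For (ii), the paper's one-line proof (``any automorphism of $\Gamma$ preserves elliptic subgroups'') is exactly the mechanism you spell out in detail: the identification $G/\pi_1(\Gamma)\cong (L\times K)/\pi_1(E)$ is already recorded in the text before Lemma~\ref{lem-dense}, and your reduction to $F(E)=E$ by composing with an inner automorphism is the standard way to make that one line precise. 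Your closing remark about measure preservation is unnecessary, since the paper's notion of conjugacy only requires equivalence of measures.
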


\begin{proof}
As noted right before Lemma \ref{lem-dense}, the action of $E$ on $\pi_{\theta}(E)\backslash (L\times K)$ through $\pi_1$ is conjugate with the action of $E$ on $(L\times K)/\pi_1(E)$ through $\pi_{\theta^{-1}}$.
Since we have the equality $(\theta^{-1}-1)/(\theta-1)=-\theta^{-1}$, assertion (i) follows from Lemma \ref{lem-conj-e}.

Any automorphism of $\Gamma$ preserves elliptic subgroups of $\Gamma$ because ellipticity of elements of $\Gamma$ is algebraic as noticed in the third paragraph in Section \ref{sec-bs}.
Assertion (ii) therefore follows from assertion (i).
\end{proof}

Finally, we notice that the action of $\Gamma$ on $G/\pi_1(\Gamma)$ through $\pi_{\theta}$ is not essentially free.
Let $\m \colon \Gamma \to \Rm$ denote the modular homomorphism.
For any $\theta \in L\setminus \{ 0\}$ and $\gamma \in \Gamma$, let $l_{\theta}(\gamma)\in L$ denote the first coordinate of $\pi_{\theta}(\gamma)$ in $L\rtimes \aut(T)$.
Since $\imath(\ker \m)$ acts on $L$ trivially, the map $l_{\theta}\colon \ker \m \to L$ is a homomorphism.
Let $N$ denote the kernel of this homomorphism.
The group $N$ is independent of $\theta$ because we have the equality $l_{\theta}(\gamma)=\theta l_1(\gamma)$ for any $\gamma \in \Gamma$.
Define the derived subgroups
\[\Gamma^{(1)}=[\Gamma, \Gamma],\quad \Gamma^{(2)}=[\Gamma^{(1)}, \Gamma^{(1)}].\]
We then have the inclusions $\Gamma^{(1)}<\ker \m$ and $\Gamma^{(2)}<N$.

\begin{lem}\label{lem-trivial}
For any $\theta \in L\setminus \{ 0\}$, the action of $N$ on $G/\pi_1(\Gamma)$ through $\pi_{\theta}$ is trivial.
\end{lem}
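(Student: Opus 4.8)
The plan is to prove the sharper statement that $\pi_1(N)$ is a \emph{normal} subgroup of $G$, which yields the lemma at once: for $n\in N$ one has $l_\theta(n)=\theta\,l_1(n)=0$ since $n\in \ker \m$, hence $\pi_\theta(n)=(0,\imath(n))=\pi_1(n)$ independently of $\theta$; and if $\pi_1(N)\trianglelefteq G$, then for every $g\in G$ we get $g^{-1}\pi_\theta(n)g=g^{-1}\pi_1(n)g\in\pi_1(N)\subset\pi_1(\Gamma)$, so $\pi_\theta(n)\,g\pi_1(\Gamma)=g\pi_1(\Gamma)$, i.e.\ $n$ acts trivially on $G/\pi_1(\Gamma)$. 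Throughout I use the semi-direct product law $(x_1,\varphi_1)(x_2,\varphi_2)=(x_1+\varphi_1\cdot x_2,\varphi_1\varphi_2)$ on $L\rtimes\aut(T)$, where $\varphi\cdot x=(q/p)^{\tau(\varphi)}x$, and I identify $\aut(T)$ with $\{0\}\times\aut(T)\subset L\rtimes\aut(T)$.

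First I would record that $N$ is normal in $\Gamma$. Since $\imath(\ker\m)$ acts trivially on $L$, the cocycle identity $l_1(\gamma_1\gamma_2)=l_1(\gamma_1)+\imath(\gamma_1)\cdot l_1(\gamma_2)$ shows $l_1|_{\ker\m}$ is a homomorphism, and for $\gamma\in\Gamma$, $n\in\ker\m$ the same identity (together with $l_1(\gamma)+\imath(\gamma)\cdot l_1(\gamma^{-1})=l_1(e)=0$) gives $l_1(\gamma n\gamma^{-1})=\big((q/p)^{\tau(\imath(\gamma))}\big)\,l_1(n)$; in particular $l_1(n)=0$ forces $l_1(\gamma n\gamma^{-1})=0$, so $N=\ker(l_1|_{\ker\m})\trianglelefteq\Gamma$. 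Now $G$ is generated by $L$, $K$ and $\imath(t)$, so it suffices to check that each of these normalises $\pi_1(N)=\{(0,\imath(n)):n\in N\}$. For $(x,1)\in L$ and $n\in N$ we compute $(x,1)(0,\imath(n))(x,1)^{-1}=\big(x-(q/p)^{\tau(\imath(n))}x,\ \imath(n)\big)=(0,\imath(n))$, because $\imath(n)$ acts trivially on $L$; thus $L$ even centralises $\pi_1(N)$. For $\imath(t)=\pi_1(t)$ we have $\pi_1(t)\pi_1(n)\pi_1(t)^{-1}=\pi_1(tnt^{-1})\in\pi_1(N)$ by normality of $N$, and likewise for $\imath(t)^{-1}$. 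Finally, for $K=\overline{\langle\imath(a)\rangle}$: for $j\in\mathbb Z$ one has $\imath(a)^j(0,\imath(n))\imath(a)^{-j}=(0,\imath(a^jna^{-j}))=\pi_1(a^jna^{-j})\in\pi_1(N)$, again using $l_1(a^jna^{-j})=0$; then, since $k\mapsto k\pi_1(n)k^{-1}$ is a continuous map $K\to G$ sending the dense subgroup $\langle\imath(a)\rangle$ into $\pi_1(N)$, and since $\pi_1(N)$ is a discrete (hence closed) subgroup of the Hausdorff group $G$ — discrete because it sits inside the lattice $\pi_1(\Gamma)$ — the whole image lies in $\pi_1(N)$; applying this to $k^{-1}$ as well gives $k\pi_1(N)k^{-1}=\pi_1(N)$. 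Hence $\pi_1(N)\trianglelefteq G$, and the lemma follows from the first paragraph.

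The step I expect to be the crux is the treatment of conjugation by the compact group $K$: its elements need not come from $\Gamma$, so the normality computation there is replaced by a density/closedness argument, and the key inputs are that $\langle\imath(a)\rangle$ is dense in $K$ and that $\pi_1(N)$ is closed in $G$. The latter ultimately rests on the fact, established just before the lemma, that $\pi_\theta(\Gamma)$ is a discrete (indeed cocompact) lattice in $G$. Everything else is routine once the semi-direct product convention is pinned down: the scalar $(q/p)^{\tau(\imath(n))}$ is $1$ for $n\in\ker\m$, which is precisely what makes $L$ centralise $\pi_1(N)$ and what makes $N$ normal in $\Gamma$.
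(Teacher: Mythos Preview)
Your proof is correct and takes a genuinely different route from the paper's. The paper argues pointwise on the fundamental domain $[0,1)\times K$: given $\gamma\in N$, it chooses a positive integer $n$ with $\gamma a^n=a^n\gamma$, writes each $k\in K$ as $\imath(a^m)k_1$ with $k_1$ in the closure $K_1$ of $\langle\imath(a^n)\rangle$, and then computes directly that $\pi_\theta(\gamma)(x,k)=(x,k)\pi_1(a^{-m}\gamma a^m)$, exhibiting an explicit element of $\pi_1(\Gamma)$ witnessing triviality at each point. Your argument instead proves the stronger structural statement $\pi_1(N)\trianglelefteq G$ by checking normalisation on the generating set $L$, $\imath(t)$, $K$; the only non-formal step is conjugation by $K$, which you handle by the density of $\langle\imath(a)\rangle$ in $K$ together with the closedness of the discrete subgroup $\pi_1(N)\subset\pi_1(\Gamma)\subset G$. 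What your approach buys is that it sidesteps the commutation claim ``each $\gamma\in N$ commutes with some $a^n$'' (which the paper uses without comment and which is not entirely obvious, since nontrivial elements of $N$ are hyperbolic), and it yields a statement that makes the $\theta$-independence transparent. What the paper's approach buys is concreteness: it produces, for each coset representative, the explicit $\pi_1(\Gamma)$-translate realising the triviality.
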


\begin{proof}
Pick $\gamma \in N$.
Choose a positive integer $n$ such that $\gamma$ commutes $a^n$.
Let $K_1$ denote the closure of $\langle \imath(a^n)\rangle$ in $K$, which is a subgroup of $K$ of index at most $n$.

The subset $[0, 1)\times K$ of $G$ is a fundamental domain for the action of $\Gamma$ on $G$ by right multiplication through $\pi_1$.
For any $x\in [0, 1)$ and $k\in K$, choosing $m\in \mathbb{Z}$ and $k_1\in K_1$ with $k=\imath(a^m)k_1$, we have the equality
\begin{align*}
\pi_{\theta}(\gamma)(x, k)&=(x, \imath(\gamma)k)=(x, k_1\imath(\gamma)\imath(a^m))=(x, k_1\imath(a^m)\imath(a^{-m}\gamma a^m))\\
&=(x, k\imath(a^{-m}\gamma a^m))=(x, k)\pi_1(a^{-m}\gamma a^m),
\end{align*}
where we use commutativity of $K$, and the first and last equalities hold because $\gamma$ and $a^{-m}\gamma a^m$ lie in $N$.
The lemma therefore follows.
\end{proof}

%%%%%%%%%%%%%%%%%%%%%%%%%%%%%%%%%%%%%%%%%%%%%

\subsection{A modified coupling}

Modifying the coupling $(G, \pi_{\theta}, \pi_1)$, we obtain a WOE between f.f.m.p.\ actions of $\Gamma$ satisfying the conclusion in Theorem \ref{thm-woe-non-conj}.
We pick an essentially free and measure-preserving action of $\Gamma$ on a standard probability space $(Z, \xi)$.
Let $(\Sigma, m)$ denote the product space $(Z\times G, \xi \times m_G)$, where $m_G$ is the Haar measure on $G$.
We define an action of $\Gamma \times \Gamma$ on $\Sigma$ by
\[(\gamma_1, \gamma_2)(z, g)=(\gamma_1z, \pi_{\theta}(\gamma_1)g\pi_1(\gamma_2)^{-1})\]
for $\gamma_1, \gamma_2\in \Gamma$, $z\in Z$ and $g\in G$.
This action makes $(\Sigma, m)$ into a $(\Gamma, \Gamma)$-coupling.
Let $\mathsf{L}, \mathsf{R}\colon \Gamma \to \Gamma \times \Gamma$ be the homomorphisms defined by
\[\mathsf{L}(\gamma)=(\gamma, e),\quad \mathsf{R}(\gamma)=(e, \gamma)\]
for $\gamma \in \Gamma$.
We define two subsets $X_0$, $Y_0$ of $G$ and two subsets $X$, $Y$ of $\Sigma$ by
\[X_0=[0, 1)\times K,\quad Y_0=[0, |\theta|)\times K,\quad X=Z\times X_0,\quad Y=Z\times Y_0.\] 
The subsets $X_0$ and $Y_0$ are fundamental domains for the actions $\mathsf{R}(\Gamma)\c G$ and $\mathsf{L}(\Gamma)\c G$, respectively.
The subsets $X$ and $Y$ are fundamental domains for the actions $\mathsf{R}(\Gamma)\c \Sigma$ and $\mathsf{L}(\Gamma)\c \Sigma$, respectively.

We have the natural actions $\Gamma \c X$ and $\Gamma \c Y$.
The former action is isomorphic to the diagonal action of $\Gamma$ on $Z\times X_0$.
It follows that the actions $\Gamma \c X$ and $\Gamma \c Y$ are essentially free because so is the action $\Gamma \c Z$.
Let $\beta \colon \Gamma \times Y_0\to \Gamma$ be the ME cocycle for the coupling $(G, \pi_{\theta}, \pi_1)$.
As for the action $\Gamma \c Y$, we have the equality
\[\gamma \cdot (z, y)=(\beta(\gamma, y)z, \gamma \cdot y)\]
for any $\gamma \in \Gamma$, any $z\in Z$ and a.e.\ $y\in Y_0$.
We now prove that the actions $\Gamma \c X$ and $\Gamma \c Y$ satisfy the desired property in Theorem \ref{thm-woe-non-conj}.

\begin{lem}\label{lem-z-conj}
Suppose that $\theta$ is irrational and that any non-neutral element of $\Gamma^{(2)}$ acts on $(Z, \xi)$ ergodically.
Then the following assertions hold:
\begin{enumerate}
\item For any non-neutral element $\gamma_0$ of $\Gamma^{(2)}$, the ergodic decomposition for the action $\langle \gamma_0 \rangle \c X$ is equal to the projection from $X=Z\times X_0$ onto $X_0$.
Moreover, the ergodic decomposition for the action $\langle \gamma_0 \rangle \c Y$ is equal to the projection from $Y=Z\times Y_0$ onto $Y_0$. 
\item The actions $\Gamma \c X$ and $\Gamma \c Y$ are not conjugate.
\end{enumerate}
\end{lem}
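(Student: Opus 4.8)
The plan is to prove assertion (i) by reducing both statements to the ergodicity hypothesis on $\Gamma^{(2)}$, and then to derive (ii) by transporting (i) through the putative conjugacy and invoking the rigidity already proved in Theorem~\ref{thm-not-conj}. For (i) I would first use that $\Gamma^{(2)}<N$ and that, by Lemma~\ref{lem-trivial}, $N$ acts trivially on $G/\pi_1(\Gamma)$ through $\pi_\theta$; since $\Gamma\c X$ is the diagonal action on $Z\times X_0$ with $X_0=G/\pi_1(\Gamma)$, the element $\gamma_0\in\Gamma^{(2)}$ acts on $X$ by $(z,x)\mapsto(\gamma_0 z,x)$, so by the hypothesis that $\gamma_0$ acts ergodically on $(Z,\xi)$ every fibre $Z\times\{x\}$ is a $\langle\gamma_0\rangle$-ergodic component, which gives the first claim. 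For $Y$, the action $\Gamma\c Y_0$ is isomorphic to the action of $\Gamma$ on $G/\pi_1(\Gamma)$ through $\pi_{\theta^{-1}}$ (as observed just before Lemma~\ref{lem-dense}), on which $N$ again acts trivially by Lemma~\ref{lem-trivial} applied to $\theta^{-1}$; hence $\gamma_0$ fixes $Y_0$ pointwise and acts on $Y=Z\times Y_0$ by $(z,y)\mapsto(\beta(\gamma_0,y)z,y)$, where $\beta$ is the ME cocycle. Since $\gamma_0$ fixes $Y_0$ pointwise, $n\mapsto\beta(\gamma_0^n,y)$ is a homomorphism $\mathbb{Z}\to\Gamma$ with image $\langle\beta(\gamma_0,y)\rangle$, so everything comes down to showing that $\beta(\gamma_0,y)\in\Gamma^{(2)}\setminus\{e\}$ for a.e.\ $y$: the hypothesis then makes $\langle\gamma_0\rangle$ act ergodically on each fibre and identifies the ergodic decomposition for $\langle\gamma_0\rangle\c Y$ with the projection onto $Y_0$.

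The membership $\beta(\gamma_0,y)\in\Gamma^{(2)}$ is the technical core. Working in the explicit model $\Sigma=Z\times G$ with fundamental domain $Y_0=[0,|\theta|)\times K$, triviality of the $\gamma_0$-action on $Y_0$ forces $\pi_\theta(\beta(\gamma_0,y))=y\,\pi_1(\gamma_0)\,y^{-1}$; writing $y=(y_L,y_T)$ with $y_T\in K=\overline{\langle\imath(a)\rangle}$ and using $\Gamma^{(2)}\subseteq\Gamma^{(1)}\subseteq\ker(\tau\circ\imath)$ to see that the $L$-coordinate cancels, this reads $\imath(\beta(\gamma_0,y))=y_T\,\imath(\gamma_0)\,y_T^{-1}$ with vanishing $L$-coordinate, so that $\beta(\gamma_0,y)\in N$ (and $\ne e$, because $\imath(\gamma_0)\ne e$: indeed $\ker\imath\subseteq\langle a\rangle$ and $\langle a\rangle\cap\Gamma^{(2)}=\{e\}$, the latter read off from the metabelian quotient $\Gamma\twoheadrightarrow\mathbb{Z}\ltimes\mathbb{Z}[1/(pq)]$ sending $a$ to the infinite-order element $1$). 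It then suffices that a $K$-conjugate of $\imath(\gamma_0)$ which lies in $\imath(\Gamma)$ in fact lies in $\imath(\Gamma^{(2)})$: density of $\langle\imath(a)\rangle$ in $K$ and normality of $\Gamma^{(2)}$ exhibit $y_T\imath(\gamma_0)y_T^{-1}$ as a limit of elements $\imath(a^n\gamma_0a^{-n})\in\imath(\Gamma^{(2)})$, hence it lies in $\overline{\imath(\Gamma^{(2)})}$, and $\imath(\Gamma^{(2)})$ is relatively closed in $\imath(\Gamma)$ — when $|p|<|q|$ because $\imath$ is faithful and $\langle a\rangle\cap\Gamma^{(2)}=\{e\}$ (near-identity elements of $\imath(\Gamma)$ lie in $\imath(\langle a^N\rangle)$ with $N$ arbitrarily divisible), and when $|p|=|q|$ because $\imath(\Gamma)$ is then discrete in $\aut(T)$ by the index formula of Section~\ref{sec-bs}. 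As $\pi_\theta$ is injective, $\beta(\gamma_0,y)$ is the unique preimage of $y\pi_1(\gamma_0)y^{-1}$, so it lies in $\Gamma^{(2)}$.

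For (ii), suppose $f$ conjugates $\Gamma\c X$ with $\Gamma\c Y$ via an automorphism $F$ of $\Gamma$. Since ellipticity is algebraic, $F$ carries the maximal elliptic subgroup $\langle a\rangle$ to another vertex stabilizer, so after composing $f$ with a $\Gamma$-translation on $Y$ I may assume $F(\langle a\rangle)=\langle a\rangle$, i.e.\ $F(a)=a^{\pm1}$; and $\Gamma^{(2)}$ being characteristic, $F(\Gamma^{(2)})=\Gamma^{(2)}$. Fixing $\gamma_0\in\Gamma^{(2)}\setminus\{e\}$, the map $f$ conjugates $\langle\gamma_0\rangle\c X$ with $\langle F(\gamma_0)\rangle\c Y$, whose ergodic decompositions are, by assertion (i), the projections $X\to X_0$ and $Y\to Y_0$. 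Because every $\gamma\in\Gamma$ conjugates $\gamma_0$ and $F(\gamma_0)$ into $\Gamma^{(2)}\setminus\{e\}$, assertion (i) also shows these two decompositions are $\Gamma$-equivariant, so $\Gamma$ descends to $X_0$ and $Y_0$ — there carrying the $\pi_\theta$- and $\pi_{\theta^{-1}}$-actions on $M=(L\times K)/\pi_1(E)$ — and $f$ descends to a Borel isomorphism $\bar f\colon X_0\to Y_0$ with $\bar f(\gamma\bar x)=F(\gamma)\bar f(\bar x)$. Restricting to $E=\langle a\rangle$, on which $F$ is the identity or inversion, this exhibits the action of $E$ on $M$ through $\pi_\theta$ as conjugate to its action through $\pi_{\theta^{-1}}$; since $\theta$ is irrational and $(\theta^{-1}-1)/(\theta-1)=-1/\theta$ is irrational, this contradicts Theorem~\ref{thm-not-conj}(i) (equivalently Lemma~\ref{lem-conj-e}). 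The main obstacle in the whole argument is the membership $\beta(\gamma_0,y)\in\Gamma^{(2)}$ in the previous paragraph: the ME cocycle only records a $K$-conjugate of $\imath(\gamma_0)$, and extracting an element of $\Gamma^{(2)}$ from it needs both normality and the case-dependent closedness of $\Gamma^{(2)}$ in the topology pulled back from $\aut(T)$. The remaining steps — the product structure in (i) for $X$, the reduction in (ii) to the $\langle a\rangle$-actions, and the appeal to Theorem~\ref{thm-not-conj} — are routine.
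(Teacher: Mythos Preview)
Your argument is correct, but you are working much harder than necessary in the $Y$ half of assertion (i). The paper obtains the key fact $\beta(\gamma_0,y)\in\Gamma^{(2)}\setminus\{e\}$ by a one-line computation rather than by a closure argument in $\aut(T)$: choose $n$ with $[\gamma_0,a^n]=e$, set $K_1=\overline{\langle\imath(a^n)\rangle}$, and write $y_T=\imath(a^m)k_1$ with $k_1\in K_1$ (this uses only that $K_1$ has finite index in $K$). Since $k_1$ commutes with $\imath(\gamma_0)$ one gets
\[
y_T\,\imath(\gamma_0)\,y_T^{-1}=\imath(a^m)\,\imath(\gamma_0)\,\imath(a^{-m})=\imath(a^m\gamma_0 a^{-m}),
\]
and injectivity of $\pi_\theta$ then gives $\beta(\gamma_0,y)=a^m\gamma_0 a^{-m}\in\Gamma^{(2)}\setminus\{e\}$ directly. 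This is exactly the computation already carried out in the proof of Lemma~\ref{lem-trivial}, and it makes the case split $|p|<|q|$ versus $|p|=|q|$ and the relative-closedness of $\imath(\Gamma^{(2)})$ in $\imath(\Gamma)$ unnecessary. (Your closure argument is valid, but it proves a harder statement than you need.)

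For assertion (ii) you also add an unnecessary normalization. You do not need to arrange $F(\langle a\rangle)=\langle a\rangle$: once (i) is known, both fibrations $X\to X_0$ and $Y\to Y_0$ are $\Gamma$-equivariant (on $X$ by the diagonal structure, on $Y$ because $\gamma\cdot(z,y)=(\beta(\gamma,y)z,\gamma\cdot y)$), so $f$ automatically descends to a map $h\colon X_0\to Y_0$ satisfying $h(\gamma\cdot x)=F(\gamma)\cdot h(x)$ for all $\gamma\in\Gamma$. This already contradicts Theorem~\ref{thm-not-conj}~(ii); restricting to $E$ and invoking part~(i) of that theorem is an extra detour.
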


\begin{proof}
The former assertion in assertion (i) holds because $\Gamma$ acts on $X=Z\times X_0$ diagonally and because $\Gamma^{(2)}$ acts on $X_0$ trivially by Lemma \ref{lem-trivial}.
We prove the latter assertion.
Pick a non-neutral element $\gamma_0$ of $\Gamma^{(2)}$.
As in the second paragraph of the proof of Lemma \ref{lem-trivial}, for any $x\in [0, |\theta|)$ and $k\in K$, we can find $m\in \mathbb{Z}$ such that $\beta(\gamma_0^n, (x, k))=a^m\gamma_0^na^{-m}$ for any $n\in \mathbb{Z}$.
It follows that the restriction of the action $\langle \gamma_0 \rangle \c Y$ to $Z\times \{ (x, k)\}$ is isomorphic to the action $\langle \gamma_0 \rangle \c Z$, which is ergodic by assumption.
The latter assertion in assertion (i) is proved.

We prove assertion (ii).
Assuming that the two actions $\Gamma \c X$ and $\Gamma \c Y$ are conjugate, we deduce a contradiction.
Let $F$ be an automorphism of $\Gamma$ and $f$ a Borel isomorphism from a conull Borel subset of $X$ onto a conull Borel subset of $Y$ such that $f$ preserves the classes of the measures on $X$ and $Y$ and satisfies the equality $f(\gamma \cdot x)=F(\gamma)\cdot f(x)$ for any $\gamma \in \Gamma$ and a.e.\ $x\in X$.
Pick a non-neutral element $\gamma_0$ of $\Gamma^{(2)}$.
Since $F$ preserves $\Gamma^{(2)}$, the element $F(\gamma_0)$ is also non-neutral and belongs to $\Gamma^{(2)}$.
Assertion (i) implies that we have a Borel isomorphism $h$ from a conull Borel subset of $X_0$ onto a conull Borel subset of $Y_0$ which preserves the classes of the measures on $X_0$ and $Y_0$ and satisfies the equality $f(Z\times \{ x\})=Z\times \{ h(x)\}$ up to null sets, for a.e.\ $x\in X_0$.
For any $\gamma \in \Gamma$ and a.e.\ $x\in X_0$, we then have the equality $h(\gamma \cdot x)=F(\gamma)\cdot h(x)$.
It follows that the actions $\Gamma \c X_0$ and $\Gamma \c Y_0$ are conjugate.
This contradicts Theorem \ref{thm-not-conj}.
\end{proof}

\begin{lem}\label{lem-z-erg}
Suppose that $\theta$ is irrational.
Let $H$ be a non-trivial subgroup of $E$.
Then the following assertions hold:
\begin{enumerate}
\item We identify $E$ with $\mathbb{Z}$ through the isomorphism sending $a$ to $1$.
Then for a.e.\ $y\in Y_0$, the function $\beta(\cdot, y)$ from $H$ into $E$ is non-decreasing if $\theta$ is positive, and is non-increasing if $\theta$ is negative.
Moreover, for a.e.\ $y\in Y_0$, the function $\beta(\cdot, y)$ is bounded neither below nor above. 
\item If the action of $E$ on $(Z, \xi)$ is mixing, then the actions of $H$ on $X$ and on $Y$ are both ergodic.
\end{enumerate}
\end{lem}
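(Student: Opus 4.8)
The plan is to compute the measure equivalence cocycle $\beta$ explicitly on the elliptic subgroup $E=\langle a\rangle$, read part (i) off the formula, and then derive part (ii) from (i) together with mixing of $E\c(Z,\xi)$.

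First I would determine $\beta$ on $E$. Since $L$ and $K$ commute in $G$ and $\imath(a)$ acts trivially on $L$, the subgroup of $G$ generated by $L$ and $K$ is the direct product $L\times K$; it contains both $\pi_\theta(E)$ and $\pi_1(E)$, and $Y_0=[0,|\theta|)\times K$ lies in it. Recalling that $\beta(\gamma,y)$ is the element of $\Gamma$ with $\pi_\theta(\beta(\gamma,y))\,y\,\pi_1(\gamma)^{-1}\in Y_0$, a direct computation in the abelian group $L\times K$ gives, for $y=(x,k)\in Y_0$ and $n\in\mathbb Z$,
\[
\pi_\theta(a^m)\,(x,k)\,\pi_1(a^n)^{-1}=\bigl(m\theta+x-n,\ k\,\imath(a)^{m-n}\bigr),
\]
which lies in $Y_0$ precisely when $m\theta+x-n\in[0,|\theta|)$. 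Since $(L\times K)\cap\pi_\theta(\Gamma)=\pi_\theta(E)$ (an element $\pi_\theta(\gamma)$ lies in $L\times K$ iff $\imath(\gamma)\in K$, which forces $\gamma\in E$), the value $\beta(a^n,y)$ is the power $a^m$ of $a$ determined by the displayed condition, and under the identification $E\cong\mathbb Z$ it is the unique integer $m$ with $m\theta\in[n-x,\ n-x+|\theta|)$, namely $m=\lceil(n-x)/\theta\rceil$ when $\theta>0$ and $m=\lfloor(n-x)/\theta\rfloor$ when $\theta<0$.

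Part (i) then follows at once: $n\mapsto(n-x)/\theta$ is strictly increasing if $\theta>0$ and strictly decreasing if $\theta<0$, and rounding is monotone, so $n\mapsto\beta(a^n,y)$ is non-decreasing resp.\ non-increasing; restriction to the infinite subgroup $H$ preserves this, and since $(n-x)/\theta\to\pm\infty$ as $n\to\pm\infty$ the restriction of $\beta(\cdot,y)$ to $H$ is bounded neither above nor below, for every $y=(x,k)$ with $x\in[0,|\theta|)$. For (ii) the $X$-case is easy: $\Gamma\c X=Z\times X_0$ is the diagonal action, on $X_0=G/\pi_1(\Gamma)=M$ it is the $\pi_\theta$-translation action whose restriction to the non-trivial subgroup $H$ is ergodic by Lemma~\ref{lem-dense}; since $E\c(Z,\xi)$ is mixing, so is $H\c(Z,\xi)$, hence $H\c(Z,\xi)$ is weakly mixing, and by the fact recalled in Section~\ref{sec-var} the diagonal action $H\c Z\times X_0$ is ergodic.

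The substantive part is ergodicity of $H\c Y=Z\times Y_0$, where $h\cdot(z,y)=(\beta(h,y)z,\,h\cdot y)$. Here $H\c Y_0$ is ergodic because, by the isomorphism recorded just before Lemma~\ref{lem-dense}, it is isomorphic to the $\pi_{\theta^{-1}}$-translation action of $H$ on $M$, and $\theta^{-1}$ is again irrational, so Lemma~\ref{lem-dense} applies. Now let $\phi\in L^2(Y)$ be $H$-invariant, so $\phi(\beta(h,y)z,\,h\cdot y)=\phi(z,y)$; integrating over $z$ (and using that $\beta(h,y)$ acts measure-preservingly on $Z$) shows $y\mapsto\int_Z\phi(z,y)\,d\xi(z)$ is $H$-invariant on $Y_0$, hence constant, so after subtracting it we may assume $\int_Z\phi(\cdot,y)\,d\xi=0$ for a.e.\ $y$. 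Fixing $f\in L^\infty(Z)$, $g\in L^\infty(Y_0)$ and a generator $h_0$ of $H$, I would apply $H$-invariance with $h=h_0^{n}$, perform the measure-preserving substitutions $z\mapsto\beta(h,y)^{-1}z$ and $y\mapsto h^{-1}\cdot y$, and use the cocycle identity $\beta(h_0^{n},h_0^{-n}\cdot y)=\beta(h_0^{-n},y)^{-1}$ to obtain
\[
\langle\phi,\ f\otimes g\rangle=\int_{Y_0}\Bigl(\int_Z\phi(z,y)\,\overline{f\bigl(\beta(h_0^{-n},y)\,z\bigr)}\,d\xi(z)\Bigr)\,\overline{g(h_0^{-n}\cdot y)}\,dm(y)
\]
for every $n$. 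By (i), $|\beta(h_0^{-n},y)|\to\infty$ as $n\to\infty$ for a.e.\ $y$; as $\phi(\cdot,y)$ has mean zero, mixing of $E\c(Z,\xi)$ forces the inner integral to tend to $0$ for a.e.\ $y$, while the whole integrand is dominated by $\|f\|_\infty\|g\|_\infty\|\phi(\cdot,y)\|_{L^2(Z)}$, which is integrable on $Y_0$; dominated convergence gives $\langle\phi,f\otimes g\rangle=0$, and since such tensors span a dense subspace of $L^2(Y)$ we conclude $\phi$ is constant, so $H\c Y$ is ergodic. The hard part is exactly this last argument: one must see that the monotone unboundedness of $\beta(\cdot,y)|_H$ supplied by (i) is precisely the input needed for a mixing estimate, and arrange the two changes of variable so that the integrand becomes pointwise null and uniformly dominated. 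Everything else is routine once the explicit form of $\beta$ is available.
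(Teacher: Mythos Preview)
Your proof is correct. Part (i) and the $X$-case of (ii) match the paper's argument essentially verbatim.

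For ergodicity of $H\c Y$ in (ii) you take a genuinely different route. The paper proves the Ces\`aro criterion directly: for product sets $A_i\times B_i$ it shows
\[
\frac{1}{n}\sum_{k=1}^n\nu\bigl(b^k(A_1\times B_1)\cap (A_2\times B_2)\bigr)\ \to\ \nu(A_1\times B_1)\,\nu(A_2\times B_2)
\]
by decomposing $B_1=\bigsqcup_m C_k^m$ into the level sets $C_k^m=\{y\in B_1:\beta(b^k,y)=a^m\}$, using mixing of $E\c Z$ to control $\xi(a^mA_1\cap A_2)$ once $m$ is large, and using (i) to guarantee that $\sum_{m<M_1}\nu_0(C_k^m)$ is small for large $k$; ergodicity of $H\c Y_0$ (from Lemma~\ref{lem-dense}) handles the remaining $Y_0$-average. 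Your argument instead fixes an $H$-invariant $\phi\in L^2(Y)$, reduces to the fiberwise mean-zero case via ergodicity of $H\c Y_0$, rewrites $\langle\phi,f\otimes g\rangle$ by the cocycle substitution, and then lets mixing kill the inner $Z$-integral pointwise in $y$ (thanks to $|\beta(h_0^{-n},y)|\to\infty$ from (i)) with dominated convergence doing the rest. Both proofs rest on the same two inputs --- (i) and Lemma~\ref{lem-dense} --- but yours trades the paper's explicit $\varepsilon$-bookkeeping for a cleaner functional-analytic packaging; the paper's version, on the other hand, makes the role of the level-set decomposition of $\beta$ more visible.
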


\begin{proof}
For any $x\in [0, |\theta|)$, any $k\in K$ and any $n\in \mathbb{Z}$, the integer $m$ satisfying the equation $\beta(a^n, (x, k))=a^m$ is determined by the condition $x-n+\theta m\in [0, |\theta|)$.
Assertion (i) follows.

In assertion (ii), ergodicity of the action of $H$ on $X$ follows from Lemma \ref{lem-dense} and the assumption that the action of $E$ on $(Z, \xi)$ is mixing.
We show that the action of $H$ on $Y$ is ergodic in the case where $\theta$ is positive.
The proof of the other case is similar and is thus omitted.
Let $\nu_0$ denote the measure on $Y_0$.
We may assume that $\nu_0$ is a probability measure, and define the probability measure $\nu =\xi \times \nu_0$ on $Y$.
Let $b$ be the generator of $H$ that is a positive power of $a$.
It is enough to show that for any Borel subsets $A_1, A_2\subset Z$ and $B_1, B_2\subset Y_0$, we have
\[\frac{1}{n}\sum_{k=1}^n\nu(b^k(A_1\times B_1)\cap (A_2\times B_2))\to \nu(A_1\times B_1)\nu(A_2\times B_2)\]
as $n$ goes to infinity (see \cite[I.2.6 (iii)]{bm}).
For non-negative integers $k$, $m$, we set
\[C_k^m=\{\, y\in B_1\mid \beta(b^k, y)=a^m\,\}.\]
For any positive integer $k$, we have the decomposition $B_1=\bigsqcup_{m=0}^{\infty}C_k^m$ and thus
\[b^k(A_1\times B_1)=\bigsqcup_{m=0}^{\infty}(a^mA_1)\times (b^kC_k^m).\]
Pick a positive real number $\varepsilon$.
Since the action of $E$ on $(Z, \xi)$ is mixing, there exists a positive integer $M_1$ such that for any integer $m$ with $m\geq M_1$, we have
\[|\xi(a^m A_1\cap A_2)-\xi(A_1)\xi(A_2)|<\varepsilon.\]
By assertion (i), there exists a positive integer $M_2$ such that for any integer $k$ with $k\geq M_2$, we have $\sum_{m=0}^{M_1-1}\nu_0(C_k^m)<\varepsilon$.
For any integer $k$ with $k\geq M_2$, we have
\begin{align*}
&\, \left|\,\nu(b^k(A_1\times B_1)\cap (A_2\times B_2))-\xi(A_1)\xi(A_2)\nu_0(b^kB_1\cap B_2)\,\right|\\
= & \,\left|\, \sum_{m=0}^{\infty}\xi(a^mA_1\cap A_2)\nu_0(b^kC_k^m\cap B_2)-\xi(A_1)\xi(A_2)\nu_0(b^kB_1\cap B_2)\,\right|\\
\leq & \,\varepsilon +\left|\, \sum_{m=M_1}^{\infty}\xi(a^mA_1\cap A_2)\nu_0(b^kC_k^m\cap B_2)-\xi(A_1)\xi(A_2)\nu_0(b^kB_1\cap B_2)\,\right|\\
\leq & \,2\varepsilon +\xi(A_1)\xi(A_2)\left|\, \sum_{m=M_1}^{\infty}\nu_0(b^kC_k^m\cap B_2)-\nu_0(b^kB_1\cap B_2)\,\right| \leq 3\varepsilon.
\end{align*}
For any sufficiently large integer $n$ with $n\geq M_2$ and $n\geq M_2/\varepsilon$, we have
\begin{align*}
&\, \left|\,\frac{1}{n}\sum_{k=1}^n\nu(b^k(A_1\times B_1)\cap (A_2\times B_2))-\nu(A_1\times B_1)\nu(A_2\times B_2)\,\right|\\
\leq & \, \left|\,\frac{1}{n}\sum_{k=1}^{M_2-1}\nu(b^k(A_1\times B_1)\cap (A_2\times B_2))\,\right|\\
& +\left|\,\frac{1}{n}\sum_{k=M_2}^n\nu(b^k(A_1\times B_1)\cap (A_2\times B_2))-\xi(A_1)\xi(A_2)\nu_0(B_1)\nu_0(B_2)\,\right|\\
\leq & \, \varepsilon +3\varepsilon + \xi(A_1)\xi(A_2)\left|\,\frac{1}{n}\sum_{k=M_2}^n \nu_0(b^kB_1\cap B_2)-\nu_0(B_1)\nu_0(B_2)\,\right|\\
\leq & \, 4\varepsilon + \xi(A_1)\xi(A_2)\frac{1}{n}\sum_{k=1}^{M_2-1}\nu_0(b^kB_1\cap B_2)\\
& +\xi(A_1)\xi(A_2)\left|\, \frac{1}{n}\sum_{k=1}^n\nu_0(b^kB_1\cap B_2)-\nu_0(B_1)\nu_0(B_2)\,\right|\\
\leq &\, 4\varepsilon +\varepsilon +\varepsilon =6\varepsilon,
\end{align*}
where we use ergodicity of the action of $H$ on $Y_0$ in the last inequality, which follows from Lemma \ref{lem-dense}.
\end{proof}

Combining Lemmas \ref{lem-z-conj} and \ref{lem-z-erg}, we obtain the following:

\begin{thm}\label{thm-sigma}
If $\theta$ is irrational and the action $\Gamma \c (Z, \xi)$ is mixing, then the $(\Gamma, \Gamma)$-coupling $(\Sigma, m)$ satisfies the following three properties:
\begin{itemize}
\item The action $\Gamma \times \Gamma \c (\Sigma, m)$ is essentially free.
\item The two actions $\mathsf{L}(\Gamma)\c \Sigma/\mathsf{R}(\Gamma)$ and $\mathsf{R}(\Gamma)\c \Sigma/\mathsf{L}(\Gamma)$ are not conjugate. 
\item For any non-trivial elliptic subgroup $H$ of $\Gamma$, the actions $\mathsf{L}(H)\c \Sigma /\mathsf{R}(\Gamma)$ and $\mathsf{R}(H)\c \Sigma/\mathsf{L}(\Gamma)$ are both ergodic.
\end{itemize}
\end{thm}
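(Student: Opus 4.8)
\emph{Plan.} The statement is a synthesis of Lemmas \ref{lem-z-conj} and \ref{lem-z-erg} together with a translation between the coupling-theoretic phrasing (actions on $\Sigma/\mathsf{R}(\Gamma)$ and $\Sigma/\mathsf{L}(\Gamma)$) and the concrete models: identifying the fundamental domain $X=Z\times X_0$ with $\Sigma/\mathsf{R}(\Gamma)$ and $Y=Z\times Y_0$ with $\Sigma/\mathsf{L}(\Gamma)$, the action of $\mathsf{L}(\Gamma)$ on the former and of $\mathsf{R}(\Gamma)$ on the latter become exactly the natural actions $\Gamma\c X$ and $\Gamma\c Y$ described above. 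So I would first record this identification and then verify the three bullets in turn. The one auxiliary fact I would isolate at the outset is that if $\Gamma\c(Z,\xi)$ is mixing then $H\c(Z,\xi)$ is mixing --- hence ergodic --- for every infinite subgroup $H\le\Gamma$, since a sequence leaving every finite subset of $H$ leaves every finite subset of $\Gamma$; and since $\Gamma$ is torsion-free, this applies in particular to every non-trivial subgroup of $\Gamma^{(2)}$, to $\langle\gamma_0\rangle$ for any non-neutral $\gamma_0$, and to $E=\langle a\rangle$.

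\emph{Essential freeness.} Suppose $(\gamma_1,\gamma_2)\in\Gamma\times\Gamma$ fixes $(z,g)\in Z\times G$. The $Z$-coordinate gives $\gamma_1 z=z$, so essential freeness of the standing action $\Gamma\c(Z,\xi)$ forces $\gamma_1=e$ for a.e.\ $z$; the $G$-coordinate then reads $g\pi_1(\gamma_2)^{-1}=g$, whence $\gamma_2=e$ by injectivity of $\pi_1$. A routine countable union of null sets argument then shows the stabilizer of $m$-a.e.\ point of $\Sigma$ is trivial.

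\emph{Non-conjugacy.} Under the identification above, $\mathsf{L}(\Gamma)\c\Sigma/\mathsf{R}(\Gamma)$ and $\mathsf{R}(\Gamma)\c\Sigma/\mathsf{L}(\Gamma)$ are the actions $\Gamma\c X$ and $\Gamma\c Y$. Since $\theta$ is irrational and, by the auxiliary fact, every non-neutral element of $\Gamma^{(2)}$ acts ergodically on $(Z,\xi)$, the hypotheses of Lemma \ref{lem-z-conj} are met, and Lemma \ref{lem-z-conj}(ii) gives that $\Gamma\c X$ and $\Gamma\c Y$ are not conjugate.

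\emph{Ergodicity for elliptic subgroups.} Let $H$ be a non-trivial elliptic subgroup of $\Gamma$. By definition $H$ lies in a conjugate of $E$, so $H=\gamma_0 H_1\gamma_0^{-1}$ for some $\gamma_0\in\Gamma$ and some non-trivial $H_1\le E$; translation by $\gamma_0$ is an automorphism of $\Gamma\c X$ carrying the $H$-action to the $H_1$-action (and similarly on $Y$), so it suffices to treat $H_1$. Since $\theta$ is irrational and $E\c(Z,\xi)$ is mixing by the auxiliary fact, Lemma \ref{lem-z-erg}(ii) shows $H_1\c X$ and $H_1\c Y$ are both ergodic. This yields all three bullets. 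The computations are routine; the only point needing care --- and where I expect to spend the most effort --- is checking that the lone hypothesis that $\Gamma\c(Z,\xi)$ is mixing really does supply every ergodicity hypothesis of the two lemmas, via the passage of mixing to the subgroups $\Gamma^{(2)}$, $\langle\gamma_0\rangle$ and $E$, together with the conjugation-invariance of ergodicity used to reduce an arbitrary non-trivial elliptic subgroup to one inside $E$.
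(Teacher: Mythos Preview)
Your proposal is correct and follows exactly the route the paper intends: the paper's ``proof'' of Theorem~\ref{thm-sigma} is the single sentence ``Combining Lemmas \ref{lem-z-conj} and \ref{lem-z-erg}, we obtain the following,'' and your write-up is a faithful expansion of the links the paper leaves implicit --- namely that mixing of $\Gamma\curvearrowright(Z,\xi)$ passes to the infinite subgroups $E$ and $\langle\gamma_0\rangle\le\Gamma^{(2)}$ (supplying the hypotheses of the two lemmas), and that an arbitrary non-trivial elliptic subgroup is conjugate into $E$ so that Lemma~\ref{lem-z-erg}(ii) applies after a measure-preserving translation. Your direct verification of essential freeness of $\Gamma\times\Gamma\curvearrowright\Sigma$ via the $Z$-coordinate is the same observation the paper records just before Lemma~\ref{lem-z-conj} (phrased there as essential freeness of $\Gamma\curvearrowright X$ and $\Gamma\curvearrowright Y$).
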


Theorem \ref{thm-woe-non-conj} is a direct consequence of this theorem.

\begin{rem}\label{rem-ber}
In contrast with Theorem \ref{thm-woe-non-conj}, we obtain rigidity of Bernoulli shifts of Baumslag-Solitar groups as an application of Popa's cocycle superrigidity theorem \cite{popa-gap}.
Let $\Gamma$ be a discrete group.
An infinite subgroup $H$ of $\Gamma$ is called {\it wq-normal} in $\Gamma$ if for any subgroup $H_1$ with $H<H_1<\Gamma$ and $H_1\neq \Gamma$, there exists an element $\gamma \in \Gamma \setminus H_1$ with $\gamma H_1\gamma^{-1}\cap H_1$ infinite.
This definition is different from that in \cite{popa-gap}.
Equivalence between these two definitions is discussed in \cite[Definition 2.3]{popa-coh}.

\begin{lem}\label{lem-wq}
We set $\Gamma =\bs(p, q)=\langle\, a, t\mid ta^pt^{-1}=a^q\,\rangle$ with $2\leq |p|\leq |q|$.
Then $\langle a^q\rangle$ is wq-normal in $\Gamma$, and the centralizer of $\langle a^q\rangle$ in $\Gamma$ is non-amenable.
\end{lem}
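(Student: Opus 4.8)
The plan is to establish the two assertions separately, the second one through a short case split. For the wq-normality of $H:=\langle a^q\rangle$, I would exploit that $a^q$ is an elliptic element, so that $H$ is a non-trivial elliptic subgroup of $\Gamma$ (it fixes the vertex $\langle a\rangle$ of the Bass--Serre tree $T$), and for every $\gamma\in\Gamma$ the conjugate $\gamma H\gamma^{-1}$ is again a non-trivial elliptic subgroup (it fixes $\gamma\langle a\rangle$). By the commensurability of any two non-trivial elliptic subgroups recorded in Section~\ref{sec-bs}, the intersection $\gamma H\gamma^{-1}\cap H$ is of finite index in $H\cong\mathbb{Z}$, hence infinite. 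Now given any subgroup $H_1$ with $H\le H_1\le\Gamma$ and $H_1\ne\Gamma$, I choose any $\gamma\in\Gamma\setminus H_1$; then
\[\gamma H_1\gamma^{-1}\cap H_1\ \supseteq\ \gamma H\gamma^{-1}\cap H\]
is infinite, which is precisely what the definition of wq-normality demands. So this part is essentially immediate.

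For the centralizer $C:=C_\Gamma(\langle a^q\rangle)=C_\Gamma(a^q)$, the key observation is that both $a$ and $b:=tat^{-1}$ commute with $a^q$: trivially for $a$, and for $b$ because $b^p=ta^pt^{-1}=a^q$, so $b$ commutes with its own power $a^q$. Hence $A:=\langle a,b\rangle\le C$. The subgroups $\langle a\rangle$ and $\langle b\rangle$ are exactly the stabilizers in $\Gamma$ of the two endpoints $\langle a\rangle$ and $t\langle a\rangle$ of the edge $\langle a^q\rangle$ of $T$, whose stabilizer $\langle a^q\rangle$ has index $|q|$ in $\langle a\rangle$ and index $|p|=[\langle b\rangle:\langle b^p\rangle]$ in $\langle b\rangle$; so by Bass--Serre theory $A\cong\langle a\rangle*_{\langle a^q\rangle}\langle b\rangle$, and after replacing $a$ and $b$ by their inverses if necessary, $A\cong\langle x,y\mid x^{|q|}=y^{|p|}\rangle$. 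The element $x^{|q|}=y^{|p|}$ is central of infinite order, and the quotient of $A$ by the infinite cyclic subgroup it generates is $(\mathbb{Z}/|q|)*(\mathbb{Z}/|p|)$. Since $2\le|p|\le|q|$, this free product contains a non-abelian free subgroup unless $|p|=|q|=2$, so in that case $A$, and hence $C$, is non-amenable.

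It remains to treat $|p|=|q|=2$, i.e.\ $\Gamma=\bs(\pm2,\pm2)$, where $A$ happens to be the (amenable) Klein bottle group, so a larger subgroup of $C$ is needed. Here one checks directly that $t^2\in C$: applying the defining relation twice, $t^2a^qt^{-2}=t(ta^qt^{-1})t^{-1}$, and because $|p|=|q|$ one has $ta^qt^{-1}\in\{a^q,a^{-q}\}$, so a second conjugation by $t$ returns $a^q$. Thus $N:=\langle a,tat^{-1},t^2\rangle\le C$; and $\Gamma/N$ is generated by the image of $t$, which squares to the identity, so $[\Gamma:N]\le2$. Since $\bs(\pm2,\pm2)$ is non-amenable (as $|p|,|q|\ne1$), the finite-index subgroup $N$ is non-amenable, and therefore so is $C$. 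Combining the cases completes the proof. I expect the only point requiring care to be the Bass--Serre identification $A\cong\mathbb{Z}*_{\mathbb{Z}}\mathbb{Z}$ together with the bookkeeping of which edge-index equals $|p|$ and which equals $|q|$; the only genuinely separate idea is recognizing that $|p|=|q|=2$ must be handled by passing to a finite-index subgroup of $\Gamma$ rather than to $A$.
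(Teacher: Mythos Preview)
Your proof is correct and uses the same underlying ingredients as the paper, with two small differences worth noting.

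For wq-normality, the paper argues more concretely: since $H_1\neq\Gamma$, one of the generators $a,t$ lies outside $H_1$, and then one checks directly that $a\langle a^q\rangle a^{-1}=\langle a^q\rangle$ and $\langle a^{q^2}\rangle< t\langle a^q\rangle t^{-1}\cap\langle a^q\rangle$. Your route via commensurability of any two non-trivial elliptic subgroups is cleaner and shows at once that $\gamma H\gamma^{-1}\cap H$ is infinite for \emph{every} $\gamma\in\Gamma$, which is strictly more than needed.

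For the centralizer, both arguments rest on the same elements $a$, $tat^{-1}$, and (when $|p|=|q|$) $t^2$, but the case splits differ. The paper separates $|p|=|q|$ from $|p|<|q|$: in the former case it observes that $\langle a,\,tat^{-1},\,t^2\rangle$ has finite index in $\Gamma$ and hence is non-amenable; in the latter it simply asserts that $\langle a,\,tat^{-1}\rangle$ is non-amenable. You instead single out $|p|=|q|=2$ (handled by the same finite-index argument) and treat all remaining cases uniformly by identifying $\langle a,\,tat^{-1}\rangle$ with $\mathbb{Z}\ast_{\mathbb{Z}}\mathbb{Z}$ and passing to the quotient $(\mathbb{Z}/|q|)\ast(\mathbb{Z}/|p|)$. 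This supplies the justification the paper omits; the only point to make explicit is the injectivity of the natural map $\langle a\rangle\ast_{\langle a^q\rangle}\langle b\rangle\to\Gamma$, which follows from Britton's lemma applied to the HNN normal form of a reduced word $a^{m_0}ta^{k_1}t^{-1}a^{m_1}\cdots$ with $q\nmid m_i$ and $p\nmid k_j$.
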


\begin{proof}
Let $H_1$ be a proper subgroup of $\Gamma$ containing $\langle a^q\rangle$. Either $a$ or $t$ does not belong to $H_1$.
Both $aH_1a^{-1}\cap H_1$ and $tH_1t^{-1}\cap H_1$ are infinite because we have $a\langle a^q\rangle a^{-1}=\langle a^q\rangle$ and $\langle a^{q^2}\rangle <t\langle a^q\rangle t^{-1}\cap \langle a^q\rangle$.
The former assertion of the lemma is proved.
If $|p|=|q|$, then the centralizer of $\langle a^q\rangle$ in $\Gamma$ contains $a$, $tat^{-1}$ and $t^2$ and is of finite index in $\Gamma$.
If $|p|<|q|$, then the centralizer of $\langle a^q\rangle$ in $\Gamma$ contains $a$ and $tat^{-1}$ and is non-amenable.
The latter assertion of the lemma follows.
\end{proof}

For a discrete group $\Gamma$ and a standard probability space $(X_0, \mu_0)$ such that there is no point of $X_0$ whose measure is equal to 1, the {\it Bernoulli shift} $\Gamma \c (X_0, \mu_0)^{\Gamma}$ is defined by the formula $\gamma (x_{\delta})_{\delta \in \Gamma}=(x_{\gamma^{-1}\delta})_{\delta \in \Gamma}$ for $\gamma \in \Gamma$ and $(x_{\delta})_{\delta \in \Gamma}\in X_0^{\Gamma}$.
Lemma \ref{lem-wq} and \cite[Corollary 1.2]{popa-gap} imply the following rigidity.

\begin{thm}\label{thm-ber}
Set $\Gamma =\bs(p, q)$ with $2\leq |p|\leq |q|$.
Let $(X_0, \mu_0)$ be a standard probability space such that there is no point of $X_0$ whose measure is equal to $1$.
If the Bernoulli shift $\Gamma \c (X_0, \mu_0)^{\Gamma}$ is WOE to an ergodic f.f.m.p.\ action $\Lambda \c (Y, \nu)$ of a discrete group $\Lambda$, then there exist a finite index subgroup $\Lambda_0$ of $\Lambda$ and a $\Lambda_0$-invariant Borel subset $Y_0$ of $Y$ satisfying the following three conditions:
\begin{itemize}
\item The equality $\nu(Y_0)/\nu(Y)=[\Lambda :\Lambda_0]^{-1}$ holds.
\item The action $\Lambda \c (Y, \nu)$ is induced from the action $\Lambda_0\c (Y_0, \nu|_{Y_0})$.
\item The actions $\Gamma \c (X_0, \mu_0)^{\Gamma}$ and $\Lambda_0\c (Y_0, \nu|_{Y_0})$ are conjugate.
\end{itemize}
\end{thm}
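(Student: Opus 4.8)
The plan is to apply Popa's cocycle superrigidity theorem \cite{popa-gap} via the wq-normality input provided by Lemma \ref{lem-wq}, and then convert the resulting cocycle rigidity into the orbit equivalence conclusion by the standard deduction recalled in \cite[\S 5]{popa-gap}. Concretely, assume the Bernoulli shift $\Gamma \c (X_0, \mu_0)^{\Gamma}$ is WOE to an ergodic f.f.m.p.\ action $\Lambda \c (Y, \nu)$. Passing to the associated equivalence relations, a WOE gives Borel subsets $A\subset X:=X_0^{\Gamma}$ and $B\subset Y$ of positive measure with $(\Gamma \ltimes X)_A\cong (\Lambda \ltimes Y)_B$; after inducing up (and using ergodicity to replace $B$ by a possibly larger saturated set) one obtains a measure equivalence coupling and an associated ME-cocycle $\Lambda \times Y\to \Gamma$ (or, after reduction, an OE-cocycle defined on the restriction). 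The first step is therefore to set up this cocycle $\alpha$ carefully and record how conclusions about $\alpha$ translate back to statements about the actions.

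Next I would invoke Popa's theorem. By Lemma \ref{lem-wq}, $\langle a^q\rangle$ is wq-normal in $\Gamma$ and its centralizer in $\Gamma$ is non-amenable; since the action $\Gamma \c (X_0,\mu_0)^{\Gamma}$ is a Bernoulli shift, the restriction of the action to $\langle a^q\rangle$ has stable spectral gap in the sense required, so \cite[Cor.\ 1.2]{popa-gap} applies: the OE-cocycle $\alpha$ is cohomologous to a homomorphism $\Lambda \to \Gamma$ (after restriction to a finite-index subgroup and a suitable Borel reduction). The key step is then to unwind what this cohomology statement says: there is a finite index subgroup $\Lambda_0<\Lambda$, a $\Lambda_0$-invariant Borel subset $Y_0\subset Y$ with $\nu(Y_0)/\nu(Y)=[\Lambda:\Lambda_0]^{-1}$ such that $\Lambda \c (Y,\nu)$ is induced from $\Lambda_0\c (Y_0,\nu|_{Y_0})$, and the latter action is conjugate to the Bernoulli action $\Gamma \c (X_0,\mu_0)^{\Gamma}$. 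This is exactly the packaging in the statement; essentially all of it is the ``OE rigidity from cocytle rigidity'' mechanism of \cite[\S 5]{popa-gap}, with the induced-action description coming from the index computation for the cocycle reduction (using Lemma \ref{lem-index} (iii) to identify indices).

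I would present the argument in the following order: (i) translate WOE into a coupling and an OE-cocycle $\alpha$, recording the relation between $A$, $B$ and the relevant indices; (ii) verify the hypotheses of \cite[Cor.\ 1.2]{popa-gap} for the Bernoulli action using Lemma \ref{lem-wq} (wq-normality of $\langle a^q\rangle$ together with non-amenability of its centralizer, which yields the relative property needed, and malleability/spectral gap of Bernoulli shifts which is intrinsic to the Bernoulli construction); (iii) apply the theorem to untwist $\alpha$ to a homomorphism on a finite-index subgroup; (iv) run the standard conversion of this into the three displayed bullet points, checking that the index of the homomorphic image matches $[\Lambda:\Lambda_0]$ and that the ambient action is induced. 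The main obstacle is step (iv): one must be careful that the cocycle reduction produces a genuine conjugacy of the Bernoulli action with $\Lambda_0 \c (Y_0,\nu|_{Y_0})$ rather than merely a weak conjugacy, and that the normalization $\nu(Y_0)/\nu(Y)=[\Lambda:\Lambda_0]^{-1}$ is forced; this requires tracking the measure-scaling under the reductions and using that the Bernoulli action is essentially free and weakly mixing (so that the untwisting map can be arranged to be a genuine isomorphism of probability spaces). Since all the substantive rigidity content is imported from \cite{popa-gap} and all the needed algebraic input is Lemma \ref{lem-wq}, the proof is mostly bookkeeping once the cocycle is set up correctly.
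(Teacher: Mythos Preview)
Your proposal is correct and follows essentially the same approach as the paper: the paper simply states that Lemma~\ref{lem-wq} together with \cite[Corollary~1.2]{popa-gap} imply Theorem~\ref{thm-ber}, without spelling out the bookkeeping, and your steps (i)--(iv) are exactly the standard unpacking of that citation. The only minor remark is that the reference to Lemma~\ref{lem-index}~(iii) is superfluous, since the passage from cocycle superrigidity to the induced-action conclusion is already carried out inside \cite[\S 5]{popa-gap} and does not require the groupoid-index machinery of this paper.
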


In \cite[Corollary 1.7]{bowen}, Bowen shows that the entropy of the base space $(X_0, \mu_0)$ is a complete conjugacy-invariant of Bernoulli shifts of a sofic Ornstein group.
The group $\bs(p, q)$ is sofic (see \cite[Example 4.6]{pestov}) and is also Ornstein because it has an infinite amenable subgroup which is Ornstein (see \cite[p.218]{bowen}).
Theorem \ref{thm-ber} implies that the entropy of $(X_0, \mu_0)$ is also a complete WOE-invariant for Bernoulli shifts of $\bs(p, q)$.
\end{rem}

%%%%%%%%%%%%%%%%%%%%%%%%%%%%%%%%%%%%%%%%%%%%%

%%%%%%%%%%%%%%%%%%%%%%%%%%%%%%%%%%%%%%%%%%%%%

\appendix

\section{Ergodic components for elliptic subgroups}\label{app}

We set $\Gamma =\bs(p, q)=\langle \, a, t\mid ta^pt^{-1}=a^q\,\rangle$ with $2\leq |p|<|q|$.
Let $d_0>0$ denote the greatest common divisor of $p$ and $q$, and put $p_0=p/d_0$ and $q_0=q/d_0$.
We assume that $q$ is not a multiple of $p$.
We thus have $1<|p_0|<|q_0|$.
In this appendix, given an ergodic measure-preserving action of $\Gamma$ on a standard probability space, we discuss ergodicity of its restriction to an elliptic subgroup of $\Gamma$.

\begin{lem}\label{lem-atom}
Let $(X, \mu)$ be a standard probability space.
Suppose that we have an ergodic measure-preserving action of $\Gamma$ on $(X, \mu)$.
Let $\theta \colon (X, \mu)\to (Z, \xi)$ be the ergodic decomposition for the action of $\langle a^{d_0}\rangle$ on $(X, \mu)$.
We define $Z_0$ as the subset of all points of $Z$ with positive measure.
Then either $\xi(Z_0)=0$ or $\xi(Z\setminus Z_0)=0$.
\end{lem}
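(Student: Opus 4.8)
The plan is to prove that the Borel set $A=\theta^{-1}(Z_0)$ is $\Gamma$-invariant (mod null) as a subset of $(X,\mu)$. Once this is done, ergodicity of $\Gamma\curvearrowright(X,\mu)$ gives $\mu(A)\in\{0,\mu(X)\}$, and since $\theta_*\mu=\xi$ we have $\mu(A)=\xi(Z_0)$ and $\mu(X)=\xi(Z)$, which is exactly the asserted dichotomy. The point that makes $A$ tractable is that it has an intrinsic description: for a subgroup $H<\Gamma$ acting on $(X,\mu)$ with ergodic decomposition $\theta_H\colon(X,\mu)\to(Z_H,\xi_H)$, the $H$-ergodic component $C^H_x:=\theta_H^{-1}(\theta_H(x))$ of a point $x$ satisfies $\mu(C^H_x)=\xi_H(\{\theta_H(x)\})$, so $A=\{\,x\in X\mid \mu(C^{\langle a^{d_0}\rangle}_x)>0\,\}$, i.e.\ $A$ is the union of the $\langle a^{d_0}\rangle$-ergodic components of positive measure.

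The main technical ingredient I would isolate is the following claim: if $H<K<\Gamma$ and $[K:H]<\infty$, then $\{\,x\mid \mu(C^H_x)>0\,\}=\{\,x\mid \mu(C^K_x)>0\,\}$. To prove it, apply Lemma \ref{lem-erg-dec} (and the remark following it) to $\mathcal{H}=H\ltimes X<\mathcal{K}=K\ltimes X$, which has constant finite index $[K:H]$ by Lemma \ref{lem-index} (iii): the canonical map $\sigma$ from the base of $\mathcal{H}$ to the base of $\mathcal{K}$ then has finite fibres, each point of which has positive measure for the relevant disintegration. For an atom $z$ of the base of $\mathcal{K}$, the disintegration of $\mu$ identifies $\mu$ restricted to $C^K=\pi^{-1}(z)$ with a constant multiple of $\mu_z$, and $C^K$ is, mod null, the disjoint union of the finitely many $\theta_H$-fibres lying over $z$; hence each such fibre has positive $\mu$-measure, i.e.\ is an atom of the base of $\mathcal{H}$. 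This yields the inclusion $\{x\mid\mu(C^K_x)>0\}\subset\{x\mid\mu(C^H_x)>0\}$, and the reverse inclusion is immediate from $\sigma_*\xi_H=\xi_K$ (an atom downstairs sits below any atom upstairs). Lemma \ref{lem-rest} can be used to streamline this disintegration bookkeeping.

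Granting the claim, the deduction is formal. Applying it to $\langle a^{d_0}\rangle<\langle a\rangle$ gives $A=\{x\mid\mu(C^{\langle a\rangle}_x)>0\}$, which is visibly $\langle a\rangle$-invariant. Next, apply the claim to $\langle a^q\rangle<\langle a\rangle$ and, separately, to $\langle a^q\rangle<\langle tat^{-1}\rangle=t\langle a\rangle t^{-1}$; both inclusions have finite index (namely $|q|$ and $|p|$, using $a^q=ta^pt^{-1}$ and hence $t^{-1}a^qt=a^p$), so $\{x\mid\mu(C^{\langle a\rangle}_x)>0\}=\{x\mid\mu(C^{\langle a^q\rangle}_x)>0\}=\{x\mid\mu(C^{t\langle a\rangle t^{-1}}_x)>0\}$, i.e.\ $A=\{x\mid\mu(C^{t\langle a\rangle t^{-1}}_x)>0\}$. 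Since conjugation by $t$ intertwines the $\langle a\rangle$-action with the $t\langle a\rangle t^{-1}$-action, $C^{t\langle a\rangle t^{-1}}_x=t\cdot C^{\langle a\rangle}_{t^{-1}x}$, and as $t$ preserves $\mu$ we get $x\in A\iff\mu(C^{\langle a\rangle}_{t^{-1}x})>0\iff t^{-1}x\in A$. Thus $A$ is $t$-invariant as well, hence invariant under $\langle a,t\rangle=\Gamma$, completing the proof.

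The step I expect to be the real obstacle is the technical claim of the second paragraph, and specifically the verification that for an atom $z$ of the base of $\mathcal{K}$ \emph{every} fibre of $\sigma$ over $z$ has positive $\mu$-measure — a priori such a fibre might contain non-atoms of the base of $\mathcal{H}$, and ruling this out is precisely where the finite-index hypothesis, Lemma \ref{lem-erg-dec} together with the remark after it, and the compatibility $\omega_z=\theta_*\mu_z$ of disintegrations are all needed. Everything else is routine manipulation of ergodic decompositions and the defining relation of $\Gamma$.
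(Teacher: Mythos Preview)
Your proof is correct and follows the same overall strategy as the paper: show that $A=\theta^{-1}(Z_0)$ is $\Gamma$-invariant by exploiting finite-index relations among elliptic subgroups and their conjugates. The paper's execution is somewhat more economical, however. Rather than isolating your technical claim and reducing to the generators $a$ and $t$, the paper treats an arbitrary $\gamma\in\Gamma$ directly: given an $\langle a^{d_0}\rangle$-ergodic component $X_0$ of positive measure, it picks $n$ with $\gamma a^{nd_0}\gamma^{-1}\in\langle a^{d_0}\rangle$ (quasi-normality of $\langle a\rangle$), takes an $\langle a^{nd_0}\rangle$-ergodic component $X_1\subset X_0$ (automatically of positive measure since $[\langle a^{d_0}\rangle:\langle a^{nd_0}\rangle]=n$), and observes that $\gamma X_1$ is an ergodic component of positive measure for the subgroup $\langle\gamma a^{nd_0}\gamma^{-1}\rangle<\langle a^{d_0}\rangle$, hence lies in $\theta^{-1}(Z_0)$. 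This avoids invoking Lemma~\ref{lem-erg-dec} entirely and keeps the argument self-contained; your route is equally valid but leans on heavier machinery than is really needed here.
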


\begin{proof}
We may assume that $\xi(Z_0)=\mu(\theta^{-1}(Z_0))$ is positive.
It is enough to show that $\theta^{-1}(Z_0)$ is $\Gamma$-invariant.
Pick $\gamma \in \Gamma$.
Pick an ergodic component $X_0$ for the action $\langle a^{d_0} \rangle \c (X, \mu)$ which is contained in $\theta^{-1}(Z_0)$.
Choose a positive integer $n$ with $\gamma a^{nd_0}\gamma^{-1}\in \langle a^{d_0} \rangle$.
Let $X_1$ be an ergodic component for the action of $\langle a^{nd_0}\rangle$ on $X_0$.
Since $\gamma X_1$ is an ergodic component for the action of $\langle \gamma a^{nd_0}\gamma^{-1}\rangle$ on $X$ and has positive measure with respect to $\mu$, we have $\gamma X_1\subset \theta^{-1}(Z_0)$.
It follows that $\gamma X_0\subset \theta^{-1}(Z_0)$ and thus $\gamma \theta^{-1}(Z_0)\subset \theta^{-1}(Z_0)$.
\end{proof}

Thanks to Lemma \ref{lem-atom}, ergodic measure-preserving actions of $\Gamma$ on a standard probability space $(X, \mu)$ have the following dichotomy:
Either almost every ergodic component for the action of $\langle a \rangle$ on $(X, \mu)$ has positive measure with respect to $\mu$, or almost every ergodic component for the action of $\langle a \rangle$ on $(X, \mu)$ has zero measure with respect to $\mu$.
The following lemma focuses on actions with the former property.

\begin{lem}\label{lem-erg-comp}
In the notation in Lemma \ref{lem-atom}, if $\xi(Z\setminus Z_0)=0$, then for any non-negative integers $k$, $l$ and any $z\in Z_0$, $\theta^{-1}(z)$ is an ergodic component for the action of $\langle a^{d_0p_0^kq_0^l}\rangle$ on $(X, \mu)$.
\end{lem}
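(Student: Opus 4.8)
The plan is to translate the statement into the spectral language of the $\langle a^{d_0}\rangle$-action and to exploit that the defining relation $ta^pt^{-1}=a^q$ makes $\langle a^p\rangle$ and $\langle a^q\rangle$ conjugate inside $\Gamma$. First I would reduce to an $L^2$-eigenvalue statement. Fix $z\in Z_0$ and $k,l\ge 0$; since $\langle a^{d_0p_0^kq_0^l}\rangle$ has index $p_0^kq_0^l$ in $\langle a^{d_0}\rangle$, the $\langle a^{d_0p_0^kq_0^l}\rangle$-ergodic components inside the atom $X_z=\theta^{-1}(z)$ are finitely many of equal measure, and one must show there is just one. Restricting the Koopman unitary of $a^{d_0}$ on $L^2(X,\mu)$ to the $\langle a^{d_0}\rangle$-ergodic component $X_z$, the space of $\langle a^{d_0p_0^kq_0^l}\rangle$-invariant functions on $X_z$ is spanned by eigenfunctions of $a^{d_0}|_{X_z}$ whose eigenvalue is a root of unity of order dividing $p_0^kq_0^l$; since the eigenvalues of an ergodic $\mathbb Z$-action form a group, nontriviality of this space forces $a^{d_0}|_{X_z}$ to have an eigenfunction with eigenvalue a primitive $\ell$-th root of unity for some prime $\ell\mid p_0q_0$. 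So it suffices to prove that the closed subspace $W$ of $L^2(X,\mu)$ spanned by all $\langle a^{d_0p_0^kq_0^l}\rangle$-invariant functions equals $L^2(X,\mathcal A)$, where $\mathcal A$ is the $\sigma$-algebra of $\langle a^{d_0}\rangle$-invariant sets (so $L^2(X,\mathcal A)=\{f:a^{d_0}\cdot f=f\}$); equivalently, $\{f\in L^2(X,\mu):a^{d_0}\cdot f=\omega f\}=0$ for every nontrivial root of unity $\omega$ whose order is a product of primes dividing $p_0q_0$.

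Next I would show $W$ is $\Gamma$-invariant. From $ta^pt^{-1}=a^q$ one gets $t\langle a^{d_0p_0m}\rangle t^{-1}=\langle a^{d_0q_0m}\rangle$ for every positive integer $m$, and symmetrically for $t^{-1}$; moreover every conjugate of $\langle a^{d_0p_0^kq_0^l}\rangle$ by $t^{\pm1}$ contains some $\langle a^{d_0p_0^{k'}q_0^{l'}}\rangle$. Hence the Koopman operators $U_t,U_{t^{-1}}$ (and trivially $U_a$, as $a$ centralises $a^{d_0p_0^kq_0^l}$) map each generating subspace of $W$ into another such subspace, so $U_\gamma W\subseteq W$ for all $\gamma\in\Gamma$. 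Since the generating family of $\sigma$-algebras is directed under divisibility, $W=L^2(X,\mathcal W)$ for a $\Gamma$-invariant sub-$\sigma$-algebra $\mathcal W\supseteq\mathcal A$, and one may form the $\Gamma$-factor $Y=X/\mathcal W$. By construction $Y/\langle a^{d_0}\rangle=X/\mathcal A=Z$, and $a^{d_0}$ acts on $Y$ with discrete spectrum whose eigenvalues are roots of unity of order a product of primes dividing $p_0q_0$; by the Halmos--von Neumann theorem each fibre of $Y\to Z$ is a rotation by a topological generator of a profinite abelian group $K_z\cong\prod_{\ell\mid p_0q_0}\mathbb Z/\ell^{a_\ell(z)}$ with $a_\ell(z)\in\{0,1,2,\dots,\infty\}$. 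The goal reduces to $a_\ell(z)=0$ a.e.\ for every prime $\ell\mid p_0q_0$, i.e.\ $Y=Z$.

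Finally I would run a descent argument. Because $a^p$ and $a^q$ are conjugate in $\Gamma$ (by $t$), as transformations of the $\Gamma$-space $Y$ they are measure-conjugate, so the measure spaces $Y/\langle a^p\rangle$ and $Y/\langle a^q\rangle$ are isomorphic; fibrewise over $Z$ these have sizes $\prod_{\ell\mid p_0}\ell^{\min(a_\ell(z),v_\ell(p_0))}$ and $\prod_{\ell\mid q_0}\ell^{\min(a_\ell(z),v_\ell(q_0))}$ (more generally all $Y/\langle a^{d_0p_0^kq_0^l}\rangle$ with $k+l$ fixed are isomorphic, being conjugate by powers of $t$). Since $Z$ has only countably many atoms whose measures sum to $\mu(X)<\infty$, the multiset of atom-measures of $Y/\langle a^p\rangle$ equals that of $Y/\langle a^q\rangle$; comparing the resulting identities at a measure value $v$ and at its rescalings $v/\ell,v/\ell^2,\dots$, and using that an atom of maximal — then near-maximal — measure cannot be split by passing to a subgroup, one forces all $a_\ell(z)=0$. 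Hence $K_z$ is trivial a.e., i.e.\ $W=L^2(X,\mathcal A)$, which is what we need. The delicate point — and the part I expect to be the main obstacle — is organising this last bookkeeping so that the distinct primes $\ell\mid p_0q_0$ are disentangled (conjugation by $t$ relates only the $p_0$- and $q_0$-parts jointly) and so that the case $a_\ell(z)=\infty$ is treated correctly.
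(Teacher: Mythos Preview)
Your spectral set-up (Steps 1--4) is sound: the subspace $W$ is $\Gamma$-invariant for exactly the reasons you give, the resulting factor $Y=X/\mathcal W$ exists, and on each atom $X_z$ the Halmos--von Neumann description of $Y_z\cong K_z$ is correct. The approach, however, is genuinely different from the paper's, and the place where it breaks down is precisely the descent you flag as delicate.

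The paper never passes to a global factor. It fixes a single $\langle a^{d_0}\rangle$-component $X_0$ of positive measure, uses recurrence to find $n\ge 1$ with $\mu(t^nX_0\cap X_0)>0$, and then argues by minimality: if $k\ge n$ is minimal with $\langle a^{d_0p_0^k}\rangle$ not ergodic on $X_0$, take an $\langle a^{d_0p_0^k}\rangle$-component $X_1\subset X_0$ meeting $t^{-n}X_0$. Since $t^na^{d_0p_0^k}t^{-n}=a^{d_0p_0^{k-n}q_0^n}$, the set $t^nX_1$ lies in $X_0$ and is an $\langle a^{d_0p_0^{k-n}q_0^n}\rangle$-component there. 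Comparing indices, $\mu(X_0)/\mu(X_1)$ divides $p_0^k$ and is $>1$, while the equal number $\mu(X_0)/\mu(t^nX_1)$ must (by minimality of $k$ and Lemma~\ref{lem-erg-comp-number}) divide $q_0^n$; coprimality of $p_0,q_0$ gives the contradiction. The $q_0$-side is symmetric, and then Lemma~\ref{lem-erg-comp-number} finishes.

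Your route can be partially rescued, but not by the multiset comparison alone. Observe first that $\Gamma$-ergodicity forces $Y$ to be either purely atomic or purely nonatomic. If $Y$ is atomic then it is a finite transitive $\Gamma$-set; letting $n$ be the order of the permutation $a^{d_0}$ on $Y$, conjugacy of $a^{d_0p_0}$ and $a^{d_0q_0}$ (via $t$) gives $n/\gcd(n,p_0)=n/\gcd(n,q_0)$, hence $\gcd(n,p_0)=\gcd(n,q_0)$; by $\gcd(p_0,q_0)=1$ both equal $1$, while by construction every prime dividing $n$ divides $p_0q_0$, so $n=1$ and $Y=Z$. This is clean, and arguably a nice alternative in that case. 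The genuine gap is the nonatomic case (all $K_z$ infinite): here the multiset identities you propose are global constraints on the sequence $(m_z)_z$ together with the functions $a_\ell(\cdot)$, and you have not shown they force all $a_\ell(z)=0$. The difficulty is real: $t$ does not act on $Z$, so the isomorphism $Y/\langle a^p\rangle\cong Y/\langle a^q\rangle$ is not fibrewise, and ``disentangling the primes'' requires tracking how $t$ moves individual atoms of $Z$ --- which is exactly what the paper's recurrence step supplies and your argument avoids. Without importing that local return information, the bookkeeping you sketch does not obviously terminate.
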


To prove this lemma, we use the following elementary observation on the ergodic decomposition for a power of an ergodic transformation.

\begin{lem}\label{lem-erg-comp-number}
Let $r$ and $s$ be positive integers.
Suppose that we have an ergodic measure-preserving action of the group $\mathbb{Z}$ on a standard finite measure space $(Y, \nu)$.
For non-negative integers $k$, $l$, we define $\lambda_{k, l}\colon (Y, \nu)\to (W_{k, l}, \omega_{k, l})$ as the ergodic decomposition for the action of the subgroup $r^ks^l\mathbb{Z}$ on $(Y, \nu)$, and assume that any point of $W_{k, l}$ has positive measure.
Then for any non-negative integers $k$, $l$,
\begin{enumerate}
\item both $|W_{k+1, l}|/|W_{k, l}|$ and $|W_{k, l+1}|/|W_{k, l}|$ are integers; and
\item $|W_{k+1, l}|/|W_{k, l}|$ is a divisor of $r$, and $|W_{k, l+1}|/|W_{k, l}|$ is a divisor of $s$.
\end{enumerate}
\end{lem}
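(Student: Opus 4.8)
The plan is to reduce the statement to a single elementary fact about ergodic $\mathbb{Z}$-actions and then to transport it along the canonical maps relating the decomposition spaces $W_{k,l}$. The fact is: if $\mathbb{Z}$ acts ergodically and in a measure-preserving way on a standard finite measure space, and $H$ is a finite-index subgroup, then $\mathbb{Z}$ permutes the (finitely many, equal-measure) ergodic components of $H$ transitively, while $H$ itself fixes each of them. Transitivity holds because $H\trianglelefteq\mathbb{Z}$, so $\mathbb{Z}$ indeed acts on the set of components and the ergodic decomposition map for $H$ is equivariant; were that action to have more than one orbit, the union of the components in one orbit would be a proper non-null $\mathbb{Z}$-invariant Borel set, contradicting ergodicity. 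The hypothesis that every point of $W_{k,l}$ has positive measure is precisely what makes ``non-null'' legitimate and forces the cardinalities $|W_{k,l}|$ to be finite.

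Applying this with $H=r^ks^l\mathbb{Z}\trianglelefteq\mathbb{Z}$, the group $\mathbb{Z}$ acts transitively on $W_{k,l}$ for every $k,l$. Since $r^{k+1}s^l\mathbb{Z}<r^ks^l\mathbb{Z}$ has index $r$, the ergodic components for the former refine those for the latter, and --- as in the setting of Lemma~\ref{lem-erg-dec}, applied to the groupoids $(r^ks^l\mathbb{Z})\ltimes Y$ and $(r^{k+1}s^l\mathbb{Z})\ltimes Y$ --- there is a canonical Borel map $\sigma\colon W_{k+1,l}\to W_{k,l}$ with $\lambda_{k,l}=\sigma\circ\lambda_{k+1,l}$, and $\sigma$ is $\mathbb{Z}$-equivariant because both $\lambda_{k,l}$ and $\lambda_{k+1,l}$ are. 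As $\mathbb{Z}$ acts transitively on the target $W_{k,l}$, all fibres of $\sigma$ have a common cardinality, which is exactly $|W_{k+1,l}|/|W_{k,l}|$; hence this ratio is a positive integer, and the same argument with $r^ks^{l+1}\mathbb{Z}$ in place of $r^{k+1}s^l\mathbb{Z}$ gives integrality of $|W_{k,l+1}|/|W_{k,l}|$. This proves assertion (i).

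For assertion (ii), fix $w\in W_{k,l}$ and set $C=\lambda_{k,l}^{-1}(w)$, an ergodic component for $r^ks^l\mathbb{Z}$. Choosing a generator $g$ of $r^ks^l\mathbb{Z}$ (as an abstract infinite cyclic group), we have $r^{k+1}s^l\mathbb{Z}=\langle g^r\rangle$, of index $r$ in $\langle g\rangle$, and the fibre $\sigma^{-1}(w)$ is naturally the set of ergodic components of $\langle g^r\rangle$ acting on $C$. By the first paragraph applied to the ergodic $\langle g\rangle$-space $C$ and its normal subgroup $\langle g^r\rangle$, the group $\langle g\rangle$ acts transitively on $\sigma^{-1}(w)$ while $\langle g^r\rangle$ acts trivially on it; the transitive action therefore factors through the finite cyclic quotient $\langle g\rangle/\langle g^r\rangle\cong\mathbb{Z}/r\mathbb{Z}$, so $|\sigma^{-1}(w)|$ divides $r$ by orbit-stabilizer. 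Combined with $|\sigma^{-1}(w)|=|W_{k+1,l}|/|W_{k,l}|$ from the previous paragraph, this shows $|W_{k+1,l}|/|W_{k,l}|$ divides $r$; the symmetric argument shows $|W_{k,l+1}|/|W_{k,l}|$ divides $s$.

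The only delicate part is the bookkeeping: one must check that the $\mathbb{Z}$-action on each $W_{k,l}$ is well defined (using normality of $r^ks^l\mathbb{Z}$ in $\mathbb{Z}$), that the canonical map $\sigma$ exists and is equivariant, and that restricting to a single fibre $\sigma^{-1}(w)$ one is again in the situation of an ergodic cyclic action with a finite-index normal subgroup. Once this is in place, both assertions are immediate from transitivity of the component action together with orbit-stabilizer for $\mathbb{Z}/r\mathbb{Z}$ and $\mathbb{Z}/s\mathbb{Z}$; no analytic input beyond the existence and functoriality of the ergodic decomposition is needed.
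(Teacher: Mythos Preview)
The paper states this lemma as an ``elementary observation'' and gives no proof. Your argument is correct and supplies precisely the standard details the author omits: transitivity of the induced $\mathbb{Z}$-action on each $W_{k,l}$ (from normality of $r^ks^l\mathbb{Z}$ and ergodicity of the ambient action), equivariance of the canonical map $\sigma\colon W_{k+1,l}\to W_{k,l}$ between decomposition spaces, and the orbit--stabilizer step for the cyclic quotient $\mathbb{Z}/r\mathbb{Z}$ acting transitively on a fibre. There is nothing to compare against; your write-up is an appropriate fleshing-out of what the paper leaves implicit.
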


\begin{proof}[Proof of Lemma \ref{lem-erg-comp}]
For non-negative integers $k$, $l$, we set $E_{k, l}=\langle a^{d_0p_0^kq_0^l}\rangle$.
Let $X_0$ be an ergodic component for the action of $E_{0, 0}$ on $X$ with $\mu(X_0)>0$.
Choose a positive integer $n$ with $\mu(t^nX_0\cap X_0)>0$.
We put $A=X_0\cap t^{-n}X_0$.

Assuming that there exists a positive integer $k$ such that the action of $E_{k, 0}$ on $X_0$ is not ergodic, we deduce a contradiction.
Let $k$ be the minimal integer such that $k\geq n$ and the action of $E_{k, 0}$ on $X_0$ is not ergodic.
Let $X_1$ be an ergodic component for the action of $E_{k, 0}$ on $X_0$ with $\mu(X_1\cap A)>0$.
The number $\mu(X_0)/\mu(X_1)$ is then a divisor of $p_0^k$ bigger than $1$.
Since $t^nA\subset X_0$, we have $t^n(X_1\cap A)\subset X_0$.
For any $x\in X_1\cap A$ and any integer $r$, we have $t^na^{d_0p_0^kr}x=a^{d_0p_0^{k-n}q_0^nr}t^nx$.
It follows that $t^nX_1\subset \langle a^{d_0}\rangle X_0=X_0$.
The equality $t^na^{d_0p_0^k}t^{-n}=a^{d_0p_0^{k-n}q_0^n}$ implies that $t^nX_1$ is an ergodic component for the action of $E_{k-n, n}$ on $X_0$.
The number $\mu(X_0)/\mu(t^nX_1)$ is therefore a divisor of $p_0^{k-n}q_0^n$.
If there were a divisor of $\mu(X_0)/\mu(t^nX_1)$ which is a divisor of $p_0$ bigger than $1$, then we would have $k-n\geq 1$, and the action of $E_{k-n, 0}$ on $X_0$ would not be ergodic by Lemma \ref{lem-erg-comp-number}.
It follows that $k-1\geq n$ and the action of $E_{k-1, 0}$ on $X_0$ is not ergodic.
This contradicts the minimality of $k$.
The number $\mu(X_0)/\mu(t^nX_1)$ is therefore a divisor of $q_0^n$.
On the other hand, we have the equality $\mu(X_0)/\mu(t^nX_1)=\mu(X_0)/\mu(X_1)$.
This is a contradiction.

We have shown that for any positive integer $k$, the action of $E_{k, 0}$ on $X_0$ is ergodic.
Along a similar argument, we can show that for any positive integer $l$, the action of $E_{0, l}$ on $X_0$ is ergodic.
The lemma now follows from Lemma \ref{lem-erg-comp-number}. 
\end{proof}

%%%%%%%%%%%%%%%%%%%%%%%%%%%%%%%%%%%%%%%%%%%%%%%

\section{Exotic amenable normal subgroupoids}\label{sec-exotic}

We set $\Gamma =\bs(p, q)$ with $2\leq |p|<|q|$ and the presentation $\Gamma =\langle\, a, t\mid ta^pt^{-1}=a^q\,\rangle$, and set $E=\langle a\rangle$.
Let $(X, \mu)$ be a standard finite measure space.
For a certain measure-preserving action of $\Gamma$ on $(X, \mu)$, we show that the subgroupoid $E\ltimes X$ is normal in $\Gamma \ltimes X$.
We also discuss the associated quotient groupoid.
The existence of such a subgroupoid is contrast to the property that the trivial group is the only amenable normal subgroup of $\Gamma$.
In fact, if there were an infinite amenable normal subgroup $N$ of $\Gamma$, then $N$ would be elliptic by Theorem \ref{thm-ell}, but it would be non-amenable by the presentation of $\Gamma$.

For non-zero integers $d$, $m$ and $n$, we say that a measure-preserving action of the group $\mathbb{Z}$ on $(X, \mu)$ is {\it $(d; m, n)$-periodic} if for any non-negative integers $k$, $l$, there exists a $\mathbb{Z}$-equivariant Borel map from $X$ into $\mathbb{Z}/(dm^kn^l\mathbb{Z})$.

\begin{prop}\label{prop-normal}
Let $d_0>0$ denote the greatest common divisor of $p$ and $q$, and put $p_0=p/d_0$ and $q_0=q/d_0$.
Let $\Gamma \c (X, \mu)$ be a measure-preserving action such that the action of $E$ is $(d_0; p_0, q_0)$-periodic.
We set $\cal{G}=\Gamma \ltimes X$ and $\cal{E}=E\ltimes X$.
Then $\cal{E}$ is normal in $\cal{G}$.
\end{prop}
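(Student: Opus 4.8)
The plan is to verify Definition \ref{defn-qn}(i) directly by producing, for each $\gamma\in\Gamma$, finitely many elements of ${\rm N}_{\cal{G}}(\cal{E})$ whose domains cover $X$ and which, along $\gamma$, recover the ``slice'' $\{(\gamma,x)\mid x\in X\}$ of $\cal{G}=\bigsqcup_{\gamma\in\Gamma}\{(\gamma,x)\mid x\in X\}$; gathering these over the countably many $\gamma\in\Gamma$ gives a countable family $\{\phi_n\}\subseteq{\rm N}_{\cal{G}}(\cal{E})$ with $\cal{G}=\bigcup_n\phi_n(D_{\phi_n})$ up to null sets, whence (taking $g=\phi_n(s(g))$ in the condition of Definition \ref{defn-qn}(i), or by the remark after Lemma \ref{lem-qn}) $\cal{E}$ is normal in $\cal{G}$. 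All the $\phi$ we use have the form $\phi=(x\mapsto(\gamma,x))|_D$ for a Borel set $D\subseteq X$; such a $\phi$ lies in $[[\cal{G}]]$ since the action is essentially by bijections.

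First I would record the criterion for these $\phi$ to normalize. Computing $U_\phi(g)=\phi(r(g))g\phi(s(g))^{-1}$ on $(\cal{E})_{D}$ gives $\cal{E}^\phi=\{(\gamma\lambda\gamma^{-1},\gamma x)\mid\lambda\in E,\ x\in D,\ \lambda x\in D\}$, while $(\cal{E})_{R_\phi}=\{(\mu,y)\mid\mu\in E,\ y\in\gamma D,\ \mu y\in\gamma D\}$. Comparing the two (the forward inclusion uses $\gamma\lambda x\in\gamma D$, the backward one uses $x=\gamma^{-1}y\in D$, $\lambda=\gamma^{-1}\mu\gamma$, $\lambda x=\gamma^{-1}\mu y\in D$), one sees that $\phi\in{\rm N}_{\cal{G}}(\cal{E})$ exactly when: (i) $\gamma\lambda\gamma^{-1}\in E$ for every $\lambda\in E$ with $\mu(D\cap\lambda D)>0$; and (ii) $\gamma^{-1}\mu\gamma\in E$ for every $\mu\in E$ with $\mu(\gamma D\cap\mu\gamma D)>0$.

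The algebraic input is a computation in $\Gamma$. Let $\tau\colon\Gamma\to\mathbb{Z}$ be the homomorphism with $\tau(a)=0$, $\tau(t)=1$, and for $k,l\ge0$ set $E_{k,l}=\langle a^{d_0p_0^kq_0^l}\rangle\le E$. From $ta^pt^{-1}=a^q$ (so $ta^{d_0p_0^kq_0^l}t^{-1}=(ta^pt^{-1})^{p_0^{k-1}q_0^l}=a^{d_0p_0^{k-1}q_0^{l+1}}$) one gets $tE_{k,l}t^{-1}=E_{k-1,l+1}$ for $k\ge1$ and, symmetrically, $t^{-1}E_{k,l}t=E_{k+1,l-1}$ for $l\ge1$, while $a$ centralizes each $E_{k,l}$. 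By induction on word length, for each $\gamma\in\Gamma$ there is $N(\gamma)\ge0$ with $\gamma E_{k,l}\gamma^{-1}=E_{k-\tau(\gamma),\,l+\tau(\gamma)}$ (a subgroup of $E$) whenever $k,l\ge N(\gamma)$. Now fix $\gamma$, put $\tau=\tau(\gamma)$, choose $k,l\ge N(\gamma)$, and use $(d_0;p_0,q_0)$-periodicity of the $E$-action to pick $E$-equivariant Borel maps $\pi\colon X\to\mathbb{Z}/(d_0p_0^kq_0^l\mathbb{Z})$ and $\pi'\colon X\to\mathbb{Z}/(d_0p_0^{k-\tau}q_0^{l+\tau}\mathbb{Z})$; write $Y_j=\pi^{-1}(j)$ and $Y'_{j'}=(\pi')^{-1}(j')$. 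Take $D=Y_j\cap\gamma^{-1}Y'_{j'}$. If $\mu(D\cap\lambda D)>0$ then a fortiori $\mu(Y_j\cap\lambda Y_j)>0$, which by $a$-equivariance of $\pi$ forces $\lambda\in E_{k,l}$, hence $\gamma\lambda\gamma^{-1}\in E_{k-\tau,l+\tau}\subseteq E$, giving (i); if $\mu(\gamma D\cap\mu\gamma D)>0$ then (using $\gamma D\subseteq Y'_{j'}$) $\mu(Y'_{j'}\cap\mu Y'_{j'})>0$, forcing $\mu\in E_{k-\tau,l+\tau}$, hence $\gamma^{-1}\mu\gamma\in\gamma^{-1}E_{k-\tau,l+\tau}\gamma=E_{k,l}\subseteq E$, giving (ii). So every such $\phi$ lies in ${\rm N}_{\cal{G}}(\cal{E})$; since $\bigcup_{j,j'}(Y_j\cap\gamma^{-1}Y'_{j'})=X$ up to null sets, these finitely many $\phi$ cover the $\gamma$-slice, and the construction is complete.

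The only genuine obstacle is the algebraic lemma on the conjugates $\gamma E_{k,l}\gamma^{-1}$ together with the bookkeeping that makes ``$k,l$ large enough'' simultaneously ensure $\gamma E_{k,l}\gamma^{-1}=E_{k-\tau,l+\tau}$ and the availability of the equivariant map at the shifted indices $k-\tau,l+\tau\ge0$; with that in hand, checking (i), (ii) and that the domains cover $X$ is routine.
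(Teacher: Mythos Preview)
Your proposal is correct and follows essentially the same route as the paper's proof. Both arguments fix $\gamma\in\Gamma$, choose a pair of subgroups $E_{k,l}$ and $\gamma E_{k,l}\gamma^{-1}=E_{k',l'}$ of $E$, use $(d_0;p_0,q_0)$-periodicity to partition $X$ into fibres $Y_j=\pi^{-1}(j)$ and $Y'_{j'}=(\pi')^{-1}(j')$ at those two levels, and then check that $x\mapsto(\gamma,x)$ restricted to each $Y_j\cap\gamma^{-1}Y'_{j'}$ lies in ${\rm N}_{\cal G}(\cal E)$. The only difference is bookkeeping: the paper takes the sharp choice $E_{i,j}=E\cap\gamma^{-1}E\gamma$ (whose form $E_{i,j}$ is implicit from the index formula in Section~\ref{sec-bs}) and verifies $U_\gamma((\cal E)_{C_n})=(\cal E)_{\gamma C_n}$ directly via $(E_{i,j}\ltimes X)_{A_m}=(\cal E)_{A_m}$, whereas you pass to a possibly deeper $E_{k,l}$ via the explicit shift formula $\gamma E_{k,l}\gamma^{-1}=E_{k-\tau(\gamma),\,l+\tau(\gamma)}$ (valid once $k,l$ exceed the maximal swing of the partial $t$-exponent sums in a word for $\gamma$) and then verify your criteria (i)--(ii). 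These are interchangeable packagings of the same idea.
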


\begin{proof}
For non-negative integers $k$, $l$, we set $E_{k, l}=\langle a^{d_0p_0^kq_0^l}\rangle$.
Pick $\gamma \in \Gamma$.
There exist non-negative integers $i$, $j$, $k$ and $l$ with $E_{i, j}=E\cap\gamma^{-1}E\gamma$ and $E_{k, l}=\gamma E_{i, j}\gamma^{-1}$.
Let $A$ be an $E_{i, j}$-invariant Borel subset of $X$ with
\[X=A_0\sqcup A_1\sqcup \cdots \sqcup A_{d_0|p_0|^i|q_0|^j-1}\]
up to null sets, where we set $A_m=a^mA$ for each $m\in \mathbb{Z}$.
Similarly, let $B$ be an $E_{k, l}$-invariant Borel subset of $X$ with
\[X=B_0\sqcup B_1\sqcup \cdots \sqcup B_{d_0|p_0|^k|q_0|^l-1}\]
up to null sets, where we set $B_m=a^mB$ for each $m\in \mathbb{Z}$.
We define $X=\bigsqcup_{n\in N}C_n$ as the partition of $X$ generated by the partition of $X$ into $\{ A_m\}_m$ and that into $\{ \gamma^{-1}B_{m'}\}_{m'}$.
Namely, it is the partition of $X$ so that for each $n\in N$, $C_n$ is of the form $A_m\cap \gamma^{-1}B_{m'}$.
The index set $N$ is finite.

Define an automorphism $U$ of $\cal{G}$ by $U(\delta, x)=(\gamma \delta \gamma^{-1}, \gamma x)$ for $(\delta, x)\in \cal{G}$.
For any $m\in \mathbb{Z}$, we have the equalities $(E_{i, j}\ltimes X)_{A_m}=(\cal{E})_{A_m}$ and $(E_{k, l}\ltimes X)_{B_m}=(\cal{E})_{B_m}$.
It follows that for any $n\in N$, we have the equality
\[U((\cal{E})_{C_n})=U((E_{i, j}\ltimes X)_{C_n})=(E_{k, l}\ltimes X)_{\gamma C_n}=(\cal{E})_{\gamma C_n}.\]
The map from $C_n$ to $\cal{G}$ sending each element $x$ of $C_n$ to $(\gamma, x)$ thus lies in ${\rm N}_{\cal{G}}(\cal{E})$.
\end{proof}

Let $\cal{G}$ be an ergodic discrete measured groupoid on $(X, \mu)$ with $r, s\colon \cal{G}\to X$ the range and source maps of $\cal{G}$, respectively.
Let $\delta \colon \cal{G}\to \Rm$ be the {\it Radon-Nikodym cocycle} defined by the equality
\[\mu(r\circ \phi(A))=\int_A\delta(\phi(x))\, d\mu(x)\]
for any $\phi \in [[\cal{G}]]$ and any Borel subset $A$ of $D_{\phi}$.
The cocycle $\delta$ factors through the quotient equivalence relation of $\cal{G}$ defined as $\{\, (r(g), s(g))\in X\times X\mid g\in \cal{G}\,\}$.
The Mackey range of the cocycle $\log \circ \delta \colon \cal{G}\to \R$ is an ergodic non-singular action of $\mathbb{R}$ on a standard Borel space with a $\sigma$-finite measure, and is called the {\it flow associated with} $\cal{G}$.
We say that
\begin{itemize}
\item $\cal{G}$ is of {\it type} ${\rm III}$ if the flow associated with $\cal{G}$ is not isomorphic to the action of $\mathbb{R}$ on itself by addition; 
\item $\cal{G}$ is of {\it type} ${\rm III}_{\lambda}$, where $\lambda$ is a real number with $0<\lambda <1$, if the flow associated with $\cal{G}$ is isomorphic to the action of $\mathbb{R}$ on $\mathbb{R}/(-\log \lambda)\mathbb{Z}$ by addition;
\item $\cal{G}$ is of {\it type} ${\rm III}_0$ if the flow associated with $\cal{G}$ is recurrent and any orbit of it is of measure zero; and
\item $\cal{G}$ is of {\it type} ${\rm III}_1$ if the flow associated with $\cal{G}$ is isomorphic to the trivial action of $\mathbb{R}$ on a single point.
\end{itemize}
Note that $\cal{G}$ is of type ${\rm III}$ if and only if $\cal{G}$ is of type ${\rm III}_{\lambda}$ for some $\lambda$ with $0\leq \lambda \leq 1$.
We refer to \cite{ho} for fundamental results on orbit equivalence relations of type ${\rm III}$.

As introduced in Remark \ref{rem-type}, every ergodic measure-preserving action of $\Gamma$ on $(X, \mu)$ is classified into those of type $\lambda$ with $\lambda \in \{ |p/q|^n\}_{n\in \mathbb{Z}_{>0}}\cup \{ 0\}$.
Under the assumption in Proposition \ref{prop-normal}, we have the following relationship between the type of the action $\Gamma \c (X, \mu)$ and the type of the quotient $\cal{G}/\cal{E}$.

\begin{prop}\label{prop-3}
In the notation in Proposition \ref{prop-normal}, suppose that the action $\Gamma \c (X, \mu)$ is ergodic.
If the action $\Gamma \c (X, \mu)$ is of type $\lambda$ in the sense of Remark \ref{rem-type}, then the quotient $\cal{G}/\cal{E}$ is of type ${\rm III}_{\lambda}$.
In particular, $\cal{G}/\cal{E}$ is of type ${\rm III}$.
\end{prop}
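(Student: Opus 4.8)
The plan is to compute the flow associated with the quotient groupoid $\cal{Q}=\cal{G}/\cal{E}$ and identify it with the flow associated with the action $\Gamma\c(X,\mu)$, which by Remark \ref{rem-type} has exactly the shape that characterizes type ${\rm III}_\lambda$ for an action of type $\lambda$. By Proposition \ref{prop-normal}, $\cal{E}$ is normal in $\cal{G}$, so $\cal{Q}$ is a well-defined discrete measured groupoid on $(Z,\xi)$, where $\pi\colon(X,\mu)\to(Z,\xi)$ is the ergodic decomposition for $\cal{E}$ and $\xi=\pi_*\mu$; write $\theta\colon\cal{G}\to\cal{Q}$ for the quotient homomorphism. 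Since $\cal{Q}$ is ergodic (a $\cal{Q}$-invariant set pulls back to a $\cal{G}$-invariant one), its type is read off from the Mackey range of $\log\circ\delta_{\cal{Q}}$, where $\delta_{\cal{Q}}\colon\cal{Q}\to\Rm$ is the Radon--Nikodym cocycle.

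The key step is to show that the pulled-back cocycle $\delta_{\cal{Q}}\circ\theta\colon\cal{G}\to\Rm$ agrees, up to an $\Rm$-valued coboundary, with $\m\circ\rho$, where $\rho\colon\cal{G}\to\Gamma$ is the projection. I would prove this by unwinding the construction of the quotient: a bisection of $\cal{Q}$ lifts to a piece of the translation $x\mapsto\gamma x$ on $X$ for some $\gamma\in\Gamma$, and the Radon--Nikodym derivative of $\xi$ along it is, after disintegrating $\mu=\int_Z\mu_z\,d\xi(z)$ against $\pi$ and using $\Gamma$-invariance of $\mu$, the product of the ratio of the masses of the relevant small ergodic components --- which is the modular cocycle $\D(\cal{G},\cal{E})$ of Section \ref{subsec-rn} --- and the ratio of the numbers of such components --- which is the local-index cocycle $\I(\cal{G},\cal{E})$ of Section \ref{subsec-lic}. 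Here Lemmas \ref{lem-erg-dec}, \ref{lem-rest} and \ref{lem-li-group} are used to see that the fibres of the auxiliary ergodic decompositions are finite and to express their sizes in terms of the indices $[E:E\cap\gamma^{\varepsilon}E\gamma^{-\varepsilon}]$ with $\varepsilon\in\{\pm1\}$. Thus $\delta_{\cal{Q}}\circ\theta$ is cohomologous to $\D(\cal{G},\cal{E})\cdot\I(\cal{G},\cal{E})$, which equals $\m\circ\rho$ by Corollary \ref{cor-bs-comp}.

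Granting this, the proof concludes as follows. Since $\m\circ\rho$ is trivial on $\cal{E}$ (because $\rho(\cal{E})=E<\ker\m$), it descends to $\cal{Q}$, and the Mackey range of a cocycle that factors through a quotient by a normal subgroupoid on which it is trivial coincides with the Mackey range of its descent --- a basic property of Mackey ranges, in the spirit of Lemmas \ref{lem-mackey-res} and \ref{lem-mackey-cohom}. Hence, discarding the coboundary by Lemma \ref{lem-mackey-cohom}, the flow associated with $\cal{Q}$ is isomorphic to the Mackey range of $\log\circ(\m\circ\rho)$ on $\cal{G}$. Now $\m\circ\rho$ is in turn trivial on $\cal{N}:=(\ker\m)\ltimes X$, so it descends to $\cal{G}/\cal{N}$, which by Lemma \ref{lem-qu} is the groupoid on the ergodic decomposition $(Z',\xi')$ of $\cal{N}$ carrying the canonical $\m(\Gamma)$-action; the descended cocycle there is $\log\circ\m$ composed with the projection to $\m(\Gamma)$, and its Mackey range is precisely the $\R$-flow induced from $\log\circ\m(\Gamma)\c(Z',\xi')$. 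By definition (Remark \ref{rem-type}) this is the flow associated with $\Gamma\c(X,\mu)$; for an action of type $\lambda$ it is the translation $\R$-action on $\R/(-\log|p/q|^n)\mathbb{Z}$ when $\lambda=|p/q|^n$, and a finite-measure-preserving $\R$-action with null orbits when $\lambda=0$. In either case $\cal{Q}=\cal{G}/\cal{E}$ is of type ${\rm III}_\lambda$, and in particular of type ${\rm III}$.

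The main obstacle is the key step: matching the Radon--Nikodym cocycle of $\cal{G}/\cal{E}$ with the product $\D(\cal{G},\cal{E})\cdot\I(\cal{G},\cal{E})$. This forces one to track how a translation $x\mapsto\gamma x$ simultaneously rescales the ergodic measures of $\cal{E}$-components and regroups those components (since $\gamma E\gamma^{-1}\neq E$), and to verify that the two effects combine into exactly $\m(\gamma)$; the finiteness statements of Lemma \ref{lem-erg-dec}, the disintegration identity of Lemma \ref{lem-rest}, and the index computations of Lemma \ref{lem-li-group} are what make this bookkeeping rigorous, while the normality of $\cal{E}$ in $\cal{G}$ from Proposition \ref{prop-normal} is what makes the quotient --- and hence the whole argument --- available.
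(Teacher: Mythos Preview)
Your plan is correct and lands on the same identity the paper uses, namely $\delta_{\cal{Q}}\circ\theta=\m\circ\rho$, but you reach it by a more elaborate route. The paper first reads off from the proof of Proposition~\ref{prop-normal} that the local-index cocycle is identically~$1$: the normalizing maps built there satisfy $U_\gamma((\cal{E})_{C_n})=(\cal{E})_{\gamma C_n}$, so on each piece one has $\cal{S}_-=(\cal{E})_{C_n}$ and $\cal{S}_+=(\cal{E})_{\gamma C_n}$, whence $\I=1$. Corollary~\ref{cor-bs-comp} then gives $\D=\m\circ\rho$ on the nose, and the paper verifies $\delta_{\cal{Q}}\circ\theta=\m\circ\rho$ by a short direct computation exploiting the $(d_0;p_0,q_0)$-periodicity: lifting a bisection of $\cal{Q}$ over $Y\subset Z$ to a single $E_-$-component $A$ over each point, periodicity forces $\mu(A)=\xi(Y)[E:E_-]^{-1}$ and $\mu(\gamma A)=\xi(r\circ\phi(Y))[E:E_+]^{-1}$, and $\Gamma$-invariance of $\mu$ yields the ratio $[E:E_+]/[E:E_-]=\m(\gamma)$. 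Your detour through a general identity $\delta_{\cal{Q}}\circ\theta\sim\D\cdot\I$ is unnecessary here, and your heuristic that the Radon--Nikodym derivative splits as ``ratio of masses'' times ``ratio of numbers of components'' does not literally match what $\D$ and $\I$ record; the clean fact, once normality is in hand, is simply $\delta_{\cal{Q}}\circ\theta=\D$, with no coboundary. On the other hand, you spell out the passage from this identity to the identification of Mackey ranges more carefully than the paper, which leaves that step implicit.
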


\begin{proof}
Put $\cal{Q}=\cal{G}/\cal{E}$ and let $\theta \colon \cal{G}\to \cal{Q}$ denote the quotient homomorphism.
Let $\delta \colon \cal{Q}\to \Rm$ denote the Radon-Nikodym cocycle for $\cal{Q}$.
We define $\D \colon \cal{G}\to \Rm$ as $\D(\cal{G}, \cal{E})$, the modular cocycle of Radon-Nikodym type defined in Section \ref{subsec-rn}.
We also define $\I \colon \cal{G}\to \Rm$ as $\I(\cal{G}, \cal{E})$, the local-index cocycle defined in Section \ref{subsec-lic}.
Let $\m \colon \Gamma \to \Rm$ be the modular homomorphism.
The last equality obtained in the proof of Proposition \ref{prop-normal} implies that $\I(\gamma, x)=1$ for any $\gamma \in \Gamma$ and a.e.\ $x\in X$.
By Corollary \ref{cor-bs-comp}, we have $\m(\gamma)=\D(\gamma, x)$ for any $\gamma \in \Gamma$ and a.e.\ $x\in X$.
To prove the proposition, it thus suffices to show the equality $\delta \circ \theta(g)=\D(g)$ for a.e.\ $g\in \cal{G}$.

Let $(Z, \xi)$ be the unit space of $\cal{Q}$.
Recall that the induced map $\theta \colon (X, \mu)\to (Z, \xi)$ is the ergodic decomposition for $\cal{E}$.
We fix $\gamma \in \Gamma$, and set $E_-=E\cap \gamma^{-1}E\gamma$ and $E_+=E\cap \gamma E\gamma^{-1}$.
Let
\[\theta_-\colon (X, \mu)\to (Z_-, \xi_-)\quad \textrm{and}\quad \theta_+\colon (X, \mu)\to (Z_+, \xi_+)\]
be the ergodic decompositions for the actions of $E_-$ and $E_+$, respectively.
We have the Borel maps $\sigma_-\colon Z_-\to Z$ and $\sigma_+\colon Z_+\to Z$ with $\theta =\sigma_-\circ \theta_-=\sigma_+\circ \theta_+$.
For a.e.\ $x\in X$, we have $\theta(\gamma, ax)=\theta(\gamma, ax)\theta(a, x)=\theta(\gamma, x)$.
The map $\theta(\gamma, \cdot)$ from $X$ into $\cal{Q}$ therefore induces the Borel map $\phi \colon Z\to \cal{Q}$ with $\phi(\theta(x))=\theta(\gamma, x)$ for a.e.\ $x\in X$.

Let $Y$ be a Borel subset of $Z$ such that the restriction of $r\circ \phi$ to $Y$ is injective, where $r$ denotes the range map of $\cal{Q}$.
Pick a Borel subset $Y_-$ of $Z_-$ such that $\sigma_-$ induces an isomorphism from $Y_-$ onto $Y$.
The element $\gamma$ induces an isomorphism from $(Z_-, \xi_-)$ onto $(Z_+, \xi_+)$.
Define a Borel subset $Y_+$ of $Z_+$ as $Y_+=\gamma Y_-$, and set $A=\theta_-^{-1}(Y_-)$.
The equality $\gamma A=\theta_+^{-1}(Y_+)$ holds, and $\sigma_+$ induces an isomorphism from $Y_+$ onto $r\circ \phi(Y)$ because the restriction of $r\circ \phi$ to $Y$ is injective.
Since the action $E\c (X, \mu)$ is $(d_0; p_0, q_0)$-periodic, we have $\mu(A)=\xi(Y)[E: E_-]^{-1}$ and $\mu(\gamma A)=\xi(r \circ \phi(Y))[E: E_+]^{-1}$.
The equality $\mu(A)=\mu(\gamma A)$ implies
\[\xi(r\circ \phi(Y))=\frac{[E: E_+]}{[E: E_-]}\xi(Y)=\m(\gamma)\xi(Y).\]
It therefore follows that $\delta \circ \theta(\gamma, x)=\m(\gamma)$ for a.e.\ $x\in X$.
\end{proof}

%%%%%%%%%%%%%%%%%%%%%%%%%%%%%%%%%%%%%%%%%%%%%


\begin{thebibliography}{99}

\bibitem{adams-hyp}S. Adams, Boundary amenability for word hyperbolic groups and an application to smooth dynamics of simple groups, {\it Topology} {\bf 33} (1994), 765--783.

\bibitem{adams}S. Adams, Indecomposability of equivalence relations generated by word hyperbolic groups, {\it Topology} {\bf 33} (1994), 785--798.

\bibitem{ar-book}C. Anantharaman-Delaroche and J. Renault, {\it Amenable groupoids}, Monogr. Enseign. Math., 36. Enseignement Math., Geneva, 2000.

\bibitem{aoi}H. Aoi, A remark on the commensurability for inclusions of ergodic measured equivalence relations, {\it Hokkaido Math. J.} {\bf 37} (2008), 545--560.

\bibitem{ay}H. Aoi and T. Yamanouchi, On the normalizing groupoids and the commensurability groupoids for inclusions of factors associated to ergodic equivalence relations-subrelations, {\it J. Funct. Anal.} {\bf 240} (2006), 297--333.

\bibitem{bfs}U. Bader, A. Furman and R. Sauer, Integrable measure equivalence and rigidity of hyperbolic lattices, preprint, to appear in {\it Invent. Math.}, arXiv:1006.5193.

\bibitem{bs}G. Baumslag and D. Solitar, Some two-generator one-relator non-Hopfian groups. {\it Bull. Amer. Math. Soc.} {\bf 68} (1962), 199--201.

\bibitem{bm}M. B. Bekka and M. Mayer, {\it Ergodic theory and topological dynamics of group actions on homogeneous spaces}, London Math. Soc. Lecture Note Ser. 269, Cambridge Univ. Press, Cambridge, 2000.

\bibitem{bowen}L. Bowen, Measure conjugacy invariants for actions of countable sofic groups, {\it J. Amer. Math. Soc.} {\bf 23} (2010), 217--245.

\bibitem{cfw}A. Connes, J. Feldman and B. Weiss, An amenable equivalence relation is generated by a single transformation, {\it Ergodic Theory Dynam. Systems} {\bf 1} (1981), 431--450.

\bibitem{dye1}H. A. Dye, On groups of measure preserving transformation. I, {\it Amer. J. Math.} {\bf 81} (1959), 119--159.

\bibitem{dye2}H. A. Dye, On groups of measure preserving transformation. II, {\it Amer. J. Math.} {\bf 85} (1963), 551--576.

\bibitem{fsz}J. Feldman, C. E. Sutherland and R. J. Zimmer, Subrelations of ergodic equivalence relations, {\it Ergodic Theory Dynam. Systems} {\bf 9} (1989), 239--269.

\bibitem{fmw}D. Fisher, D. W. Morris and K. Whyte, Nonergodic actions, cocycles and superrigidity, {\it New York J. Math.} {\bf 10} (2004), 249--269.

\bibitem{furman-mer}A. Furman, Gromov's measure equivalence and rigidity of higher rank lattices, {\it Ann. of Math. (2)} {\bf 150} (1999), 1059--1081.

\bibitem{furman-oer}A. Furman, Orbit equivalence rigidity, {\it Ann. of Math. (2)} {\bf 150} (1999), 1083--1108.

\bibitem{furman-survey}A. Furman, A survey of measured group theory, in {\it Geometry, rigidity, and group actions}, 296--374, Univ. Chicago Press, Chicago and London, 2011.

\bibitem{gab}D. Gaboriau, Orbit equivalence and measured group theory,
in {\it Proceedings of the International Congress of Mathematicians (Hyderabad, India, 2010), Vol. III}, 1501-1527, Hindustan Book Agency, New Delhi, 2010. 

\bibitem{ghmr}N. D. Gilbert, J. Howie, V. Metaftsis and E. Raptis, Tree actions of automorphism groups, {\it J. Group Theory} {\bf 3} (2000), 213--223.

\bibitem{gromov-as-inv}M. Gromov, Asymptotic invariants of infinite groups, in {\it Geometric group theory, Vol. 2} (Sussex, 1991), 1--295, London Math. Soc. Lecture Note Ser., 182, Cambridge Univ. Press, Cambridge, 1993.

\bibitem{hahn}P. Hahn, The regular representations of measure groupoids, {\it Trans. Amer. Math. Soc.} {\bf 242} (1978), 35--72.

\bibitem{hoo}T. Hamachi, Y. Oka and M. Osikawa, Flows associated with ergodic non-singular transformation groups, {\it Publ. Res. Inst. Math. Sci.} {\bf 11} (1975), 31--50.

\bibitem{ho}T. Hamachi and M. Osikawa, {\it Ergodic groups of automorphisms and Krieger's theorems}, Sem. Math. Sci., 3, Keio University, Department of Mathematics, Yokohama, 1981.

\bibitem{hjorth-kechris}G. Hjorth and A. S. Kechris, Rigidity theorems for actions of product groups and countable Borel equivalence relations, {\it Mem. Amer. Math. Soc.} {\bf 177} (2005), no. 833.

\bibitem{jones}V. F. R. Jones, Index for subfactors, {\it Invent. Math.} {\bf 72} (1983), 1--25.

\bibitem{kechris}A. S. Kechris, {\it Classical descriptive set theory}, Grad. Texts in Math., 156. Springer-Verlag, New York, 1995.

\bibitem{kida-oer}Y. Kida, Orbit equivalence rigidity for ergodic actions of the mapping class group, {\it Geom. Dedicata} {\bf 131} (2008), 99--109.

\bibitem{kida-survey}Y. Kida, Introduction to measurable rigidity of mapping class groups, in {\it Handbook of Teichm\"uller theory, Vol. II}, 297--367, IRMA Lect. Math. Theor. Phys., 13, Eur. Math. Soc., Z\"urich, 2009.

\bibitem{kida-ama}Y. Kida, Rigidity of amalgamated free products in measure equivalence, {\it J. Topol.} {\bf 4} (2011), 687--735.

\bibitem{kida-exama}Y. Kida, Examples of amalgamated free products and coupling rigidity, {\it Ergodic Theory Dynam. Systems} {\bf 33} (2013), 499--528.

\bibitem{krieger}W. Krieger, On non-singular transformations of a measure space. II, {\it Z. Wahrscheinlichkeitstheorie und Verw. Gebiete} {\bf 11} (1969), 98--119.

\bibitem{krieger-flow}W. Krieger, On ergodic flows and the isomorphism of factors, {\it Math. Ann.} {\bf 223} (1976), 19--70.

\bibitem{levitt}G. Levitt, On the automorphism group of generalized Baumslag-Solitar groups, {\it Geom. Topol.} {\bf 11} (2007), 473--515.

\bibitem{ls}R. C. Lyndon and P. E. Schupp, {\it Combinatorial group theory}, Classics Math., Springer-Verlag, Berlin, 2001.

\bibitem{mackey}G. W. Mackey, Ergodic theory and virtual groups, {\it Math. Ann.} {\bf 166} (1966), 187--207.

\bibitem{mold}D. I. Moldavanski\u{\i}, On the isomorphisms of Baumslag-Solitar groups, {\it Ukrainian Math. J.} {\bf 43} (1991), 1569--1571.

\bibitem{ms}N. Monod and Y. Shalom, Orbit equivalence rigidity and bounded cohomology, {\it Ann. of Math. (2)} {\bf 164} (2006), 825--878.

\bibitem{ow}D. S. Ornstein and B. Weiss, Ergodic theory of amenable group actions. I. The Rohlin lemma, {\it Bull. Amer. Math. Soc. (N.S.)} {\bf 2} (1980), 161--164.

\bibitem{pestov}V. G. Pestov, Hyperlinear and sofic groups: a brief guide, {\it Bull. Symbolic Logic} {\bf 14} (2008), 449--480.

\bibitem{popa-coh}S. Popa, Some computations of 1-cohomology groups and construction of non-orbit-equivalent actions, {\it J. Inst. Math. Jussieu} {\bf 5} (2006), 309--332.

\bibitem{popa-survey}S. Popa, Deformation and rigidity for group actions and von Neumann algebras, in {\it International Congress of Mathematicians. Vol. I},  445--477, Eur. Math. Soc., Z\"urich, 2007.

\bibitem{popa-gap}S. Popa, On the superrigidity of malleable actions with spectral gap, {\it J. Amer. Math. Soc.} {\bf 21} (2008), 981--1000.

\bibitem{ramsay}A. Ramsay, Virtual groups and group actions, {\it Adv. Math.} {\bf 6} (1971), 253--322.

\bibitem{rohlin}V. A. Rohlin, On the fundamental ideas of measure theory, {\it Amer. Math. Soc. Translation} {\bf 1952} (1952), no. 71, 1--54.

\bibitem{sauer-thom}R. Sauer and A. Thom, A spectral sequence to compute $L^2$-Betti numbers of groups and groupoids, {\it J. Lond. Math. Soc. (2)} {\bf 81} (2010), 747--773.

\bibitem{sch}K. Schmidt, Asymptotic properties of unitary representations and mixing, {\it Proc. Lond. Math. Soc. (3)} {\bf 48} (1984), 445--460.

\bibitem{serre}J.-P.\ Serre, {\it Trees}, Springer Monogr. Math., Springer-Verlag, Berlin, 2003.

\bibitem{shalom-survey}Y. Shalom, Measurable group theory, in {\it European Congress of Mathematics}, 391--423, Eur. Math. Soc., Z\"urich, 2005.

\bibitem{take3}M. Takesaki, {\it Theory of operator algebras. III}, Encyclopaedia Math. Sci., 127. Operator Algebras and Non-commutative Geometry, 8. Springer-Verlag, Berlin, 2003.

\bibitem{vaes-survey}S. Vaes, Rigidity for von Neumann algebras and their invariants, in {\it Proceedings of the International Congress of Mathematicians (Hyderabad, India, 2010), Vol. III}, 1624--1650, Hindustan Book Agency, New Delhi, 2010.

\bibitem{zim-book}R. J. Zimmer, {\it Ergodic theory and semisimple groups}, Monogr. Math., 81. Birkh\"auser Verlag, Basel, 1984.


\end{thebibliography}
\end{document}